\newcommand{\arxiv}[1]{\href{http://arxiv.org/abs/#1}{\texttt{arXiv:#1}}}
\newcommand{\mathheader}[1]{\texorpdfstring{$\mathbf{#1}$}{#1}}
\newcommand{\fourdots}{{\begin{smallmatrix} 
     \cdot\kern -1.8pt & \cdot \\[-2.7pt]
     \cdot\kern -1.8pt& \cdot \end{smallmatrix}}}
\theoremstyle{plain}
\newtheorem{thm}{Theorem}[section]
\newtheorem{lem}[thm]{Lemma}
\newtheorem{prop}[thm]{Proposition}
\newtheorem{cor}[thm]{Corollary}
\theoremstyle{definition}
\newtheorem{defn}[thm]{Definition}
\theoremstyle{remark}
\newtheorem{rem}[thm]{Remark}
\title{\bf Counting Permutations Modulo Pattern-Replacement Equivalences for Three-Letter Patterns}
\author{William Kuszmaul \\
\small MIT PRIMES \\[-0.8ex]
\small Massachusetts Institute of Technology \\[-0.8ex] 
\small Cambridge, Massachusetts, USA \\
\small\tt william.kuszmaul@gmail.com
}
\begin{document}

\maketitle

\begin{abstract}
We study a family of equivalence relations on $S_n$, the group of permutations on $n$ letters, created in a manner similar to that of the Knuth relation and the forgotten relation. For our purposes, two permutations are in the same equivalence class if one can be reached from the other through a series of pattern-replacements using patterns whose order permutations are in the same part of a predetermined partition of $S_c$. 

When the partition is of $S_3$ and has one nontrivial part and that part is of size greater than two, we provide formulas for the number of classes created in each previously unsolved case. When the partition is of $S_3$ and has two nontrivial parts, each of size two (as do the Knuth and forgotten relations), we enumerate the classes for $13$ of the $14$ unresolved cases. In two of these cases, enumerations arise which are the same as those yielded by the Knuth and forgotten relations. The reasons for this phenomenon are still largely a mystery.
\end{abstract}


\section{Introduction}

In 1970, Donald Knuth discovered the so-called Knuth relation, using it as a tool for studying the RSK (Robinson-Schensted-Knuth) correspondence \cite{Kn}. The RSK correspondence is a bijection between permutations in $S_n$ (for fixed $n$) and pairs of standard Young tableaux of the same shape with $n$ cells. The Knuth relation, also known as the plactic equivalence, was designed to connect two permutations exactly when the first tableau that each one is mapped to under the RSK correspondence is the same for both. However, over time, it would find applications not only to combinatorics, but also to abstract algebra. The Knuth relation is particularly well-known for its aid in the proof of the Littlewood-Richardson rule, an identity which can be interpreted as a multiplication rule for Schur polynomials (among many other things). This proof was historically one of the first proofs to be found for the rule.

In 2008, Novelli and Schilling brought to light the so-called forgotten relation \cite{NS}, originally discovered by Lascoux and Sch\"{u}tzenberger in 1981, but heretofore ignored in most existing literature (whence the name). The forgotten relation and the Knuth relation share a common structure: Each of them is an equivalence relation on $S_n$ (or, more generally, on some set of words) which connects two permutations obtained from each other by a rearrangement of three consecutive letters according to certain rules. The forgotten relation also shares a common application with its fore-father. When either of the forgotten or Knuth relations is forced on the free associative algebra, we get a structure in which the elementary symmetric functions (appropriately defined) commute. The similar structure and uses of these two relations inspired two research groups to systematically analyze relations in the same family in \cite{LPRW10} and \cite{PRW11}.

There are several ways to set the rules that allow rearranging letters in a permutation. The ones we are going to deal with allow the rearranging of more than three consecutive letters, but the number of letters rearranged is fixed, and the rules which determine when rearrangements are allowed to depend only on the relative order of the letters we are rearranging and on the order in which we want to rearrange them. This is what \cite{LPRW10} and \cite{PRW11} call the ''only indices adjacent'' case (or the $P^{\mid\ \mid}$ case). We formalize the rules into the notion of a replacement partition (see the definitions below). Both \cite{LPRW10} and \cite{PRW11} studied only replacement partitions with one non-trivial part using patterns of size three. They considered two problems: First, how many equivalence classes does a given such relation subdivide $S_n$ into? Second, how big is the class containing the identity? Along with solving the second problem, they were able to solve all but seven cases of the first, as long as the nontrivial part of the replacement partition is required to be of size greater than two. We extend their work by solving the remaining seven cases (Section \ref{Secsingle}). We furthermore study a family of relations more closely related to the Knuth and forgotten relations, by considering replacement partitions of $S_3$ having two non-trivial parts, each of size two. We enumerate the classes in all but one of the unresolved cases (Section \ref{Secdouble}). Finally, in Section \ref{Secgen}, we study several general results concerning things such as a connection between pattern-avoidance and the enumeration of equivalence classes. These results are sometimes cited in proofs from previous sections. We conclude with comments on methodology and future work (Section \ref{conclusion}).

Before we present our results, we establish some conventions. Most of these formalize conventions made in \cite{LPRW10}.

\begin{defn} A \emph{word} is a finite, possibly empty sequence of
positive integers.  The elements of a word are called \emph{letters}.
We denote the $i$-th letter of word $w$ as $w_i$, with $w_1$ as the
first letter.
\end{defn}
\begin{defn} We regard any permutation of $\left\{1,2,\ldots ,n\right\}$ as a word (by writing it in one-line notation). Conversely, given a word $w$ with no two letters equal, we define the \emph{order permutation} of $w$ as the unique permutation $\pi\in S_n$ (where $n$ is the length of $w$) such that for any $i$ and $j$, we have $\pi_i<\pi_j$ if and only if $w_i<w_j$. This permutation $\pi$ is also known as the \emph{standardization} of $w$. 
\end{defn}
For example, the order permutation of $425$ is $213$. The notion of an order permutation is more generally defined for any words (possibly with equal letters), which requires a subtler definition, but we are never going to need it in this generality.

\begin{defn}
If a word $w$ has order permutation $w'$, then we may simply say that $w$ \emph{forms} the permutation $w'$.
\end{defn}

\begin{defn}
A \emph{factor} of a permutation is a subsequence of adjacent letters in the permutation.
\end{defn}

Note that we may treat a factor either as a word or as its order permutation.

\begin{defn}
The \emph{position parity} of a letter in a permutation is the parity of the position of the letter.
\end{defn}

For example, the first letter in a permutation has odd position parity.

Throughout the paper, we may sometimes use notation like ``$f(n<3)=n+2$'' which is just shorthand for ``when $n<3$, $f(n)=n+2$''.

\begin{defn}
If $a$ and $b$ are two words, then the \emph{concatenation} of $a$ with $b$ is defined as the word obtained by attaching $b$ to the end of $a$. It is denoted by $ab$ or $a.b$. (\emph{Note:} If $\pi$ is a permutation of size $n$, then $\pi n$ denotes the concatenation $\pi.n$, not the number $\pi\left(n\right)$.)
\end{defn}
For example, the concatenation of $21$ with $72$ is $2172$.
\begin{defn} Let $\pi\in S_n$. Let $w$ be a word with no two letters equal. If we can write $w$ in the form $aub$ for some three (possibly empty) words $a$, $u$ and $b$ such that $u$ has order permutation $\pi$, then we say that $u$ is \emph{a $\pi$ in $w$} (or \emph{a $\pi$ pattern in $w$}). We say that $w$ \emph{avoids $\pi$} if there is no $\pi$ in $w$.
\end{defn} 
For example, $574$ is a $231$ in $2657431$ because the order permutation of $574$ is $231$. However, $3124$ avoids $231$.

We now will define the equivalence relations that we are going to study.
\begin{defn} Let $k\in \mathbb{N}$. A \emph{replacement partition} of $S_k$ is a set partition of the symmetric group $S_k$.
\end{defn}
\begin{defn} Given a replacement partition $K$ of $S_k$ and a positive integer $n$, the \emph{$K$-equivalence on $S_n$} is defined as the equivalence relation on $S_n$ generated by the following requirement:
If $\phi\in S_n$ and $\psi\in S_n$ are such that $\phi = aub$ and $\psi = avb$ for some words $a$, $b$, $u$ and $v$, where $u$ and $v$ have length $k$, and the order permutation of $u$ lies in the same part of $K$ as the order permutation of $v$, then we say that $\phi$ is \emph{equivalent} to $\psi$.

Moreover, in this case, we say that $\psi$ results from $\phi$ by a \emph{$K$-transformation}, or more precisely, $\psi$ results from $\phi$ by a \emph{transformation $p\to q$}, where $p$ is the order permutation of $u$ and $q$ is the order permutation of $v$.

When $K$ is clear from the context, we abbreviate ``$K$-equivalence'' as ``equivalence'', and ``$K$-transformation'' as ``transformation''.

We also write $\phi \equiv \psi$ for ``$\phi$ is equivalent to $\psi$''.
\end{defn}

Note that if two permutations are equivalent, we may say that they are \emph{reachable} from each other. This tends to be used when we are providing the manner through which we can reach one from the other.

What we call ``$K$-equivalence'' is denoted as ``$K^{\mid\ \mid}$-equivalence'' in \cite{LPRW10}.\\
{\bf Example: } Let $n = 5$, $k = 3$, and $K = \left\{\left\{123,321\right\}, \left\{132,231\right\}, \left\{213\right\}, \left\{312\right\}\right\}$. We will later abbreviate this by $K = \left\{123,321\right\}\left\{132,231\right\}$, leaving out the outer brackets and the one-element parts of the partition. Then, the permutation $15324\in S_5$ is $K$-equivalent to $12354\in S_5$ (because $15324 = aub$ and $12354 = avb$ with $a = 1$, $u = 532$, $v = 235$ and $b = 4$, and the order permutation of $532$ lies in the same part of $K$ as the order permutation of $235$). More precisely, $12354$ results from $15324$ by a transformation $321\to 123$. Similarly, $12354$ is equivalent to $12453$ (here, $a = 12$, and $b$ is the empty word), and $12453$ results from $12354$ by a transformation $132\to 231$. Combining these, we see that $15324$ is equivalent to $12453$, although $12453$ does not directly result from $15324$ by any transformation.
\begin{defn} Given a replacement partition $K$ and a positive integer $n$, the $K$-equivalence on $S_n$ partitions $S_n$ into equivalence classes. We will briefly refer to these equivalence classes as \emph{classes}. A class is called \emph{trivial} if it consists of one element only.
\end{defn}
\begin{defn} Let $K$ be a replacement partition. If $w$ is a word with no two letters equal, then a \emph{hit} (or more precisely, a \emph{$K$-hit}) in $w$ is a word $u$ such that $w = aub$ for some words $a$ and $b$, and such that the order permutation of $u$ lies in a nontrivial part of $K$. A word with no two letters equal is said to \emph{avoid} $K$ if it contains no hit, i.e., if it avoids every permutation in every nontrivial part of $K$. Otherwise it is said to be \emph{non-avoiding}, or, equivalently, \emph{a non-avoider} (with respect to $K$).
\end{defn}

Observe that each permutation that avoids $K$ forms a trivial class (with respect to the $K$-equivalence), whereas non-avoiders lie in nontrivial classes. 

We will occasionally refer to the elements of $K$ as \emph{patterns}, hoping that no confusion with the notation of ``$\pi$ pattern in $w$'' can arise.

If two permutations are connected by a transformation, then we say the letters in the hit used in the transformation are \emph{involved} in the transformation, even if they are static in the transformation.

Figure~\ref{answers} shows the number of classes created in $S_n$ by the replacement partitions considered in this paper. Each of these results are proven in the following two sections.

\begin{figure}
\begin{center}
\[
\begin{array}{| c | l |}
\hline
\!\!\mbox{replacement partition}\!\!            & \mbox{number of classes in $S_n$} \\ \hline
\{213, 231, 132\}        & 2^{n-2}+2n-4                          \\ \hline
\{123, 132, 231\}        & 2^{n-1}                               \\ \hline
\{123, 132, 321\}        & (n-1)!!+(n-2)!!+n-2                  \\ \hline
\{123, 132, 312\}        & f(n\ge 5)=f(n-1)+(n-2) \cdot f(n-2)+1  \\ \hline
\{123, 132, 213, 231\}   & n                                    \\ \hline  
\{123, 132, 231, 321\}   & 2 \mbox{ for $n>3$}                  \\ \hline
\{213,132,231,312\}      & 3                                    \\ \hline
\{123, 132\} \{312, 321\}& 2^{n-1}                               \\ \hline
\{123, 132\} \{213, 231\}& 2^{n-1}                               \\ \hline
\{123, 231\} \{132, 321\}& 2^{n-1}                               \\ \hline
\{132, 312\} \{321, 213\}& (n^2+n)/2-2                          \\ \hline
\{123, 231\} \{213, 132\}& n^2-3n+4                             \\ \hline
\{123, 321\} \{213, 231\}& 3 \mbox{ for $n>5$}                  \\ \hline
\{123, 132\} \{231, 312\}& 3 \cdot 2^{n-3} + n - 2 \mbox{ for $n>5$} \\ \hline
\{123, 132\} \{213, 321\}& \mbox{Sum of the first $n-1$ Motzkin numbers} \\ \hline
\{123, 132\} \{213, 312\}& f(n \ge 3)=f(n-1)+(n-1) \cdot f(n-2) \\ \hline
\{123, 321\} \{132, 213\}& \binom{n}{\lfloor n/2 \rfloor}+\binom{n-2}{\lfloor (n-2)/2 \rfloor}+3 \mbox{ for $n>4$} \\ \hline
\{123, 231\} \{321, 213\}&

f(n>5)=
\begin{cases}
3n, & \text{ if } n \text{ is even} \\
3n-1, & \text{ if } n \text{ is odd}
\end{cases}
\\ \hline
\{123, 321\} \{132, 231\}&  \sum\limits_{x=1}^{l}{ x! \cdot \binom{n-x-1}{h-1}} + \sum\limits_{x=1}^{h}{ x! \cdot \binom{n-x-1}{l-1}}, \\
                         &  \mbox{where $l = \lfloor n/2 \rfloor$, $h= \lceil n/2 \rceil$.}                                          \\ \hline
\{123, 231\} \{213, 312\}& g(n,k)= \begin{cases}
   1 & \begin{array}{l} \text{if } n=1 \text{ or } \\ n-2k+1=0, \hskip-7pt \end{array} \\
   \sum\limits_{j=1}^{\lfloor (n+1)/2 \rfloor}{g(n-1, j)} & \text{ if } k=1,  \\
   \sum\limits_{x=k-1}^{n-k}{\sum\limits_{j=1}^{n-k-x}{\binom{x-1}{k-2} \cdot g(n-2k+1, j)}}  & \text{ otherwise.}
  \end{cases}
 \\
& f(n)=\sum\limits_{k=1}^{\lfloor n/2+1 \rfloor}{g(n+1, k)}+n-2 \\ \hline

\end{array}
\]
\end{center}
\caption{The number of classes created in $S_n$ by various replacement partitions of $S_3$. Unless otherwise specified, $n\ge3$. In the table, $f(n)$ equals the number of classes created and is used for recursive formulas.} 
\label{answers}
\end{figure}

\section{Single Replacements}\label{Secsingle}
In this section, we consider the number of classes created in $S_n$ by replacement partitions of $S_3$ with exactly one nontrivial part. This problem was previously addressed in some cases by \cite{LPRW10} and \cite{PRW11}. We provide formulas for the number of classes in all cases where the nontrivial part of the replacement partition is of size greater than two, apart from those that have been previously solved. 

When considering replacement partitions, it is important to note that some relations are equivalent to others. In fact, let $w$ be the permutation $k(k-1)\ldots 1 \in S_k$ (where $k$ is the size of the permutations in the replacement partition, and needs not be $3$ in this argument). Without changing the structure of the classes, one can either replace each pattern $\pi$ in the replacement partition by $\pi\circ w = \pi_k\pi_{k-1} \ldots \pi_1$, or replace each pattern $\pi$ in the replacement partition by $w\circ \pi = (k-\pi_1+1)(k-\pi_2+1) \ldots (k-\pi_k+1)$. By these two operations (which generate a Klein four-group if $k\geq 3$), the set of all replacement partitions of $S_k$ is subdivided into orbits (mostly of size $4$, but occasionally smaller), and if we can count the number of classes which are generated in $S_n$ by one partition in each orbit, we automatically obtain the same number for all the other partitions in all of the orbits. Noting this, we will only provide results for one replacement partition per orbit.

The following subsections are concerned with one replacement partition each. In each subsection, the replacement partition $K$ is to be understood to be the partition mentioned in the title of the subsection.

\subsection{\mathheader{\{123, 132, 321\}}-Equivalence}

Note that in this section, we use several results and definitions which we state in Subsection \ref{secstooge}.

Let $k\in\left\{1,2,\ldots \right\}$. We say that a word $w$ satisfies \emph{property $A_k$} if there exists a strictly increasing sequence $\left(i_1,i_2,\ldots,i_k\right)$ of integers \textbf{of equal parity} satisfying\[
w_{i_1} = k,\ \ \ w_{i_2} = k-1,\ \ \ w_{i_3} = k-2,\ \ \ \ldots,\ \ \ w_{i_k} = 1\ \ \ 
\]
(i.e., satisfying $w_{i_j} = k-j+1$ for every $j\in\left\{1,2,\ldots,k\right\}$).

\begin{lem}\label{lemAkinv}
Property $A_k$ is invariant under the $\{123, 132, 321\}$-equivalence. The position parity of the letter $1$ is also invariant.
\end{lem}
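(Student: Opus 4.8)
The plan is to reduce everything to a single transformation and then analyze how the three rearranged letters can interact with the data defining $A_k$. Since the $\{123,132,321\}$-equivalence is the transitive closure of single transformations, it suffices to show that $A_k$ takes the same value on any two permutations $\phi,\psi$ related by one transformation acting on three consecutive positions $p,p+1,p+2$. Writing the three involved letters as $x<y<z$, I would first record the structural fact underlying both assertions: positions $p$ and $p+2$ share a parity (call it A) while $p+1$ has the opposite parity, and in each of the three patterns $123=xyz$, $132=xzy$, $321=zyx$ the \emph{smallest} letter $x$ sits at $p$ or at $p+2$. Hence $x$ always keeps position parity A. Applying this with $x$ the global minimum $1$ immediately gives the second statement: whenever $1$ is involved in a transformation it is the smallest of the three letters and keeps its parity, and when it is not involved its position is untouched.

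For property $A_k$ I would split on the number $a$ of the letters $x,y,z$ lying in $\{1,\dots,k\}$, recalling that in a permutation the witness positions are forced to be the positions of the values $k,k-1,\dots,1$, so $A_k$ holds exactly when the $k$ smallest values appear in strictly decreasing order and all at one parity. If $a=0$ the positions of all values $\le k$ lie outside $\{p,p+1,p+2\}$ and are unchanged, so $A_k$ is unaffected. If $a=3$ the three values $\le k$ occupy the consecutive positions $p,p+1,p+2$, which realize both parities, so they can never all share a parity and $A_k$ fails on both sides. If $a=1$, only $x$ is $\le k$; by the structural fact it stays at parity A, and since it moves only within $\{p,p+2\}$ while every other value $\le k$ sits outside $[p,p+2]$, both its parity and its order relative to those values are the same for all three patterns, so the truth value of $A_k$ is unchanged.

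The main obstacle is the case $a=2$, where $x,y\le k<z$: here two witness-eligible values are moved at once, and I expect the heart of the argument to be a parity-versus-order trade-off. For $x$ and $y$ to share a parity they must occupy the two same-parity slots $p$ and $p+2$, which forces $z$ into the middle slot $p+1$; but that is exactly the pattern $132$, in which $x$ precedes $y$, violating the decreasing-order requirement of $A_k$. In each of the other allowed patterns ($123$ and $321$) the letter $y$ lands on $p+1$ while $x$ occupies a parity-A slot, so $x$ and $y$ have opposite parities and $A_k$ again fails. (The arrangement $y,z,x$ that would simultaneously give decreasing order and equal parity is precisely the forbidden pattern $231$.) Thus $A_k$ is false on both sides when $a=2$ as well. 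Assembling the four cases shows that the truth value of $A_k$ is constant across the three patterns at positions $p,p+1,p+2$, hence invariant under every single transformation and therefore under the whole equivalence.
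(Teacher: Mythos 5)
Your proof is correct, and it takes a genuinely different route from the paper's. The paper gives a preservation argument: assuming $A_k$ holds, it shows that each letter $j\le k$ keeps its position parity (because every smaller letter lies to its right with the same parity) and that $j$ cannot jump over $j+1$ (because they are at least two positions apart), so $A_k$ continues to hold after any transformation. Your argument is instead a static case analysis on $a$, the number of the three rearranged letters lying in $\{1,\dots,k\}$: you show that the truth value of $A_k$ is literally identical for all three local patterns $123$, $132$, $321$ placed in the window, since for $a\in\{0,1\}$ the positions and relative order of the values $\le k$ relevant to $A_k$ are untouched (using the key fact that the smallest of the three letters always occupies an end slot of the window, hence a fixed parity), while for $a\in\{2,3\}$ the property $A_k$ fails on both sides for parity-versus-order reasons. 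Your version buys a cleaner logical structure -- it establishes invariance in both directions at once rather than relying on the symmetry of the transformation set to upgrade one-directional preservation, and it isolates exactly why $231$ is the one pattern whose absence makes the argument work. The paper's version buys a more dynamic picture of how the letters $k,k-1,\dots,1$ behave under the equivalence, which is what gets reused in Lemma \ref{lem3}. Both are complete proofs of the statement.
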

\begin{proof}
The second statement is trivial to see.

Hence, we need to show that if a permutation in $S_n$ satisfies property $A_k$ for some $k$, then every permutation equivalent to it also satisfies $A_k$ for the same $k$. Indeed, whenever we perform one of the transformations $123\leftrightarrow 132$, $132\leftrightarrow 321$, and $321\leftrightarrow 123$, the position parity of a letter $i$ does not change as long as every letter smaller than $i$ lies to the right of $i$ and has the same position parity as $i$. But as long as property $A_k$ holds, this is guaranteed for any of the letters $k$, $k-1$, $\ldots$, $1$. So, the position parity of letters $\leq k$ cannot change under any of these transformations as long as $A_k$ holds. Moreover, as long as $A_k$ is satisfied, for every $j\in\left\{1,2,\ldots,k-1\right\}$, the letter $j$ is at least two positions to the right of the letter $j+1$ in our permutation (since $i_1$, $i_2$, \ldots, $i_k$ are of equal parity and increase), and thus it cannot ''jump'' over $j+1$ in a single application of one of the transformations $123\leftrightarrow 132$, $132\leftrightarrow 321$, $321\leftrightarrow 123$ \footnote{In fact, if a letter is at least two places to the right of another letter, then the only way it can jump over that other letter in one single application of one of these transformations is when these two letters are the last and the first letter of a hit. But the letters $j$ and $j+1$ cannot be the last and the first letter of any hit, respectively.}, i.e., it stays to the right of $j+1$. Thus the order of the letters $k$, $k-1$, $\ldots$, $1$ in our permutation does not change. Hence, after we apply a transformation, the permutation still satisfies property $A_k$ for the same $k$.
\end{proof}

Before continuing, we will first define a set $B_n,C_n \subset S_n$.

\begin{defn}
The permutation $b_n\in S_n$ is $(1,2,\ldots,n-2) \circ (1,2,\ldots, n-4) \circ (1,2, \ldots, n-6) \circ \cdots$, where the product ends with $(1)$ if $n$ is odd and with $(1,2)$ if $n$ is even.
\end{defn}

For example, $b_3=123$, $b_4=2134$, $b_5=23145$, and $b_6=324156$.

\begin{defn}
We say that $B_2=\{ \}$. Then, $B_{n>2}$ is the set containing the following:
\begin{enumerate}
\item $wn$ for each $w \in B_{n-1}$;
\item $b_n$. 
\end{enumerate}
\end{defn}

For example, $B_4=\{1234, 2134\}$, $B_5=\{12345, 21345, 23145\}$, and \\ $B_6=\{123456, 213456, 231456, 324156\}$. Note that $B_n$ has $n-2$ elements.

\begin{defn}
We define $C_{n>2}=B_n \setminus \{b_n\}$ along with the permutation \[
(1,2,\ldots,n) \circ (1,2,\ldots, n-4) \circ (1,2, \ldots, n-6) \circ \cdots\]
 in $S_n$, where the product ends with $(1)$ if $n$ is odd and with $(1,2)$ if $n$ is even.
\end{defn}

For example, $C_6=\{123456, 213456, 231456, 324561\}$.

\begin{lem}\label{lem3}
Let $w$, $w'$ be non-avoiding permutations in $S_n$. If $w$ and $w'$ have property $A_k$ for the same $k$ and each has $1$ in the same position parity, then $w \equiv w'$ under the $\{123, 132, 321\}$-equivalence.
\end{lem}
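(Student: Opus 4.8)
The plan is to prove the stronger statement that every non-avoiding permutation $w\in S_n$ is $K$-equivalent to a single canonical representative depending only on the pair of invariants preserved by Lemma~\ref{lemAkinv}: the largest $k$ for which $w$ has property $A_k$, together with the position parity $p$ of the letter $1$. I would take these representatives to be the elements of $B_n$ (and, in one parity branch, of $C_n$). Once this reduction is in hand, Lemma~\ref{lem3} follows immediately: one checks that the $n-2$ elements of $B_n$ realize pairwise distinct invariant pairs $(k,p)$ --- indeed $b_j\,(j+1)\cdots n$ has $1$ in position $j-2$ and largest valid index $\lfloor (j-1)/2\rfloor$ for property $A_k$, and these pairs are distinct as $j$ runs from $3$ to $n$ --- so if $w$ and $w'$ share the same $(k,p)$, they reduce to the same representative and are therefore equivalent to each other.

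The engine of the reduction is one robust move. If the letter $1$ sits in position $i$ with $i\le n-2$, then the factor $1\,w_{i+1}\,w_{i+2}$ forms either $123$ (when $w_{i+1}<w_{i+2}$) or $132$ (when $w_{i+1}>w_{i+2}$); in both cases its order permutation lies in the nontrivial part $\{123,132,321\}$, so a single transformation to $321$ slides the letter $1$ two positions to the right. Thus, unconditionally, I can push $1$ rightward in steps of two until it reaches the last position of its own parity class --- consistent with the parity of $1$ being invariant. By contrast, driving the largest letter $n$ rightward is not always possible in a single step, and this asymmetry is exactly what forces the two representative families apart.

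With this tool I would run an induction on $n$, the base case $n=3$ being the single class formed by $123,132,321$ and represented by $123\in B_3$. In the inductive step I peel off one end. When $n$ can be driven to the final position, I write $w\equiv w'n$, delete the terminal $n$ (which changes neither $p$ nor the largest valid $k$, since $n$ never participates in a chain witnessing $A_k$), and apply the inductive hypothesis to $w'\in S_{n-1}$, landing in $B_n$; transformations on $w'$ lift verbatim to $w'n$. When instead $n$ is blocked at position $n-1$ by the letter $1$ occupying the last position --- the situation modelled by $c_n$, whose final letter is $1$ --- I peel the terminal $1$ and recurse after standardizing, landing in $C_n$. In either branch, if the peeled permutation happens to avoid $K$, I argue directly that $w$ is equivalent to the appropriate special representative $b_n$ or $c_n$, which carries precisely the one invariant pair that is new at size $n$.

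The main obstacle is the positioning step together with the bookkeeping for the $1$-peeling branch. For the former I must show that $n$ never gets permanently stuck: the only frozen local configurations are those whose order permutation lies in $\{213,231,312\}$, and I would show that each such block around $n$ can first be dissolved by rearranging its neighbours --- this is where both the non-avoider hypothesis and the precise list $\{123,132,321\}$ of usable patterns are essential, and where I expect to invoke the auxiliary results of Subsection~\ref{secstooge}. For the latter, deleting a terminal $1$ shifts the largest valid index from $k$ to $k-1$ and reinterprets the parity invariant relative to the new smallest letter, so I must track these changes carefully to ensure the recursion lands on the representative matching $w$'s original $(k,p)$. I expect the bulk of the effort, and essentially all of the delicacy, to reside in these two points.
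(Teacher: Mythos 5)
The half of your argument that separates the representatives is fine and matches the paper: the $n-2$ elements of $B_n$ do carry pairwise distinct pairs $(k,p)$, and Lemma~\ref{lemAkinv} preserves these, so everything reduces to showing that each non-avoider reaches its representative. But that reduction is the entire content of the lemma, and the mechanism you propose for it does not work as stated. Your peel-one-letter induction applies the inductive hypothesis to the first $n-1$ letters of $w'n$, yet the lemma's hypothesis requires those letters to be non-avoiding, and they need not be: take $w=435126\in S_6$, whose only hit is the terminal $126$; its first five letters $43512$ avoid $\{123,132,321\}$, so no inductive hypothesis is available. Your fallback --- that in this situation $w$ is equivalent to the one special representative carrying ``the invariant pair that is new at size $n$'' --- is false here: $435126$ has $k=1$ and the letter $1$ in even position, so it must be shown equivalent to $213456$, not to $b_6=324156$. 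Since there are $(n-2)!!+(n-3)!!$ avoiders in $S_{n-1}$, this is not a fringe case. Likewise your dichotomy (``$n$ reaches the last position unless blocked by a terminal $1$'') is asserted, not proved: $n$ moves right only when the two letters following it descend, and dissolving a $312$-block to the right of $n$ means manufacturing an inversion there using hits elsewhere --- this is precisely the hard combinatorial step, and the proposal supplies nothing for it beyond the intention to ``dissolve'' such blocks.

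The paper's proof is engineered to avoid exactly these two failure modes. It inducts with base case $n\le 5$ and works with \emph{middled} permutations: Proposition~\ref{propreachmiddled} shows every non-avoider is equivalent to a middled one, and a middled permutation has \emph{both} its first $n-1$ and its last $n-1$ letters non-avoiding, so the inductive hypothesis --- packaged as $L_{n-1}=B_{n-1}$ and $R_{n-1}=C_{n-1}$ --- is always applicable to both windows. Theorem~\ref{thmstoogesort} then alternates the two windows, stooge-sort fashion, with termination guaranteed by lexicographic decrease rather than by any explicit unblocking of $n$, landing every non-avoider in $C_n$. You gesture at Subsection~\ref{secstooge} but never actually set this up; to repair your proof you should either replace the peeling step by this two-window argument, or else supply genuine proofs of the unblocking claim and of the case where a peeled window is an avoider (including the correct identification of which representative is reached, which your sketch gets wrong).
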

\begin{proof}
We will prove this inductively, with an inductive base case of $n\le 5$. Let $n>5$. Noting the inductive hypothesis, Lemma \ref{lemAkinv} implies that $L_{n-1}=B_{n-1}$ and $R_{n-1}=C_{n-1}$ (as $L_{n-1}$ and $R_{n-1}$ are defined in Subsection \ref{secstooge}). Since $n>5$, by Proposition \ref{propreachmiddled}, every non-avoiding permutation in $S_n$ is equivalent to some middled permutation. By Theorem \ref{thmstoogesort}, this implies that every permutation in $S_n$ is equivalent to some element of $C_n$. Lemma \ref{lemAkinv} leads us to conclude that thus, the $k$ for which $A_k$ holds along with the position parity of the letter $1$ form a complete invariant for non-avoiding permutations in $S_n$ under the $\{123, 132, 321\}$-equivalence.
\end{proof}

\begin{rem}
Noting Lemmas \ref{lemAkinv} and \ref{lem3}, we see that every non-trivial class in $S_n$ under the equivalence has exactly one representative in $B_n$. When given a non-avoider $w \in S_n$, the $B_n$-representative of its class can be easily found arithmetically by checking which of the relations $A_k$ are satisfied for $w$ and checking the position parity of the letter $1$ in $w$.
\end{rem}

\begin{prop}\label{prop1}
Let $n\ge 5$. In $S_n$, there are $n-2$ non-trivial classes under the $\{123, 132, 321\}$-equivalence.
\end{prop}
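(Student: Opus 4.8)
The plan is to read off the count directly from the correspondence between non-trivial classes and $B_n$ supplied by the preceding Remark. By that Remark (which combines the invariance in Lemma~\ref{lemAkinv} with the completeness of the invariant in Lemma~\ref{lem3}), every non-trivial class of $S_n$ contains exactly one element of $B_n$. Assigning to each non-trivial class its unique element of $B_n$ therefore gives a map from the set of non-trivial classes into $B_n$, and this map is injective because distinct classes are disjoint. Since $|B_n| = n-2$, the whole proposition will follow as soon as this map is shown to be onto.

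To see that the map is onto, note that a permutation lies in a non-trivial class exactly when it is a non-avoider, so an element $w \in B_n$ is in the image precisely when $w$ is a non-avoider (in which case $w$ is forced to be the $B_n$-representative of its own class). Thus the only thing left to prove is that \emph{every} element of $B_n$ is a non-avoider. I would do this by induction on $n$ along the recursive definition of $B_n$. For an element of the form $wn$ with $w \in B_{n-1}$, the inductive hypothesis furnishes a hit inside $w$ --- a factor of length three forming $123$, $132$, or $321$ --- and since $w$ is a prefix of $wn$ this factor persists, so $wn$ is again a non-avoider.

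The one remaining element at each stage is $b_n$, which I would treat directly from its cycle description. Every cycle occurring in $b_n$ moves only indices $\le n-2$, so as a permutation $b_n$ fixes both $n-1$ and $n$, placing the letters $n-1$ and $n$ in positions $n-1$ and $n$; meanwhile, composing the cycles right-to-left, the inner cycles fix $n-2$ and the outer cycle $(1,2,\ldots,n-2)$ sends $n-2 \mapsto 1$, so position $n-2$ carries the letter $1$. Hence the last three letters of $b_n$ are $1,\,n-1,\,n$, a factor forming the pattern $123$, and $b_n$ is a non-avoider. This closes the induction, the map becomes a bijection, and there are exactly $|B_n| = n-2$ non-trivial classes.

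The counting step is essentially free once the bijection is established, so the only real point of care is the claim about $b_n$: one must track the composition order of the cycles correctly to confirm that the tail of $b_n$ is $1,\,n-1,\,n$ (a check one can cross-validate against $b_4=2134$, $b_5=23145$, $b_6=324156$), and one should keep the small cases --- covered by the base case $n\le 5$ of Lemma~\ref{lem3} --- in mind so that the appeal to the Remark is legitimate. Beyond that, all the substantive work, namely the invariance of property $A_k$ and of the position parity of the letter $1$ together with their completeness as an invariant, has already been carried out in Lemmas~\ref{lemAkinv} and~\ref{lem3}.
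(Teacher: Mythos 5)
Your proposal is correct and follows essentially the same route as the paper, which simply cites Lemmas~\ref{lemAkinv} and~\ref{lem3} (via the Remark that $B_n$ is a system of representatives for the non-trivial classes) and reads off $|B_n|=n-2$. The only difference is that you explicitly verify the surjectivity step --- that every element of $B_n$ is a non-avoider, in particular that $b_n$ ends in the factor $1,(n-1),n$ --- a detail the paper leaves implicit, and your verification of it is accurate.
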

\begin{proof}
This follows from Lemma \ref{lemAkinv} and Lemma \ref{lem3}.
\end{proof}

\begin{prop}\label{firstprop}
Let $n\ge 5$. There are $(n-1)!!+(n-2)!!+n-2$ classes in $S_n$ under the $\{123, 132, 321\}$-equivalence.
\end{prop}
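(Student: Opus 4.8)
The plan is to separate the classes into trivial and non-trivial ones. Every trivial class is a single permutation avoiding $K = \{123, 132, 321\}$, and every non-trivial class consists of non-avoiders, so the number of classes in $S_n$ equals the number of $K$-avoiders plus the number of non-trivial classes. By Proposition~\ref{prop1} the number of non-trivial classes is $n-2$, so it suffices to show that exactly $(n-1)!! + (n-2)!!$ permutations in $S_n$ avoid $K$. Since avoidance here is avoidance of consecutive (factor) patterns, a permutation avoids $K$ precisely when every one of its length-$3$ factors forms one of $213$, $231$, $312$.

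Next I would convert this local condition into a global structural description. Forbidding the factors $123$ and $321$ forbids any three consecutive letters from being monotone, which says exactly that the permutation is alternating; thus an avoider is either up-down ($\pi_1 < \pi_2 > \pi_3 < \cdots$) or down-up ($\pi_1 > \pi_2 < \pi_3 > \cdots$). Among alternating permutations the only surviving constraint is that no factor forms $132$. A length-$3$ factor centered at a peak forms either $231$ (left shoulder larger) or $132$ (right shoulder larger), so forbidding $132$ says precisely that at every interior peak the left neighbor exceeds the right neighbor. Chaining this condition across consecutive peaks shows that an up-down avoider is exactly an up-down alternating permutation whose valley values (the entries in odd positions) strictly decrease, and a down-up avoider is exactly a down-up alternating permutation whose valley values (the entries in even positions) strictly decrease. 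Let $U_n$ and $D_n$ count the up-down and down-up avoiders, so that the avoider count is $U_n+D_n$.

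Finally I would enumerate $U_n$ and $D_n$ by a deletion argument keyed on the smallest letter $1$. Being a global minimum, $1$ is a valley, and because the valley values decrease it must occupy the last valley position; depending on the parity of $n$ this is either the final or the penultimate position. In the first case, deleting $1$ and standardizing is a bijection onto the avoiders of length $n-1$ of the same type; in the second case the word ends in $(\text{valley }1)(\text{peak }c)$, and deleting these two letters is, for each of the $n-1$ admissible values $c$, a bijection onto the avoiders of length $n-2$ of the same type. This gives $U_n = U_{n-1}$, $D_n = (n-1)D_{n-2}$ for odd $n$ and $U_n = (n-1)U_{n-2}$, $D_n = D_{n-1}$ for even $n$, which solve to $\{U_n, D_n\} = \{(n-1)!!, (n-2)!!\}$ in either parity, so $U_n + D_n = (n-1)!! + (n-2)!!$; adding the $n-2$ non-trivial classes completes the proof. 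The main thing to get right is the verification that these deletions and their inverses genuinely preserve the avoider property---in particular, checking the two or three length-$3$ factors newly created at the end of the word when $1$ or $(1,c)$ is removed or reinserted---together with the parity bookkeeping that decides whether the reduction drops one letter or two; everything past that is a routine induction. (As a sanity check, pairing each valley of an even-length up-down avoider with the following peak gives a bijection with the perfect matchings of $\{1,2,\ldots,n\}$, which explains the appearance of $(n-1)!!$.)
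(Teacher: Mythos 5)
Your proposal is correct, and it splits the count exactly as the paper does: $n-2$ non-trivial classes from Proposition~\ref{prop1}, plus the trivial classes, which are precisely the permutations avoiding all of $123$, $132$, $321$ as consecutive patterns. The difference is in how the trivial classes are counted. The paper simply cites Theorem~3 of Kitaev \cite{Ki} for the value $(n-1)!!+(n-2)!!$ and stops; you instead derive this count from scratch. Your derivation is sound: forbidding consecutive $123$ and $321$ forces the permutation to be alternating, every length-$3$ factor centered at an interior valley is automatically an allowed $213$ or $312$, and the factors centered at interior peaks are $231$ or $132$ according to whether the left shoulder exceeds the right, so forbidding $132$ chains into the condition that the valley values strictly decrease. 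The location of the letter $1$ (last valley position, hence final or penultimate slot depending on parity) then yields the recursions $U_n=U_{n-1}$ or $(n-1)U_{n-2}$ and likewise for $D_n$, and the base cases check out ($U_3+D_3=3$, $U_4+D_4=5$, $U_5+D_5=11$, matching $(n-1)!!+(n-2)!!$). What your route buys is a self-contained proof and an explicit structural description of the avoiders (alternating permutations with decreasing valleys), which makes the appearance of double factorials transparent; what the paper's route buys is brevity, at the cost of outsourcing the enumeration to the literature. Either way the final assembly with Proposition~\ref{prop1} is identical.
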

\begin{proof}
By Proposition \ref{prop1}, there are $n-2$ non-trivial classes. By Theorem 3 of \cite{Ki}, there are $(n-1)!!+(n-2)!!$ trivial classes. 
\end{proof}

\begin{rem}
Proposition \ref{firstprop} is of particular interest because \cite{LPRW10} collected computational data in an attempt to enumerate the $\{123, 132, 321\}$-equivalence, but was unable to find a formula for the number of classes in $S_n$. Proposition \ref{firstprop} provides this formula.
\end{rem}

\subsection{\mathheader{\{123, 132, 231\}}-Equivalence}

\begin{defn}
Let $w$ be a word with no repeating letters. A \emph{left to right minimum} of $w$ is a letter which is smaller than each letter to its left in $w$.
\end{defn}

\begin{defn}
Let $w$ be a word with no repeating letters. If $x$ is a letter of $w$, then the \emph{$x$-min} of $w$ is the first letter to $x$'s right that is a left to right minimum. The $x$-min needs not exist for all $x$.
\end{defn}

\begin{defn}
We call a letter $x$ of a word $w$ (with no repeating letters) \emph{odd-tailed} if $x$ is a left to right minimum and the $x$-min exists and is of different position parity than $x$.
\end{defn}

\begin{lem} \label{tailedlem}
Let $x$ be an odd-tailed letter of a permutation $z \in S_n$. Then, $x$ is odd-tailed in any permutation equivalent to $z$ under the $\{123, 132, 231\}$-equivalence.
\end{lem}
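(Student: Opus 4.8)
The plan is to reduce to a single transformation and then run a short case analysis governed by one geometric observation: a transformation rearranges three letters occupying consecutive positions $i, i+1, i+2$, and positions $i$ and $i+2$ have the same parity.

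First I would note that the $\{123,132,231\}$-equivalence is generated by single transformations, each of which is symmetric (reversible), so it suffices to prove that if $z'$ results from $z$ by one transformation, then $x$ is odd-tailed in $z$ if and only if $x$ is odd-tailed in $z'$ (one direction would follow from the other by reversibility, but I would treat them together). Fix such a transformation acting on positions $i, i+1, i+2$, whose three letters have values $\alpha < \beta < \gamma$, and let $m$ be the smallest letter lying strictly before position $i$ (with $m = +\infty$ if there is none). Every letter outside the window $\{i, i+1, i+2\}$ keeps its position (hence its position parity) and its value, and, since the set of values to its left is unchanged (the window always holds $\{\alpha, \beta, \gamma\}$), it also keeps its status as a left to right minimum.

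Next I would record how left to right minima behave inside the window, noting that this depends only on comparing $m$ with $\alpha$ and $\beta$, not on which of the three patterns is present: $\gamma$ is never a left to right minimum, since within the window it is always preceded by a smaller letter; $\alpha$ is a left to right minimum exactly when $\alpha < m$, and in every pattern $\alpha$ occupies position $i$ or $i+2$, so its position parity is always that of $i$; and whenever the window contains any left to right minimum, its leftmost one occupies position $i$ or $i+2$ as well, again of the parity of $i$. The coincidence that $i$ and $i+2$ share a parity makes all of these parities pattern-independent.

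Finally I would split on the location of the tracked letter $x$. If $x$ lies to the right of the window, then $x$ and every left to right minimum to its right are untouched, so the claim is immediate. If $x$ lies to the left of the window, then its $x$-min is either a left to right minimum strictly between $x$ and the window (untouched), or, failing that, the leftmost minimum of the window (present in all patterns precisely when $\alpha < m$, and then of parity $\equiv i$), or the first minimum after the window (untouched); in each case both the existence and the position parity of the $x$-min are preserved, as is $x$ itself. The delicate case, which I expect to be the main obstacle, is $x \in \{\alpha, \beta, \gamma\}$: here $x = \gamma$ is never a left to right minimum and hence never odd-tailed; $x = \alpha$ is a left to right minimum iff $\alpha < m$ (of parity $\equiv i$) and its $x$-min is always the first minimum after the window, so odd-tailedness is preserved; and $x = \beta$ is genuinely subtle, since $\beta$ becomes a new left to right minimum in the pattern $231$ that it is not in $123$ or $132$. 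The resolution is again parity: in $231$, when $\beta$ is a left to right minimum its $x$-min is $\alpha$, sitting two positions later in position $i+2$, of the same parity as $\beta$ in position $i$, so $\beta$ fails to be odd-tailed there as well, and consistency is maintained. Assembling these cases gives the lemma.
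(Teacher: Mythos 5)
Your proof is correct and takes essentially the same route as the paper's: both reduce to a single transformation and exploit the fact that a left to right minimum inside a hit can only occupy the first or third position of the hit, which share a parity, so that the position parities of $x$ and the $x$-min cannot change. Your reorganization --- characterizing the window's left to right minima pattern-independently and proving the full biconditional by cases on where $x$ sits relative to the window --- is a clean repackaging of the paper's case analysis on where $x$ and the $x$-min sit within the hit (including the $x=\beta$ subcase, which the paper instead rules out using the odd-tailed hypothesis directly).
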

\begin{proof}
Let $x$ be an odd-tailed letter of the permutation $z$ in $S_n$. Let $y$ be the $x$-min. Notice that by their definitions, $x$ is less than everything to its left and $y$ is also less than everything to its left (including $x$). As a consequence, $x$ and $y$ cannot be adjacent letters in any $123$ pattern, $132$ pattern, or $231$ pattern, and because $x$ and $y$ have different position parity, they can not be in the same hit.

In a hit of the form $123$, the only possible position for $x$ is the first (since $x$ is a left to right minimum). The same holds for a hit of the form $132$. In a hit of the form $231$, the first and the third positions are possible candidates for containing left to right minima, but $x$ cannot be in the first (because then, the $x$-min would be in the third place, but that contradicts the fact that the $x$-min is of different position parity than $x$). Hence, if $x$ lies in a hit, then $x$ is acting as the $1$ in the hit. So, after rearrangement, $x$ is still a left to right minimum and maintains its position parity. 

What remains to be shown is that the position parity of the $x$-min is unchanged after a transformation. This is trivial if $y$ is not in the hit being transformed. If $y$ is in the hit, then there are three subcases. If $y$ is the final letter in the hit then the hit must be of the form $231$, and after any rearrangement, $y$ is still the $x$-min and has the same position parity. We know that $y$ can not be the second letter in a hit because a left to right minimum can never be the second letter in a hit. The final possibility is that $y$ is the first letter in the hit. Then, any rearrangement of the hit is either $123 \leftrightarrow 132$, in which case $y$ remains the $x$-min with the same position, or is one of the rearrangements $231 \leftrightarrow 123$ and $231 \leftrightarrow 132$. In the latter scenario, the new $x$-min is either the first or the final letter in the hit after the rearrangement and thus has the same position parity as $y$ does in $z$. 
\end{proof}

\begin{defn}
A permutation is a \emph{V-permutation} if its letters decrease until the letter $1$ and then increase until the end of the permutation.
\end{defn}
 For example, $531246$ is a V-permutation in $S_6$. Note that there are $2^{n-1}$ V-permuta\-tions in $S_n$ because for each letter other than $1$, we can only choose if it is in the decreasing part or the increasing part of the permutation.

\begin{prop}
Let $n\geq 3$. In $S_n$, $2^{n-1}$ classes are created under the $\{123, 132, 231\}$-equivalence.
\end{prop}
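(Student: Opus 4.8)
The plan is to show that the V-permutations form a complete and irredundant system of class representatives: every permutation is equivalent to at least one V-permutation, and no two distinct V-permutations are equivalent. Since there are exactly $2^{n-1}$ V-permutations in $S_n$, the first statement bounds the number of classes above by $2^{n-1}$ and the second bounds it below by $2^{n-1}$, giving the result. I would treat the two directions separately, by induction for the existence direction and via the odd-tailed invariant for the inequivalence direction.

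For existence (every $w\in S_n$ is equivalent to some V-permutation), I would induct on $n$, checking small cases by hand (e.g.\ $n=3$ gives the four classes $\{123,132,231\},\{213\},\{312\},\{321\}$, and $123,213,312,321$ are the four V-permutations), and split on the position of the largest letter $n$. If $n$ is the first letter of $w$, then every length-three factor containing it has $n$ in its first position, so such a factor forms $312$ or $321$; as neither lies in the nontrivial part $\{123,132,231\}$, no transformation can involve the first position. Thus $w=nw'$ with $w'\in S_{n-1}$, every transformation of $w$ is a transformation of $w'$, and by the inductive hypothesis $w'\equiv V'$ for some V-permutation $V'$; then $nV'$ is again a V-permutation (with $n$ on the decreasing side), so $w\equiv nV'$. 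If instead $n$ is not first, then whenever $n$ sits at a position $i$ with $2\le i\le n-1$ its surrounding factor has $n$, the largest of the three, in the middle, so the factor forms $132$ or $231$; applying the transformation to $123$ slides $n$ one step to the right. Iterating moves $n$ to the final position, after which the first $n-1$ letters form a permutation of $\{1,\dots,n-1\}$ that, by the inductive hypothesis, can be reduced to a V-permutation using transformations that never touch the last position; appending $n$ keeps it a V-permutation. The main obstacle is precisely this reduction step: one must verify that the ``push $n$ rightward'' move is always available (it is, because a factor with its largest entry in the middle is automatically $132$ or $231$) and that the front-frozen case genuinely decouples the first letter.

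For inequivalence, I would use that odd-tailedness is preserved by the equivalence (Lemma~\ref{tailedlem}); since equivalence is symmetric, the set of odd-tailed letters is therefore a class invariant. The key computation is that the odd-tailed set of a V-permutation equals its decreasing part: in a V-permutation the left-to-right minima are exactly the entries of the decreasing part together with $1$, and they occupy consecutive initial positions, so every decreasing-part entry has its successor left-to-right minimum one position away (hence of opposite parity) and is odd-tailed, whereas $1$ has no later left-to-right minimum and is not. Because distinct V-permutations have distinct decreasing parts, they have distinct odd-tailed sets and so are pairwise inequivalent.

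Combining the two directions, each class contains exactly one of the $2^{n-1}$ V-permutations, which gives the count. I expect all of the delicacy to lie in the existence direction — ensuring the inductive reduction is valid in both the frozen-front and push-to-the-back cases and that the base cases match the formula — while the inequivalence direction should be a short consequence of Lemma~\ref{tailedlem} once the odd-tailed set of a V-permutation is identified with its decreasing part.
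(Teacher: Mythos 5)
Your proposal is correct and follows essentially the same route as the paper: slide $n$ to the final position (or freeze it if it is already first), induct on the remaining $n-1$ letters to reach a V-permutation, and then separate the $2^{n-1}$ V-permutations using the odd-tailed invariant of Lemma~\ref{tailedlem}, identifying the odd-tailed set of a V-permutation with the letters preceding $1$. No substantive differences from the paper's argument.
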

\begin{proof}
We will first prove that permutations in $S_n$ are each equivalent to some V-permut\-ation by inducting on $n$ with a trivial base case of $n=3$. Take $w \in S_n$. If $n$ is in the first position, leave it there. Otherwise, slide $n$ to the final position through repeated applications of $132 \rightarrow 123$ and $231 \rightarrow 123$. Applying the inductive hypothesis to the other $n-1$ letters, we reach a V-permutation. Since there are $2^{n-1}$ V-permutations, there are at most $2^{n-1}$ classes. 

Note that the set of odd-tailed letters in a V-permutation is exactly the set of letters to the left of the letter $1$. (They are odd-tailed because for a letter $x$ to the left of $1$, $x$ is a left to right minimum and the $x$-min is the letter directly to $x$'s right and thus is of different position parity than $x$.) This set is distinct for each V-permutation, so by Lemma \ref{tailedlem}, none of the V-permutations are equivalent to each other. Hence, there are $2^{n-1}$ classes, each determined by the set of odd-tailed letters in each of its permutations. 
\end{proof}

\begin{rem}\label{remshortproof}
For $n>5$, we can actually prove the above proposition without using any knowledge about invariants. In fact, the proposition simply falls from Theorem \ref{thmavoiders} (stated in Subsection \ref{Secconnect}). However, this proof provides far less insight into the workings of the equivalence than does the above proof.
\end{rem}

\subsection{\mathheader{ \{123, 132, 312\} }-Equivalence} 

In this subsection, let $f(n)=$ the number of classes created in $S_n$ under the $\{123, 132, 312\} $-equivalence.

\begin{lem}\label{lem4}
 Every permutation not ending with $1$ is equivalent to a permutation of the form $\ldots 1j$ for some $j$ under the $\{123, 132, 312\} $-equivalence.
\end{lem}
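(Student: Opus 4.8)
The plan is to drive the letter $1$ rightward one position at a time, via single transformations, until exactly one letter remains to its right. The key observation is that $1$ is the global minimum, so any factor consisting of $1$ together with the two letters immediately following it necessarily forms either $123$ or $132$: the letter $1$ must be the smallest of the three and occupies the first slot, leaving only the two possible orders of the remaining pair. Both $123$ and $132$ lie in the nontrivial part $\{123, 132, 312\}$, and each may be transformed into the $312$-arrangement of the same three values. Since the pattern $312$ places the smallest of its three letters in the \emph{middle} slot, this single transformation slides $1$ one step to the right.

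Concretely, I would argue as follows. Suppose $1$ is not among the last two letters, and write the permutation as $a\, 1\, b\, c\, d$, where $a$ and $d$ are (possibly empty) words and $b, c$ are the two letters directly following $1$. If $b < c$, the factor $1bc$ forms $123$, and the transformation $123 \to 312$ replaces it by $c\,1\,b$, yielding $a\, c\, 1\, b\, d$. If $b > c$, the factor $1bc$ forms $132$, and the transformation $132 \to 312$ replaces it by $b\,1\,c$, yielding $a\, b\, 1\, c\, d$. In either case $1$ has advanced exactly one position, so the number of letters to its right has strictly decreased by one.

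I would then close the argument by iteration: as long as $1$ has at least two letters to its right (equivalently, as long as $1$ is not yet in the penultimate position), the previous step applies and strictly decreases the count of letters to the right of $1$. Hence after finitely many transformations $1$ arrives in position $n-1$, and the resulting permutation has the form $\ldots 1 j$. This can be phrased as a downward induction on the position of $1$ (or an induction on the number of letters to its right), with base case ``$1$ already sits in position $n-1$''.

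The one point that needs care --- and the conceptual heart of the lemma --- is whether a single transformation can really push $1$ past a neighboring letter. Because none of the allowed patterns $123$, $132$, $312$ places its minimum in the last slot, the letter $1$ can never be moved into the third position of its own three-letter factor, so one might fear that $1$ is stuck. The resolution is that we never attempt this: advancing $1$ from the first to the middle slot of a factor already moves it one position rightward, and $312$ is precisely the allowed pattern realizing that move. Once this is recognized, the remainder is a routine termination argument, and I do not expect any further difficulty.
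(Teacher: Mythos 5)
Your proposal is correct and follows exactly the paper's approach: the paper's proof is the one-line statement that such a permutation ``is reached through repeated applications of $123 \rightarrow 312$ and $132 \rightarrow 312$ using the actual letter $1$,'' and your argument is simply a careful expansion of that same sliding procedure with an explicit termination argument.
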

\begin{proof} Such a permutation is reached through repeated applications of $123 \rightarrow 312$ and $132 \rightarrow 312$ using the actual letter $1$.
\end{proof}

\begin{lem}\label{lem5} Permutations of the form $\ldots12$ are equivalent under the $\{123, 132, 312\} $-equivalence.
\end{lem}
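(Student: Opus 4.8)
The plan is to first recast the allowed moves in a more transparent form and then reduce every permutation of the shape $\ldots 1\,2$ to one fixed canonical representative, say $\rho_n = 3\,4\,\cdots\,n\,1\,2$; transitivity then yields the lemma. The recasting rests on the observation that $\{123,132,312\}$ is exactly the set of length-three patterns whose two smaller entries occur in increasing order (deleting the largest entry from any of $123,132,312$ leaves an ascending pair, while deleting it from $213,231,321$ leaves a descending pair). Consequently a $\{123,132,312\}$-transformation may be described uniformly: in any three adjacent letters whose two smaller entries are already in increasing order, the largest of the three may be slid to any of the three slots, the other two keeping their (increasing) relative order. Two corollaries I would record are that the relative order of the letters $1$ and $2$ is an invariant (they can never be reordered, since in any window containing both they are the two smaller entries), and that the trailing block $x\,1\,2$ of such a permutation, with $x\ge 3$, is always a $312$, so its leading letter $x$ can be slid rightward over $1\,2$ or one step left over $1$ at will.

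Second, I would set up an induction on $n$ (base cases $n\le 4$ by direct inspection). For the step, the goal is to move the largest letter $n$ to the front: once $n$ sits in position $1$ it can be frozen, and the moves that avoid position $1$ act on positions $2,\dots,n$ exactly as the $\{123,132,312\}$-equivalence on a word of length $n-1$, to which the inductive hypothesis applies. Lemma \ref{lem4} supplies the complementary engine, letting me march the letter $1$ toward the tail when convenient, and the conveyor move $\ldots x\,1\,2 \leftrightarrow \ldots 1\,2\,x$ from the previous paragraph lets me cyclically feed body letters past the block $1\,2$, so that after fixing $n$ I can restore the tail to $1\,2$ and rerun the argument on the shorter word.

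The hard part will be the relocation of $n$ to the front. This is not a monotone march: if $n$'s left neighbor exceeds its right neighbor, the three letters around $n$ form a $231$, which is frozen, so $n$ cannot step left directly. To unfreeze it I plan a secondary argument that first rearranges the larger letters lying to the right of $n$ (using the hop description, together with the conveyor trick to borrow the slack provided by $1\,2$) until a letter larger than $n$'s left neighbor is presented immediately to its right, turning the local $231$ into a $132$ over which $n$ can hop. The delicate point I must handle carefully is that after $n$ has been relocated the residual word on positions $2,\dots,n$ need not end in $1\,2$; I will therefore either strengthen the inductive statement so that it applies to these residual configurations, or invoke it only after using the conveyor move to reinstate a $1\,2$ tail, and I expect this bookkeeping, rather than any single transformation, to be the main source of technical difficulty.
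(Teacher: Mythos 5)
Your reduction of the allowed moves (the three patterns are exactly those whose two smaller entries ascend, so each transformation slides the largest letter of a window to any slot while the other two keep their ascending order) is correct and a nice way to see the invariance of the relative order of $1$ and $2$. However, the crux of your plan --- that every permutation of the form $\ldots12$ is equivalent to one beginning with $n$ --- is left essentially unproved, and it is not a routine step: given the rest of your outline it is about as strong as the lemma itself. Your unfreezing device (``present a letter larger than $n$'s left neighbor immediately to its right'') breaks down in concrete configurations. Take $(n-1)\,n\,b_1\,b_2\ldots12$ with $b_1>b_2$: the window $((n-1),n,b_1)$ is a frozen $231$, the window $(n,b_1,b_2)$ is a frozen $321$, and no letter larger than $n$'s left neighbor $n-1$ exists, so $n$ can move neither left nor right until the material to its right has first been reorganized. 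Already for $45312\in S_5$ one must first shuffle $312$ and carry $5$ \emph{rightward} through several intermediate permutations (which no longer end in $12$) before it can ever reach the front. So the relocation is not a local repair but a global rearrangement whose existence needs its own inductive argument; as written, the ``secondary argument'' plus the deferred ``bookkeeping'' is precisely where the proof is missing.

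For comparison, the paper avoids tracking $n$ altogether. Its induction applies the lemma for $S_{n-1}$ to the rightmost $n-1$ letters (sorting everything above $2$ into increasing order before the tail $12$), pivots the final three letters via $312\to123$ so that the block $12$ moves off the end, applies the inductive hypothesis to the \emph{leftmost} $n-1$ letters (which now end in $12$) to plant the letter $3$ in front, pivots back with $123\to312$, and finishes with one more application to the rightmost $n-1$ letters. This stooge-sort-style double induction needs only two explicit transformations and no analysis of frozen windows. If you want to salvage your route, you should either prove the relocation claim by a separate induction (handling the $(n-1)n\ldots$ blockage explicitly), or switch to the paper's two-sided use of the inductive hypothesis, which renders the relocation unnecessary.
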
 
\begin{proof}
Let $w=345 \ldots 12$ where the letters not shown are in increasing order. We will show that $w$ is equivalent to an arbitrary permutation $x$ of the form $\ldots 12$. The base cases of $S_3$ and $S_4$ for the following induction are easy to show computationally. If the lemma holds for $S_{n-1}$, then from $x$, we can rearrange the right-most $n-1$ letters however we want, while not moving the final two. We arrange them to be in increasing order, arriving at $x'$. Then, we rearrange the final three letters of $x'$ with $312\rightarrow 123$, arriving at $x''$ where $1$ and $2$ are the actual letters $1$ and $2$ and $3$ is \textbf{not} the actual letter $3$ (because $n>4$). Performing the inductive hypothesis on the left-most $n-1$ letters of $x''$, the actual letter $3$ is placed in the left-most position and we get $w'''$ (requires $n\ge 5$). Rearranging the final $3$ letters of $w'''$, $123 \rightarrow 312$, we reach $y$. Finally, applying the inductive hypothesis to the right-most $n-1$ letters of $y$, we reach $w$.
\end{proof}

\begin{cor}\label{cor5a} Permutations of the form $1\ldots$ are equivalent to each other under the $\{123, 132, 312\} $-equivalence.
\end{cor}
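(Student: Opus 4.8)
The plan is to reduce everything to Lemma \ref{lem5}. I will show that every permutation beginning with the letter $1$ is equivalent to some permutation of the form $\ldots12$; since Lemma \ref{lem5} guarantees that all permutations of the form $\ldots12$ form a single class, all permutations of the form $1\ldots$ then lie in that one class and are mutually equivalent.

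The mechanism is to push the letter $1$ rightward. Whenever $1$ occupies a position $i\le n-2$, the three consecutive letters $w_i w_{i+1} w_{i+2} = 1\,p\,q$ form either a $123$ or a $132$ pattern (because $1$ is the smallest letter present), so one of the transformations $123\to 312$ or $132\to 312$ is available and replaces this factor by $\max(p,q)\,1\,\min(p,q)$, moving $1$ exactly one step to the right. Iterating this, I can bring $1$ all the way to position $n-1$.

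The key point, and the part I expect to require the most care, is that the trailing letter is then forced to be $2$, so that the resulting permutation really has the form $\ldots12$. To see this I would track the joint behavior of $1$ and $2$. First, $1$ never crosses $2$: in each push $1$ lands in the middle slot of $\max(p,q)\,1\,\min(p,q)$, and since $p,q\ge 2$ we have $2\ne\max(p,q)$, so $2$ (when it is involved in a push) can only be the minimum deposited to the right of $1$; hence $1$ stays to the left of $2$ throughout. Second, letting $d$ denote the number of positions by which $2$ lies to the right of $1$, a short case check shows each push either keeps $d=1$ (when $2$ is already immediately after $1$) or decreases $d$ by $1$; thus $d$ is nonincreasing and eventually equals $1$. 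Consequently, once $1$ has been pushed to position $n-1$, the letter $2$ is forced into position $n$, and the permutation ends in $12$.

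The main obstacle is precisely this bookkeeping on the relative positions of $1$ and $2$; once it is established, Lemma \ref{lem5} completes the proof, since every permutation of the form $1\ldots$ has now been shown equivalent to a permutation of the form $\ldots12$. As a sanity check, for $n=3$ the single available push already sends any permutation of the form $1\ldots$ to $312$, in agreement with the general claim.
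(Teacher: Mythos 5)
Your proof is correct and follows essentially the same route as the paper: push $1$ rightward to position $n-1$ via $123\to 312$ and $132\to 312$ (this is exactly Lemma \ref{lem4}), argue the trailing letter must be $2$, and invoke Lemma \ref{lem5}. The only cosmetic difference is that the paper dispatches the "trailing letter is $2$" step by citing the invariant that the smallest letter to the right of $1$ never changes, whereas you track the relative positions of $1$ and $2$ by hand; both are valid.
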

\begin{proof}
By Lemma \ref{lem4}, every permutation of the form $1\ldots$ is equivalent to a permutation of the form $\ldots 1j$ for some $j$. This $j$ must be $2$ (since the smallest letter to the right of $1$ never changes, and that smallest letter is $2$ in the original permutation). Thus every permutation of the form $1\ldots$ is equivalent to a permutation of the form $\ldots 12$. Since any two permutations of the latter kind are equivalent (by Lemma \ref{lem5}), so must be any two permutations of the former kind.
\end{proof}

\begin{lem}\label{lem6} No two permutations of the form $\ldots 1x$ and $\ldots 1y$ where $x \neq y$ are equivalent under the $\{123, 132, 312\} $-equivalence.
\end{lem}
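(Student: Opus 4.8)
The plan is to produce an invariant of the $\{123, 132, 312\}$-equivalence that recovers the final letter $x$ of any permutation of the shape $\ldots 1x$. Let $R$ denote the set of letters lying to the right of the letter $1$, and consider the quantity $\min R$ (the smallest letter to the right of $1$), which is exactly the quantity already used implicitly in the proof of Corollary~\ref{cor5a}. In a permutation of the form $\ldots 1x$, the only letter to the right of $1$ is $x$, so there $\min R = x$. Thus, if I can show $\min R$ is preserved by every transformation, then two permutations $\ldots 1x$ and $\ldots 1y$ with $x \neq y$ carry different values of $\min R$ and cannot be equivalent, which is precisely the claim.

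To establish invariance I would track $R$ under a single transformation and show $\min R$ is unchanged. If the hit avoids the position of the letter $1$, then its three adjacent positions lie entirely on one side of $1$, so $R$ is merely permuted (or untouched) and $\min R$ does not change. The substantive case is when the hit contains $1$. Since $1$ is the globally smallest letter, it must be the smallest entry of the hit, so it occupies the first slot in a $123$ or $132$ hit and the second slot in a $312$ hit; in particular it never sits in the last slot of a hit. Consequently a permutation in which $1$ is not the last letter can never have $1$ pushed into the final position, so $R$ stays nonempty throughout the entire class and $\min R$ remains well defined.

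It remains to check the transformations whose hit contains $1$. Under $123 \leftrightarrow 132$ the letter $1$ keeps the first slot of the hit and the two larger letters remain to its right, so $R$ is unchanged. In the transformations exchanging $312$ with $123$ or with $132$, exactly the largest letter $c$ of the three crosses from one side of $1$ to the other, while the middle letter $b$ stays on the same side as before; since $c > b$, the letter $c$ is never the minimum of $R$ regardless of which side it is on, so $\min R$ is again preserved. Confirming that in each of these directed transformations the letter changing sides relative to $1$ is always the largest of the three is the only real computation, and it is the step I expect to require the most care. Once invariance of $\min R$ is established, applying it to $\ldots 1x$ (where $R = \{x\}$ and $\min R = x$) finishes the proof.
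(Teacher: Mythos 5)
Your proposal is correct and uses exactly the same invariant as the paper: the paper's proof of this lemma simply cites the fact that the smallest letter to the right of $1$ never changes under the equivalence, which is your $\min R$. Your case analysis verifying that only the largest letter of a hit containing $1$ can cross sides is a sound (and welcome) elaboration of what the paper leaves as "easily" seen.
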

\begin{proof} This falls easily from the fact that the smallest letter to the right of $1$ never changes under the equivalence.
\end{proof}

\begin{lem}\label{lem6.5}
Let $a \neq 2$ be a letter. Let $x$ and $y$ be equivalent permutations ending with $1a$. Let $x'$ and $y'$ be the permutations made from the first $n-2$ letters of $x$ and $y$ respectively. Then, $x'$ is equivalent to $y'$ under the $\{123, 132, 312\} $-equivalence.
\end{lem}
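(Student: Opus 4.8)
Since the lemma only asserts the forward implication $x\equiv y \Rightarrow x'\equiv y'$ (the reverse being the easy direction, obtained by replaying on the first $n-2$ letters of $x$ any path witnessing $x'\equiv y'$), I would reduce it to a clean projection. The key local observation is that a literal $1$ occupying the last slot of a permutation can never lie in a hit: the only length-three factor containing slot $n$ sits at positions $n-2,n-1,n$, and there $1$, being the global minimum, is the last and hence smallest entry, so the factor forms $231$ or $321$, neither of which belongs to $\{123,132,312\}$. The same bookkeeping shows that when $1$ is pinned at slot $n-1$ and $a$ at slot $n$, the factor at positions $n-3,n-2,n-1$ (which has $1$ smallest in final place) is never a hit, so the \emph{only} transformation that can disturb the suffix $1a$ is the one at positions $n-2,n-1,n$, equal to $(c,1,a)$ with $c=x_{n-2}$; this is a hit exactly when $c>a$ (the pattern $312$) and is otherwise inert. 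Hence, if I can realize $x\equiv y$ by a path that never fires this final triple, then the suffix $1a$ stays fixed throughout, every transformation acts within the first $n-2$ letters, and—because a factor's pattern depends only on the relative order of its entries, which is identical in $x$ and $x'$—the same sequence of transformations carries $x'$ to $y'$, giving $x'\equiv y'$.

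So the whole content lies in showing that the connection $x\equiv y$ can be achieved without ever firing the final triple. I would first record why this is not automatic from a step-by-step projection: deleting $1$ and $a$ from each permutation along an arbitrary given path fails, because a transformation that involves the literal $1$ (where $1$ must play the smallest role of its hit) sends the two larger entries of that hit from the order ``smaller, larger'' to ``larger, smaller'' under $123\leftrightarrow132$ and under $123\leftrightarrow312$; after $1$ is deleted these two entries become adjacent and are merely transposed, and an adjacent transposition is not an equivalence move. Thus projection cannot be carried out one transformation at a time, and a genuinely global rerouting of the path is needed.

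For that rerouting I would lean on Lemma \ref{lem6}: the smallest letter to the right of $1$ is an invariant, so along the entire path it stays equal to $a$, and because $a\neq 2$ the letter $2$ is trapped to the left of $1$ at every step. I would then set up a double induction—on $n$, with the small cases handled directly, and, inside the inductive step, on the number of times the given path fires the final triple—and show that each such firing can be eliminated. The idea is that whenever the path creates a $312$ at positions $n-2,n-1,n$ and fires it, the letters $1$ and $a$ move inward, but since $2$ remains on their left and $a$ remains the minimum of everything to the right of $1$, the later configuration that restores $1$ to slot $n-1$ and $a$ to slot $n$ can be reached instead by a detour confined to the first $n-1$ slots, leaving the suffix untouched; replacing the excursion by this detour lowers the count of final-triple firings without changing the endpoints.

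The main obstacle is precisely this elimination step—proving that every excursion into the last three slots is dispensable for connecting two suffix-pinned permutations. This is exactly where the hypothesis $a\neq2$ must be used, and where any correct argument has to be fragile: for $a=2$ the lemma is false, since all permutations ending in $12$ are equivalent by Lemma \ref{lem5} while their prefixes need not be, so the rerouting must fail exactly when $2$ is permitted to the right of $1$. Getting the bookkeeping of this path surgery right—exploiting the trapped letter $2$ to forbid the global scrambling that powers Lemma \ref{lem5}—is the crux; the individual pattern computations around it are routine.
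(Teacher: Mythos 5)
Your proposal correctly isolates the difficulty (a step‑by‑step projection fails because a transformation whose hit contains the literal $1$ degenerates, after deleting $1$ and $a$, into an illegal adjacent transposition) and correctly locates where $a\neq 2$ must enter (the letter $2$ is trapped to the left of $1$ for the whole path, by the invariant of Lemma \ref{lem6}). But the argument has a genuine gap at exactly the point you yourself flag as ``the crux'': the claim that every excursion of $1$ and $a$ away from the last two slots can be replaced by a detour confined to the prefix is only asserted, never constructed. Note also that such an excursion need not be local --- after the first firing of the triple at positions $n-2,n-1,n$, the letters $1$ and $a$ can wander arbitrarily far left and need not return to slots $n-1,n$ until the very end of the path --- so the induction ``on the number of firings'' does not obviously decompose the path into independently repairable pieces. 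As written, the proposal reduces the lemma to an equivalent unproven statement (the existence of a suffix‑fixing path is precisely the conclusion $x'\equiv y'$ plus the trivial converse), so it does not constitute a proof.

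The paper closes this gap by a different mechanism: instead of rerouting the given path, it keeps the path and projects every intermediate permutation $t$ to $t'$, defined by extracting $1$ and $a$ and re‑appending them at the end as $1a$. For consecutive $t,w$ it then shows $t'$ and $w'$ are joined by transformations avoiding the last two slots. The two ingredients are (i) since the smallest letter to the right of $1$ is invariant and equals $a\neq 2$, the letter $2$ stays left of $1$, and because in each of $123,132,312$ the smallest letter of a hit sits left of the second smallest, $1$ and $2$ are never in a common hit; hence any hit containing $1$ or $a$ lies entirely to the right of $2$; and (ii) Corollary \ref{cor5a} lets the block of letters strictly between $2$ and the appended suffix $1a$ be rearranged arbitrarily without involving $1$ or $a$. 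So a problematic step is not eliminated but \emph{absorbed}: $t'$ and $w'$ agree outside that block and are connected inside it. If you want to salvage your approach, this projection‑plus‑free‑block argument is the missing detour construction.
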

\begin{proof}
Let $x$, $y$, $x'$, and $y'$ be as described. Consider a sequence of permutations that starts with $x$ and ends with $y$, with each pair of consecutive permutations in the sequence connected by a single transformation. In such a sequence, going from one permutation to the next, $a$ can never pass $1$ and $1$ can never pass $2$ because the smallest letter to the right of $1$ can not change. Note that since $1$ and $2$ can never be in the same hit because in the hits considered, $1$ is always to the right of $2$. Thus neither $1$ nor $a$ can be in the same hit as any letter that is to the left of $2$. Consider two consecutive permutations in the sequence, $t$ and $w$. Let $t'$ and $w'$ be $t$ and $w$ respectively with $1$ and $a$ slid to the two right-most positions as $1a$. If the transformation between $t$ and $w$ involves either $1$ or $a$, then it involves letters only to the right of the letter $2$. However, these letters (excluding $1$ and $a$) can be rearranged freely in $t'$ and $w'$ by Corollary \ref{cor5a}. Thus $t'$ and $w'$ are reachable from each other using transformations that do not involve $1$ or $a$. This, of course, also holds if the transformation between $t$ and $w$ involves neither $1$ nor $a$ (because then, we can simply apply the same transformation to get $w'$ from $t'$). Thus there exists a sequence of permutations in $S_{n-2}$ starting with $x$ and ending with $y$, with each pair of consecutive permutations in the sequence connected by transformations which do not involve the final two letters $1a$. Hence, $x' \equiv y'$.
\end{proof}

\begin{lem}\label{lem7} Let $a\neq 2$. Permutations of the form $\ldots1a$ break into $f(n-2)$ classes under the $\{123, 132, 312\} $-equivalence for $n\ge 5$.
\end{lem}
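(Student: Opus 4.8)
The plan is to set up a bijection between the set of $\{123,132,312\}$-equivalence classes of $S_n$ that contain a permutation of the form $\ldots 1a$ and the set of all classes of $S_{n-2}$, from which the count $f(n-2)$ is immediate. To this end, for a permutation $w\in S_n$ ending in $1a$, let $w'$ denote the order permutation of its first $n-2$ letters. Since $w$ uses each of $1,2,\ldots,n$ exactly once and ends with the two letters $1$ and $a$, its prefix is a word on the fixed letter set $\{2,\ldots,n\}\setminus\{a\}$, so $w'$ is a well-defined element of $S_{n-2}$. This prefix map $w\mapsto w'$ is the object whose behaviour on classes I will analyze.

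First I would check that the map descends to classes. By Lemma \ref{lem6.5}, if $x$ and $y$ both end in $1a$ and $x\equiv y$, then $x'\equiv y'$ in $S_{n-2}$; hence the class of $x'$ depends only on the class of $x$, giving a well-defined map $\bar\Phi$ from classes meeting $\{\ldots 1a\}$ to classes of $S_{n-2}$. Surjectivity of $\bar\Phi$ is immediate: given any $\sigma\in S_{n-2}$, I realize $\sigma$ as a word on $\{2,\ldots,n\}\setminus\{a\}$ and append $1a$ to produce a permutation ending in $1a$ whose prefix has order permutation $\sigma$. I would also record, via Lemma \ref{lem6}, that two permutations ending in $1a$ and $1a'$ with $a\neq a'$ are never equivalent, so the families indexed by distinct $a$ contribute disjoint collections of classes and the count for a single $a$ is unambiguous.

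The crux is the reverse implication, namely that $x'\equiv y'$ in $S_{n-2}$ forces $x\equiv y$ in $S_n$; this is what makes $\bar\Phi$ injective. Here I would take a sequence of transformations connecting $x'$ to $y'$ in $S_{n-2}$ and lift it to the full permutations. The key observation is that the prefixes of $x$ and $y$ are words on the \emph{same} letter set $\{2,\ldots,n\}\setminus\{a\}$ with equivalent order permutations, and each transformation in the $S_{n-2}$ sequence acts on three consecutive letters lying entirely among the first $n-2$ positions. Because a transformation depends only on the relative order of the three letters it touches, and the trailing letters $1$ and $a$ never fall inside such a window, the very same sequence of transformations applied to the first $n-2$ letters of $x$ (holding $1a$ fixed at the end) carries $x$ to $y$ without ever involving the last two positions. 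The main thing to verify carefully is exactly this non-interference — that no step of the lifted sequence is blocked or altered by the presence of the fixed suffix $1a$ — together with the bookkeeping that $\bar\Phi$ is simultaneously well-defined, injective, and surjective. Combining these, $\bar\Phi$ is a bijection, so the permutations of the form $\ldots 1a$ break into exactly $f(n-2)$ classes.
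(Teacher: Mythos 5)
Your proposal is correct and takes essentially the same route as the paper: the paper's two-sentence proof bounds the count above by $f(n-2)$ via exactly your lifting argument (transformations on the prefix never touch the fixed suffix $1a$) and below by $f(n-2)$ via Lemma \ref{lem6.5}, which is precisely the well-definedness of your map $\bar\Phi$. You have merely unpacked the same bijection in full detail.
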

\begin{proof}
They clearly fall into at most $f(n-2)$ classes. By Lemma \ref{lem6.5}, there are at least $f(n-2)$ classes.
\end{proof}

\begin{thm} $f(n)=f(n-1)+(n-2) \cdot f(n-2)+1$ when $n \ge 5$. As base cases, $f(3)=4$, and $f(4)=9$.
\end{thm}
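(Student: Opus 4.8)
The plan is to partition the classes of $S_n$ according to the behavior of the letter $1$, using the lemmas already established. By Lemma \ref{lem4}, every permutation is equivalent either to one ending in $1$ (i.e. of the form $\ldots 1$, where $1$ is the last letter) or to one of the form $\ldots 1j$ with $j \ge 2$. So I would first split the count $f(n)$ into three disjoint groups of classes: (i) classes of permutations ending with the actual letter $1$; (ii) classes of permutations of the form $\ldots 12$; and (iii) classes of permutations of the form $\ldots 1a$ with $a \neq 2$ (and necessarily $a \neq 1$). Lemma \ref{lem6} guarantees that these three groups are genuinely disjoint, since the smallest letter to the right of $1$ is an invariant, so no permutation of type (ii) is equivalent to one of type (iii), and an ending-in-$1$ permutation has no letter to the right of $1$ at all.

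Next I would count each group. For group (ii), Lemma \ref{lem5} says all permutations of the form $\ldots 12$ are mutually equivalent, contributing exactly $1$ class; this accounts for the ``$+1$'' in the recurrence. For group (iii), I would sum over the possible values of $a$: for each fixed $a \in \{3,4,\ldots,n\}$ there are, by Lemma \ref{lem7}, exactly $f(n-2)$ classes among permutations ending in $1a$, and again Lemma \ref{lem6} ensures different values of $a$ give different classes. Since there are $n-2$ admissible choices of $a$, group (iii) contributes $(n-2)\cdot f(n-2)$ classes, giving the ``$(n-2)\cdot f(n-2)$'' term. For group (i), the permutations ending in $1$ are precisely those whose first $n-1$ letters form an arbitrary permutation of $\{2,3,\ldots,n\}$ with $1$ appended; deleting the trailing $1$ sets up a bijection of classes with the classes of $S_{n-1}$, yielding the ``$f(n-1)$'' term.

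The main obstacle I anticipate is justifying the $f(n-1)$ term for group (i) rigorously: I must show that appending a terminal $1$ induces a genuine bijection between classes of $S_{n-1}$ and classes of permutations ending in $1$ in $S_n$. The delicate direction is that two equivalent permutations in $S_n$ both ending in $1$ must restrict to equivalent permutations in $S_{n-1}$; this requires an argument, analogous to the one in Lemma \ref{lem6.5}, that the terminal $1$ can never participate in any hit that moves it out of last position, so all transformations act entirely within the first $n-1$ letters. Because $1$ is the global minimum sitting in the final slot, in any of the patterns $123$, $132$, $312$ it could only serve as the ``$1$'', and in each such pattern the ``$1$'' is never the last letter except in $231$ (which is not in our partition); hence a hit containing the terminal $1$ would have to extend past the end of the permutation, which is impossible. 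This confines every transformation to the prefix and gives the bijection.

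Finally, adding the three contributions yields
\[
f(n) = f(n-1) + (n-2)\cdot f(n-2) + 1,
\]
and the base cases $f(3)=4$ and $f(4)=9$ can be verified by direct computation (enumerating the small classes by hand or noting which permutations avoid the partition and which collapse together).
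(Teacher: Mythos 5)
Your proposal is correct and follows essentially the same route as the paper: decompose by whether the permutation ends in $1$, in $12$, or in $1a$ with $a\neq 2$, then apply Lemmas \ref{lem4}, \ref{lem5}, \ref{lem6}, and \ref{lem7} to count $f(n-1)$, $1$, and $(n-2)\cdot f(n-2)$ classes respectively. Your explicit justification that the terminal $1$ cannot lie in any hit (so the ending-in-$1$ classes biject with classes of $S_{n-1}$) is a correct elaboration of the paper's brief remark that ``the final letter is immobile.''
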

\begin{proof} Assume $n \ge 5$ (The lower cases are computed manually). The permutations ending with $1$ clearly fall into $f(n-1)$ classes because the final letter is immobile. By Lemma \ref{lem4}, every permutation not ending with $1$ is reachable from $\ldots 1j$ for some $j$ and by Lemma \ref{lem6}, this $j$ is unique. Permutations reachable from $\ldots 12$ fall into $1$ class by Lemma \ref{lem5}. Permutations reachable from $\ldots 1j$ where $j \neq 2$ fall into $f(n-2)$ classes
for a given $j$ by Lemma \ref{lem7}. There are $n-2$ possibilities for such a $j$. Thus $f(n)=f(n-1)+(n-2) \cdot f(n-2)+1$ when $n \ge 5$, $f(3)=4$, and $f(4)=9$.
\end{proof}

\begin{rem} The referee for this paper made the following interesting observation. Using the generating function $f(x)=\sum_{n\geq4}f(n)x^{n-1}/(n-1)!$, we can obtain that $$d/dx (f(x)-2x^2-1.5x^3)=f(x)-2x^2+x f(x)+(e^x-1-x-x^2/2),$$
which implies that $$f \left( x \right) = \left( -{{\rm
e}^{-x-1/2\,{x}^{2}}}-2\,{{\rm e}^{
-x-1/2\,{x}^{2}}}x+1/2\,\sqrt {\pi }\sqrt {2} {{\rm
erf}\left(1/2\,\sqrt {2}x\right)}+1 \right) {{\rm e}^{1/2\,x
 \left( 2+x \right) }}.$$
\end{rem}

\subsection{\mathheader{ \{213, 132, 231\} }-Equivalence}

\begin{defn}
In this subsection, let $f(m)=$ the number of non-trivial classes created in $S_m$ under the $\{213, 132, 231\}$-equivalence. Let $g(m)=$ the number of trivial classes created under the $\{213, 132, 231\}$-equivalence in $S_m$.
\end{defn}

\begin{defn}
A permutation in $S_n$ is \emph{reductive} if it satisfies the following three conditions.
\begin{itemize}
\item it does not start with $n-2$; 
\item it ends with $(n-1)n$;
\item its left-most $n-1$ letters are non-avoiding.
\end{itemize}
\end{defn}

\begin{defn}
A permutation in $S_n$ is \emph{decent} if it satisfies the following two conditions.
\begin{itemize}
\item it starts with $n-2$; 
\item its right-most $n-1$ letters are non-avoiding, do not begin with $n-1$, and do end with $n$.
\end{itemize}
\end{defn}

\begin{lem}\label{lemob1}
Let $n>3$. All non-avoiding permutations not beginning with $n-1$ but ending with $n$ are equivalent in $S_n$ under the $\{213, 132, 231\}$-equivalence.
\end{lem}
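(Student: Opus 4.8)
The plan is to induct on $n$, with base case $n=4$, where the three qualifying permutations $1324$, $2134$, $2314$ are seen to be mutually reachable directly (for instance $1324\to 2134$ by applying $132\to 213$ to the first three letters, and $2134\to 2314$ by applying $213\to 231$ to them). The engine for the inductive step is the elementary fact that if the largest letter of a length-three factor occupies its \emph{middle} slot, then that factor forms $132$ or $231$ (according as its two outer letters ascend or descend) and is therefore always a hit. Consequently, whenever the maximum letter $n$ sits in an interior position, I may always slide it one step to the right, and I may also swap its two immediate neighbours while fixing it; a leftward slide is available exactly when the two letters just to its left form a descent. These are the only moves I will use. The hypothesis $w_1\neq n-1$ enters precisely to keep $n-1$ off the front, since a ``largest-so-far'' letter stranded in position $1$ has leading factor $312$ or $321$ and is immobile there; permutations with $w_1=n-1$ are excluded because they may genuinely lie in other classes.

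For the inductive step I fix a non-avoiding $w$ with $w_1\neq n-1$ and $w_n=n$ and keep $n$ pinned at the last position. Since $w_1\neq n-1$, the letter $n-1$ lies in an interior position (if it is already in position $n-1$ there is nothing to do), and all its neighbours other than $n$ are smaller than it; I therefore slide it rightward, one step at a time, until it reaches position $n-1$. This transforms $w$ into a permutation $u\,(n-1)\,n$, where $u$ is a word on $\{1,\dots,n-2\}$. The crucial point is that appending $n$ after $n-1$ creates only the factor $u_{n-2}\,(n-1)\,n$, which is a $123$ and hence never a hit; thus $u\,(n-1)\,n$ has exactly the same hits as its length-$(n-1)$ prefix $u\,(n-1)$. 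Since non-avoidance is preserved inside any non-trivial class and $w$ is a non-avoider, $u\,(n-1)\,n$ is a non-avoider, and therefore so is the prefix $u\,(n-1)$, which moreover ends in its own maximum $n-1$. Every transformation confined to the first $n-1$ letters fixes the trailing $n$, so it is simultaneously a transformation of $u\,(n-1)$ inside $S_{n-1}$; provided $u\,(n-1)$ does not begin with $n-2$, the inductive hypothesis applies to it and collapses all such prefixes into one class, which lifts to $S_n$ and finishes the argument.

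The one side condition that is \emph{not} automatic, and which I expect to be the main obstacle, is that the reduced prefix must not begin with $n-2$, i.e.\ $u_1\neq n-2$. A naive rightward slide can fail this in two ways: if $w_1=n-2$ already (which is permitted, since only $w_1=n-1$ is forbidden), then $n-2$ never leaves the front; and when $n-1$ passes through position $2$ it deposits $\max$ of its two neighbours at the front, which can again be $n-2$. To repair this I plan to use the extra freedom that the bare slide ignored. If $n-2$ occupies the front, I will first bring $n-1$ or $n$ into position $2$ or $3$ so that the leading factor becomes a genuine hit whose largest letter is not $n-2$, then use that hit to push $n-2$ off the front; and while sliding $n-1$ rightward I will interleave neighbour-swaps (and, where a descent is needed, a temporary one-step inward move of $n$) so as never to re-deposit $n-2$ in position $1$. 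I anticipate that after this routing only a bounded list of exceptional prefixes forced to begin with $n-2$ will remain, and these I will connect to the canonical representative by explicit hand computation, after which the inductive hypothesis applies uniformly. Controlling this front-letter bookkeeping cleanly, rather than the slides themselves, is where the real difficulty lies.
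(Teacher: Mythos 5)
Your setup --- induct on $n$, slide $n-1$ rightward into position $n-1$ to reach a permutation $u(n-1)n$, note that its prefix $u(n-1)$ is a non-avoider in $S_{n-1}$ (the only new factor $u_{n-2}(n-1)n$ is a $123$, never a hit), and apply the inductive hypothesis to that prefix --- is exactly the paper's first step, and your identification of the obstruction (the hypothesis on the prefix is unavailable precisely when the first letter is $n-2$) is also correct. But at that point the proposal stops being a proof. The proposed repair --- route $n-1$ or $n$ forward to dislodge $n-2$, interleave neighbour swaps so that $n-2$ is never re-deposited in position $1$, and clean up ``a bounded list of exceptional prefixes'' by hand --- is announced rather than carried out, and you say yourself that this bookkeeping ``is where the real difficulty lies.'' Note also that permutations beginning with $n-2$ are themselves members of the single class the lemma asserts (only $n-1$ is forbidden as a first letter), so they cannot merely be routed around; they must be connected to the rest, and nothing in the proposal does this. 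It is not even clear the routing can always succeed: when $n-1$ sits in position $2$ with outer neighbours $x$ and $y$, every transformation that advances $n-1$ to position $3$ deposits $\max(x,y)$ at the front, and that maximum can be $n-2$.

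The paper closes this gap with an idea absent from your proposal: apply the inductive hypothesis to the \emph{suffix}. If the resulting permutation $y$ begins with $n-2$ and ends with $(n-1)n$ (and $n\ge 5$), its first three letters form a $312$ or $321$ and hence are not a hit, so the rightmost $n-1$ letters of $y$ must be non-avoiding; they also end with their maximum and do not begin with their second-largest letter, so the inductive hypothesis applies to them and shows that all such ``decent'' permutations are mutually equivalent. It then remains only to exhibit one explicit chain of transformations carrying a single decent permutation to a single ``reductive'' one (one to which the prefix induction applies), which the paper does by a short hand computation starting from $(n-2)1(n-1)2n\ldots$. You would need this suffix-induction step, or an equally concrete substitute for it, before your argument goes through.
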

\begin{proof}
We will prove this inductively using the easily checked base case of $S_4$. Let $n\geq 5$ and assume that the result holds for $S_{n-1}$. Note that all reductive permutations are reachable from each other by the inductive hypothesis applied to the left-most $n-1$ letters.

Let $w\in S_n$ be a non-avoiding permutation that is not beginning with $n-1$ and that ends with $n$. We will prove that $w$ is equivalent to a reductive permutation. Since reductive permutations are all equivalent, this will complete the proof. Applying repeatedly either $132 \rightarrow 213$ or $231 \rightarrow 213$ to $w$, we place $n-1$ in the second to final position (while keeping $n$ in the final position). If the resulting permutation does not begin with $n-2$, then it is reductive (because its left-most $n-1$ letters are clearly non-avoiding, given how we constructed it) and we are done. In the remaining case, the resulting permutation $y$ begins with $n-2$.

Note that $y$ begins with $n-2$ and ends with $(n-1)n$. Since $y$ begins with $n-2$ and is of size $\geq 5$, the first three letters of $y$ form a $312$ or $321$ and thus do not form a hit. Since $y$ is non-avoiding and cannot begin with a hit, the right-most $n-1$ letters of $y$ must be non-avoiding. Thus $y$ is decent. Any two decent permutations are equivalent to each other by the inductive hypothesis applied to the right-most $n-1$ letters. Thus we only need to show that some decent permutation is equivalent to a reductive permutation.

Consider the permutation $w=(n-2)1(n-1)2n \ldots$. Note that the letters not shown are those between $2$ and $n-2$, and are in increasing order from left to right. Sliding $n$ to the final position through repeated applications of $132 \rightarrow 213$, and then sliding $(n-1)$ to the second to final position in the same manner, we reach $w'=(n-2) \ldots 21(n-1)n$. One can easily check that $w'$ is decent. From $w$, we can obtain $w''=21(n-1)n(n-2) \ldots$ in the following way.
$$(n-2)1(n-1)2n \ldots \rightarrow$$
$$(n-2)1(n-1)n2 \ldots \rightarrow$$
$$1(n-1)(n-2)n2 \ldots \rightarrow$$
$$1(n-1)2n(n-2) \ldots \rightarrow$$
$$21(n-1)n(n-2) \ldots \phantom{\rightarrow}$$

From $w''$, we can first slide $n$ to the final position through repeated applications of $231$ or $132$ $\rightarrow 213$, and then slide $n-1$ to the second to final position in the same manner to get $w'''$. Note that $w'''$ begins with the letter $2$, ends with the letters $(n-1)n$, and has a hit in the penultimate three letters. Hence, $w'''$ is a reductive permutation and we are done.
\end{proof}

\begin{lem}\label{lemob2}
Let $n>3$. All non-avoiding permutations of the form $(n-1) j \ldots n$ with $j\neq n-2$ are equivalent in $S_n$ under the $\{213, 132, 231\}$-equivalence. (The letter $j$ is not fixed here.)
\end{lem}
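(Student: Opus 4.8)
My plan is to reduce this lemma directly to Lemma~\ref{lemob1} by restricting attention to the factor occupying positions $2$ through $n$. Write $w = (n-1)j\ldots n$ with $j \neq n-2$ and $w$ non-avoiding. The crucial preliminary observation is that \emph{no hit of $w$ can involve the first letter}: the only length-three window containing position $1$ is positions $1,2,3$, and since $n$ sits at the end (so $n$ is not among the first three letters when $n>3$), the first letter $n-1$ is the largest of those three, forcing them to form a $312$ or $321$ pattern, neither of which is a hit. Consequently every hit of $w$ lies within positions $2,\ldots,n$, so $w$ being non-avoiding forces the factor $u$ on positions $2,\ldots,n$ to be non-avoiding as well.

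Next I would interpret $u$ as a permutation in $S_{n-1}$ via its order permutation (standardization). Its letters are exactly $\{1,\ldots,n-2\}$ together with $n$, so standardization fixes each of $1,\ldots,n-2$ and sends $n \mapsto n-1$. Thus $u$ ends with its largest letter (the standardized $n-1$), begins with $j$ (unchanged under standardization since $j \le n-2$), and is non-avoiding. The hypothesis $j \neq n-2$ says precisely that the standardized first letter is not the second-largest letter $n-2$ of $S_{n-1}$. Therefore $u$ satisfies exactly the hypotheses of Lemma~\ref{lemob1} applied in $S_{n-1}$: it is a non-avoiding permutation not beginning with $n-2$ but ending with its maximal letter. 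Hence all possible such factors $u$ are equivalent to one another under the $\{213,132,231\}$-equivalence in $S_{n-1}$.

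The final step is to lift this equivalence back to $S_n$. Every transformation produced by Lemma~\ref{lemob1} acts on three consecutive letters lying within positions $2,\ldots,n$, and whether such a triple forms a hit depends only on its relative order, which is preserved under standardization; so each transformation is a legitimate $K$-transformation of $w$ that never touches position $1$. The first letter $n-1$ therefore stays frozen in front while the suffix ranges over all of its Lemma~\ref{lemob1} equivalence class, showing that any two non-avoiders of the form $(n-1)j\ldots n$ with $j\neq n-2$ are equivalent. I would dispatch the smallest case $n=4$ (where Lemma~\ref{lemob1} is unavailable) separately by direct inspection; there the only candidate $3124$ is avoiding, so the statement is vacuous.

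The main obstacle is the non-avoidance transfer, i.e.\ justifying carefully that \emph{every} hit of $w$ avoids position $1$ (so that non-avoidance of $w$ is equivalent to non-avoidance of the suffix) and that the hypothesis $j\neq n-2$ matches exactly the ``does not begin with the second-largest letter'' condition of Lemma~\ref{lemob1} after standardization. Once these bookkeeping points are verified, the argument is a clean restriction-and-lift with no induction on $n$ of its own beyond the induction already contained in Lemma~\ref{lemob1}.
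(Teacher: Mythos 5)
Your proof is correct and takes essentially the same route as the paper: note that the first three letters of $w$ form a $312$ or $321$ and hence cannot be a hit, conclude that the right-most $n-1$ letters must be non-avoiding, and apply Lemma~\ref{lemob1} to that suffix while the frozen first letter $n-1$ stays put. Your explicit standardization bookkeeping and separate treatment of the vacuous case $n=4$ (where the paper's appeal to Lemma~\ref{lemob1} for the length-$3$ suffix would not literally apply) are small refinements of the paper's argument.
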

\begin{proof}
Let $w$ be such a permutation in $S_{n>3}$. Since $w$ begins with $n-1$, ends with $n$, and is of size $\geq 4$, the first three letters of $w$ form a $312$ or $321$ and thus don't form a hit. Since $w$ is non-avoiding and cannot begin with a hit, the right-most $n-1$ letters of $w$ must be non-avoiding. Thus they can be rearranged as the identity with $n-2$ and $n-3$ swapped (by Lemma \ref{lemob1}).
\end{proof}

\begin{lem}\label{lemob3} Let $n>3$. All non-avoiding permutations of the form $(n-1)\ldots n$ are equivalent in $S_n$ under the $\{213, 132, 231\}$-equivalence.
\end{lem}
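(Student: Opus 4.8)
The plan is to reduce to Lemma \ref{lemob2} by showing that the only outstanding case—permutations whose second letter is exactly $n-2$—collapses into the class already produced by that lemma. Write a generic such permutation as $w=(n-1)(n-2)w_3\cdots w_{n-1}n$. By Lemma \ref{lemob2}, all non-avoiders of the form $(n-1)j\cdots n$ with $j\neq n-2$ lie in a single class $\mathcal{C}$; it therefore suffices to show that every non-avoider of the form $(n-1)(n-2)\cdots n$ also lies in $\mathcal{C}$. The basic difficulty is visible immediately: the first three letters of $w$ form a $321$, which is not a hit, so the prefix $(n-1)(n-2)$ is ``frozen,'' and the only letter large enough to dislodge $n-2$ from position $2$ is $n$, which sits at the far right.

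I will proceed by induction on $n$, checking the small cases $n=4$ (where there is a unique non-avoider of this form) and $n=5$ by hand. For the inductive step, first observe that since the window at positions $1,2,3$ is a $321$ and is thus never a hit, every hit of $w$ lies among positions $2,\ldots,n$; hence the suffix $v=(n-2)w_3\cdots w_{n-1}n$ is itself non-avoiding, and after standardization it is a non-avoider of $S_{n-1}$ of the very form $(m-1)\cdots m$ with $m=n-1$. Any transformation performed inside this suffix fixes the leading letter $n-1$, so by the inductive hypothesis (Lemma \ref{lemob3} at size $n-1$) all such suffixes are mutually reachable. Consequently all non-avoiders of the form $(n-1)(n-2)\cdots n$ are equivalent to one another, and it remains only to connect a single convenient representative of this class to $\mathcal{C}$.

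To build that bridge I would unfreeze the prefix by walking $n$ leftward. Using the freedom just established, I would first put the suffix into a form with enough descents (for instance a decreasing tail), then bubble $n$ to the left one step at a time via $213\to231$ until a genuine hit involving $n$ appears near position $2$. Applying the appropriate pattern ejects $n-2$ from position $2$ and replaces it by a smaller letter; sliding $n$ back toward the end by the reverse moves then yields a permutation that still begins with $n-1$, still ends with $n$, but now has second letter different from $n-2$. That permutation lies in $\mathcal{C}$, and the proof is complete.

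The main obstacle is exactly this last bridge. Walking $n$ forward is easy, but the crossover move and especially the return trip are delicate: the naive ejection leaves $n$ stranded in the interior, and the only hits that then move $n$ back to the right tend to reinstate $n-2$ in position $2$. The real work is to choose the representative and the order of moves so that the newly installed small letter at position $2$ is protected throughout the return slide (concretely, keeping it as the fixed left element of each $231\to213$ step), and to verify that non-avoidance is maintained at every stage. I expect this bookkeeping, together with the base cases, to be where essentially all the effort goes.
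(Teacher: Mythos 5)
Your first half is sound and matches the paper's: since the window at positions $1,2,3$ of $w=(n-1)(n-2)\cdots n$ is a $321$ and hence never a hit, the suffix of length $n-1$ is non-avoiding and standardizes to a non-avoider of the form $(m-1)\cdots m$, so the inductive hypothesis makes all permutations of the form $(n-1)(n-2)\cdots n$ mutually equivalent, and by Lemma \ref{lemob2} everything reduces to connecting \emph{one} such permutation to a non-avoider of the form $(n-1)j\cdots n$ with $j\neq n-2$. But that bridge is precisely what you have not built, and you say so yourself; without it the lemma is not proved. Moreover, the specific plan you sketch runs into exactly the wall you fear. With a decreasing tail, bubbling $n$ leftward by $213\to231$ does reach $(n-1)(n-2)n(n-3)\cdots 21$, whose leading window is a $213$ hit; but the two ways of transforming it give $(n-1)n(n-2)(n-3)\cdots 1$ (second letter is now $n$, and every window except the first is a $321$, so $n$ cannot start back toward the right without first reinstating $n-2$ in position $2$ or evicting $n-1$ from position $1$) or $(n-2)n(n-1)\cdots$ (leading $n-1$ destroyed). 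The "ejection" never installs a \emph{smaller} letter in position $2$, and the decreasing tail you chose to make the leftward walk possible is exactly what starves the return trip of usable hits.

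The paper's bridge goes the other way around: it takes the representative $y=(n-1)(n-2)\,2\,3\cdots(n-3)\,1\,n$, with an \emph{increasing} middle and the letter $1$ placed just before $n$. Repeated $213\to132$ moves slide the pair $1n$ leftward as a unit to reach $(n-1)(n-2)1n23\cdots(n-3)$; two more moves at the front ($213\to132$ on positions $2$--$4$, then $213\to231$ on positions $1$--$3$) produce $(n-1)n1(n-2)23\cdots(n-3)$; and then the increasing tail $23\cdots(n-3)$ supplies the $132$ and $231$ hits needed to carry first $n-2$ and then $n$ back to the right end via $\to 213$ moves, landing on a non-avoider that begins with $n-1$, ends with $n$, and has second letter $\neq n-2$. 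In other words, the letter that gets walked toward the front is $1$ (escorting $n$), not $n$ alone, and the tail is kept increasing precisely so that the return slide is protected. Until you exhibit such an explicit chain, or a concrete alternative, the proof has a genuine gap at its crux.
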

\begin{proof}
We will prove this inductively. Assume it is true for $S_{n-1}$. The base cases of $S_4$ and $S_5$ can be easily checked computationally. By Lemma \ref{lemob2}, all such permutations whose second letters are not $n-2$ are equivalent. Let $w$ be such a permutation whose second letter is $n-2$. Note that $w$ has the form $(n-1)(n-2) \ldots n$. By the inductive hypothesis (applied to the right-most $n-1$ letters), $w$ is equivalent to any non-avoiding permutation of the form $(n-1)(n-2) \ldots n$.

One such permutation is $y=(n-1)(n-2)\ldots 1n$. Here, the letters not shown are the letters between $1$ and $n-2$ and are in increasing order from left to right. Through repeated applications of $213 \rightarrow 132$, we can reach $w' = (n-1)(n-2)1n \ldots$. From $w'$, we reach $w''=(n-1)n1(n-2)\ldots$ in the following manner.
$$(n-1)(n-2)1n\ldots \rightarrow$$
$$(n-1)1n(n-2)\ldots \rightarrow$$
$$(n-1)n1(n-2)\ldots \phantom{\rightarrow}$$

From $w''$, we may first slide $n-2$ to the final position through repeated applications of $132 \rightarrow 213$, and then slide $n$ to the final position through repeated applications of $132$ or $231$ $\rightarrow 213$. This yields a permutation which is non-avoiding, begins with $n-1$, ends with $n$, and has its second letter $\neq n-2$. Thus all non-avoiding permutations of the form $(n-1)(n-2) \ldots n$ are equivalent to a non-avoiding permutation of the form $(n-1)j \ldots n$ for some $j \neq n-2$. Hence, by Lemma \ref{lemob2}, all non-avoiding permutations of the form $(n-1)\ldots n$ are equivalent.
\end{proof}

\begin{defn}
A permutation in $S_n$ will be called \emph{fronted} if it starts
either with $n-1$ or with $jn(n-1)$ for some $j\leq n-2$.
\end{defn}

\begin{lem}\label{lem8} Any permutation starting with $n-1$ is only equivalent to permutations where $n-1$ is in the first or third position under the $\{213, 132, 231\}$-equivalence.
\end{lem}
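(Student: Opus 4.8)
The plan is to prove a slightly stronger statement, from which the lemma follows at once, namely the following invariant: \emph{every permutation equivalent to a permutation beginning with $n-1$ either has $n-1$ in its first position, or has $n-1$ in its third position and $n$ in its second position.} Since this confines $n-1$ to positions $1$ and $3$, it immediately yields the lemma. Because the $\{213,132,231\}$-equivalence is generated by single transformations, and the starting permutation (with $n-1$ in position $1$) trivially satisfies the invariant, it suffices to check that a single transformation preserves the invariant.

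The analysis rests on one elementary observation about the letter $n-1$. Since $n$ is the unique letter exceeding $n-1$, in any factor of three letters the letter $n-1$ is the \emph{largest} of the three if the factor does not contain $n$, and the \emph{middle} one if it does. Combined with the fact that none of $213$, $132$, $231$ places its maximal letter first, this means that \textbf{$n-1$ can be the first letter of a hit only when that hit also contains $n$} (so that $n-1$ plays the role of the $2$, and the factor is a $213$ or a $231$ with $n$ as its largest letter).

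I would then split into the two states of the invariant. In \emph{State I} ($n-1$ in position $1$), the only length-$3$ factor meeting position $1$ is the window $(1,2,3)$, and by the observation above a transformation can move $n-1$ only if this window contains $n$; checking the replacements $213\leftrightarrow 132$, $213\leftrightarrow 231$, $231\leftrightarrow 132$ on a window of the form $(n-1)\,?\,n$ or $(n-1)\,n\,?$ shows that $n-1$ either stays in position $1$, or the window is rewritten to $?\,n\,(n-1)$, landing us in State II. In \emph{State II} ($n-1$ in position $3$, $n$ in position $2$), any factor involving $n-1$ is one of the windows $(1,2,3)$, $(2,3,4)$, $(3,4,5)$: the window $(1,2,3)$ reads $x\,n\,(n-1)$, a $132$, whose only rewrites return $n-1$ to position $1$ (State I), while in $(2,3,4)$ and $(3,4,5)$ the largest letter present ($n$ in the former, and $n-1$ itself in the latter, since $n$ sits at position $2$) occupies the first slot, so neither is a hit and $n-1$ cannot move rightward. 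Finally, a transformation not involving $n-1$ cannot disturb State II either, because every length-$3$ window meeting position $2$ also meets position $3$, so one cannot move $n$ out of position $2$ without involving $n-1$.

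The heart of the argument, and the step I expect to require the most care, is the strengthening that also pins $n$ to position $2$ in State II. Tracking $n-1$ alone does not close the induction: without knowing that $n$ lies immediately to the left of $n-1$, one cannot rule out the window $(3,4,5)$ being a hit that carries $n-1$ from position $3$ to position $5$. The key observation is that leaving position $1$ always \emph{drags $n$ into position $2$}, and that $n$ then blocks any further rightward motion; this is exactly what makes positions $1$ and $3$ the only reachable ones. The remaining work is the routine pattern bookkeeping indicated above.
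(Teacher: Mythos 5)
Your proposal is correct and takes essentially the same approach as the paper: your invariant (``$n-1$ in position $1$, or $n-1$ in position $3$ with $n$ in position $2$'') is exactly the paper's notion of a \emph{fronted} permutation, i.e.\ one starting with $n-1$ or with $jn(n-1)$. The only difference is that you spell out the case-check of closure under a single transformation, which the paper dismisses as ``easy to see.''
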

\begin{proof} 
It is easy to see that any transformation, applied to a
fronted permutation, yields another fronted permutation. Hence, any
fronted permutation is equivalent only to fronted permutations. In
particular, any permutation starting with $n-1$ is only equivalent to
fronted permutations, and these  have $n-1$ either in the first
or in the third position.
\end{proof}

\begin{lem}\label{lem9} Let $n>3$. Non-avoiding permutations not beginning with $n$ in $S_n$ fall into $2$ non-trivial classes under the $\{213, 132, 231\}$-equivalence.
\end{lem}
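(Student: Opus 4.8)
The plan is to show that every non-avoiding permutation not beginning with $n$ is equivalent to a non-avoider that ends with $n$, and then to invoke Lemmas \ref{lemob1} and \ref{lemob3} to collapse all such permutations into just two candidate classes --- one for those beginning with $n-1$ and one for the rest --- finally using Lemma \ref{lem8} to verify that these two classes are genuinely distinct.

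First I would slide $n$ to the final position. Observe that among the patterns $213$, $132$, $231$, the largest entry sits in the third slot of $213$ but in the second slot of both $132$ and $231$. Hence, if $n$ occupies position $i$ with $2\le i\le n-1$, the factor at positions $i-1,i,i+1$ has $n$ in its middle and therefore forms either $132$ or $231$, i.e.\ a hit; applying $132\to 213$ or $231\to 213$ moves $n$ to position $i+1$. Since the permutation does not begin with $n$, we have $i\ge 2$ throughout, so repeating this pushes $n$ all the way to position $n$. The resulting permutation is equivalent to the original, hence still non-avoiding.

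Next I would split the non-avoiders ending in $n$ according to their first letter. Those not beginning with $n-1$ form a single class by Lemma \ref{lemob1}; those beginning with $n-1$ (and ending with $n$) form a single class by Lemma \ref{lemob3}. Thus every non-avoider not beginning with $n$ lies in one of these two classes, giving at most two non-trivial classes. Both are non-trivial, since for $n\ge 4$ each contains more than one permutation.

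The main obstacle is to show that these two classes are distinct, which requires an invariant separating them. Here I would use Lemma \ref{lem8}: any permutation beginning with $n-1$ is equivalent only to permutations having $n-1$ in position $1$ or $3$. It therefore suffices to exhibit a single representative of the first class whose letter $n-1$ sits in neither of those positions. The permutation reading $1$, then $n-1$, then $2,3,\ldots,n-2$ in increasing order, and finally $n$ (that is, $1\,(n-1)\,2\,3\cdots(n-2)\,n$) works: it ends with $n$, does not begin with $n-1$, is non-avoiding (its first three letters $1,(n-1),2$ form a $132$), and has $n-1$ in position $2$. By Lemma \ref{lem8} it cannot be equivalent to any permutation beginning with $n-1$, so the two classes are indeed distinct. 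Combined with the at-most-two bound, this yields exactly two non-trivial classes.
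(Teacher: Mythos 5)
Your proposal is correct and follows essentially the same route as the paper: slide $n$ to the last position via $132,231\to 213$, invoke Lemmas \ref{lemob1} and \ref{lemob3} to get at most two classes, and use Lemma \ref{lem8} to separate them. The only difference is that you exhibit an explicit non-fronted witness $1\,(n-1)\,2\,3\cdots(n-2)\,n$ where the paper merely asserts one exists; this is a harmless (indeed slightly more careful) elaboration.
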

\begin{proof}
From an arbitrary non-avoiding permutation not beginning with $n$, we can move $n$ to the right-most position through repeated applications of $132$ or $231$ $\rightarrow 213$. By Lemmas \ref{lemob1} and \ref{lemob3}, we have established that non-avoiding permutations not beginning with $n$ break into at most two classes: those containing permutations of the form $(n-1)\ldots n$, and those containing permutations not beginning with $n-1$ but ending with $n$. Noting Lemma \ref{lem8}, and that there exists a non-avoiding permutation of the form $(n-1)\ldots n$ as well as one which is non-fronted and ending with $n$, they break into exactly two classes in $S_n$ for $n>3$.
\end{proof}

\begin{prop}\label{prop2} $f(n)=f(n-1)+2$ for $n>3$.
\end{prop}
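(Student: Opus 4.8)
The plan is to split the non-trivial classes of $S_n$ according to whether or not a representative begins with the letter $n$, count each group separately, and add. The key structural observation is that the letter $n$, when it occupies the first position, is \emph{immobile}: a factor of three consecutive letters beginning with $n$ has its largest entry in the first slot, so its order permutation is $312$ or $321$, neither of which lies in the nontrivial part $\{213,132,231\}$. Hence a leading $n$ never belongs to a hit, and consequently every transformation applied to a permutation of the form $n\sigma$ acts entirely within the last $n-1$ positions. In particular, \emph{beginning with $n$} is invariant under the equivalence.

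First I would handle the permutations beginning with $n$. Writing such a permutation as $n\sigma$, where $\sigma$ is the word formed by the last $n-1$ letters (a permutation of $\{1,\dots,n-1\}$), the immobility of $n$ shows that $n\sigma$ is non-avoiding if and only if $\sigma$ is non-avoiding, and that $n\sigma\equiv n\tau$ in $S_n$ if and only if $\sigma\equiv\tau$ in $S_{n-1}$. Thus deletion of the leading $n$ furnishes a bijection between the non-trivial classes having a representative beginning with $n$ and the non-trivial classes of $S_{n-1}$, so this group contributes exactly $f(n-1)$ classes.

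Next I would invoke Lemma \ref{lem9}, which states that for $n>3$ the non-avoiding permutations not beginning with $n$ fall into exactly $2$ non-trivial classes; this supplies the additive constant. Because beginning with $n$ is an equivalence invariant, no class straddles the two groups, so the two counts are disjoint and together exhaust all non-trivial classes. Adding them yields $f(n)=f(n-1)+2$ for $n>3$.

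The bulk of the work is already carried by Lemma \ref{lem9} (built from Lemmas \ref{lemob1}--\ref{lem8}); the only genuinely new ingredient is verifying the immobility of a leading $n$ and confirming that the deletion map respects both the relation and the non-trivial/avoiding dichotomy. Since this is a short order-permutation argument, I expect no serious obstacle, and the proof should reduce essentially to assembling Lemma \ref{lem9} with this immobility observation.
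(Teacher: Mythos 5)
Your proposal is correct and follows essentially the same route as the paper: the paper's proof also splits the non-avoiding permutations by whether they begin with $n$, notes that the leading $n$ "can be ignored" to get $f(n-1)$ classes, and invokes Lemma \ref{lem9} for the remaining two. Your version merely spells out the immobility of a leading $n$ and the resulting disjointness of the two groups, which the paper leaves implicit.
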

\begin{proof}
We will calculate $f(n)$. Consider the non-avoiding permutations that have $n$ as the first letter. These clearly break into $f(n-1)$ nontrivial classes because the first letter can be ignored. By Lemma \ref{lem9}, the remaining non-avoiding permutations fall into two classes.
\end{proof}

\begin{lem}\label{prop3lem} If the $n$-th letter of an avoiding permutation in $S_n$ is smaller than the $(n-1)$-th one, then the permutation must be the decreasing permutation.
\end{lem}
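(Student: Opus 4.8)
The plan is to exploit the fact that, for this equivalence, \emph{avoiding} means that no factor of three consecutive letters has order permutation $213$, $132$, or $231$ (the hits here are exactly the length-three factors whose pattern lies in the single nontrivial part $\{213,132,231\}$). The crucial reformulation I would make first is that forbidding $132$ and $231$ is exactly forbidding a \emph{peak}, i.e.\ a position where an ascent is immediately followed by a descent: among three consecutive letters $w_i w_{i+1} w_{i+2}$, the condition $w_i < w_{i+1} > w_{i+2}$ forces the order permutation to be $132$ (when $w_i < w_{i+2}$) or $231$ (when $w_i > w_{i+2}$), and these are the only two order permutations exhibiting a strict ascent followed by a strict descent. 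So a $K$-avoider can never have an ascent immediately followed by a descent.

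With that in hand I would argue by contradiction. Suppose $w$ avoids $K$, satisfies $w_n < w_{n-1}$, yet is not the decreasing permutation $n(n-1)\cdots 1$. Then $w$ has at least one ascent; let $i$ be the \emph{largest} index with $w_i < w_{i+1}$. The hypothesis $w_n < w_{n-1}$ says position $n-1$ is a descent, so $i \le n-2$; in particular the factor $w_i w_{i+1} w_{i+2}$ exists. By maximality of $i$, position $i+1$ is a descent, i.e.\ $w_{i+1} > w_{i+2}$, so $w_i < w_{i+1} > w_{i+2}$ is a peak. By the reformulation above, this factor is a hit (a $132$ or a $231$), contradicting that $w$ avoids $K$. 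Hence $w$ has no ascent and must be the decreasing permutation.

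There is no real obstacle here: the argument is short, and the only point requiring care is the bookkeeping that guarantees the offending factor actually exists, which is precisely where the hypothesis $w_n < w_{n-1}$ enters, ruling out the degenerate case in which the only ascent would sit at the very end (so that no third letter is available to form a factor). I note in passing that the $213$-avoidance is not even needed: forbidding the two peak patterns $132$ and $231$ already does all the work.
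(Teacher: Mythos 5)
Your proof is correct and rests on exactly the same observation as the paper's, namely that an ascent immediately followed by a descent in three consecutive positions forces a $132$ or $231$ hit. The paper propagates the final descent leftward iteratively while you pick the rightmost ascent and derive a contradiction, but these are the same argument in slightly different packaging.
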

\begin{proof}
Let $w \in S_n$ be an avoiding permutation. Whenever, for some $i\in\left\{2,3,\ldots ,n-1\right\}$, the $(i+1)$-th letter of $w$ is smaller than the $i$-th one, it is clear that the $i$-th letter must be smaller than the $(i-1)$-th one (since otherwise it would give a $132$ or a $231$ hit, contradicting the avoidance). By applying this observation iteratively, we see that if the $n$-th letter of $w$ is smaller than the $(n-1)$-th one, then the $(n-1)$-th one must be smaller than the $(n-2)$-th one, which in turn must be smaller than the $(n-3)$-th one, etc.. Altogether, this yields that, if the $n$-th letter of $w$ is smaller than the $(n-1)$-th one, the permutation $w$ must be decreasing.
\end{proof}

\begin{prop}\label{prop3} $g(n)=g(n-1) \cdot 2-1$ for $n>3$.
\end{prop}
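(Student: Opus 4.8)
The plan is to count the trivial classes directly, since a trivial class in $S_n$ is exactly a permutation that avoids $K=\{213,132,231\}$, that is, one in which no three \emph{consecutive} letters form a $213$, $132$, or $231$ pattern. Equivalently, each consecutive triple must be one of the allowed patterns $123$, $312$, $321$, i.e.\ each triple is either increasing or has its first letter largest. The first thing I would record is what this forces on the position of the maximal letter $n$: if $n$ occupied a position $j$ with $1<j<n$, then the triple $(w_{j-1},w_j,w_{j+1})=(a,n,b)$ would form a $132$ (when $a<b$) or a $231$ (when $a>b$), both forbidden. Hence in any avoider the letter $n$ sits either in the first or in the last position, and for $n>3$ these two possibilities are disjoint and exhaustive.

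Next I would split the count of avoiders of $S_n$ according to these two cases. If $w$ begins with $n$, then deleting the leading $n$ leaves an avoider of $S_{n-1}$, and conversely prepending $n$ to any avoider $v\in S_{n-1}$ is harmless, since the only new consecutive triple is $(n,v_1,v_2)$, which is largest-first and hence allowed. This is a bijection, so there are exactly $g(n-1)$ avoiders beginning with $n$. If instead $w$ ends with $n$, write $w=vn$ with $v$ a permutation of $\{1,\dots,n-1\}$. The consecutive triples of $w$ are those of $v$ together with the single new triple $(v_{n-2},v_{n-1},n)$; since $n$ is largest but sits last, this triple cannot be largest-first, so it is allowed if and only if it is increasing, i.e.\ if and only if $v_{n-2}<v_{n-1}$. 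Thus the avoiders of $S_n$ ending in $n$ correspond bijectively to the avoiders $v$ of $S_{n-1}$ that end with an ascent.

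Finally I would invoke Lemma \ref{prop3lem}: an avoiding permutation of $S_{n-1}$ whose last letter is smaller than its penultimate one must be the decreasing permutation. Hence among the $g(n-1)$ avoiders of $S_{n-1}$, all but the single decreasing one end with an ascent, giving $g(n-1)-1$ avoiders of $S_n$ that end in $n$. Combining the two disjoint cases yields $g(n)=g(n-1)+\bigl(g(n-1)-1\bigr)=2g(n-1)-1$. The one point requiring care — and the only genuine obstacle — is the ``ending in $n$'' case: one might naively hope for a clean bijection with \emph{all} of the avoiders of $S_{n-1}$, but appending $n$ is legal only when $v$ ends in an ascent, and it is precisely Lemma \ref{prop3lem} that identifies the single excluded permutation and accounts for the $-1$ in the recurrence.
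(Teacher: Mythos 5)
Your proposal is correct and follows essentially the same route as the paper: split avoiders by whether $n$ is the first or the last letter, note that prepending $n$ gives a clean bijection with avoiders of $S_{n-1}$ while appending $n$ works exactly when the shorter avoider ends in an ascent, and use Lemma \ref{prop3lem} to identify the decreasing permutation as the unique exception accounting for the $-1$. No issues.
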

\begin{proof}
We will calculate $g(n)$ for $n>3$. Consider the avoiding permutations in $S_n$ that have $n$ as the first letter. There are clearly $g(n-1)$ of them because the first letter can be ignored. Now, consider the permutations that have $n$ not as the first letter but are avoiding. Then, $n$ must be the final letter because otherwise there is a $132$ or $231$ in the permutation. Ignoring the letter $n$, these permutations are avoiding permutations from $S_{n-1}$. The converse of this, however, is not true, but instead we have something more delicate: If we take an arbitrary permutation $w$ from $S_{n-1}$ and append $n$ to the end, then the resulting permutation is still avoiding if and only if the $(n-1)$-th letter of $w$ is greater than the $(n-2)$-th one. This requirement holds in all but one case (since Lemma \ref{prop3lem}, applied to $S_{n-1}$, shows that if the $(n-1)$-th letter of an avoiding permutation in $S_{n-1}$ is smaller than the $(n-2)$-th one, then this permutation must be $(n-1)(n-2) \ldots 1$). Hence, $g(n)=g(n-1) \cdot 2-1$ for $n>3$.
\end{proof}

\begin{thm}
The number of classes created in $S_n$ under the $\{213, 132, 231\}$-equivalence is $2^{n-2}+2n-4$.
\end{thm}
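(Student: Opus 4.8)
The plan is to observe that, by definition, every $K$-equivalence class in $S_n$ is either trivial or non-trivial, so the total number of classes is exactly $f(n)+g(n)$. Thus the entire task reduces to solving the two recurrences already in hand, namely $f(n)=f(n-1)+2$ for $n>3$ (Proposition \ref{prop2}) and $g(n)=2g(n-1)-1$ for $n>3$ (Proposition \ref{prop3}), pinning down their base cases, and adding the closed forms.

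First I would nail the base cases by direct inspection of $S_3$. With $K=\{213,132,231\}$, the three permutations $213$, $132$, $231$ all lie in the single non-trivial part and so form one class, giving $f(3)=1$; meanwhile $123$, $312$, and $321$ each form a singleton class, giving $g(3)=3$. (As a sanity check one can also confirm $f(4)=3$ and $g(4)=5$ against the recurrences.) Then I would solve each recurrence. The relation $f(n)=f(n-1)+2$ is arithmetic, so unwinding from $f(3)=1$ yields $f(n)=2n-5$ for all $n\ge 3$. For $g$, the map $g\mapsto 2g-1$ has fixed point $1$, so substituting $g(n)=1+h(n)$ turns the recurrence into the geometric relation $h(n)=2h(n-1)$; since $h(3)=g(3)-1=2$, this gives $h(n)=2^{n-2}$ and hence $g(n)=2^{n-2}+1$ for all $n\ge 3$.

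Adding the two closed forms finishes the proof: $f(n)+g(n)=(2n-5)+(2^{n-2}+1)=2^{n-2}+2n-4$, exactly the claimed count.

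There is no genuinely hard step here, since Propositions \ref{prop2} and \ref{prop3} have already done the combinatorial heavy lifting; the only point requiring care is bookkeeping. Both recurrences are valid only for $n>3$, so I must supply the $n=3$ values separately and make sure the formula $2^{n-2}+2n-4$ is asserted for $n\ge 3$ (consistent with the convention in Figure \ref{answers}). The mild obstacle, therefore, is simply confirming that the arithmetic and geometric solutions are correctly anchored at $n=3$ and that their sum reproduces the stated exponential-plus-linear expression for every $n\ge 3$.
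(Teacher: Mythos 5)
Your proposal is correct and is essentially identical to the paper's own proof: both take the base values $f(3)=1$ and $g(3)=3$, solve the recurrences of Propositions \ref{prop2} and \ref{prop3} to get $f(n)=2n-5$ and $g(n)=2^{n-2}+1$, and add them. The only difference is that you spell out the solving of the recurrences in slightly more detail.
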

\begin{proof} Let $n>3$, noting that the case of $n=3$ is trivial. Because $f(3)=1$, Proposition \ref{prop2} shows that $f(n)=2 \cdot n-5$. Because $g(3)=3$, Proposition \ref{prop3} shows that $g(n)=2^{n-2}+1$. Thus the number of classes created in $S_n$ is $2 \cdot n-5+2^{n-2}+1=2^{n-2}+2n-4$ for $n>3$.
\end{proof}

\subsection{\mathheader{ \{213, 132, 231, 312\} }-Equivalence, \mathheader{ \{123, 132, 213, 231 \} }-Equiva\-lence, and \mathheader{ \{123, 132, 231, 321 \} }-Equivalence} \label{sub4s}
In this subsection, we reference results from Section \ref{Secgen}.

\begin{prop} There are two classes in $S_n$ under the $\{123, 132, 231, 321 \}$-equivalence for $n \ge 4$, one containing permutations equivalent to the identity, and one containing permutations equivalent to the identity with $1$ and $2$ swapped.
\end{prop}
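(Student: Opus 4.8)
The claim has two parts: that $S_n$ splits into exactly two classes for $n \ge 4$, and that these classes are identified by the two named representatives. I would attack this by showing (i) that every permutation is equivalent to either the identity $12\ldots n$ or to the swapped identity $21\ldots n$, giving at most two classes, and (ii) that these two permutations are genuinely inequivalent, giving at least two classes. The partition here is $\{123,132,231,321\}$, so the available transformations let me freely interchange the order permutations $123$, $132$, $231$, and $321$ within any length-three factor; only $213$ and $312$ sit in trivial parts and cannot be produced or consumed. This rich set of transformations is what makes the ``at most two'' direction plausible.

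For the upper bound, the natural plan is a sorting argument by induction on $n$. Given $w \in S_n$, I would first maneuver the largest letter $n$ into a convenient terminal position using transformations that involve $n$, then apply the inductive hypothesis to the remaining $n-1$ letters. The key is that from any non-avoiding configuration one of $123,132,231,321$ is available as a hit, and since four of the six patterns are mutually interchangeable, $n$ can be slid toward the end (for instance, a factor forming $132$, $231$, or $321$ can be rearranged to bring the largest of the three letters rightward or leftward as needed). I expect that every permutation reduces to one of the two sorted forms $12\ldots n$ or $21\ldots n$, the residual ambiguity being exactly whether the first two letters end up as $12$ or $21$. Here I would lean on Section~\ref{Secgen}, in particular the general results on avoiders cited in that section (analogous to Remark~\ref{remshortproof}), to handle the reduction uniformly for large $n$ and to dispatch the small base cases computationally.

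For the lower bound — showing the two classes are distinct — I need an invariant of the $\{123,132,231,321\}$-equivalence that separates $12\ldots n$ from $21\ldots n$. The most promising candidate is a parity invariant: since every one of the four interchangeable patterns $123,132,231,321$ is an \emph{even} permutation (each is either the identity, a transposition composed to stay even, or a $3$-cycle), any single transformation replaces an even factor pattern by another even factor pattern, and so the sign of the whole permutation is preserved under every transformation. Because $12\ldots n$ is even and $21\ldots n$ is odd, no sequence of transformations can connect them, so they lie in different classes. This sign-invariance argument is clean and is exactly what forces the count to be two rather than one.

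The main obstacle will be the upper-bound sorting step, not the invariant: I must verify that the reduction really terminates at one of the two canonical forms and never gets stuck in an avoider that is inequivalent to both. Concretely, the danger is a permutation all of whose length-three factors already form $213$ or $312$ (the trivial patterns), which cannot be altered at all. I would need to check that any such completely ``frozen'' permutation is itself already sortable — or rather, that the only avoiders are isolated and can be matched to the correct parity class directly — which is where the hypothesis $n \ge 4$ and the avoider-classification results of Section~\ref{Secgen} do the real work. Once the structure of avoiders is pinned down, combining it with the parity invariant yields exactly two classes with the stated representatives.
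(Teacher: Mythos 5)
Your separating invariant is wrong, and it is the load-bearing part of the lower bound. You claim that each of $123$, $132$, $231$, $321$ is an even permutation, so that every transformation preserves the sign of the whole permutation. But $132$ and $321$ are single transpositions, hence odd: the allowed transformation $123\to 132$ swaps exactly two letters of the permutation and therefore flips its sign. Concretely, $1234\to 1324$ is a single $\{123,132,231,321\}$-transformation connecting an even permutation to an odd one, so sign is not an invariant of this equivalence and cannot separate $12\ldots n$ from $21\ldots n$. The invariant the paper actually uses is the position parity of the letter $1$: in each of the four patterns in the nontrivial part, the smallest letter sits in the first or third position of the hit (an odd position within the hit), so any transformation moves the letter $1$ by an even number of places (or not at all). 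Since $1$ occupies position $1$ in the identity and position $2$ in the identity with $1$ and $2$ swapped, the two representatives are inequivalent. Your upper-bound sketch is broadly in the spirit of the paper (which runs an induction and invokes Proposition~\ref{propreachmiddled} and Theorem~\ref{thmstoogesort} to reduce every permutation to one of the two representatives), and your worry about frozen avoiders is easily dispatched: both $213$ and $312$ place their smallest letter in the middle, so two overlapping windows cannot both avoid the nontrivial part, and there are no avoiders for $n\ge 4$. But without a correct invariant the argument only shows there are \emph{at most} two classes, not exactly two.
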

\begin{proof}Note that the position parity of $1$ is invariant under the equivalence. Let $x_n$, $y_n$ be the identity and the identity with $1$ and $2$ swapped in $S_n$. Assume the result inductively, with an inductive base case of $S_4, S_5$. It is not hard to see that in $S_{n>5}$, every permutation is middled. By the inductive hypothesis along with the noted invariance, we may conclude that $L_{n-1}=R_{n-1}=\{x_{n-1}, y_{n-1}\}$. By Theorem \ref{thmstoogesort}, this implies that each permutation in $S_n$ is equivalence to $x_n$ or $y_n$. This along with the noted invariance implies the proposition.
\end{proof}

\begin{prop} There are $n$ classes created in $S_n$ under the $\{123, 132, 213, 231\}$-equivalence.
\end{prop}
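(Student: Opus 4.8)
The plan is to induct on $n$ and establish the recursion ``(number of classes in $S_n$) $=$ (number of classes in $S_{n-1}$) $+\,1$'', with the base case $n=3$ checked by hand. For $n=3$ the four patterns $123,132,213,231$ merge into one class while $312$ and $321$ are avoiders, giving exactly $3$ classes; since $3=3$, the recursion then yields $n$ classes for every $n\ge 3$.

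The key structural observation I would record first is that the property \emph{``$n$ is in the first position''} is invariant under the equivalence. In each of the four patterns $123,132,213,231$ the largest letter occupies the second or third slot, never the first. Hence, whenever $n$ participates in a hit it is the largest letter of that hit, so it can neither sit in nor move into the first slot of a hit. Consequently a permutation with $n$ at position $1$ has $n$ frozen and never involved in any hit, while a permutation with $n$ elsewhere can never push $n$ to the front. This partitions $S_n$ into two invariant families. For the family with $n$ first, every hit lies strictly to the right of position $1$, so these permutations behave exactly like $S_{n-1}$ under the same equivalence and contribute precisely (number of classes in $S_{n-1}$) classes.

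It then remains to prove the central lemma: \emph{all permutations with $n$ not in the first position form a single class}. I would prove this by induction, reducing an arbitrary such permutation to the identity. First, since $n$ can always slide one step to the right when it is not first (via a hit of type $132$ or $231$ with $n$ in the middle), I slide $n$ to the last position, obtaining $w\,n$ with $w\in S_{n-1}$. If the maximum of $w$ is not at the front of $w$, the inductive hypothesis applied to the left-most $n-1$ letters sorts $w$ into $12\cdots(n-1)$, and we are done. The only remaining situation is $\sigma\equiv(n-1)\,u\,n$, where $n-1$ sits stubbornly at the front.

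The main obstacle is exactly this dislodging step: we must move $n-1$ off the front while every available pattern forbids a maximum in the first slot. The idea is to use $n$ as a \emph{catalyst}. I slide $n$ leftward to position $2$ (choosing, at the final step, transformations that keep $n-1$ at the front), producing $(n-1)\,n\,v$; the triple in positions $1,2,3$ is then a hit with the maximum $n$ in the middle, and the interchangeability of $x\,n\,y$ with $y\,n\,x$ lets me convert $(n-1)\,n\,v_1$ into $v_1\,n\,(n-1)$, replacing the front letter $n-1$ by the smaller letter $v_1$. Sliding $n$ back to the end, while steering $n-1$ away from position $1$, leaves a permutation $w^{\ast}\,n$ whose first $n-1$ letters no longer have their maximum at the front, so the earlier case applies. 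This catalyst principle is what effectively frees us from the $312,321$ restriction and is the crux of the whole argument; I expect verifying that the sliding never irreparably disturbs position $1$ to be the delicate bookkeeping. Combining the two invariant families finally gives the recursion and hence the count $n$. (Alternatively, one could collapse the non-first family to a single class by invoking the general reduction machinery of Section~\ref{Secgen}, exactly as in the preceding proposition.)
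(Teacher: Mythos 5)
Your proof is correct and rests on the same decomposition as the paper's: induct on $n$, observe that $n$ can neither leave nor enter the first position (since none of $123$, $132$, $213$, $231$ has its largest letter first), so the permutations beginning with $n$ contribute exactly the $n-1$ classes of $S_{n-1}$, and the remaining permutations must be shown to form a single class. The difference is entirely in that last step: the paper disposes of it in one line by citing Proposition 4.1 of \cite{PRW11}, whereas you prove it from scratch. Your catalyst argument checks out --- after reducing to the form $(n-1)\,u\,n$, only the final leftward slide of $n$ (from position $3$ to position $2$) touches the first slot, and there the $213\to 231$ choice keeps $n-1$ in front; the exchange $(n-1)\,n\,v_1\to v_1\,n\,(n-1)$ (a $231\to 132$ move) then dislodges $n-1$, and on the return trip only the first rightward slide involves position $1$, where the $132\to 123$ choice installs $v_1$ there, after which the earlier case applies. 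So what you gain is a self-contained proof of the reachability statement the paper outsources; what the paper gains is brevity, plus a second, slicker route it sketches afterward via Theorems \ref{Thmsame2} and \ref{thmavoiders} (identifying the relation with its non-adjacent variant and counting avoiders), which enumerates the classes without exhibiting the explicit sequences of moves your argument provides.
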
 \label{propsimplen}
\begin{proof}
\end{proof}
\begin{proof}
We will prove this by inducting on $n$. The base case of $S_3$ can easily be shown computationally. Assume the result holds for $S_{n-1}$. Note that if the left-most letter of a permutation is $n$, then $n$ is stationary under the relation. Hence, the permutations starting with $n$ are clearly broken into $n-1$ classes by the inductive hypothesis. Proposition 4.1 of \cite{PRW11} shows that the remaining permutations are equivalent.
\end{proof}

There is a slightly cooler proof of the above proposition which goes as follows. By Theorem \ref{Thmsame2}, the $\{123, 132, 213, 231\}$-equivalence and the  $\{123, 132, 213, 231\}^{\fourdots}$-equivalence are the same in $S_{n\ge 4}$. By Theorem \ref{thmavoiders}, there are $n$ equivalence classes in $S_n$ under the $\{123, 132, 213, 231\}^{\fourdots}$-equivalence for $n\ge 5$ (the smaller cases of $n$ can be shown computationally).

\begin{prop} $S_n$ breaks into three classes under the $\{213, 132, 231, 312\}$-equivalence.
\end{prop}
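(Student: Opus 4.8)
The statement is that $S_n$ breaks into exactly three classes under the $\{213, 132, 231, 312\}$-equivalence. The plan is first to understand which permutations avoid this replacement partition, since avoiders form trivial (singleton) classes and non-avoiders collapse into larger classes. A permutation avoids all four patterns $213, 132, 231, 312$ exactly when every length-three factor forms either $123$ or $321$; it is easy to check that the only such permutations are the identity $12\cdots n$ and the reverse-identity $n(n-1)\cdots 1$ (for $n \ge 3$, a permutation all of whose consecutive triples are monotone must be globally monotone). These two avoiders each form their own trivial class, giving two of the three claimed classes. The remaining task is to show that \emph{all} non-avoiders lie in a single class.

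**Reducing to a sorting statement.** To prove the non-avoiders form one class, I would show that every non-avoiding permutation is equivalent to one fixed target, for instance a specific ``standard'' non-avoider such as $213\,45\cdots n$ or the near-identity $213456\cdots n$. The natural approach is induction on $n$ with small base cases ($S_3, S_4$, perhaps $S_5$) verified computationally. Since the partition contains four of the six patterns in $S_3$, the equivalence is very rich: out of any hit we can transform into three of the other orientations freely. The first concrete move I would exploit is that the letter $n$ is highly mobile: using transformations $132 \to 231$, $213 \to 312$, etc., one should be able to slide $n$ to a convenient position (say the front or the back) whenever $n$ participates in a hit. The inductive step would then remove $n$ and apply the hypothesis to the remaining $n-1$ letters, being careful that deleting $n$ preserves non-avoidance — this is where the results in Section~\ref{Secgen} (and possibly Theorem~\ref{thmavoiders} or the ``stooge'' machinery of Subsection~\ref{secstooge}) become useful for controlling which permutations stay non-avoiding.

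**Executing via middled/stooge results.** Given the pattern of the preceding propositions in this subsection (which invoke Theorem~\ref{thmstoogesort} and the sets $L_{n-1}, R_{n-1}$), I expect the cleanest route mirrors those proofs: establish that for large $n$ every permutation is \emph{middled}, then show by the inductive hypothesis that $L_{n-1} = R_{n-1}$ consists only of the appropriate representatives, and conclude via Theorem~\ref{thmstoogesort} that every non-avoider reduces to one canonical non-avoider. Combined with the two trivial avoider classes, this yields exactly three classes. Alternatively, Theorem~\ref{Thmsame2} and Theorem~\ref{thmavoiders} may apply directly if this equivalence coincides with its $\fourdots$ variant, giving a short proof for $n \ge 5$ with smaller cases checked by hand.

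**Main obstacle.** The hard part will be verifying connectivity among the non-avoiders with enough uniformity to drive the induction — specifically, ensuring that after sliding $n$ out of the way, the truncated permutation is still a non-avoider so that the inductive hypothesis applies. Because some non-avoiders might become avoiders upon deleting $n$ (e.g. if the only hit involves $n$), I would need a lemma guaranteeing that any non-avoider is equivalent to one whose first $n-1$ letters are themselves non-avoiding, analogous to Lemma~\ref{lemob1}. Establishing that reduction cleanly, rather than the final class count, is where the real work lies.
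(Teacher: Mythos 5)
Your third paragraph is exactly the paper's proof: it notes the two avoiders, then uses Proposition \ref{propreachmiddled} to show every non-avoider is equivalent to a middled permutation, and applies Theorem \ref{thmstoogesort} together with the inductive hypothesis (which forces $L_{n-1}$ and $R_{n-1}$ to consist of a single representative of the unique nontrivial class) to collapse all non-avoiders into one class. The ``main obstacle'' you identify---preserving non-avoidance when passing to $n-1$ letters---is precisely what Proposition \ref{propreachmiddled} handles, so no additional lemma is needed.
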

\begin{proof}
There are always two avoiding permutations. Assume that the remaining permutations are each equivalent in $S_{n-1}$ for $n>5$ (the base cases are easy to check). By Proposition \ref{propreachmiddled}, every non-avoiding permutation in $S_n$ is equivalent to a middled permutation. By Theorem \ref{thmstoogesort}, every non-avoiding permutation in $S_n$ is thus equivalent to the identity with $n$ and $n-1$ swapped.
\end{proof}

\section{Double Replacements}\label{Secdouble}
In this section, we consider the classes created under replacement partitions of $S_3$ with two nontrivial parts, each of size two. Both the forgotten and Knuth relations are members of this family of relations; they are the main inspiration for this direction of work. We find the number of equivalence classes created in $S_n$ in all but one of the unresolved cases (up to symmetry). In the final case, the $\{231, 132\} \{213, 312\}$-equivalence, we provide computational data for the use of future authors. When convenient, we also calculate the size of the class containing the identity.

Surprisingly, class enumerations equal to those yielded by each of the Knuth relation and forgotten relation show up in our study of the $\{123, 132\}\{213, 312\}$-equivalence and $\{123, 231\} \{213, 132\}$-equivalence respectively. The reason for this is still largely a mystery.

The following subsections are concerned with one replacement partition each. In each subsection, the replacement partition $K$ is to be understood to be the partition mentioned in the title of the subsection (unless otherwise specified).

\subsection{\mathheader{\{ 312, 321\} \{ 123, 132 \}}-Equivalence}

\begin{lem}\label{lem21}  (a) Letters to the right of $1$ can be rearranged freely under the \\$\{ 312, 321 \} \{ 123, 132 \}$-equivalence. (b) Letters to the right of $n$ can be rearranged freely under the $\{ 312, 321 \} \{ 123, 132 \}$-equivalence.
\end{lem}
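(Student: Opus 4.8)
The plan is to first restate both nontrivial parts of the partition as a single elementary move on three consecutive letters. Since $123$ and $132$ are exactly the two order permutations whose first letter is smallest, while $321$ and $312$ are exactly the two whose first letter is largest, a $K$-transformation amounts to the following rule: in three consecutive letters $a\,b\,c$, the last two letters $b,c$ may be transposed precisely when $a$ is either larger than both or smaller than both --- equivalently, when $a$ does not lie strictly between $b$ and $c$ in value. Before attacking (a), I would observe that (b) is not independent of it. Applying the value-complementation symmetry $\pi\mapsto(4-\pi_1)(4-\pi_2)(4-\pi_3)$, one of the generators of the Klein four-group described at the start of Section~\ref{Secsingle}, sends the partition $\{312,321\}\{123,132\}$ to itself while exchanging its two parts and sending the letter $1$ to the letter $n$. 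Hence (b) is exactly the image of (a) under this symmetry, and it suffices to prove (a).

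For (a) I would prove the following more general statement by strong induction on the number $m$ of letters involved: \emph{if $\ell$ is a letter of a word such that every letter to its right is larger than $\ell$, then the letters to the right of $\ell$ can be rearranged into any order by transformations that fix every position up to and including that of $\ell$.} Part (a) is the case $\ell=1$. The base case records that the two letters immediately to the right of the pivot $\ell$ can always be swapped, since together with $\ell$ they form a $123$ or a $132$ (as $\ell$ is smaller than both). For the inductive step I would argue by transitivity using two moves. First, from an arbitrary arrangement I would bring the smallest letter $s$ of the block to the position immediately right of $\ell$. Second, once $s$ sits right after $\ell$, every letter to the right of $s$ exceeds $s$, so the induction hypothesis applied with $s$ as the new pivot (and only $m-1$ letters) lets me reach every arrangement in which $s$ occupies that position. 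Combining the two: every arrangement is equivalent to one with $s$ at the front, and all of the latter are mutually equivalent, so all arrangements of the block lie in a single class.

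The main obstacle is the first move --- showing that the minimum $s$ can always be driven to the front --- because a letter can be blocked from sliding left whenever its second-left neighbor happens to lie strictly between it and its immediate left neighbor. I would dispose of this with the induction hypothesis as well. If $s$ is not in the last position of the block, say at position $q$, then the positions from just after $\ell$ up through $q$ form a strictly shorter block with pivot $\ell$, and the induction hypothesis lets me reorder that sub-block so as to place $s$ first. If $s$ does occupy the last position, I would first use the induction hypothesis on the preceding $m-1$ positions to arrange that the next-to-last letter exceeds the one before it; the triple ending in $s$ then has its first letter largest, so the $\{312,321\}$-move transposes $s$ one step to the left, after which $s$ is no longer last and the previous case applies. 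At the very end I would note that (b) then follows from (a) by the symmetry recorded in the first paragraph. The only delicate points are the bookkeeping needed to certify that each invoked sub-case is genuinely of smaller size and that the transformations chosen never disturb the fixed prefix.
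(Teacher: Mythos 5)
Your proposal is correct in substance but takes a genuinely different route from the paper's. The paper proves (a) by induction on $n$ in stooge-sort style: it shows every permutation beginning with $1$ reaches the identity by alternately sorting the first $n-1$ and the last $n-1$ letters, with a special case (resolved by a single $321\to 312$ move on the final triple) when the letter $2$ is stuck in the last position. You instead prove a stronger statement for an arbitrary pivot $\ell$ dominated by everything to its right, and run a selection-sort recursion: drive the minimum $s$ of the block to the position just after the pivot, then recurse with $s$ as the new pivot on the remaining $m-1$ letters. Your generalization to an arbitrary pivot is exactly what makes that recursion self-contained (the paper needs the same flexibility but obtains it implicitly by applying the lemma to factors via their order permutations), and your reduction of (b) to (a) by complementation is identical to the paper's. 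One directional slip needs fixing: in the sub-case where $s$ occupies the last position of the block, you ask to arrange that ``the next-to-last letter exceeds the one before it,'' i.e.\ $a<b$ in the final triple $a\,b\,s$; that triple then forms a $231$, which is not a hit in the partition $\{312,321\}\{123,132\}$, and the move fails. What you want --- and what your very next clause correctly describes --- is $a>b$, so that $a\,b\,s$ forms a $321$ and the $321\to 312$ move transposes $s$ one step to the left; this is achievable by the same appeal to the induction hypothesis, since the preceding $m-1\ge 2$ letters of the block can be put in any order. With that correction the argument is complete.
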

\begin{proof}
We first prove (a). It is sufficient to prove that in $S_n$, all permutations of the form $1 \ldots$ are equivalent to the identity. We will prove this by induction. The base case of $S_3$ is trivial. Assume that we have shown the result to hold for $S_{n-1}$. Let $w$ be a permutation beginning with $1$ in $S_n$. We will prove that $w$ is reachable from the identity. We consider two cases:
\begin{enumerate}
\item The final letter of $w$ is $2$.
\item The final letter of $w$ is not $2$.
\end{enumerate}

We deal with Case 2 first. In this case, the letter $2$ is among the first $n-1$ positions of $w$. Applying the inductive hypothesis to the first $n-1$ letters of $w$, we rearrange the first $n-1$ letters as the identity, yielding $w'$. Note that the first two letters of $w'$ are $12$. Thus applying the inductive hypothesis to the final $n-1$ letters of $w'$, we arrive at the identity.

Now, we consider Case 1. In this case, the final letter of $w$ is $2$. By the inductive hypothesis, we can rearrange the first $n-1$ letters of $w$ to be the identity with the final two letters swapped. We apply $321 \rightarrow 312$ to the final three letters, yielding a permutation starting with $1$ but not ending with $2$. Hence, we can proceed as in Case 2, and conclude that in Case 1, $w$ is equivalent to the identity.

Now, we note that (b) falls from (a) because whenever two permutations $x$ and $y$ are equivalent under the $\{312, 321\}\{123, 132\}$-equivalence, so are their complements. (The \emph{complement} of a permutation $a_1 a_2 \ldots a_n$ is defined as the permutation $(n + 1 - a_1 )(n + 1 - a_2 ) \ldots (n + 1 - a_n )$.)
\end{proof}

\begin{defn}
The \emph{proximum} of a word $w$ is the left-most of the largest and smallest letters in $w$.
\end{defn}

For example, the proximum of $519234$ is $1$. 

\begin{defn}
Let $w$ be a word consisting of $n$ pairwise distinct letters. We define the set $W_w$ (of letters) in the following way.
\begin{itemize}
\item If $w$ has only one letter, then $W_{w}=\varnothing$.
\item Otherwise, $W_{w}=\{  u \} \cup W_{f}$, where $u$ is the proximum of $w$, and $f$ is the factor of $w$ going from the first letter of $w$ to $u$ (inclusive).
\end{itemize}
\end{defn}

For example, if $w=453216$, then $W_w$ contains $1$ as well as the  elements of $W_{45321}$ which are 5 as well as the elements of $W_{45}$, which are just 4. So, $W_w$ contains $1$, $5$, and $4$.

\begin{lem}\label{lemWclosure}
If $w$ and $w'$ are separated by a single transformation, then $W_w = W_{w'}$.
\end{lem}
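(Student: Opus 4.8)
The plan is to prove the lemma by induction on the length of $w$, exploiting the fact that the recursive definition peels off a suffix of $w$ at each step: if $u$ is the proximum of $w$, occurring at position $j$, then $W_w=\{u\}\cup W_{w[1\ldots j]}$, and the letters at positions $>j$ play no further role. A single transformation of the $\{312,321\}\{123,132\}$-equivalence applied at positions $i,i+1,i+2$ always fixes the letter $w_i$ (which is the largest of the three letters in the $\{312,321\}$ case and the smallest in the $\{123,132\}$ case) and swaps $w_{i+1}$ with $w_{i+2}$. First I would record this concrete description, and then reduce to the case where $w_i$ is the largest of the three: the complement operation turns a $\{123,132\}$-transformation into a $\{312,321\}$-transformation, and since the proximum position of a word and of its complement agree (largest and smallest merely trade roles, but the pair of their positions, hence the leftmost of them, is unchanged), we have $W_w=W_{w'}\iff W_{\bar w}=W_{\bar w'}$, where $\bar x$ denotes the complement.

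With $e:=w_i$ the largest of the three (so the transformation is $312\leftrightarrow 321$), the main work is a case analysis on the position $j$ of the proximum relative to the swapped positions $i+1,i+2$. A preliminary observation is that the two smaller letters $w_{i+1},w_{i+2}$ can never include the global maximum, so at most one global extreme sits in the swapped region and the proximum letter is never changed in value by the swap. When $j\le i$ (swap strictly inside the discarded suffix) the retained prefixes $w[1\ldots j]$ and $w'[1\ldots j]$ are literally equal; when $j\ge i+3$ (swap strictly inside the retained prefix) they differ by the very same valid transformation, so the inductive hypothesis gives $W_{w[1\ldots j]}=W_{w'[1\ldots j]}$. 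In both of these cases the proximum keeps its value and position, and we conclude $W_w=W_{w'}$ immediately.

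The hard part, and the heart of the argument, is the two boundary cases $j=i+1$ and $j=i+2$, where the proximum is itself one of the two swapped letters. Here $e$ being the largest of the three forces the proximum $u$ to be the global minimum; moreover the global maximum lies strictly to the right of both swapped positions, so after the swap it stays put and $u$ remains the leftmost extreme, merely shifting by one position (from $i+2$ to $i+1$, or from $i+1$ to $i+2$). Its value is unchanged but the retained prefix changes length, taking one of the shapes $Pu$ and $Pmu$, where $P=w[1\ldots i]$ ends in $e$ and $m$ is the remaining middle-valued letter of the triple, with $u<m<e\le\max P$. The key auxiliary fact is $W_{Pu}=W_{Pmu}$: in each word the minimum is $u$ at the final position and the maximum is $\max P$, attained strictly to the left inside $P$, so the proximum of both words is $\max P$ at the same position, and both recurse on the identical prefix of $P$. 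This collapses the length mismatch and yields $W_w=W_{w'}$ in the boundary cases as well.

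The main obstacle is precisely this boundary analysis: one must verify that in each of $j=i+1$ and $j=i+2$ the swap moves the proximum by exactly one position without changing its value, and that the intervening middle letter $m$ is genuinely invisible to the proximum recursion (this is where $u<m<\max P$ is used). Once the auxiliary identity $W_{Pu}=W_{Pmu}$ is in hand, the remaining cases are routine, the induction closes on the shorter prefix $w[1\ldots j]$, and the complement symmetry disposes of the $\{123,132\}$ transformations.
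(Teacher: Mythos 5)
Your proof is correct and follows essentially the same route as the paper's: an induction on length with a three-way case split according to whether the hit lies entirely before the proximum, entirely after it, or straddles it, with the hard straddling case resolved by observing that the maximum of the retained prefix and all letters to its left are untouched (your identity $W_{Pu}=W_{Pmu}$ is exactly the paper's ``the greatest letter to $1$'s left does not change'' step). The only cosmetic difference is that you normalize via complementation to $312\leftrightarrow 321$ transformations, where the paper instead splits on whether $1$ or $n$ is the leftmost extreme and treats the two cases as mirror images.
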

\begin{proof}
Assume that the lemma holds for smaller $n$, with an inductive base case of $n=1$. Let $w$ and $w'$ in $S_n$ be separated by a single transformation using the hit $h$. We notice that the relative order of $1$ and $n$ can never change during a transformation (since this would only be possible if $1$ and $n$ are in the same hit, but then they would have to be acting as the $1$ and the $3$ of that hit). Hence, either $1$ is to the left of $n$ in each of $w$ and $w^{\prime}$, or $n$ is to the left of $1$ in each of $w$ and $w^{\prime}$. We will only treat the former of these cases; the latter is completely analogous. Assume that $1$ is to the left of $n$ in $w$ and $w'$. Let $f$ and $f'$ be the factors of $w$ and $w'$ respectively which begin with the first letter of $w$ and $w'$ respectively, and end with $1$. If $h$ only involves letters to the left of $1$, then $W_w=W_f \cup \{1\}=W_{f'} \cup \{1\}=W_{w'}$ because $W_f=W_{f'}$ by the inductive hypothesis. If $h$ only involves letters to the right of $1$, then the lemma is trivial. If $h$ involves $1$, then the greatest letter to $1$'s left does not change under the transformation. Hence, the greatest letter in $f$ and $f'$ is the same. Since the letters to the left of that letter in $f$ and $f'$ clearly are static under the transformation, $W_f=W_{f'}$ and $W_w=W_f \cup \{1\}=W_{f'} \cup \{1\}=W_{w'}$.
\end{proof}

\begin{defn}
The \emph{origin permutation} of a permutation $w$ in $S_n$ is a permutation in $S_n$ beginning with the letters of $W_w$, in the same order that they appear in $w$, and then continued with the remaining letters in increasing order.
\end{defn}

\begin{lem}\label{safetylem}
Let $w$ and $w'$ in $S_n$ be such that $W_w=W_{w'}$. Then, the origin permutations of $w$ and $w'$ are the same.
\end{lem}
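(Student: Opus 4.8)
The plan is to reduce the statement to a single structural fact: the set $W_w$ alone determines the order in which its elements occur from left to right in $w$. This suffices, because the origin permutation of $w$ consists of the elements of $W_w$ written in their order of appearance in $w$, followed by the complementary letters in increasing order; the complementary letters and their increasing arrangement depend only on the set $W_w$. So once the order of appearance is shown to be a function of $W_w$ alone, applying this to $w$ and $w'$ (which have $W_w=W_{w'}$) forces their origin permutations to coincide. The case $n=1$ is trivial, so I assume $n\ge 2$.

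First I would unwind the recursive definition of $W_w$. Writing $u_1$ for the proximum of $w$, $u_2$ for the proximum of the factor $f_1$ of $w$ running from its first letter to $u_1$, and so on, I obtain a sequence $u_1,u_2,\ldots,u_m$ listing all elements of $W_w$, where $f_k$ is the factor ending at $u_k$ and $W_{f_k}=\{u_{k+1},\ldots,u_m\}$. Because the proximum of a word of length $\ge 2$ always lies strictly to the left of that word's last letter (the last position cannot hold both the maximum and the minimum), each $f_{k}$ is a strict prefix of $f_{k-1}$, so the positions of $u_1,u_2,\ldots,u_m$ in $w$ strictly decrease. Hence the left-to-right order of $W_w$ in $w$ is exactly $u_m,u_{m-1},\ldots,u_1$, and it is enough to recover the sequence $(u_1,\ldots,u_m)$ from the set $W_w$.

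The key structural step establishes two facts. (1) Consecutive proxima alternate in type: if $u_k$ is the maximum of its factor $f_{k-1}$, then $u_k$ is also the maximum of $f_k$ and sits at the right end of $f_k$, which forces the proximum of $f_k$ to be its minimum, so $u_{k+1}$ is a minimum (and symmetrically with the roles reversed). (2) The proximum of any factor is both an extreme of that factor and a member of its $W$-set; since $W_{f_{k-1}}$ is a subset of the letters of $f_{k-1}$, this forces $u_k=\max W_{f_{k-1}}=\max\!\left(W_w\setminus\{u_1,\ldots,u_{k-1}\}\right)$ in the maximum case, and the analogous minimum identity otherwise. Finally, since $w\in S_n$, the first proximum $u_1$ is the leftmost of $1$ and $n$; it is added to $W_w$, while the other of $1,n$ lies to its right and hence outside the prefix $f_1$, so exactly one of $1,n$ lies in $W_w$. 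This determines whether $u_1=\min W_w=1$ or $u_1=\max W_w=n$. Combining (1) and (2), the whole sequence is reconstructed from $W_w$ by a deterministic rule: extract $1$ first if $1\in W_w$ and $n$ first if $n\in W_w$, then alternately extract the maximum and minimum of the remaining set.

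Applying this reconstruction to both $w$ and $w'$, the hypothesis $W_w=W_{w'}$ gives identical sequences $(u_1,\ldots,u_m)$, hence the elements of $W_w$ appear in the same order in $w$ and in $w'$; together with the identical increasing tail, this yields equal origin permutations. I expect the main obstacle to be the rigor of the key structural step, specifically the alternation of extreme-types in (1) and the identity in (2), both of which rest on the single observation that the proximum of a factor lies in that factor's $W$-set and is simultaneously that factor's maximum or minimum; I anticipate verifying these by a short induction on the length of the factor.
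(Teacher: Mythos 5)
Your proof is correct and takes essentially the same route as the paper's: both reduce the lemma to the observation that the set $W_w$ alone determines the left-to-right order of its elements in $w$, recovered by the same rule of starting from whichever of $1$ and $n$ lies in $W_w$ and then alternately extracting the maximum and minimum of what remains. The only difference is that you supply the inductive justification (strictly shrinking prefixes, alternation of extreme-types) that the paper asserts directly from the definition.
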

\begin{proof}
As a consequence of the definition of $W_w$, for a given choice of letters to be in the set, there is exactly one possibility for the order of those letters in $w$. One can find this order in the following way. The right-most letter in $W_w$ is $1$ or $n$. Assume without loss of generality that $W_w$ contains $n$. Then, the next right-most letter is the smallest letter in $W_w$, the next right-most letter is the second largest letter in $W_w$, the next right-most letter is the second smallest letter in $W_w$ and so on. This shows that the order in which the letters of $W_{w}$ appear in $w$ is
uniquely determined by the set $W_{w}$. But the origin permutation of $w$ only depends on the set $W_{w}$ and the order in which the letters of $W_{w}$ appear in $w$. Hence, the origin permutation of $w$ is uniquely determined by the set $W_{w}$, and the origin permutations of $w$ and $w^{\prime}$ are the same.
\end{proof}

\begin{lem}\label{lemWcomplete}
Let $w\in S_n$ and $w'$ be the origin permutation of $w$. Then, $w \equiv w'$  under the $\{ 312, 321\} \{ 123, 132 \}$-equivalence.
\end{lem}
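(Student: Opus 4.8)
The plan is to prove $w \equiv w'$ by induction on $n$, peeling off one extremal letter at a time, guided by the recursive structure of the definition of $W_w$. Recall that the origin permutation $w'$ begins with the letters of $W_w$ in the order they appear in $w$, and then lists the remaining letters in increasing order. The recursive definition of $W_w$ says that if $u$ is the proximum of $w$ (the leftmost of the largest and smallest letters), then $W_w = \{u\} \cup W_f$, where $f$ is the prefix of $w$ ending at $u$. This strongly suggests that the proof should mirror this recursion: I first want to argue that I can rearrange everything to the \emph{right} of the proximum $u$ into increasing order without disturbing the structure to its left, and then recurse on the prefix $f$.

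First I would set up the induction with a trivial base case ($n=1$, or a small computational base case). For the inductive step, by the symmetry under complementation used already in Lemma~\ref{lem21}(b) (the $\{312,321\}\{123,132\}$-equivalence commutes with complementation), I may assume without loss of generality that the proximum $u$ is the letter $1$ rather than the letter $n$; the other case follows by applying the argument to the complement. Once $u = 1$, the key engine is Lemma~\ref{lem21}(a): letters to the right of $1$ can be rearranged freely. So I can sort all letters appearing after the proximum $1$ into increasing order, producing a permutation $v$ that agrees with $w$ on the prefix $f$ up through the letter $1$, and is increasing thereafter. This handles the ``tail'' and reduces the problem to showing that I may rearrange the prefix $f$ into its own origin form while leaving the sorted tail alone.

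The next step is the recursion on the prefix. By the inductive hypothesis applied to the shorter word $f$, the factor $f$ is equivalent to \emph{its} origin permutation, i.e.\ the letters of $W_f$ in the order they appear, followed by the remaining letters of $f$ in increasing order. The delicate point is to lift this equivalence of $f$ (a word on a subset of $\{1,\dots,n\}$) to an equivalence of the full permutation $v$: I need the transformations that rearrange $f$ to be applicable as legitimate transformations of $v$, i.e.\ the hits used lie entirely within the first block of positions and do not spill into the already-sorted tail. Because every letter of $f$ except the proximum $1$ is larger than every letter of the tail (or, in the complement case, smaller), and $1$ sits at the boundary, hits confined to $f$ never need to borrow a letter from the tail, so the rearrangement of $f$ transports verbatim. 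After rearranging $f$ and then re-sorting the tail once more via Lemma~\ref{lem21}(a), I arrive exactly at $w'$: the letters of $W_w = \{1\} \cup W_f$ appear in the correct order, followed by all remaining letters increasing.

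The main obstacle I anticipate is precisely this lifting step: making rigorous that the inductive equivalence on the standardized word $f$ yields a chain of genuine $K$-transformations on the ambient permutation without the moving hits ever interacting with the tail, and checking that the output of the recursion on $f$ concatenated with the sorted tail really is the origin permutation $w'$ of $w$ (rather than merely having the right set $W_w$). Here Lemma~\ref{safetylem} is the natural safety net: it guarantees that once I know $W$ is preserved (Lemma~\ref{lemWclosure}) and I have reached \emph{some} permutation with the correct $W$-set in the correct standard order, that permutation's origin permutation is forced to be $w'$, so the bookkeeping closes. I would also double-check the boundary behaviour of the proximum under the complementation reduction, since swapping the roles of $1$ and $n$ flips ``increasing'' to ``decreasing'' and I must confirm the tail-sorting direction stays consistent with the definition of the origin permutation.
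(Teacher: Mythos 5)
Your argument is correct, but it runs the recursion in the opposite direction from the paper. The paper's proof peels off the \emph{leftmost} element of $W_w$ (the first letter $j_1$ of $w$): it applies Lemma~\ref{lem21} to the factor from $j_1$ to $j_2$ to slide $j_2$ adjacent to $j_1$, and then invokes the inductive hypothesis on the final $n-1$ letters. You instead peel off the \emph{rightmost} element in position (the proximum $u$), sort everything to its right with Lemma~\ref{lem21}, and recurse on the prefix $f$ --- which mirrors the recursive definition of $W_w$ directly and makes it more transparent that you land exactly on $w'$. The cost is the complementation reduction and the final re-sort of the tail, both of which you correctly flag and both of which go through; in fact you could skip complementation entirely by invoking part (a) or part (b) of Lemma~\ref{lem21} according to whether $u=1$ or $u=n$.

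Two corrections to the write-up. First, your stated justification for the lifting step is false: it is \emph{not} true that every letter of $f$ other than the proximum exceeds every letter of the tail (take $w=31524$, where the proximum is $1$, $f=31$, and the tail is $524$). Fortunately the claim being justified needs no such hypothesis: a transformation of the word $f$ uses a hit that is a contiguous factor contained in the prefix positions, and the order permutation of a factor depends only on the letters of that factor, so it is automatically a legitimate hit of the ambient permutation; the lifting is immediate. Second, Lemmas~\ref{lemWclosure} and~\ref{safetylem} only guarantee that whatever permutation you reach has origin permutation $w'$; they do not certify that the permutation you reach \emph{is} $w'$ (after all, $w$ itself has the correct $W$-set). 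You do not actually need that safety net, since your explicit construction terminates at the letters of $W_w$ in their $w$-order followed by the remaining letters in increasing order, which is $w'$ by definition.
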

\begin{proof}
Inductively assume the result holds in $S_{n-1}$ with a trivial base case of $S_1$. Let $w$ be in $S_n$ and $w'$ be the origin permutation of $w$. Let $j_1, j_2, \ldots, j_k$ be the letters of $W_w$ in the order that they appear in $w$ from left to right. If $k=1$, then we may apply Lemma \ref{lem21} to $w$ to reach $w'$. Otherwise, we may apply Lemma \ref{lem21} to the factor going from $j_1$ to $j_2$, and slide $j_2$ to be adjacent to $j_1$, reaching $x$ which begins with $j_1j_2$. Applying the inductive hypothesis to the final $n-1$ letters of $x$, we reach $w'$.
\end{proof}

\begin{prop}\label{propWbij}
There are $2^{n-1}$ classes in $S_n$  under the $\{ 312, 321\} \{ 123, 132 \}$-equiva\-lence.
\end{prop}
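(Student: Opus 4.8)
I want to show that the $W_w$-invariant is a \emph{complete} invariant for the $\{312,321\}\{123,132\}$-equivalence, and that it takes exactly $2^{n-1}$ distinct values as $w$ ranges over $S_n$. The preceding lemmas already supply one direction: by Lemma~\ref{lemWclosure} the set $W_w$ is preserved by every single transformation, hence it is constant on each equivalence class, so two equivalent permutations have the same $W$-set. For the converse I would use Lemmas~\ref{safetylem} and~\ref{lemWcomplete} together: if $W_w = W_{w'}$, then by Lemma~\ref{safetylem} the origin permutations of $w$ and $w'$ coincide, and by Lemma~\ref{lemWcomplete} each of $w$ and $w'$ is equivalent to its (common) origin permutation, so $w \equiv w'$. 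Combining the two directions, $w \equiv w'$ \emph{if and only if} $W_w = W_{w'}$, so the number of classes equals the number of distinct sets $W_w$ arising from permutations $w \in S_n$.

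\textbf{Counting the sets.} It then remains to count how many subsets of $\{1,2,\ldots,n\}$ occur as $W_w$ for some $w\in S_n$, and to see that this number is $2^{n-1}$. Here I would lean on the structural description extracted inside the proof of Lemma~\ref{safetylem}: the letters of $W_w$, read from right to left in $w$, begin with $1$ or $n$ and then alternate inward between the smallest and the largest still-unused letters of the set. In particular every $W_w$ must contain at least one of $1$ and $n$ (indeed its rightmost element is $1$ or $n$), and once that rightmost element is fixed, the remaining elements of $W_w$ are forced to form a nested ``staircase'' closing in from both ends. I would make this precise by exhibiting a clean bijection between the possible sets $W_w$ and the $2^{n-1}$ subsets of a natural $(n-1)$-element index set, or equivalently by setting up a bijection with the $2^{n-1}$ V-permutations (or with binary strings of length $n-1$ recording, at each inward step, whether the proximum came from the low end or the high end).

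\textbf{The main obstacle.} The genuinely delicate point is the counting step, not the characterization of the equivalence, which is essentially handed to us by the four preceding lemmas. I expect the difficulty to be in pinning down \emph{exactly} which subsets of $\{1,\ldots,n\}$ are realizable as some $W_w$ and verifying that this family has cardinality $2^{n-1}$ with no double-counting. The alternating ``smallest/largest remaining'' description suggests encoding each admissible set by a sequence of binary choices, but one must be careful at the two ends: the process terminates when the set is exhausted, and the cases ``$W_w$ contains $n$'' and ``$W_w$ contains $1$'' overlap precisely when $W_w$ contains both, so a naive product count would require an inclusion–exclusion correction. The cleanest route, which I would pursue, is to show directly that the map sending $w$ to the binary string recording the successive proximum choices descends to a bijection onto $\{0,1\}^{n-1}$, thereby both identifying the invariant and yielding the count $2^{n-1}$ in one stroke.
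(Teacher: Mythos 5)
Your first half --- that $W_w$ is a complete invariant, with invariance supplied by Lemma~\ref{lemWclosure} and completeness by Lemmas~\ref{safetylem} and~\ref{lemWcomplete} --- is exactly the paper's argument, and it correctly reduces the proposition to counting the sets that arise as $W_w$. The gap is in that counting step, which you rightly flag as the remaining work but do not carry out, and the route you sketch for it rests on two misconceptions. First, there is no inclusion--exclusion to perform: $W_w$ always contains \emph{exactly} one of $1$ and $n$, never both. The proximum of $w$ is the leftmost of $1$ and $n$, and the recursion then restricts to the prefix of $w$ ending at that proximum, so the other extreme letter lies strictly to its right and can never enter $W_w$. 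Second, there is no hidden ``staircase'' constraint on which sets are realizable. The alternating largest/smallest pattern you describe governs the \emph{order} in which the letters of $W_w$ appear in $w$ (that is precisely the content of Lemma~\ref{safetylem}); it does not restrict the possible \emph{values}: in fact every subset of $\{1,\ldots,n\}$ containing exactly one of $1$ and $n$ occurs as some $W_w$.

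The paper settles realizability by an explicit construction you would need to supply: given such a set $W$ of size $k$ containing (say) $n$, let $f$ be the length-$k$ word whose letters, read from right to left, are the largest element of $W$, then the smallest, then the second largest, then the second smallest, and so on, and let $w$ be $f$ followed by the remaining letters of $\{1,\ldots,n\}$ in increasing order; one checks directly that $W_w=W$. With realizability in hand the count is immediate --- $2$ choices for which of $1,n$ is present times $2^{n-2}$ subsets of $\{2,\ldots,n-1\}$ gives $2^{n-1}$ --- with no correction term. Your alternative encoding by a binary string of ``proximum choices'' does not obviously land in $\{0,1\}^{n-1}$ either, since the number of inward steps equals $|W_w|$, which varies with $w$.
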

\begin{proof}
By Lemma \ref{lemWclosure}, each class in $S_{n}$ gives rise to a set $W\subseteq\{1,2,\ldots,n\}  $ such that $W=W_{w}$ for each $w$ in the class, and such that $W$ contains exactly one of $1$ and $n$. Thus we obtain a map from the set of classes in $S_{n}$ to $\{  W \subseteq \{  1,2,\ldots,n\} \ \mid\ W\text{ contains exactly one of }1\text{ and }n\}$. This map is injective as a consequence of Lemma \ref{safetylem} and Lemma \ref{lemWcomplete}. We will now show that it is also surjective.

Let $W$ be a set containing letters from $1$ to $n$, including exactly one of $1$ and $n$. Assume without loss of generality that $W$ contains $n$. Let $k$ be the size of $W$. We define the word $f$ as the word of size $k$ ending with the largest letter in $W$, following the smallest letter, following the next largest letter, following the next smallest letter, etc.. Then, we define $w$ to be the permutation in $S_n$ beginning with $f$ and followed by the letters not in $f$ in increasing order. Note that $W_w=W$.  Therefore, there is a bijection from the set of classes in
$S_{n}$ to the set \newline$\{  W\subseteq\{  1,2,\ldots,n\} \ \mid\ W\text{ contains exactly one of }1\text{ and }n\}  $, and thus the number of classes in $S_{n}$ is
\[
\left\vert \{  W\subseteq \{  1,2, \ldots ,n\}  \ \mid\ W\text{ contains exactly one of }1\text{ and }n\}  \right\vert =2^{n-1}.
\]
\end{proof}

\begin{defn}
Let $w\in S_n$. Let $g_1, g_2, \ldots, g_k$ be the letters of $W_w$ in the order in which they appear in $w$ from right to left. Then, $g_i$ is a \emph{valley} if it is less than each of its adjacent letters in the sequence $g_1, g_2, \ldots, g_k$ and is a \emph{peak} if it is greater than its adjacent letters in the sequence $g_1, g_2, \ldots, g_k$. If $k=1$, then if $g_1=1$ it is a valley and if $g_1=n$ it is a peak. If $n=1$, we consider $g_1$ to be a valley.
\end{defn}

\begin{prop}\label{corsize}
Let $w \in S_n$. Let $g_1, g_2, \ldots, g_k$ be the letters of $W_w$ in the order that they appear in $w$ from \textbf{right to left}. Let $j_i=g_i$ if $g_i$ is a valley and $j_i=n-g_i$ if $g_i$ is a peak. The class containing $w$ is of size $$\dfrac{(n-1)!}{\prod\limits_{a=1}^{k-1}{(j_a+j_{a+1})}}$$ under the $\{ 312, 321\} \{ 123, 132 \}$-equivalence.
\end{prop}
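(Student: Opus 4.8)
The plan is to turn the size computation into the enumeration of a single fiber of the map $w\mapsto W_w$, and then to count that fiber by decomposing a permutation into ``filler blocks'' whose admissible letters are governed by a nested family of intervals. First I would reduce: by Lemma~\ref{lemWclosure} the set $W$ is constant on each class, and by Lemma~\ref{safetylem} together with Lemma~\ref{lemWcomplete} (as already exploited in Proposition~\ref{propWbij}) two permutations are equivalent if and only if they share the same $W$. Hence the class of $w$ is exactly $\{\sigma\in S_n : W_\sigma = W_w\}$, and it suffices to count permutations with a prescribed $W$. Writing $g_1,\ldots,g_k$ for the elements of $W_w$ read right to left, the rightmost element $g_1$ is $1$ or $n$; since complementation is a class-preserving involution (used already in Lemma~\ref{lem21}) that sends $W$ to its complement and, as one checks, leaves every consecutive sum $j_a+j_{a+1}$ unchanged (the individual $j_a$ change, but the peak/valley swap compensates), I may assume $g_1=n$, so $g_1$ is a peak and $j_1=0$.

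Next I would set up the region decomposition. In any $\sigma$ with $W_\sigma=W$ the $W$-letters occur left to right in the order $g_k,g_{k-1},\ldots,g_1=n$, because each $g_i$ is the proximum produced at the $i$-th stage of the recursive definition of $W$ and hence lies to the right of $g_{i+1}$; moreover $\sigma$ must begin with $g_k$, since the innermost prefix has length one. This splits $\sigma$ into the $W$-letters together with filler blocks $A_1,\ldots,A_{k-1}$, where $A_i$ occupies the gap between $g_{i+1}$ and $g_i$, and a terminal block $A_0$ lying to the right of $n$. Unwinding the proximum recursion, I would prove that $W_\sigma=W$ holds if and only if every filler letter is placed in an \emph{admissible} block and the letters within each block are ordered arbitrarily, where a letter $v\notin W$ is admissible in $A_0$ unconditionally and in $A_i$ (for $i\ge 1$) exactly when $v$ lies strictly between $g_i$ and $g_{i+1}$. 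The structural heart of the argument is that the intervals $I_i=\bigl(\min(g_i,g_{i+1}),\,\max(g_i,g_{i+1})\bigr)$ are nested, $I_1\supseteq I_2\supseteq\cdots\supseteq I_{k-1}$ (consecutive $g$'s form a peak--valley pair, so each step tightens one endpoint), so the admissible blocks of any filler $v$ form a prefix $\{A_0,A_1,\ldots,A_{r(v)}\}$.

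Because admissible sets are nested prefixes, I would count the fiber by inserting the $n-k$ filler letters one at a time in order of increasing $r(v)$: when the $t$-th letter is inserted, all previously inserted letters already sit in its admissible blocks $A_0,\ldots,A_{r(v)}$, so the number of available slots is exactly $(t-1)+(r(v)+1)=t+r(v)$. This yields $\bigl|\{\sigma:W_\sigma=W\}\bigr|=\prod_{t=1}^{n-k}\bigl(t+r(v_t)\bigr)$; grouping the factors by depth and telescoping rewrites this as $(n-1)!\big/\prod_{r=0}^{k-2}(C_r+r+1)$, where $C_r$ denotes the number of fillers admissible in no block deeper than $A_r$ (note $C_{k-1}+(k-1)=n-1$, which supplies the $(n-1)!$). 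It then remains to verify the identity $C_r+r+1=j_{r+1}+j_{r+2}$, which follows from a short telescoping evaluation of the band sizes $|I_r\setminus I_{r+1}|$ in terms of the $g$'s, handled by splitting into the valley side (even depths) and the peak side (odd depths).

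The main obstacle is the ``if'' direction of the characterization in the second paragraph: showing that admissibility of every filler, with arbitrary intra-block order, is \emph{sufficient} to force the full proximum recursion to reproduce $g_1,\ldots,g_k$. Here one must check that no filler letter ever becomes a spurious running maximum or minimum that would displace one of the intended proxima, and it is precisely the nestedness of the $I_i$ (and the fact that all letters below $g_2$ are forced into $A_0$) that rules this out. Once nestedness is established, the slot count in the third paragraph is routine, and the closing identity $C_r+r+1=j_{r+1}+j_{r+2}$ is a mechanical, if slightly fussy, telescoping.
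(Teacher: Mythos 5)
Your proposal is correct; I checked the key claims (the nestedness of the intervals $I_i$, the sufficiency of block-admissibility for reproducing the proximum recursion, the slot count $t+r(v_t)$, and the closing identity $C_r+r+1=j_{r+1}+j_{r+2}=n-|g_{r+1}-g_{r+2}|$) and they all hold; your count also agrees with the formula on small examples such as $w=453216$. The underlying combinatorial content is the same as the paper's --- both proofs reduce the class to the fiber $\{\sigma : W_\sigma=W_w\}$ via Lemmas~\ref{lemWclosure}, \ref{safetylem} and \ref{lemWcomplete}, and both ultimately count placements of the non-$W$ letters organized by the nested bands between consecutive extremes of $W_w$ --- but the organization differs genuinely. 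The paper inducts on $k$: it strips off the innermost band $I=\{g_k\}\cup(g_{k-1},g_k)$ (in your language, the letters of maximal depth $r(v)=k-1$), encodes a class element as a pair in $S\times Y$ where $|Y|$ is a falling factorial counting the placements of those freest letters, and recurses on the reduced permutation. You instead write down the full admissibility characterization at once and obtain the answer as a single product $\prod_t(t+r(v_t))$ by inserting letters in order of \emph{increasing} freedom, then telescope. Your route buys an explicit description of the class as a set (which letters may occupy which gaps, in arbitrary internal order), at the cost of having to prove the ``if'' direction of the characterization and the nestedness lemma up front; the paper's recursion quietly absorbs that sufficiency check into the claim that its map $g:S\times Y\to E$ lands in $E$, which it states rather tersely. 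Either way the verification you flag as the main obstacle does go through, for the reason you give: every filler below $g_2$ (in particular the unique global extreme not in $W_w$) is forced into $A_0$, and within each prefix $f_i$ the admissibility constraints pin down both the value and the relative position of the next proximum.
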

\begin{proof}

We want the number of permutations $w'\in S_n$ with $W_{w'}=W_w$ for a given $w \in S_n$. Recall that in such a situation, $w$ and $w'$ must have the letters of $W_w$ in the same order. We will prove the result by inducting on $k$; the base case of $k=1$ falls from Lemma \ref{lem21}. Assume the corollary holds for smaller $k$. Let $w \in S_n$, $g_1, \ldots, g_k$, and $j_1, \ldots, j_k$ be as stated. Note that $g_k$ is the left-most letter of $w$ and of $w'$. Also note that in this proof, a letter $a$ is said to be \textit{between} two letters $b$ and $c$ if and only if $\min\{b,c\}  <a<\max\{b,c\}  $. In particular, ``between'' refers to value, not position, and does not include the boundaries of the interval.

Let $E$ be the set of all $w^{\prime}\in S_{n}$ such that $W_{w^{\prime}}=W_{w}$. Observe that $E$ is the class of $w$. Let $I$ be the union of $\left\{  g_{k}\right\}  $ with
the set of all letters between $g_{k-1}$ and $g_{k}$. An $I$\emph{-purged word} is a word which contains every letter from $\left\{1,2,\ldots,n\right\}  \setminus I$ exactly once (and no other letters). Let $S$ be the set of all $I$-purged words $x$ with $W_x=\{g_1, g_2, \ldots, g_{k-1}\}$. Let $T$ be the set
  $$\{x\in S_{n-|g_{k}-g_{k-1}|} | W_x= \begin{array}[t]{l} \{g_i | 1\le i \le k-1, g_i \text{ is a valley}\} \; \cup \\
                                                      \{g_i-|g_k-g_{k-1}| | 1\le i\le k-1, g_i \text{ is a peak} \}\} \text{.}
                                   \end{array} $$
 Let $Y$ be the set of words of size $n$ consisting of zeros and a single occurrence of each letter between $g_k$ and $g_{k-1}$ as well as $g_k$ which is the first letter.

A bijection between $S$ and $T$ can be created by mapping $s \in S$ to its order permutation. Hence $|S|=|T|$. 

Let $e\in E$. Let $s$ be the word obtained by striking the letters between $g_k$ and $g_{k-1}$ as well as $g_k$ from $e$. Let $y$ be $e$ except that each letter that is neither between $g_k$ and $g_{k-1}$ nor $g_k$ is replaced with a zero. Then, we define $f: E \rightarrow S \times Y$ such that $f(e)=(s, y)$. Note that $s \in S$ because no $g_{i}$ with $i<k$ is equal to $g_{k}$ or any letter between $g_{k-1}$ and $g_{k}$. Now we go in the other direction; let $s\in S$ and $y\in Y$ be arbitrary. Then, $g: S \times Y \rightarrow E$ is defined such that $g(s, y)$ is $y$ except with each of the zeros of $y$ replaced by the letters of $s$ (in the order that they appear in $s$). Since $f$ and $g$ are inverses of each other, we have that $|E|=|S \times Y|$. Since $|S|=|T|$, we have $|E|=|T \times Y|$.

It is easy to see that $|Y|=(n-1)(n-2)\ldots (n-|g_k-g_{k-1}|+1)$. By the inductive hypothesis, $$|T|=\dfrac{(n-|g_k-g_{k-1}|-1)!}{\prod\limits_{a=1}^{k-2}{(j'_a+j'_{a+1})}}$$ where $j'_{i\le k-1}$ is $g_i=j_i$ if $g_i$ is a valley and is $(n-|g_k-g_{k-1}|)-(g_i-|g_k-g_{k-1}|)=n-g_i=j_i$ if $g_i$ is a peak. Therefore, $$|E|=(n-1)(n-2)\ldots (n-|g_k-g_{k-1}|+1) \cdot \dfrac{(n-|g_k-g_{k-1}|-1)!}{\prod\limits_{a=1}^{k-2}{(j_a+j_{a+1})}}\text{.}$$ This simplifies to $$|E|=\dfrac{(n-1)!}{\prod\limits_{a=1}^{k-1}{(j_a+j_{a+1})}}\text{.}$$
\end{proof}

\begin{prop}
The multiset of sizes of classes in $S_n$ under the  $\{123, 132\} \{213, 231 \}$-equivalence is the same as the multiset of sizes of classes in $S_n$ under the \\ $\{312, 321\} \{123, 132\}$-equivalence.
\end{prop}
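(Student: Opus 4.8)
The plan is to produce a \emph{size-preserving} correspondence between the blocks of the two partitions of $S_n$: a bijection from the set of $\{312,321\}\{123,132\}$-classes to the set of $\{123,132\}\{213,231\}$-classes sending each class to a class of the same cardinality. I would build this from explicit normal forms on each side, not from a symmetry of the two relations, because no such symmetry exists: the orbits of the two partitions under the reverse/complement Klein four-group are disjoint (complement fixes $\{312,321\}\{123,132\}$ while $\{123,132\}\{213,231\}$ has no nontrivial stabilizer there), so the coincidence is genuine. The one structural feature both relations share, and my starting point, is that every transformation fixes the first letter and only rearranges positions $\ge 2$ (the leftmost admissible window can move only positions $2$ and $3$); hence the first letter is a common invariant, and when it is extreme the class structure degenerates exactly as in Lemma~\ref{lem21}.

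First I would mirror, for the $\{123,132\}\{213,231\}$-equivalence, the development carried out for $\{312,321\}\{123,132\}$ in the present subsection. The governing rule here is ``one may swap the letters in positions $i+1$ and $i+2$ whenever the letter in position $i$ is \emph{not} the largest of the three,'' so I would introduce the analogue of the proximum and of the set $W_w$ adapted to this rule, prove the analogues of Lemma~\ref{lemWclosure} (invariance under a single transformation) and Lemma~\ref{lemWcomplete} (every permutation reduces to a normal form), and thereby obtain that this equivalence also has $2^{n-1}$ classes with explicit transversals, paralleling Proposition~\ref{propWbij}. Running the same inductive peeling as in Proposition~\ref{corsize} --- strip the first letter, split off the letters in a controlled value-interval, recurse --- should give a product formula of the same shape $(n-1)!/\prod(\,\cdot\,)$ for the size of each class. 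Two clean sub-lemmas feed the induction: when $w_1\in\{1,2\}$ the window on positions $1,2,3$ always fires, forcing a single class of size $(n-1)!$; and when $w_1=n$ that window never fires, so the suffix carries exactly the $\{123,132\}\{213,231\}$-equivalence on $S_{n-1}$, contributing a faithful copy of the $(n-1)$-case multiset.

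The main obstacle is the last step: proving the two multisets coincide. Because the relations are not interchanged by any symmetry, they distribute the common multiset differently across the first letter --- for $n=4$ both yield $\{6,6,3,3,2,2,1,1\}$, but with incompatible first-letter profiles --- so no first-letter-preserving or symmetry-induced bijection can exist, and the identity must be proved at the level of the multisets themselves. I would take $M_B(n)$, the size-multiset of the $\{312,321\}\{123,132\}$-classes, in the closed product form supplied by Proposition~\ref{corsize}, and prove $M_A(n)=M_B(n)$ by induction on $n$ from the $\{123,132\}\{213,231\}$ side: the $w_1=n$ block contributes $M_A(n-1)=M_B(n-1)$ by induction, the blocks $w_1\in\{1,2\}$ contribute two copies of $(n-1)!$, and the middle first letters $3,\dots,n-1$ contribute the remainder via the product formula from the previous paragraph. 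The hard part is then purely combinatorial: verifying that these middle contributions, reassembled, reproduce precisely the remaining values of $M_B(n)$ from Proposition~\ref{corsize}. I expect this term-by-term matching of two differently-indexed product formulas --- possibly streamlined by exhibiting an explicit value-preserving bijection between the two index sets of normal forms --- to absorb essentially all of the work, with the base case $n=3$ checked directly.
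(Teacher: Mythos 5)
Your setup is sound and your negative observations are all correct: the two partitions lie in different orbits of the reverse/complement group, the first letter is invariant under both relations, and the $n=4$ computation showing incompatible first-letter profiles rules out any symmetry-induced or first-letter-preserving bijection. But the proof as proposed stops exactly where the proposition begins: the ``term-by-term matching of two differently-indexed product formulas,'' which you defer as the hard part expected to absorb essentially all of the work, \emph{is} the content of the statement, and nothing in the proposal establishes it. Your first-letter induction also needs more than you state: to invoke the inductive hypothesis you must first locate $M_B(n-1)$ inside $M_B(n)$ on the $\{312,321\}\{123,132\}$ side (it sits in the classes whose parameter set contains $n-1$, since $(n-2)!/\prod l_i=(n-1)!/\bigl((n-1)\prod l_i\bigr)$), and then still match the leftover middle blocks, so the induction does not reduce the difficulty of the matching; it only reorganizes it. Likewise, building a full proximum/$W_w$ analogue for the $\{123,132\}\{213,231\}$ side is unnecessary work.

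The paper closes the gap without induction on $n$ and without new machinery on that side. Each $\{123,132\}\{213,231\}$-class contains a unique V-permutation $v$ (from the proof of Proposition \ref{prop2n}), and iterating Corollary \ref{corollarysamesizes} gives the class size as $(n-1)!/\prod_i l_i$, where the $l_i+1$ are the letters other than $1$ lying to the left of $2$ in $v$; each subset $\{l_1<\dots<l_{k-1}\}\subseteq\{2,\dots,n-1\}$ arises from exactly two V-permutations (swap $1$ and $2$). On the $\{312,321\}\{123,132\}$ side, Proposition \ref{corsize} gives sizes $(n-1)!/\prod_a(j_a+j_{a+1})$, and the substitution $l_a=j_a+j_{a+1}$ maps the admissible sequences $(j_1,\dots,j_k)$ two-to-one onto the same subsets of $\{2,\dots,n-1\}$, the two preimages being $j_1=0$ and $j_1=1$ with the rest determined by $j_{a+1}=l_a-j_a$. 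Hence both multisets consist of exactly two copies of $(n-1)!/\prod_i l_i$ for each subset $L\subseteq\{2,\dots,n-1\}$, and the proposition follows. This explicit reindexing is the missing lemma you would need to prove in place of the inductive reassembly.
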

\begin{proof}
Let us consider the multiset of sizes of classes in $S_n$ under the $\{123, 132\} \{213, 231 \}$-equivalence. By the induction in the proof of Proposition \ref{prop2n}, it is easy to see that in each class in $S_n$, there is exactly one V-permutation.

Let $v$ be a V-permutation. Consider the set $L_v$ containing the letters to the left of $2$ in $v$ with the exception of the letter $1$. Then, we may define $k-1$ as the size of $L_v$ and $l_1+1, \ldots, l_{k-1}+1$ to be the letters in $L_v$ in increasing order (thus from right to left in position in $v$). Then, by Corollary \ref{corollarysamesizes}, the size of the class containing $v$ is $$\dfrac{(n-1)!}{\prod\limits_{i=1}^{k-1}{l_i}}\text{.}$$ For a given choice of letters $l_1 < l_2 < \cdots < l_{k-1}$ with $l_{k-1}\le n-1$ and $l_1>1$, there are exactly two classes containing a V-permutation $v$ such that $L_v$ consists of the letters $l_i+1$. (The two V-permutations are identical except with $1$ and $2$ swapped.)  

Now, we consider the multiset of sizes of classes in $S_n$ under the $\{312, 321\} \{123, 132\}$-equivalence.

Let $j_1, \ldots, j_k$ be distinct letters from $0$ to $n-1$ such that $j_1$ is $1$ or $0$, $j_2>1$ if $k>1$, $j_{i}<j_{i-2}$ for $i>2$, $j_k+j_{k-1} \le n-1$ if $k>1$, and $k\ge1$. Let $g_i = j_i$ if $i$ is odd and $j_1=1$ or if $i$ is even and $j_1=0$. Let $g_i=n-j_i$ otherwise. Then, since the set $g_1, \ldots, g_k$ contains exactly one of $1$ or $n$, we know that there is exactly one class in $S_n$ such that $W_w=\{g_1, \ldots, g_k\}$ and that it is of size $$\dfrac{(n-1)!}{\prod\limits_{a=1}^{k-1}{(j_a+j_{a+1})}}\text{.}$$

Given a permutation $w$ in $S_n$, we can construct $g_1, \ldots, g_k$ and $j_1, \ldots, j_k$ for some $k \ge 1$ as done in Proposition \ref{corsize} so that they meet the stated restrictions and so that $w$ is in a class of size $$\dfrac{(n-1)!}{\prod\limits_{a=1}^{k-1}{(j_a+j_{a+1})}}\text{.}$$ 

Thus there is a bijection between sets $\{j_1, \ldots, j_k\}\subseteq\{1,2,\ldots,n\}$ for some $k \ge 1$ such that $j_1$ is $1$ or $0$, $j_2>1$ if $k>1$, $j_{i}<j_{i-2}$ for $i>2$, and $j_k+j_{k-1} \le n-1$ if $k>1$ and classes in $S_n$ which are of size $$\dfrac{(n-1)!}{\prod\limits_{a=1}^{k-1}{(j_a+j_{a+1})}}\text{.}$$ 

For such a set $J$, let $L$ be the set containing $l_i=j_i+j_{i+1}$ for $i<k$. Since $j_i<j_{i+2}$, $l_i<l_{i+1}$ for $i<k-1$. Since $j_{k-1}+j_k \le n-1$, $l_i$ is from $1$ to $n-1$ for $i<k$. If $L$ is not empty, then since $a_2>1$, $l_1>1$. For any set $L\subseteq\{2,3,\ldots,n-1\}  $ (possibly empty), there are two possible (and valid) $J$ which could yield such a set $L$, one with $j_1=1$ and one with $j_1=0$. Thus for a given choice of letters $l_1 < l_2 < \cdots < l_{k-1}$ with $l_{k-1}\le n-1$ and $l_1>1$, there are exactly two classes of size $$\dfrac{(n-1)!}{\prod\limits_{i=1}^{k-1}{l_i}}\text{.}$$

Hence, the multiset of sizes of classes in $S_n$ is the same for both equivalences.
\end{proof}

\subsection{\mathheader{\{132, 312 \} \{321, 213 \}}-Equivalence}

\begin{lem}\label{n-1avoiderslem}
There are $n-1$ trivial classes under the $\{132, 312 \} \{321, 213 \} $-equivalence.
\end{lem}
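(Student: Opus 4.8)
The claim is that there are exactly $n-1$ trivial classes, i.e. exactly $n-1$ permutations in $S_n$ that avoid the replacement partition $\{132,312\}\{321,213\}$. By the definition of avoidance given in the excerpt, a permutation forms a trivial class precisely when it contains no factor whose order permutation is one of $132$, $312$, $321$, or $213$. The plan is therefore to characterize the avoiders combinatorially and then count them directly.

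First I would translate the four forbidden patterns into a local condition on consecutive triples. A permutation $w$ avoids all of $132$, $312$, $321$, $213$ if and only if no factor $w_i w_{i+1} w_{i+2}$ of three consecutive letters forms any of these four patterns. The key observation is that the only three-letter patterns NOT in this forbidden list are $123$ and $231$. So an avoider is exactly a permutation in which every consecutive triple is order-isomorphic to either $123$ or $231$; equivalently, every length-three factor is one of these two "allowed'' shapes. I would then analyze what a permutation looks like when every window of three consecutive entries is a $123$ or a $231$. Looking at the second and third entries of each window relative to the first, one sees strong rigidity: once the pattern of ascents and descents at the start is fixed, the whole permutation is forced.

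The main step is to show this rigidity gives exactly $n-1$ permutations. Concretely, I expect the avoiders to be precisely the permutations of the form $12\cdots k$ followed by a block arranged so that the remaining windows read as $231$'s — i.e. there is a single "position'' where the structure switches, giving $n-1$ choices (for instance, the $k$-th entry, $k \in \{1,\dots,n-1\}$, being where the first descent occurs, together with the forced continuation). I would make this precise by an induction on $n$ or by directly describing the avoiders as a one-parameter family: for each choice of the location of the unique descent compatible with the allowed-window condition, there is exactly one avoider, and these are all distinct, yielding $n-1$ of them. The base cases $n=3$ (where the avoiders are exactly the two patterns $123$ and $231$, matching $n-1=2$) can be checked directly.

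The hard part will be verifying the rigidity claim carefully: I must confirm that requiring every consecutive triple to be $123$ or $231$ really does pin down the permutation up to the single discrete parameter, and in particular that no "branching'' is possible after the switch and that different parameter values never coincide. Concretely, the subtle point is ruling out configurations that locally look like allowed windows but globally fail to be a permutation of $\{1,\dots,n\}$, or that secretly introduce a forbidden pattern across overlapping windows; handling the overlap constraints between consecutive windows (which share two letters) is where the bulk of the genuine argument lies, and is what forces the count down to exactly $n-1$ rather than something exponential.
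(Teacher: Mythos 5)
Your overall strategy is the same as the paper's: a trivial class is just a permutation avoiding all four patterns as consecutive factors, so one characterizes these avoiders by a local analysis of consecutive triples and counts them. (The paper phrases the local analysis as: avoiding $213$ and $312$ forces a $\Lambda$-shape, avoiding $321$ forces the decreasing tail after the peak to have length at most one, and avoiding $132$ then rules out exactly one candidate.) So the method is sound and you have correctly identified that the allowed windows are $123$ and $231$ and that the crux is the rigidity imposed by overlapping windows.

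However, the concrete structure you conjecture for the avoiders is wrong, and the parametrization you propose does not give the right family. The overlap analysis you defer to ``the hard part'' is in fact decisive and shows something stronger than a one-parameter switch point: no allowed window \emph{begins} with a descent (both $123$ and $231$ start with an ascent), so a $231$ window can never be followed by any allowed window. Hence at most the final window $w_{n-2}w_{n-1}w_n$ can be a $231$, all earlier windows are $123$'s, and every avoider is increasing in its first $n-1$ positions. In particular there is no ``block of $231$'s,'' the first descent (if any) can only occur between positions $n-1$ and $n$, and the avoiders need not begin with $12\cdots k$ (e.g.\ $2341$ for $n=4$). The correct parameter is the \emph{value} $j$ of the last letter, not the position of the descent: the avoider is $\{1,\ldots,n\}\setminus\{j\}$ in increasing order followed by $j$, and the final window is $231$ rather than the forbidden $132$ exactly when $j\neq n-1$, giving $n-2$ such permutations plus the identity, i.e.\ $n-1$ in total. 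If you carry out your plan you will be forced into this corrected description, but as written the claimed family of avoiders is not the right one.
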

\begin{proof}
Because $312$ and $213$ are hits, we only need to consider \emph{$\Lambda$-permutations}, that is permutations that increase to $n$ and then decrease until the final letter. Since $321$ is a hit, we only need to consider $\Lambda$-permutations which increase for at least the first $n-1$ letters. There are $n$ such permutations because given an ending letter, the rest of such a permutation can be uniquely determined. Since the final possible hit is a $132$, the final letter can be any letter except for $n-1$. Hence, there are $n-1$ trivial classes.
\end{proof}

\begin{defn}
We say that $w \in S_k$ is in $T_k$ if and only if the letters of $w$ other than $1$ and $2$ are in increasing order, and the letters $1$ and $2$ are adjacent and in $21$ order.
\end{defn}

For example, $T_5=\{21345, 32145, 34215, 34521\}$.

\begin{lem}\label{Tclosurelem}
The elements of $T_n$ form an equivalence class under the \\ $\{132, 312 \}\{321, 213  \}$-equivalence.
\end{lem}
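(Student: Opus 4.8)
The plan is to establish two things: (\textbf{closure}) no single transformation can carry an element of $T_n$ outside of $T_n$, and (\textbf{connectivity}) any two elements of $T_n$ are linked by a chain of transformations. Together these say that $T_n$ is exactly one equivalence class. Throughout I would write a generic element of $T_n$ as $w = c_1 c_2 \ldots c_i\, 2\, 1\, c_{i+1} \ldots c_{n-2}$, where $c_1 < c_2 < \cdots < c_{n-2}$ are the letters $3, 4, \ldots, n$ listed in increasing order and the block $21$ sits after exactly $i$ of them (with $0 \le i \le n-2$, the extreme values covering the cases where the block is at the very front or the very end).

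For closure I would classify every length-three factor of $w$ by its order permutation. A factor lying entirely within one of the two increasing runs forms $123$, and the factor $c_{i-1} c_i 2$ (when it exists) forms $231$; since $123$ and $231$ lie in trivial parts of $\{132, 312\}\{321, 213\}$, none of these is a hit. The only remaining factors are the two straddling the block: $c_i\, 2\, 1$, which forms $321$, and $2\, 1\, c_{i+1}$, which forms $213$. Thus the only hits available in $w$ are these two, and the only permitted transformations are $321 \to 213$ and $213 \to 321$ applied to them. A short computation shows $c_i\, 2\, 1 \to 2\, 1\, c_i$ and $2\, 1\, c_{i+1} \to c_{i+1}\, 2\, 1$, each of which merely slides the $21$ block one step past an increasing letter while preserving the increasing order of all remaining letters; hence the result is again in $T_n$. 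This verifies closure. I would treat the boundary cases $i = 0$ and $i = n-2$ explicitly to confirm that no additional factor appears there.

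For connectivity I would use exactly the two transformations found above: $c_i\, 2\, 1 \to 2\, 1\, c_i$ moves the block one position to the left, and $2\, 1\, c_{i+1} \to c_{i+1}\, 2\, 1$ moves it one position to the right. Iterating these, the block can be placed after any number $i$ of increasing letters, so every element of $T_n$ is reachable from any other. Combining closure with connectivity then yields that $T_n$ is a single equivalence class.

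I expect the only genuinely delicate step to be the exhaustive factor analysis in the closure argument --- in particular, confirming that $w$ contains no $132$ or $312$ factor anywhere, so that the two block-straddling hits really do constitute the complete list of available transformations; the edge cases in which the block abuts the start or the end of the permutation must be checked separately to be safe.
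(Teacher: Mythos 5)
Your proposal is correct and follows essentially the same route as the paper: identify that the only hits in an element of $T_n$ are the $321$ factor $c_i\,2\,1$ and the $213$ factor $2\,1\,c_{i+1}$, observe that the corresponding transformations just slide the $21$ block one step left or right within $T_n$, and conclude both closure and connectivity. Your exhaustive factor analysis simply spells out in detail what the paper dismisses with ``clearly.''
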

\begin{proof}
Let $w\in T_n$. Clearly, the only possible hits in $w$ are $k21$ and $21j$ for some $k$ and $j$ with the actual letters $2$ and $1$. We can rearrange such a hit as $21k$ and $j21$ respectively. By the definition of $T_n$, the permutation reached is in $T_n$. Furthermore, sliding the letters $21$ in this way, all of the permutations in $T_n$ are reachable from each other.
\end{proof}

Note that any three-letter factor $abc$ with $a>b$ must form a hit. This will be useful in the proof of Lemma \ref{nslidelem}.

\begin{lem}\label{nslidelem}
Let $w\in S_n$ be non-avoiding and not in $T_n$. Then, $w$ is equivalent to a permutation which begins with the letter $n$ under the $\{132, 312 \}\{321, 213  \}$-equivalence.
\end{lem}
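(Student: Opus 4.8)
The plan is to push the letter $n$ leftward, one hit at a time, until it occupies the first position. Two elementary slides drive the argument. Since the order permutation of a factor $a\,n\,b$ with $a<b$ is $132$ while that of $n\,a\,b$ is $312$, and these lie in the same part of $K$, we may perform $a\,n\,b\to n\,a\,b$, moving $n$ one step left; I call this the \emph{one-slide}. Since $a\,b\,n$ with $a>b$ forms a $213$ and $n\,a\,b$ forms a $321$, we may perform $a\,b\,n\to n\,a\,b$, moving $n$ two steps left over an adjacent descent; I call this the \emph{two-slide}. Symmetrically, because any factor $n\,d\,e$ has order permutation $312$ or $321$, the letter $n$ can always be slid to the \emph{right} whenever at least two letters follow it; in particular $w$ is equivalent to a permutation in which $n$ sits in position $n-1$ or $n$.

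First I would dispose of the cases where a slide is immediately available: if the letter just left of $n$ is smaller than the letter just right of $n$, the one-slide applies, and if the two letters just left of $n$ form a descent, the two-slide applies. When neither holds, $n$'s left neighbour $c$ is a local peak, exceeding both its left neighbour $b$ and its right neighbour $d$. Here I would first attempt a short detour: slide $n$ one step right via a $312\to132$ move and then apply a two-slide across the descent $c>d$; this nets one leftward step for $n$ provided the letter two places right of $n$ exceeds $d$.

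The genuine obstacle is the configuration where $n$ is blocked by a peak and no such detour succeeds; resolving it really can require a roundabout sequence rather than monotone progress (for instance $3124\equiv1324\equiv1432\equiv4132$ in $S_4$). To handle it I would argue by induction on $n$. After parking $n$ at the end, write $w\equiv u\,n$ with $u\in S_{n-1}$ a word on $\{1,\ldots,n-1\}$; moves internal to $u$ are exactly $K$-moves in $S_{n-1}$. If $u$ is a non-avoider not in $T_{n-1}$, the inductive hypothesis gives $u\equiv(n-1)\,t$, so $w\equiv(n-1)\,t\,n$, and I would then use $n-1$ as a \emph{lever}: slide it rightward toward the trailing $n$ (legal because $n-1$ is the largest letter apart from $n$, so every factor $(n-1)\,x\,y$ is a hit) until it lands just left of $n$, producing a factor $(n-1)\,y\,n$ of type $213$; a two-slide then carries $n$ past it, after which the dominance of $n-1$ keeps the one-slide available for further hops. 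The crux of the write-up is to show this lever process terminates with $n$ in front, re-docking $n-1$ behind $n$ as needed and recursing to smaller maxima when $n-1$ itself stalls.

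Finally I would close the remaining branches using the hypotheses: if the reduced word $u$ were an avoider, or if a lever could never be inserted, one checks via Lemma~\ref{n-1avoiderslem} and the observation that every factor $abc$ with $a>b$ is a hit that $w$ must itself be an avoider or an element of $T_n$, both of which are excluded. Small base cases ($n\le 5$) are verified directly. The step deserving the most care is the termination and correctness of the lever recursion in the peak-obstruction case, since progress there is not monotone in the position of $n$.
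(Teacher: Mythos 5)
Your proposal leaves the central difficulty unproved. The ``lever'' recursion --- sliding $n-1$ up against $n$, two-sliding $n$ past it, re-docking $n-1$ and ``recursing to smaller maxima when $n-1$ itself stalls'' --- is precisely the hard part of the lemma, and you only assert that it can be made to work. After the two-slide $(n-1)\,y\,n \to n\,(n-1)\,y$ and one further one-slide, $n$ stalls as soon as its left neighbour exceeds its right neighbour; re-docking $n-1$ next to $n$ then requires moving $n-1$ \emph{leftward}, which needs a $132\to 312$ move with $n-1$ in the middle and is not always available, and it is not stated what quantity the ``recursion to smaller maxima'' descends on or why it terminates. A second, smaller gap sits at the very start: rightward slides only guarantee that $n$ reaches position $n-1$ or $n$, and it can genuinely stall at $n-1$ (in $21354$ the factor $354$ is a $132$ and pushes $5$ left, while $135$ is a $123$ and is not a hit), so the reduction ``write $w\equiv u\,n$'' is not yet justified. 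The branch where the reduced word $u$ is an avoider also needs an actual argument rather than an appeal to the hypotheses, since $u$ avoiding does not make $u\,n$ avoiding (e.g.\ $1342$ is an avoider in $S_4$ but $13425$ contains the hit $425$).

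The paper avoids the letter-by-letter transport of $n$ entirely. It first shows, by short explicit move sequences, that one may assume the suffix $w_2\cdots w_n$ is non-avoiding and not in $T_{n-1}$; it then applies the inductive hypothesis to that suffix --- which \emph{contains} $n$ --- so that $n$ lands in position $1$ or $2$ of $w$. With $n$ that far left, the prefix $w_1\cdots w_{n-1}$ automatically has a hit among its first four letters and cannot lie in $T_{n-1}$ for $n>5$, so a second application of the inductive hypothesis to the prefix puts $n$ in front. If you want to salvage your approach, the cleanest fix is to adopt this suffix-then-prefix structure in place of the parking-plus-lever scheme; otherwise you must supply a genuine termination argument for the lever process.
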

\begin{proof}
Assume inductively that the proposition holds for $S_k$ for $k<n$. Inductive base cases of $n\in\{3,4,5\}$ can easily be shown computationally. 

We can assume that the final $n-1$ letters of $w$ are non-avoiding. In fact, if the only hit is in the first three letters, then the hit is either $312$ or $213$ and rearranging it as either a $132$ or $321$ creates a hit to the right. 

We can also assume that the final $n-1$ letters of $w$ do not form a permutation in $T_{n-1}$. In fact, if they do, then they can be rearranged as the identity in $S_{n-1}$ with $1$ and $2$ swapped. Then, the first four letters are either $1324$, $2314$, $3214$, or $k213$ where $k>3$. The third case will never happen because then $w$ would have to be in $T_n$. In each remaining case there is a series of steps with which one can reach a permutation which is non-avoiding in the final $n-1$ letters and for which the final $n-1$ letters do not form a permutation in $T_{n-1}$: $1324 \rightarrow 1432 \rightarrow 4132$, $2314 \rightarrow 2431 \rightarrow 4231$, and $k213 \rightarrow k321 \rightarrow 32k1$.

Noting these two assumptions, we can apply the inductive hypothesis to the final $n-1$ letters of $w$, placing $n$ either in the first or second position of $w$, and obtaining $w'$. Because of the position of $n$, the first $n-1$ letters of $w'$ can not form a permutation in $T_{n-1}$ for $n>5$. Also, because of the position of $n$, the first $4$ letters contain a hit. Therefore, since $n>5$, applying the inductive hypothesis to the first $n-1$ letters, we can place $n$ in the first position.
\end{proof}

\begin{prop}\label{killingnprop}
The number of classes containing permutations in $S_{n>3}$ that are non-avoiding and not in $T_n$ equals the number of classes created in $S_{n-1}$ under the \\$\{132, 312 \}\{321, 213  \}$-equivalence.
\end{prop}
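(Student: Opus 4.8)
The plan is to set up an explicit bijection between the classes of $S_{n-1}$ and the classes in $S_n$ consisting of permutations that are non-avoiding and not in $T_n$. For $w\in S_n$, write $d(w)\in S_{n-1}$ for the permutation obtained by deleting the letter $n$ from $w$ (deleting the maximal entry of a permutation of $\{1,\ldots,n\}$ leaves a permutation of $\{1,\ldots,n-1\}$, so no relabeling is needed). The two maps I would use are \emph{prepending} $n$, sending $u\in S_{n-1}$ to $nu\in S_n$, and \emph{deleting} $n$, sending $w$ to $d(w)$, and I would show that they descend to mutually inverse maps on classes.

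First I would check that prepending respects equivalence: if $u\equiv u'$ in $S_{n-1}$, then $nu\equiv nu'$ in $S_n$, since any transformation applied to a factor of $u$ can be applied verbatim to the corresponding factor of $nu$ (that factor lives among the last $n-1$ positions and never involves the leading $n$). Moreover $nu$ is always non-avoiding and never lies in $T_n$: its first three letters $n\,u_1\,u_2$ form a $312$ or $321$ hit because $n$ is maximal, and for $n>3$ no element of $T_n$ begins with $n$ (the letters $3,\ldots,n$ appear in increasing order, so $3<n$ precedes $n$). Hence prepending induces a well-defined map $\overline{\Psi}$ from classes of $S_{n-1}$ into the relevant classes of $S_n$, and Lemma~\ref{nslidelem} makes $\overline{\Psi}$ surjective, since every non-avoiding permutation not in $T_n$ is equivalent to one beginning with $n$, that is, to some $nu$.

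The main step, and the step I expect to be the obstacle, is to show that deletion descends to classes, i.e.\ that $[d(w)]_{S_{n-1}}$ depends only on $[w]_{S_n}$. For this I would verify invariance under a single transformation $w\to w''$, splitting according to whether the hit used contains $n$. If the hit avoids $n$, its three letters remain an adjacent factor with the same order permutation after $n$ is struck out, so $d(w)\to d(w'')$ is a legal transformation in $S_{n-1}$ and the classes agree. If the hit contains $n$, then $n$ is forced to be the largest entry, i.e.\ the ``$3$'' of the pattern; the only such hits are the $312\leftrightarrow132$ and $321\leftrightarrow213$ moves, and in each case the two non-maximal letters keep both their relative order and the pair of slots they occupy once the maximal letter is removed. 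Thus $d(w)=d(w'')$ \emph{exactly}: deleting $n$ collapses every $n$-moving transformation to the identity. Either way $[d(w)]_{S_{n-1}}$ is unchanged, so deletion induces a well-defined map $\overline{\Phi}$ on classes even though $n$ is not stationary within its class.

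Finally I would observe that $\overline{\Psi}$ and $\overline{\Phi}$ are mutually inverse. On one side $\overline{\Phi}(\overline{\Psi}([u]))=[d(nu)]=[u]$ directly. On the other, given a non-avoiding non-$T_n$ class $[w]$, Lemma~\ref{nslidelem} furnishes a representative $nu$, whence $\overline{\Phi}([w])=[u]$ and $\overline{\Psi}([u])=[nu]=[w]$. Therefore the classes in question biject with the classes of $S_{n-1}$, which proves the proposition. The delicate point is the case analysis of the preceding paragraph, and it is precisely the observation that all $n$-involving transformations become trivial after deletion that lets the deletion map be well-defined despite $n$ wandering freely inside its equivalence class.
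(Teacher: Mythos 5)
Your proof is correct and takes essentially the same approach as the paper: Lemma~\ref{nslidelem} gives surjectivity onto permutations beginning with $n$, prepending $n$ gives the upper bound, and the observation that a hit containing $n$ never changes the relative order of its other two letters gives the lower bound. The paper states this last step in one terse sentence (``In a hit involving $n$, no two letters that are not $n$ can change relative order''), of which your deletion-map argument is just the fully spelled-out version.
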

\begin{proof} Let $f(n)$ be the number of classes created in $S_n$. By Lemma \ref{nslidelem}, a non-avoiding permutation in $S_n$ that is not in $T_n$ is reachable from a permutation beginning with $n$. Since $n>3$, every permutation beginning with $n$ is non-avoiding and not in $T_n$; we just need to consider how many classes they fall into. Since we can ignore the first letter, permutations beginning with $n$ fall into at most $f(n-1)$ classes. In a hit involving $n$, no two letters that are not $n$ can change relative order. Hence, the permutations beginning with $n$ fall into at least $f(n-1)$ classes, and we are done.
\end{proof}

\begin{prop} There are $\dfrac{n(n+1)}{2}-2$ classes in $S_{n \ge 3}$ under the $\{132, 312 \}\{321, 213  \}$-equivalence.
\end{prop}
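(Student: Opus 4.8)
The plan is to establish a recurrence for $f(n)$ and solve it with the known base cases. I would partition $S_n$ into three groups: the trivial (avoiding) classes, the classes consisting of permutations in $T_n$, and the classes containing non-avoiding permutations not in $T_n$. By Lemma \ref{n-1avoiderslem} the first group contributes exactly $n-1$ trivial classes. By Lemma \ref{Tclosurelem} the permutations of $T_n$ form a single equivalence class, contributing $1$. Finally, Proposition \ref{killingnprop} tells us that the third group contributes exactly $f(n-1)$ classes, since every non-avoiding permutation not in $T_n$ is reachable from one beginning with $n$, and those are counted by $f(n-1)$.

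Combining these three contributions gives the recurrence
\[
f(n) = f(n-1) + 1 + (n-1) \qquad \text{for } n>3.
\]
Here I must be slightly careful to check that these three groups are genuinely disjoint and exhaustive: a class is either trivial (all avoiders), or it contains a permutation in $T_n$ (hence is the single $T_n$-class by Lemma \ref{Tclosurelem}), or it is a nontrivial class none of whose members lie in $T_n$. The only subtle point is verifying that the single $T_n$-class is not already counted among the $f(n-1)$ classes of Proposition \ref{killingnprop}; this holds because the permutations of $T_n$ are excluded by hypothesis from that proposition, and a permutation beginning with $n$ is never in $T_n$ for $n>3$.

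Next I would solve the recurrence $f(n) = f(n-1) + n$ for $n>3$, which telescopes. Summing from the base value upward, $f(n) = f(3) + \sum_{k=4}^{n} k$. I would compute $f(3)$ directly (the small case, checkable by hand or against Figure \ref{answers}); the telescoping sum then yields a quadratic in $n$. Matching against the claimed answer, the recurrence $f(n)-f(n-1)=n$ integrates to a triangular-number expression, and with the correct base case one obtains $f(n) = \frac{n(n+1)}{2} - 2$. A clean way to present this is to verify that $g(n) := \frac{n(n+1)}{2}-2$ satisfies $g(n)-g(n-1) = n$ and agrees with $f(3)$, so that $f=g$ by induction.

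The main obstacle is not the algebra but the partitioning bookkeeping: I expect the delicate step to be confirming that the three groups of classes are mutually exclusive and that no class is double-counted, in particular that the $T_n$-class and the classes captured by Proposition \ref{killingnprop} do not overlap. Once disjointness and exhaustiveness are secured, the recurrence is immediate and the closed form follows by a routine telescoping induction. I would therefore spend most of the written proof justifying the clean split $f(n) = (n-1) + 1 + f(n-1)$ and then dispatch the solution of the recurrence briefly.
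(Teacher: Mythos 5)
Your proposal is correct and takes essentially the same route as the paper's proof: $n-1$ trivial classes from Lemma \ref{n-1avoiderslem}, one class for $T_n$ from Lemma \ref{Tclosurelem}, and $f(n-1)$ remaining classes from Proposition \ref{killingnprop}, yielding $f(n)=f(n-1)+n$ for $n>3$ with base case $f(3)=4$. Your disjointness check is sound (and is handled in the paper by noting that the $T_n$-class is the unique nontrivial class containing no permutation beginning with $n$), so there is nothing to add.
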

\begin{proof}
We will show that with the step from $S_{n-1}$ to $S_n$, $n$ classes are added to the enumeration. Noting the base case of $S_3$, this will inductively prove the proposition. Let $f(n)$ be the number of classes created in $S_n$. By Lemma \ref{n-1avoiderslem} there are $n-1$ trivial classes. By Lemma \ref{Tclosurelem} and Lemma \ref{nslidelem}, there is one nontrivial class containing no permutations beginning with $n$. By Proposition \ref{killingnprop}, the remaining permutations fall into $f(n-1)$ classes. So, $f(n>3)=f(n-1)+n$ and we are done.
\end{proof}

\subsection{\mathheader{\{ 123, 231 \} \{ 213, 132\}}-Equivalence}

 Let $f(m)$ be the number of classes in $S_m$ under the $\{ 123, 231 \} \{ 213, 132\}$-equivalence.

\begin{lem}\label{lemhill} The permutations of a given parity which end with $n$ and which do not begin with $n-1$ are all equivalent under the $\{ 123, 231 \} \{ 213, 132\}$-equivalence.
\end{lem}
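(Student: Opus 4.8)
The plan is to prove the statement by induction on $n$, after first isolating the parity as the only obstruction.

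First I would record why the parity appears at all. Each generator of the equivalence rearranges three consecutive entries according to $123\leftrightarrow 231$ or $213\leftrightarrow 132$, and in every one of these four rearrangements the three entries are permuted among their positions by a $3$-cycle, which is an even permutation. Hence each transformation preserves the sign of a permutation, so two permutations of opposite parity can never be equivalent. Thus ``of a given parity'' is a genuine necessary condition, and the real content of the lemma is that, among permutations ending with $n$ and not beginning with $n-1$, parity is the \emph{only} invariant. I would set up the induction on $n$ with the small base cases (say $n\le 5$) checked directly, as is done elsewhere in this section.

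The workhorse of the inductive step is a sliding mechanism for the largest available letter. If the largest letter of a three-entry window sits at the right end of that window, the window forms either $123$ or $213$, both of which are hits; applying $123\to 231$ or $213\to 132$ moves that largest letter one position to the left, and the inverse transformations move it one position to the right. Using this, I would first bring $n$ and $n-1$ together and then relocate the increasing pair $\ldots(n-1)\,n\ldots$: note that $(n-1)\,n$ immediately followed by a smaller letter forms a $231$ (a hit), so via $231\to 123$ the block $(n-1)\,n$ can be slid one step to the right as a unit. In this way I would aim to reduce $w$ to a permutation ending with $(n-1)\,n$. More precisely, I would move $n$ into the penultimate position by one end-transformation, so that the length-$(n-1)$ prefix becomes a permutation ending in its own maximum; standardizing and invoking the inductive hypothesis then normalizes that prefix while the last letter is held fixed, and toggling $n$ back to the end lets me iterate. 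To make this an honest induction I would attach a monovariant (for instance the number of letters separating $n-1$ from $n$) that strictly decreases, and I would use the hypothesis ``does not begin with $n-1$'' precisely to guarantee that the slide of $n-1$ never stalls at the front (a leading $n-1$ is frozen, since a maximum at the left of a window gives $312$ or $321$, neither a hit) and that the relocated prefix never falls into its own excluded ``begins with the second-largest letter'' case.

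The step I expect to be the main obstacle is controlling \emph{which} adjacency of the two top letters is produced. The increasing adjacency $(n-1)\,n$ can be pushed to the right, since its window with a following smaller letter is the hit $231$; but the decreasing adjacency $(n,n-1)$ cannot, because the window $(n,n-1)\,c$ with $c$ smaller is a $321$, and $321$ lies in no nontrivial part of the partition, so it is frozen. Showing that one can always arrange the increasing adjacency---equivalently, that the only obstruction to a full normalization is exactly the sign---is the crux, and it is here that the parity invariant must be seen to account for the entire discrepancy. Verifying the base cases and checking that the inductive-hypothesis conditions persist under each relocation of $n$ are the remaining technical points.
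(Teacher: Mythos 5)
Your setup is sound: the parity invariance (each allowed rearrangement permutes the three window positions by a $3$-cycle, hence preserves sign), the sliding mechanics for the largest letter of a window, and the plan of inducting on $n$ with base cases $n\le 5$ all match the paper's strategy. But the proof is not complete. You explicitly defer the crux --- showing that parity is the \emph{only} obstruction --- and you also leave unverified whether the inductive hypothesis actually applies to the prefixes you create: the length-$(n-1)$ prefix must end with its maximum \emph{and} not begin with its second-largest letter, and after your single ``end-transformation'' the letter $n-1$ may have been expelled to position $n$, in which case the prefix's second-largest letter is $n-2$ and nothing prevents it from sitting at the front. Your proposed monovariant (the distance between $n-1$ and $n$) also does not obviously decrease across a normalization of the prefix by the inductive hypothesis, since that normalization is free to move $n-1$ anywhere.

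The paper closes exactly this gap with a specific choreography. First slide $n-1$ to position $2$ and then $n$ to position $n-1$ (both via $123\to 231$ and $213\to 132$; the hypothesis that $w$ does not begin with $n-1$ lets the first slide start, and parking $n-1$ at position $2$ --- not merely ``somewhere in the prefix'' --- is what makes the inductive hypothesis applicable to the first $n-1$ letters). Normalizing that prefix produces a permutation $w'$ whose first letter is $1$, $2$ or $3$; pushing $n$ back to the end and normalizing the suffix yields the identity except with the first three letters scrambled; and one final application of the inductive hypothesis to the first $n-1$ letters lands on one of exactly two permutations, the identity or the identity with $1$ and $2$ swapped, which are distinguished precisely by sign. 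It is this last reduction to two explicit representatives of opposite parity that shows parity accounts for the entire discrepancy; as written, your argument establishes only that parity is a necessary condition, not that it is sufficient.
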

\begin{proof}
We will prove this inductively. The result is simple to show computationally for $n \le 5$. Assume (as an
induction hypothesis) that whenever $x\in S_{n-1}$ ends with $n$ and does not begin with $n-1$, $x$ is equivalent either to the identity or the identity but with $1$ and $2$ swapped. Let $w$ be a permutation in $S_n$ ending with $n$ and not beginning with $n-1$. Applying $123 \rightarrow 231$ and $213 \rightarrow 132$ repeatedly, we place $n-1$ in position $2$ and then $n$ in position $n-1$. Then, applying the inductive hypothesis to the left-most $n-1$ letters, we rearrange them as the identity or the identity with $1$ and $2$ swapped depending on the parity of the permutation that they form, yielding $w'$. Note that the first letter of $w'$ is $1$, $2,$ or $3$. Sliding $n$ from the second to final position to the final position with an application of either $132 \rightarrow 213$ or $231 \rightarrow 123$, we then apply the inductive hypothesis to the right-most $n-1$ letters of $w'$ and get $w''$. Note that $w''$ is the identity in $S_n$ except with the first three letters possibly in any order. Finally, applying the inductive hypothesis to the first $n-1$ letters of $w''$, we reach the identity or the identity with $1$ and $2$ swapped in $S_n$. Hence, all such $w$ of a given parity are equivalent.
\end{proof}

\begin{lem}\label{lemdownhill} A permutation beginning with a decreasing subsequence of consecutive
values starting with $n$ (that is, a permutation beginning with
$n(n-1)(n-2) \ldots (n-k+1)$ for some $k\geq 0$) will always have that same
decreasing subsequence at its start after transformations. In
particular, the longest such subsequence never changes in length under
the $\{123,231\}\{213,132\}$-equivalence.
\end{lem}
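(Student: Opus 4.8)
The plan is to reduce the claim to a single transformation and then exploit one structural feature of the four patterns $123, 231, 213, 132$: in each of them the largest of the three letters sits in the second or third position, never the first. Equivalently, the two patterns that begin with their largest letter, namely $312$ and $321$, are exactly the ones absent from the nontrivial parts of this replacement partition. This rigidity is what will pin a ``largest-first'' prefix in place, so I would open the proof by recording this observation as the engine of the argument.

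Since the $\{123,231\}\{213,132\}$-equivalence is, by definition, generated by single transformations, it suffices to prove that one transformation preserves the prefix; the full statement then follows by induction on the length of a chain of transformations connecting two equivalent permutations (the $k=0$ case being vacuous). So I would fix a permutation $w$ beginning with $n(n-1)\ldots(n-k+1)$ and let the hit of a transformation occupy three consecutive positions $\{i,i+1,i+2\}$. The goal is to show $i\ge k+1$, i.e. that the hit lies entirely to the right of position $k$ and hence cannot disturb the prefix.

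The key step is to rule out $i\le k$. Suppose $i\le k$; then position $i$ carries the value $n-i+1$. Because positions $1,\ldots,k$ hold the top $k$ values in decreasing order while every position past $k$ holds a value at most $n-k$, the value $n-i+1$ strictly exceeds the values at positions $i+1$ and $i+2$: each of those is either $n-i$ or $n-i-1$ (if the position still lies in the prefix) or at most $n-k<n-i+1$ (if it lies beyond it, using $i\le k$). Hence $n-i+1$ is the largest letter of the hit and occupies its first position, contradicting the structural observation that no pattern in $\{123,231,213,132\}$ has its largest letter first. Therefore $i\ge k+1$, the transformation leaves positions $1,\ldots,k$ untouched, and the prefix survives. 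I expect this value comparison, handled uniformly whether or not the hit straddles position $k$, to be the only place requiring care, and it is the main content of the proof.

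Finally, to upgrade ``the prefix is preserved'' to ``the length of the longest such prefix never changes'' rather than merely never decreasing, I would invoke reversibility: every transformation $p\to q$ has an inverse transformation $q\to p$ that is again a $K$-transformation, since the parts of $K$ are unordered. If some transformation carried a permutation whose longest such prefix has length $k^\ast$ to one with a strictly longer prefix, then applying the invariance just established to the inverse transformation would force the original permutation to have had a longer prefix as well, contradicting the maximality of $k^\ast$. Thus the length is exactly invariant, completing the proof.
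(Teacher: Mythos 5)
Your proof is correct and rests on exactly the same observation as the paper's, whose entire proof reads ``It is sufficient to note that no hits begin with the largest letter in the hit.'' You have simply spelled out the routine details (the positional case analysis and the reversibility argument for exact invariance of the length) that the paper leaves implicit.
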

\begin{proof}
It is sufficient to note that no hits begin with the largest letter in the hit.
\end{proof}

\begin{defn}
We define a $k$\emph{-hill} to be $k$ consecutively positioned and consecutively valued letters in a permutation in $S_n$, each of which is greater than all of the letters to its right in the permutation which are not $n$. A $k$-hill may also be called a \emph{hill} if $k$ is unknown.
\end{defn}

\begin{defn}
In this subsection, we consider a permutation in $S_n$ to satisfy the property $C_k$ if either
\begin{itemize}
\item the permutation begins with $(n-1)$ and begins with a $k$-hill not followed directly by $n$, or
\item the permutation begins with a $g$-hill followed by the letters $jn$ and then a $(k-g)$-hill for some integer $g$ between $0$ and $k$ inclusive and for some letter $j$, where $n-1$ is the first letter of those in the mentioned hills (meaning that $n-1$ is the first letter of the first hill if $g>0$, and of the second hill if $g=0$).
\end{itemize}
\end{defn}

\begin{rem}
Alternatively, we consider a permutation in $S_n$ to satisfy the property $C_k$ if either it begins with a factor $(n-1)(n-2)(n-3)\ldots (n-k+1)$ not followed directly by $n$ or if it begins with a factor $(n-1)(n-2) \ldots (n-i+1)jn(n-i)(n-i-1)\ldots (n-k+1)$ (where $\ldots$ denotes inbetween letters in decreasing order).
\end{rem}

\begin{lem}\label{lemhillprop} If a permutation satisfies $C_k$, then any equivalent permutation will as well under the $\{ 123, 231 \} \{ 213, 132\}$-equivalence.
\end{lem}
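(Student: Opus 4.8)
The plan is to reduce to a single transformation and then track the leading factor by a finite case analysis. Since the $\{123,231\}\{213,132\}$-equivalence is the transitive closure of single transformations, it suffices to show that if $w$ satisfies $C_k$ and $w'$ results from $w$ by one transformation, then $w'$ satisfies $C_k$ for the \emph{same} $k$. Throughout I would work with the concrete factor reformulation recorded just above the lemma: $w$ satisfies $C_k$ exactly when it begins either with the decreasing factor $(n-1)(n-2)\cdots(n-k)$ not immediately followed by $n$ (Case~(i)), or with a factor $(n-1)\cdots(n-g)\,j\,n\,(n-g-1)\cdots(n-k)$ for some $0\le g\le k$ and some letter $j$ (Case~(ii)). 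Here the two hills together occupy the top values $n-1,\dots,n-k$, so $j$ is necessarily smaller than every hill letter. The single external fact I would reuse is the observation from the proof of Lemma~\ref{lemdownhill} that no hit begins with its largest letter; I would also note its consequence that the largest letter of a hit moves by at most one position under a transformation, so in particular $n$ shifts by at most one place.

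Next I would record the structural constraints that make the analysis finite. A decreasing run contains no hit, since every length-three factor of it forms $321\notin\{123,231,213,132\}$; hence no transformation acts inside either hill. Moreover, each hill letter exceeds every letter to its right except possibly $n$, so a hill letter can sit at the front of a hit only when $n$ also lies in that hit (otherwise it would be the largest letter of the hit). Combining these, the only triples of adjacent positions that can form a hit \emph{and} touch the leading block are those straddling a boundary of the distinguished segment: in Case~(i), the triple $(n-k)\,v\,n$ that occurs precisely when $n$ sits two places past the hill; and in Case~(ii), the triples $(n-g)\,j\,n$ and $j\,n\,(n-g-1)$ at the two ends of the inserted $jn$. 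Any other admissible hit involves only letters at positions beyond the $n$, and none of those can be brought into the leading block (again because each hill letter dominates everything to its right), so it leaves $C_k$ intact.

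I would then dispatch the boundary transformations one at a time and verify that each image is of the prescribed form with the same $k$. In Case~(i), if $n$ is not at position $k+2$ then every admissible hit lies strictly to the right of the hill and leaves the leading factor untouched, so $w'$ is again Case~(i); if $n$ is at position $k+2$, the hit $(n-k)\,v\,n$ is a $213$, and $213\to132$ sends it to $v\,n\,(n-k)$, producing the Case~(ii) factor $(n-1)\cdots(n-k+1)\,v\,n\,(n-k)\cdots$ with $j=v$ and $g=k-1$. In Case~(ii), the hit $(n-g)\,j\,n$ is a $213$, and $213\to132$ yields $j\,n\,(n-g)$, sliding the $jn$ block one step left (decreasing $g$ by one); the hit $j\,n\,(n-g-1)$ is a $132$, and $132\to213$ yields $(n-g-1)\,j\,n$, sliding the block one step right (increasing $g$ by one). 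In every instance the union of the two hills is unchanged, so $k$ is preserved, and I would check that the extreme positions $g=0$, $g=k$, and a length-one second hill cause no trouble (indeed Case~(ii) with $g=k$ is literally an instance of Case~(i), which keeps the two cases consistent at the boundary).

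The main obstacle is the bookkeeping of the paragraph above: one must argue convincingly that the enumerated boundary triples are the \emph{only} hits that can disturb the leading factor, and then confirm — paying attention to the extreme positions of $n$ and to hills of length zero or one — that the distinguished factor is reproduced with the same parameter $k$, rather than being lengthened, shortened, or broken into a shape outside Cases~(i) and~(ii). Once the ``no hit begins with its largest letter'' principle (together with the at-most-one-position movement of $n$) is used to exclude every other way a hill letter or $n$ could be relocated, each surviving case reduces to a direct computation of the image pattern, and the proof closes.
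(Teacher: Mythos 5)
Your proposal is correct and follows essentially the same route as the paper's proof: both reduce to a single transformation, use the fact that no hit begins with its largest letter to show the only hits touching the distinguished prefix are the boundary triples $(n-k)vn$, $(n-g)jn$, and $jn(n-g-1)$, and then verify directly that each such transformation merely slides the $jn$ block, changing $g$ by one while preserving $k$. The paper packages the "hits to the right of $n$ are harmless" step as an appeal to Lemma~\ref{lemdownhill}, but the content is the same as your direct argument.
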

\begin{proof}
Let $w$ be a permutation satisfying property $C_k$. We will show that after a transformation, the resulting permutation will still satisfy $C_k$. There are two cases for $w$:
\begin{enumerate}
\item In the first case, $w$ begins with a $k$-hill not directly followed by $n$. If a transformation does not involve any letters in the $k$-hill, then it is easy to see that the resulting permutation still starts with a $k$-hill not directly followed by $n$. If a transformation does involve letters in the $k$-hill, the transformation must be $(n-k)jn \rightarrow jn(n-k)$ for some $j$. This results in a permutation which starts with a $g$-hill followed by the letters $jn$ and then a $(k-g)$-hill for $g=k-1$. Hence, the yielded permutation still satisfies $C_k$.
\item In the second case, $w$ begins with a $g$-hill followed by the letters $jn$ and then a $(k-g)$-hill for some integer $g$ and letter $j$. There are three subcases for a hit involved in a transformation. The hit can contain only letters to the right of $n$ (subcase 1); contain letters only to the left of $n$ (subcase 2); contain $n$ (subcase 3). In subcase 1, by Lemma \ref{lemdownhill}, after a transformation using the hit, the resulting permutation will still satisfy $C_k$. Subcase 2 can never happen because the letters to the left of $n$ are in strictly decreasing order. In subcase 3, the hit is either $213$ or $132$ where $3$ is the actual $n$, $1$ is the letter referred to as $j$ in the definition of $C_k$, and $2$ is a letter in one of the two hills mentioned in the definition of $C_k$. If the hit is used in an application of $213 \rightarrow 132$, the resulting permutation must begin with a $g$-hill followed by the letters $jn$ and then a $(k-g)$-hill for one lesser $g$ than before and the same $j$. If the hit is used in an application of $132 \rightarrow 213$, then either the resulting permutation begins with a $g$-hill followed by the letters $jn$ and then a $(k-g)$-hill for one greater $g$ than before and the same $j$, or it begins with a $k$-hill not followed directly by the letter $n$. Hence, the resulting permutation satisfies $C_k$. 
\end{enumerate}
\end{proof}

\begin{prop} There are $n^2-3n+4$ classes in $S_n$ under the $\{ 123, 231 \} \{ 213, 132\}$-equivalence.
\end{prop}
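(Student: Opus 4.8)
The plan is to establish the recursion $f(n)=f(n-1)+(2n-4)$ for $n\ge 4$; combined with the base value $f(3)=4$ and a few directly checked small cases, a routine induction then gives $f(n)=n^2-3n+4$. At the outset I would record one global invariant: the sign of a permutation. Each transformation $123\leftrightarrow 231$ and $213\leftrightarrow 132$ cyclically permutes the three values it acts on and is hence an even permutation, so parity is preserved. Parity will be the secondary invariant used, alongside the hill data of Lemmas \ref{lemdownhill} and \ref{lemhillprop}, to tell the remaining classes apart.

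I would then split $S_n$ according to whether a permutation begins with $n$. Since no hit begins with its own largest letter, a permutation whose first letter is $n$ can never let $n$ enter a hit, so $n$ stays frozen at the front; such permutations are therefore in a class-preserving bijection with $S_{n-1}$ and contribute exactly $f(n-1)$ classes. By Lemma \ref{lemdownhill} the length of the initial maximal run of consecutive decreasing values starting at $n$ is an invariant, and beginning with $n$ is exactly the condition that this length is positive; hence no class meets both parts of the split, and it remains only to show that the permutations not beginning with $n$ form exactly $2n-4$ classes.

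To count these I would separate them by whether they satisfy some property $C_k$ with $k\ge 1$. A permutation satisfying no such $C_k$ can be transformed into one ending with $n$ and not beginning with $n-1$: one slides $n$ to the last position (possible one step at a time whenever $n$ is interior, via $231\to 123$ or $132\to 213$) and uses the failure of every $C_k$ to guarantee the leading letter can be kept away from $n-1$; by Lemma \ref{lemhill} the resulting permutations form exactly two classes, one of each parity. For the permutations that do satisfy some $C_k$, Lemma \ref{lemhillprop} shows that the largest $k$ with $C_k$ is an invariant, and I would argue that this largest $k$ together with parity is a \emph{complete} invariant on this set, by normalizing the letters outside the hill to a canonical representative for each admissible pair. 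Enumerating the admissible pairs and checking that there are $2n-6$ of them then gives $2+(2n-6)=2n-4$ classes, completing the recursion.

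The hardest part is this final enumeration and the completeness claim behind it. The properties $C_k$ overlap (the interrupted-hill clause makes a large $k$ entail smaller ones), and within a single class both the interrupting letter $j$ and the split $g$ of the definition vary, so one must identify genuine canonical forms and then determine precisely how parity is tied to the hill size: as the case $n=4$ already shows, not all pairs occur (there only $(0,\text{even})$, $(0,\text{odd})$, $(1,\text{even})$, and $(2,\text{odd})$ arise). Carrying out the sorting that reduces an arbitrary member of a given pair to its representative — the completeness direction — is where essentially all of the transformation bookkeeping sits; invariance in the other direction is already furnished by Lemmas \ref{lemdownhill} and \ref{lemhillprop} together with the parity observation.
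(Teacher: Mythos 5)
Your decomposition is the one the paper uses: permutations beginning with $n$ contribute $f(n-1)$ classes, the permutations satisfying no $C_k$ collapse via Lemma \ref{lemhill} into two classes split by parity, and the remaining permutations are governed by the hill invariants of Lemmas \ref{lemdownhill} and \ref{lemhillprop} together with parity. The target recursion $f(n)=f(n-1)+2n-4$ and the split $2+(2n-6)=2n-4$ also match the paper exactly.

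The gap is precisely the step you defer: proving that the largest $k$ with $C_k$, together with parity, is a \emph{complete} invariant, and counting which pairs actually occur. The paper closes this with one concrete observation that your sketch is missing. After sliding $n$ to the end, such a permutation has the form $(n-1)(n-2)\cdots(n-k+1)$ followed by a suffix of $n-k$ letters; for $k\le n-4$ that suffix ends with its largest letter ($n$) and does not begin with its second largest ($n-k$, since otherwise the hill would have length $k+1$), so Lemma \ref{lemhill} applies \emph{directly to the suffix} and shows that any two such permutations with equal $k$ and equal parity are equivalent. No separate canonical-form or sorting argument is needed; the lemma you already cite for the non-$C_k$ case does all the work here too. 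For $k=n-2$ and $k=n-3$ the entire permutation is forced (namely $(n-1)\cdots 21n$ and $(n-1)\cdots 312n$), giving one class each rather than two, and $C_{n-1}$ cannot occur; this is exactly what makes the count $2(n-4)+1+1=2n-6$ instead of $2(n-2)$. Your $n=4$ remark shows you are aware that not every pair $(k,\text{parity})$ is realized, but without the suffix reduction and the explicit treatment of $k\in\{n-2,n-3\}$, both the completeness claim and the $2n-6$ count remain asserted rather than proved.
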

\begin{proof} Consider the permutations that begin with $n$ for $n>3$. They clearly fall into $f(n-1)$ classes because we can ignore the $n$. From a permutation not beginning with $n$, $n$ can be moved to the final position through repeated applications of $231 \rightarrow 123$ and $132 \rightarrow 213$. Hence, each of the remaining uncounted equivalence classes contains a permutation which ends with $n$. By Lemma \ref{lemhill}, permutations ending with $n$ and not beginning with $n-1$ fall into two classes, divided by parity. Hence, we need only consider permutations of the form $(n-1)\ldots n$ which are not reachable from any permutation ending with $n$ but not beginning with $n-1$. By Lemma \ref{lemhillprop}, these are exactly the permutations of the form $(n-1)\ldots n$ since such permutations each satisfy $C_1$ and permutations not beginning with $n-1$ and ending with $n$ do not.

Let $x$ and $y$ in $S_n$ both begin with $n-1$ and end with $n$. If $x$ and $y$ are of different parity, then clearly they are not equivalent. Assume that $x$ and $y$ are the same parity. Let $k$ be the largest $k$ such that $x$ begins with a $k$-hill and $k$ is less than $n-1$ (when $k=n-1$, the $k$-hill is followed by $n$). Let $k'$ be the largest $k'$ such that $y$ begins with a $k'$-hill and $k'$ is less than $n-1$. 

If $k\neq k'$, then by Lemma \ref{lemhillprop}, $x$ and $y$ are not equivalent. 

Otherwise, assume that $k'=k$. If $k=n-2$ or $k=n-3$, then it is easy to check that $x$ must equal $y$. Thus there is a total of two classes for the cases of $k=n-2$ and $k=n-3$.

Otherwise, assume that $k<n-3$. Then, $x$ and $y$ begin with the same first $k$ letters, the remaining letters do not begin with $n-k$ (because then $x$ or $y$ would begin with a $(k+1)$-hill), and they do end with $n$. Hence, we can apply Lemma \ref{lemhill} to those remaining letters to conclude that $x$ and $y$ are equivalent.

Thus $x$ is equivalent to $y$ exactly when they are of the same parity and they satisfy $C_j$ for the same $j$. When the highest such $j$ is between $1$ and $n-4$ inclusive, there are $2(n-4)$ resulting classes. As mentioned previously, there is one additional class for each of the cases where the highest such $j$ is $n-2$ and $n-3$. Note also that it is impossible for a permutation to satisfy $C_{n-1}$. This means that permutations beginning with $n-1$ and ending with $n$ fall into $2(n-4)+2$ classes. Recall that permutations beginning with $n$ fall into $f(n-1)$ classes, and that the permutations ending with $n$ but not beginning with $n-1$ fall into two more classes. So, $f(n>3)=f(n-1)+2(n-4)+2+2$, $f(3)=4$. This recursion inductively implies that there are $n^2-3n+4$ classes.
\end{proof}

\begin{prop}
The number of elements in the class containing the identity in $S_n$ is $((n-2)(n-1)!)/2$ for $n>3$ under the $\{ 123, 231 \} \{ 213, 132\}$-equivalence.
\end{prop}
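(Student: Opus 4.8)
The plan is to count the class of the identity by leveraging the structure already established for this equivalence, namely that $n$ can always be slid to the final position and that the parity-and-$C_k$ invariants separate classes. First I would note that the identity $12\ldots n$ begins with $1$ (not $n-1$) and ends with $n$, so by Lemma~\ref{lemhill} its class contains \emph{every} permutation of the same parity as the identity which ends with $n$ and does not begin with $n-1$. Thus the class of the identity is exactly the set of permutations reachable from such permutations, and my main task is to count how many permutations in $S_n$ lie in this one class. The strategy is to give a clean characterization of membership and then count directly.

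The key steps, in order, are as follows. First I would argue that every member of the identity's class can be brought to a canonical form ending in $n$: from any permutation in the class, repeated applications of $231\to123$ and $132\to213$ move $n$ to the last position without leaving the class, exactly as in the preceding proposition. Second, among permutations ending with $n$, Lemma~\ref{lemhill} shows the identity's class consists precisely of those of the correct parity that do \emph{not} begin with $n-1$ (equivalently, by Lemma~\ref{lemhillprop}, those not satisfying any $C_k$). So I would reduce the count to: the number of $w\in S_n$ that are equivalent to the identity, which should equal (number of even permutations ending in $n$ and not beginning with $n-1$) together with all permutations that slide to such a form. Third, and this is the crux, I would set up a counting argument on the first $n-1$ letters: writing each class member in the form where $n$ sits at the end, the first $n-1$ entries range over permutations of $\{1,\ldots,n-1\}$ of a fixed parity, subject to the single forbidden leading value $n-1$, and I would translate the invariants (parity of the whole permutation, and the absence of a leading $(n-1)$-run) into an exact enumeration.

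I expect the main obstacle to be pinning down the parity bookkeeping together with the ``does not begin with $n-1$'' condition simultaneously, and reconciling the two with the target count $((n-2)(n-1)!)/2$. The natural reading of the formula is that it equals $\tfrac{n-2}{2}\cdot(n-1)!$; since $(n-1)!/2$ is the number of permutations of $\{1,\ldots,n-1\}$ of a given parity, the factor $n-2$ suggests that for each such parity-class of the first $n-1$ letters there are $n-2$ admissible completions (corresponding to the allowed leading blocks, i.e.\ those avoiding the single $C_k$ obstruction). I would therefore aim to exhibit an explicit bijection between the identity's class and a set of the form (parity-fixed permutations of $n-1$ letters) $\times$ (a set of size $n-2$), most cleanly by an induction on $n$ that separates permutations according to their first letter: those beginning with $n$ contribute a copy of the $S_{n-1}$ class count, while those not beginning with $n$ are handled by sliding $n$ to the end and invoking Lemma~\ref{lemhill}. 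The induction should reduce the claim to a short recurrence for the size of the identity's class, whose solution is $((n-2)(n-1)!)/2$; verifying the base case $n=4$ and checking that the recurrence matches is the remaining routine work.
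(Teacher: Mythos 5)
Your overall instinct---use Lemma~\ref{lemhill} for the lower bound and the $C_k$ invariant of Lemma~\ref{lemhillprop} together with parity for the upper bound---is the right one, but the proposal has a genuine gap at its crux: you never convert ``the class of the identity'' into an explicitly countable subset of $S_n$. Counting permutations that end with $n$, have the right parity, and do not begin with $n-1$ counts only the canonical forms; there are $(n-2)(n-2)!/2$ of these, a factor of roughly $n-1$ short of the target, and the remaining members of the class are exactly the ``permutations that slide to such a form,'' which you acknowledge but never enumerate. The product decomposition (parity-fixed permutations of $n-1$ letters) $\times$ (a set of size $n-2$) is read off from the shape of the formula rather than exhibited, and the proposed recursion is miscalibrated: permutations beginning with $n$ contribute \emph{nothing} to the identity's class (by Lemma~\ref{lemdownhill} a leading $n$ is frozen, so such a permutation can never reach the identity); the ``those beginning with $n$ contribute a copy of the $S_{n-1}$ count'' step belongs to the recursion for the \emph{number of classes}, not to the size of one class.

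The missing idea, which is how the paper closes the argument, is an exact membership criterion: $w$ is equivalent to the identity if and only if $w$ is even, does not begin with $n$, and fails $C_1$---equivalently, $w$ either begins with $(n-1)n$ or begins with neither $n-1$ nor a prefix of the form $jn(n-1)$. Lemma~\ref{lemhillprop} and parity invariance give one direction (an upper bound), and sliding $n$ to the end followed by Lemma~\ref{lemhill} gives the other (every even permutation of this shape reaches an even permutation ending in $n$ and not starting with $n-1$, hence the identity). Once the class is identified with this explicit set, the count is immediate and requires no induction: there are $(n-2)(n-1)!-(n-2)!$ permutations beginning with none of $n$, $n-1$, or $jn(n-1)$, plus $(n-2)!$ beginning with $(n-1)n$, for $(n-2)(n-1)!$ in all, and exactly half of each family is even, giving $(n-2)(n-1)!/2$.
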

\begin{proof}
Consider permutations in $S_{n>3}$. By Lemma \ref{lemhillprop}, no permutations equivalent to the identity satisfy $C_1$. A permutation $w$ fails to satisfy $C_1$ exactly when $w$ begins with neither $n-1$ nor $jn(n-1)$ for any $j$, or when $w$ begins with $(n-1)n$. If a permutation is equivalent to the identity, then it also can not begin with $n$. There are $(n-2)(n-1)!-(n-2)!$ permutations not beginning with $n-1$, $jn(n-1)$, or $n$.  There are $(n-2)!$ permutations beginning with $(n-1)n$. It is easy to see that half of the permutations counted in each of these enumerations are odd and half are even. If we consider only the even ones (noting that transformations do not change parity), there are at most $(n-2)(n-1)!/2$ permutations that can be equivalent to the identity.  From an even permutation not starting with $n$ or $n-1$ and not starting with three letters of the form $jn(n-1)$ for some $j$, through repeated applications $132 \rightarrow 213$ and $231 \rightarrow 123$, one can reach an even permutation ending with $n$ and not starting with $n-1$ which by Lemma \ref{lemhill} is equivalent to the identity. Similarly, all even permutations of the form $(n-1)n\ldots$ are equivalent to the identity. Thus the number of elements in the class containing the identity is $(n-2)(n-1)!/2$ for $n>3$.
\end{proof}

\subsection{\mathheader{\{ 123, 132\} \{ 231, 312 \}}-Equivalence}

\begin{lem}\label{lem1tail} The class containing the identity contains exactly the permutations beginning with $1$ under the $\{ 123, 132\} \{ 231, 312 \}$-equivalence.
\end{lem}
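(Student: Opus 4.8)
The plan is to prove the two inclusions separately, since the statement amounts to the set equality (class of the identity) $=$ (permutations beginning with $1$). First I would show that \textbf{beginning with $1$ is invariant} under the equivalence, which confines the class of the identity to permutations starting with $1$. The key observation is that $1$ is the global minimum, so in any hit it must occupy whichever of the three positions holds the smallest involved letter: among the patterns $123, 132$ this is the first position, while among $231, 312$ it is the second or third. Since a transformation only swaps a pattern for another pattern \emph{in the same part}, the smallest letter of a hit drawn from $\{123,132\}$ stays first, and the smallest letter of a hit drawn from $\{231,312\}$ never occupies the first position. Hence, if $1$ sits in position $1$ of the whole word, the only hits containing it are $123$ and $132$ at positions $1,2,3$, and $123\leftrightarrow 132$ fixes position $1$; any hit not containing $1$ fixes it trivially. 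So $1$ can never leave the front, and since the identity begins with $1$, every permutation $\equiv$ to it begins with $1$.

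For the reverse inclusion I would show, by induction on $n$ (with the small cases $n\le 3$ checked directly), that \emph{every permutation beginning with $1$ is equivalent to the identity}. For the inductive step, write $w = 1w_2\cdots w_n$ and let $v = w_n$ be its last letter (note $v\ge 2$, as $1$ is frozen at the front). The main tool is the induction hypothesis applied to the factor occupying positions $1,\ldots,n-1$: this factor begins with $1$, its own minimum, so transformations internal to those positions can turn it into \emph{any} rearrangement of its letters that begins with $1$. I would use this freedom first to reach a permutation ending in $n$, and then invoke the hypothesis once more on the first $n-1$ letters (now a permutation of $\{1,\ldots,n-1\}$ beginning with $1$) to finish sorting into the identity.

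To reach a permutation ending in $n$ when $v\ge 3$, I would rearrange the prefix so that $n$ lands in position $n-1$ and $2$ in position $n-2$; then the factor $(2,n,v)$ in the last three positions is a $132$, and $132\to 123$ slides $n$ into the final slot. The corner case $v=2$ must be handled first: there I would use the hypothesis to place $n-1$ in position $n-2$ and $n$ in position $n-1$, apply $231\to 312$ to the factor $(n-1,n,2)$ so that the last letter becomes $n-1\ge 3$, and then fall into the previous case.

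The main obstacle is exactly this step of driving $n$ to the end. A naive greedy push of $n$ rightward using $132\to 123$ and $312\to 231$ stalls whenever $n$ is immediately preceded by a larger letter and followed by a descending block running to the end (for instance, in $14532$ no local push works, yet $14532\equiv 12345$), so local moves alone are insufficient. The resolution is to lean on the inductive hypothesis to pre-arrange the \emph{entire} prefix — which is legitimate precisely because the frozen leading $1$ forces that prefix to begin with its own minimum — so that one local transformation then finishes the job. The only genuine wrinkle is the case $v=2$, where no letter smaller than $v$ other than the immovable $1$ is available to sit in front of $n$, which is why that case needs the extra preliminary move.
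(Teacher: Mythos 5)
Your proof is correct and follows essentially the same strategy as the paper's: an induction on $n$ that pins the letter $1$ at the front, uses the inductive hypothesis to freely rearrange the first $n-1$ positions, and then makes a short local maneuver (with the last letter equal to $2$ as the special case) before one more application of the hypothesis. The only cosmetic differences are that you reduce to a permutation ending in $n$ and recurse on the prefix, while the paper reduces to a permutation starting with $12$ and recurses on the suffix, and that you spell out the (trivial but needed) invariance of the leading $1$, which the paper leaves implicit.
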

\begin{proof} We will prove this inductively. In $S_3$, the lemma is trivial. Assume the lemma holds in $S_{n-1}$. Let $w$ be a permutation beginning with $1$ in $S_n$. We rearrange the first $n-1$ letters of $w$ as the identity (using the inductive hypothesis), resulting in $w'$. If $w'$ ends with $2$ (i.e., does not have $2$ in the second position), then through repeated applications of $231 \rightarrow 312$ (where the actual two is the lowest letter in the hit), we move it to the third position; then, applying $132 \rightarrow 123$ to the first three letters, we arrive at a permutation starting with $12$. Thus $w$ is equivalent to a permutation starting with $12$, and applying the inductive hypothesis to the final $n-1$ letters of such a permutation, we reach the identity.
\end{proof}

\begin{lem}\label{lemVperm} Every permutation is reachable from a permutation of the form $w1 \ldots$ where $w$ is a word and $w1$ is avoiding. Note that this means $w1$ is decreasing. Thus by Lemma \ref{lem1tail}, all permutations are reachable from a V-permutation under the \\ $\{ 123, 132\} \{ 231, 312 \}$-equivalence.
\end{lem}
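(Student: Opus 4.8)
The plan is to prove the substantive claim — that every $\pi\in S_n$ is equivalent to a permutation $w1\ldots$ whose prefix $w$ (the letters preceding the unique $1$) is decreasing — and then to read off both the parenthetical remark and the statement about V-permutations. The remark that an avoiding $w1$ must be decreasing is the easy end: if $w1$ had an ascent, take the rightmost one, say in positions $i,i+1$ with $v_i<v_{i+1}$. Since $i$ is the rightmost ascent, the entries from position $i+1$ onward decrease, so $v_{i+2}$ exists (the run cannot stop at $v_{i+1}=1$, as $v_i<v_{i+1}$ forbids $v_{i+1}=1$) and $v_{i+1}>v_{i+2}$; thus $v_iv_{i+1}v_{i+2}$ is an up--down factor, i.e. a $132$ or a $231$, hence a hit, contradicting avoidance. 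Conversely a decreasing $w1$ has only $321$ factors and is therefore avoiding, so the two descriptions coincide.

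For the main claim I would induct on $n$, using one structural invariant: the relative order of the letters $1$ and $n$ never changes. Indeed $1$ and $n$ can occur together in a hit only as its smallest and largest entries, and the two nontrivial parts respect this — every pattern in $\{123,132\}$ has the minimum before the maximum, while every pattern in $\{231,312\}$ has the maximum before the minimum — so no transformation can transpose $1$ and $n$. This splits the inductive step into two cases. If $n$ precedes $1$, I aim to reach a permutation beginning with $n$; then $\pi\equiv n\sigma$, the inductive hypothesis rewrites $\sigma$ (a permutation on $n-1$ letters, still containing its own minimum $1$) as $w'1\ldots$ with $w'$ decreasing, using only transformations internal to $\sigma$, and since $n$ exceeds everything, $nw'$ is again decreasing. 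If instead $1$ precedes $n$, I aim to reach a permutation ending with $n$; peeling that terminal $n$ and applying the inductive hypothesis produces $w'1\ldots n$, which is of the required form because $n$ lands in the free tail after $1$. In both cases the base cases $n\le 3$ are checked by hand.

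The engine for moving $n$ to an end is a block-slide. Writing $R$ for the letter immediately to the right of $n$, the factor consisting of $n$, $R$ and an adjacent letter $t$ is a hit exactly when $t>R$: if $t$ is on the left it is a $231$ and $231\to 312$ turns $t\,n\,R$ into $n\,R\,t$, while if $t$ is the next letter on the right it is a $312$ and $312\to 231$ turns $n\,R\,t$ into $t\,n\,R$. Thus the block $nR$ may be slid, in either direction, past any neighbour whose value exceeds $R$, and it is obstructed only by letters smaller than $R$. In the case $n$ before $1$ this is used to shepherd the letter $1$ into the slot immediately right of $n$; once the factor $n1$ is present, $x\,n\,1\to n\,1\,x$ (again $231\to 312$, valid for every $x$ since $1$ is globally minimal) slides $n1$ all the way to the front, giving a permutation beginning with $n$. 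The case $1$ before $n$ is handled symmetrically, sliding $n$ rightward to the last position.

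The hard part is precisely making the shepherding rigorous: naive greedy moves stall. For example, a letter is stuck the moment it is trapped behind a maximal decreasing run — in $24531$ neither $5$ nor $1$ can take a single productive step, yet the permutation is equivalent to $51243$ — so one cannot simply push $1$ toward $n$ monotonically. I expect to control this with a carefully chosen monovariant, for instance a lexicographic quantity combining the position of $n$ with the value of its current right neighbour, and to argue that whenever $n$ is not yet at the desired end, some sequence of block-slides strictly decreases it; the delicate point is that intermediate steps may move $n$ the wrong way or temporarily enlarge the gap between $n$ and $1$, so the decrease must be established over a whole block-slide phase rather than a single transformation. Finally, granting the main claim, the V-permutation statement follows immediately: in a representative $w1\ldots$ the suffix beginning at $1$ is a factor whose first letter is its minimum, so its standardization begins with $1$ and, by Lemma~\ref{lem1tail}, is equivalent to the identity; rearranging that suffix into increasing order (by transformations internal to it) leaves $w$ untouched and yields a genuine V-permutation, decreasing down to $1$ and increasing thereafter.
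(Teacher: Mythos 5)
Your reduction of the parenthetical remark (avoiding $w1$ is the same as decreasing $w1$) is correct, and your closing paragraph deriving the V-permutation statement from Lemma \ref{lem1tail} is fine. The problem is the main claim, where your argument has an acknowledged but unfilled hole. Your whole strategy rests on the assertion that a permutation with $n$ before $1$ is equivalent to one beginning with $n$, and one with $1$ before $n$ is equivalent to one ending with $n$. You do not prove this: you describe the block-slide moves, correctly observe (via your own example $24531$) that greedy sliding stalls, and then say you ``expect'' a lexicographic monovariant to work without exhibiting one or verifying that some sequence of block-slides always decreases it. That assertion is the entire content of the lemma in your framework --- it is at least as hard as the statement being proved --- so as written the proof is incomplete, not merely unpolished.

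The paper avoids this difficulty entirely with a much lighter induction on $n$ that never tracks the letter $n$ at all. If $1$ is not in the final position, apply the inductive hypothesis to the first $n-1$ letters (which contain $1$); the resulting prefix $w1$ is decreasing and the untouched final letter simply joins the tail after $1$, so you are done immediately. If $1$ is in the final position, apply the inductive hypothesis (in its V-permutation form) to the first $n-1$ letters to make them a V-permutation; then either $2$ sits in position $n-1$, in which case the whole permutation is decreasing and already of the required form, or the last three letters form a $231$ and a single $231\to 312$ move pushes $1$ out of the final position, reducing to the first case. If you want to salvage your approach, you would need to actually construct and verify the monovariant for the shepherding phase; but note that the paper's decomposition shows the invariant on the relative order of $1$ and $n$, and the entire machinery of moving $n$ to an end, are unnecessary for this lemma.
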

\begin{proof} This is trivial for $n=3$. Assume that the result holds for $n-1$. If $1$ is not in the final position of a permutation, then using the inductive hypothesis, we are done. Let $x$ be a permutation ending with $1$. By the inductive hypothesis (on the first $n-1$ letters), $x$ is reachable from a permutation which is a V-permutation in the first $n-1$ letters and ends with $1$. Let $y$ be such a permutation. If $2$ is not the second to final letter, then the final three letters can be rearranged as $231 \rightarrow 312$ yielding a permutation not ending with $1$ (a case for which we have already shown that the lemma holds). If $2$ is the second to final letter, then the entire permutation is decreasing and we have thus reached a V-permutation.
\end{proof}

\begin{lem}\label{lemstaticstartVperms} A V-permutation of the form $j\ldots$ and a V-permutation of the form $k\ldots$ where $k \neq j$ can not be equivalent under the $\{ 123, 132\} \{ 231, 312 \}$-equivalence.
\end{lem}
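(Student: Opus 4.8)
The plan is to produce an explicit class-invariant that reads off the first letter of any V-permutation. Define $M(w)$ to be the largest letter occurring strictly to the left of the letter $1$ in $w$, with the convention that $M(w)=0$ when $1$ is itself the first letter of $w$. The argument has two halves: show that $M$ is preserved by every transformation, and show that $M$ determines the leading letter of a V-permutation. Together these give the lemma at once.

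First I would check invariance of $M$. A single transformation rearranges only three consecutive letters, so $M$ can change only if the three-letter factor being altered contains the letter $1$; if the factor lies wholly on one side of $1$, the set of letters to the left of $1$ is unaffected, and hence so is its maximum. Since $1$ is the globally smallest letter, it is the least element of any factor containing it. In a $\{123,132\}$-transformation the least element sits in the first slot of the factor and does not move, so the letters to the left of $1$ are unchanged. The only genuine case is a $\{231,312\}$-transformation containing $1$; such a factor must have the form $qr1 \leftrightarrow r1q$ with $1<q<r$. Here exactly one letter, namely $q$, crosses from the left of $1$ to its right (or back), but because $q<r$ and $r$ stays to the left of $1$ throughout, the maximum over the letters left of $1$ is unchanged. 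Hence $M$ is invariant in every case.

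Next I would evaluate $M$ on a V-permutation. If such a permutation begins with $j>1$, its strictly decreasing prefix is precisely the set of letters appearing before $1$, and the maximum of that prefix is its first entry $j$, so $M=j$. If it begins with $1$, then $M=0$. Since $M$ can never take the value $1$, its value determines the first letter in every case: the first letter equals $M$ when $M\geq 2$ and equals $1$ when $M=0$.

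Combining the two halves finishes the proof: if two V-permutations have distinct first letters $j\neq k$, then their values of $M$ are distinct, and since $M$ is constant on equivalence classes, the two permutations cannot be equivalent. The only delicate point, and the step I would be most careful with, is the $\{231,312\}$ computation above, where a letter genuinely does cross the letter $1$; invariance survives only because that crossing letter is dominated by a letter that remains on the left side of $1$.
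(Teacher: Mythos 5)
Your proposal is correct and is essentially the paper's own argument: the paper's proof consists of the single observation that ``the largest letter to the left of the $1$ does not change under the transformations considered,'' which is exactly your invariant $M$, and your case analysis (in particular the $qr1\leftrightarrow r1q$ computation) is a careful verification of that one-line claim.
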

\begin{proof} The largest letter to the left of the $1$ does not change under the transformations considered.
\end{proof}

\begin{defn}
Let $E_n$ be the set of permutations in $S_n$ which are the descending permutation except with two consecutive letters swapped. 
\end{defn}

For example, $E_4=\{3421, 4231, 4312\}$.

\begin{lem} \label{lemEclass}
The permutations in $E_n$ form an equivalence class under the \\ $\{ 123, 132\} \{ 231, 312 \}$-equivalence.
\end{lem}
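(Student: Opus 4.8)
The plan is to establish two complementary facts: first, that any two members of $E_n$ are equivalent to each other, and second, that $E_n$ is closed under the equivalence, meaning no single transformation carries a member of $E_n$ to a permutation outside $E_n$. Together these identify $E_n$ as exactly one equivalence class. It helps to fix the explicit shape of the members: for $i\in\{1,2,\ldots,n-1\}$, let $P_i$ denote the descending permutation $n(n-1)\cdots 1$ with the entries in positions $i$ and $i+1$ transposed, so that $P_i = n(n-1)\cdots(n-i+2)(n-i)(n-i+1)(n-i-1)\cdots 1$. Thus $P_i$ is strictly decreasing except for a single ascent at positions $i$ and $i+1$, whose two values $n-i < n-i+1$ are consecutive, and $E_n = \{P_1, P_2, \ldots, P_{n-1}\}$.

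For the first fact, I would exhibit an explicit transformation joining $P_i$ to $P_{i+1}$ for each $i\in\{1,\ldots,n-2\}$, which suffices by transitivity. The three-letter factor in positions $i, i+1, i+2$ of $P_i$ has values $n-i, n-i+1, n-i-1$, whose order permutation is $231$; applying $231\to 312$ there produces precisely $P_{i+1}$, as a short direct check confirms. Hence $P_1 \equiv P_2 \equiv \cdots \equiv P_{n-1}$.

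For the second fact, which is the real content of the lemma, I would argue that the only hits available in any $P_i$ are the two factors flanking its unique ascent, and that replacing either keeps us inside $E_n$. Since every transformation acts on three consecutive letters, it suffices to inspect each three-letter factor of $P_i$. Any factor disjoint from the ascent lies within a strictly decreasing run, so its order permutation is $321$, which is not a hit. The only factors meeting both ascent positions are those in positions $(i-1,i,i+1)$, with order permutation $312$, and in positions $(i,i+1,i+2)$, with order permutation $231$ (with the evident modification at the boundaries $i=1$ and $i=n-1$, where only one of these exists). Because each nontrivial part has exactly two elements, the transformation at a $312$-hit is forced to be $312\to 231$ and at a $231$-hit is forced to be $231\to 312$; I would verify directly that the former yields $P_{i-1}$ and the latter yields $P_{i+1}$. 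Thus every transformation issuing from a member of $E_n$ again lands in $E_n$.

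The main obstacle is precisely this closure step: one must be certain that no three-letter factor other than those adjacent to the ascent can form a hit, and in particular that the patterns $123$ and $132$ never arise (a $123$ would require two ascents, while a $132$ would require the ascent factor $(i,i+1,i+2)$ to satisfy $w_i < w_{i+2}$, i.e.\ $n-i < n-i-1$, which is false). This is exactly why viewing $E_n$ as ``descending with a single consecutive-value ascent'' is the right framing, since it makes the enumeration of hits immediate. Combining the two facts, $E_n$ is both internally connected and closed, and therefore forms a single equivalence class.
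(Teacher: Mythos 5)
Your proof is correct and follows essentially the same route as the paper's: you verify closure by checking that the only possible hits in a member of $E_n$ are the $312$ factor and the $231$ factor straddling the unique ascent (each transformation merely sliding the ascent one step), and you verify connectivity by chaining these transformations across all of $E_n$. The paper's argument is just a terser version of the same two steps.
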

\begin{proof}
We will first show that the set $E_n$ is closed under the transformations considered. Let $w \in E_n$. There are at most two hits in $w$, a $231$ hit and a $312$ hit, each using the pair of letters in increasing order in $w$. After a $231 \leftrightarrow 312$ rearrangement, we have simply swapped which pair of consecutive letters is in increasing order, resulting in a permutation in $E_n$.

What remains to be shown is that all of the permutations in $E_n$ are equivalent. Through repeated applications of the rearrangement $312 \rightarrow 231$, we can go from the descending permutation with $1$ and $2$ swapped to each of the permutations in $E_n$. Hence, the permutations in $E_n$ form an equivalence class.
\end{proof}

\begin{lem}
Each V-permutation of the form $\ldots 21\ldots$ is not equivalent to any other V-permutations under the $\{ 123, 132\} \{ 231, 312 \}$-equivalence.
\end{lem}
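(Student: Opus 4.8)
The plan is to show that in such a V-permutation $v$ the letters $1$ and $2$, together with everything lying to the left of $1$, form a \emph{frozen prefix}: no sequence of transformations issuing from $v$ can ever alter the letters in the positions up to and including the position of $1$. Since $v$ has the form $\ldots 21 \ldots$, the letter immediately left of $1$ is $2$, and the two three-letter factors containing the adjacent pair $21$ are $a21$ (where $a$ is the letter just left of $2$) and $21b$ (where $b$ is the letter just right of $1$). Their order permutations are $321$ and $213$, and neither lies in a nontrivial part of $K=\{123,132\}\{231,312\}$, so neither is a hit. More generally, because $1$ and $2$ are the two smallest letters and both sit weakly left of $1$, every suffix letter exceeds $2$; hence any hit containing $1$ must have $1$ as its leftmost letter (the alternatives $a21$ and $21b$ being the non-hits $321$ and $213$), and any hit containing $2$ must be $a21$, $21b$, or a factor of the decreasing run to the left of $2$ (again of order permutation $321$), none of which is a hit. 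Thus in $v$ itself the letter $2$ lies in no hit and the letter $1$ is moved by no transformation.

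Next I would promote this to an invariant by inducting on the length of a chain of transformations issuing from $v$. Assuming a permutation $w$ equivalent to $v$ still agrees with $v$ in all positions through that of $1$ (in particular has $2$ immediately left of $1$), I would examine a single transformation $w\to w'$. Because a hit occupies three consecutive positions and the position $p$ of $1$ separates the prefix from the suffix, any hit meeting both sides of $p$ must contain $1$; the three candidate windows $(p-2,p-1,p)$, $(p-1,p,p+1)$, and $(p,p+1,p+2)$ have order permutations $321$, $213$, and one starting with $1$ (so $123$ or $132$), respectively. Only the last is admissible, and it keeps $1$ fixed at position $p$ while touching only suffix letters; every other hit lies entirely in the suffix. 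Hence $w'$ again agrees with $v$ through position $p$, closing the induction. The upshot is that every permutation equivalent to $v$ carries exactly the letters of $v$, in the same order, from the start up to and including $1$, and differs from $v$ only by a rearrangement of the letters lying to the right of $1$.

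Finally I would specialize to V-permutations. A V-permutation equivalent to $v$ shares this frozen prefix, so its multiset of letters to the right of $1$ coincides with that of $v$; but by the definition of a V-permutation those letters must occur in increasing order, which determines them uniquely. Therefore such a V-permutation equals $v$, proving that $v$ is equivalent to no other V-permutation. (As a consistency check, this recovers Lemma~\ref{lemstaticstartVperms}, since the frozen prefix in particular fixes the first letter.)

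The step I expect to be the main obstacle is the inductive preservation in the second paragraph: one must be certain that rearranging the suffix can never, even after many steps, create a hit that reaches back across $1$ and dislodges the $21$ block. The resolution is the uniform observation that the neighbours of $1$ and $2$ inside the prefix are themselves frozen and that every suffix letter exceeds $2$, so the windows straddling $p$ are perpetually of type $321$ or $213$ and never become hits; this is what makes the freezing genuinely permanent rather than merely a feature of $v$ itself.
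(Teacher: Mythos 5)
Your proof is correct and follows essentially the same route as the paper's: both arguments show that the decreasing prefix up to and including the letter $1$ is frozen, because the windows meeting it have order permutations $321$ or $213$ (non-hits) except for the $1xy$ window, whose transformations $123\leftrightarrow 132$ fix the letter $1$. Your version merely makes explicit the induction along a chain of transformations that the paper leaves implicit.
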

\begin{proof}
 Let $w$ be a V-permutation of the form $j\ldots 21\ldots$. Since no hit can begin with $21$, the letters to the left of $1$ are in decreasing order, and hits beginning with $1$ can only be rearranged to form other hits beginning with $1$, we have that all permutations equivalent to $w$ must have the letters leading up to $1$ exactly the same as in $w$. So, $w$ can not be equivalent to any V-permutations other than itself.
\end{proof}

\begin{lem}\label{lemunattachedVperms} Each V-permutation of the form $\ldots 31\ldots$ is not equivalent to any other V-permutations under the $\{ 123, 132\} \{ 231, 312 \}$-equivalence.
\end{lem}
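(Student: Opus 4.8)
The plan is to pin down the entire decreasing part of the V-permutation as an invariant of its class. Write the given permutation as $w = a_1\cdots a_r\,3\,1\,2\,b_1\cdots b_s$, where $a_1 > \cdots > a_r > 3$ is the part of the decreasing run lying above $3$ (possibly $r=0$) and $2 < b_1 < \cdots < b_s$ is the increasing run, so every $b_i \geq 4$. Let $S$ be the set of letters lying to the left of $1$ in $w$; then $S = \{a_1,\dots,a_r,3\}$, every element of $S$ is $\geq 3$, and $S$ determines $w$ among all V-permutations. I would show that, for \emph{every} permutation in the class of $w$, the set of letters $\geq 3$ lying to the left of $1$ is invariant and equal to $S$.

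Granting this, the lemma follows quickly. Let $w'$ be any V-permutation equivalent to $w$, with left-set $S'$. The letters $\geq 3$ to the left of $1$ in $w'$ are exactly $S' \setminus \{2\}$, and the invariant forces $S' \setminus \{2\} = S$, so $S' = S$ or $S' = S \cup \{2\}$. In the first case $w' = w$ and we are done. In the second case $2 \in S'$, so $w'$ has the form $\ldots 21 \ldots$; by the preceding lemma such a $w'$ is equivalent to no V-permutation other than itself, yet $w$ is a V-permutation with $w \equiv w'$ and $w \neq w'$ (the two have different forms), a contradiction. Hence $S' = S$ and $w' = w$. (In particular this is consistent with Lemma \ref{lemstaticstartVperms}, which already fixes the first letter.)

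The heart of the matter is the claim $(R)$: in every permutation equivalent to $w$, each hit containing the letter $1$ also contains the letter $2$. First I would record why $(R)$ yields the invariant. A hit that does not contain $1$ occupies three consecutive positions lying entirely on one side of $1$, so it never moves a letter across $1$. A hit that does contain $1$ is, by $(R)$, of the form $\{1,2,t\}$ with $t\geq 3$; rearranging such a hit within its part of $K$, one checks directly that the largest value $t$ always keeps its side of $1$, so the only letter that can change sides is the middle value $2$. Consequently no letter $\geq 3$ ever crosses $1$, and the set of such letters to the left of $1$ stays frozen at its initial value $S$.

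It therefore remains to prove $(R)$, and this is where the real work lies. For $w$ itself the only hits through $1$ are the factors $3\,1\,2$ and $1\,2\,b_1$, each containing $2$, so $(R)$ holds initially. I would then induct along a chain of transformations: assuming $(R)$ for a permutation $u$ in the class, a single admissible transformation uses either a hit avoiding $1$ or a hit $\{1,2,t\}$, and in each case I must verify that no new hit is created that passes through $1$ while omitting $2$. Such a bad hit could only be a window of three consecutive positions sitting immediately beside $1$ on the side away from $2$ and depositing a letter $\geq 3$ next to $1$; ruling this out amounts to controlling the local arrangement of $1$ and $2$ (they remain within two positions of each other, and the single letter that may separate them is forced to be $3$ on the left or a large letter on the right). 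Carrying out this local case analysis — most cleanly by strengthening the induction hypothesis to record the exact neighbourhood of the block formed by $1$ and $2$ — is the main obstacle; once it is in place, the preceding paragraphs close the argument.
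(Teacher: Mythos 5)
Your overall strategy is sound and close in spirit to the paper's: freeze the set $S$ of letters to the left of $1$ (all of which are $\geq 3$) as a class invariant, and then dispose of the alternative $S'=S\cup\{2\}$ by appealing to the preceding lemma on V-permutations of the form $\ldots 21\ldots$. Your reduction is also right as far as it goes: a $\{123,132\}$-hit containing the actual letter $1$ has $1$ as its first letter and moves nothing across $1$, while a $\{231,312\}$-hit containing $1$ moves only its middle-valued letter across $1$. But the claim $(R)$ on which you hang the invariance is false. Take $w=431256$. Applying $123\to132$ to the factor $256$ gives $431265$; applying $123\to132$ to the factor $126$ gives $431625$; applying $312\to231$ to the factor $625$ gives $431562$. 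This last permutation is equivalent to $w$ and contains the hit $156$, a $123$ pattern containing $1$ but not $2$. What you actually need is the weaker statement that every $\{231,312\}$-hit containing $1$ has $2$ as its middle letter (that statement is true, and it does yield your invariant), but an induction carried on $(R)$ cannot deliver it, both because $(R)$ itself is not preserved and because, as you concede, $(R)$ is not a self-propagating hypothesis: after a transformation in a window abutting $1$ you have no control over whether the new neighbours of $1$ create a $231$ or $312$ hit.

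This is where the real content of the lemma lies, and it is exactly the part you defer. The paper closes the gap by proving a stronger structural invariant: in every permutation equivalent to $w$, the letters to the left of $1$ are either exactly $S$ in decreasing order, or $S\cup\{2\}$ arranged as an element of $E_{k+1}$ (the descending word with one adjacent transposition), and it checks that this dichotomy is preserved under a single transformation, using Lemma \ref{lemEclass} to handle hits lying entirely to the left of $1$. Some hypothesis of comparable strength — one that pins down the whole prefix ending at $1$, not just which hits pass through $1$ — is needed before your concluding paragraphs can be invoked; until it is formulated and verified, the argument does not close.
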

\begin{proof}
Let $w$ be a V-permutation of the form $\ldots 31 \ldots$ with $k$ letters to the left of $1$. We posit that the letters to the left of $1$ in each permutation equivalent to $w$ are either exactly the same as in $w$ and are in decreasing order, or form a permutation in $E_{k+1}$ using the same letters as are to the left of $1$ in $w$ as well as the letter $2$. Assume inductively that this is true for permutations which are $j$ transformations away from $w$, with an inductive base case of $j=0$. We will show that the claim holds for permutations $j+1$ transformations away from $w$, completing the proof. Let $x\in S_n$ be $j$ transformations away from $w$ and $x'\in S_n$ be one transformation away from $x$. There are two cases.
\begin{itemize}
\item The letters to the left of $1$ in $x$ are in decreasing order and are the same letters as in $w$. If $x'$ is reached using a hit to the right of $1$, then the claim is trivial. Otherwise, the rearrangement must be $312 \rightarrow 231$ using the actual letters $1,2,3$. Then, the letters to the left of $1$ in $x'$ form a permutation in $E_{k+1}$ and are exactly the letters to the left of $1$ in $w$ in addition to the letter $2$.
\item The letters to the left of $1$ in $x$ form a permutation in $E_{k+1}$ using the same letters as are to the left of $1$ in $w$ in addition to the letter $2$. If $x'$ is reached by a transformation using letters only to the right of $1$, then the claim is trivial. If $x'$ is reached by a transformation using letters only to the left of $1$, then by Lemma \ref{lemEclass}, the claim holds. If $x'$ is reached using a transformation using the letter $1$, then the transformation is $231 \rightarrow 312$ using the actual letters $1,2,3$, and the letters before $1$ in $x'$ are in decreasing order and are the same as in $w$.
\end{itemize}
\end{proof}

\begin{lem}\label{lemattachedVperms} V-permutations of the form $j\ldots k1\ldots$ as well as the V-permutation of the form $j1 \ldots$ where $j\ge 4$ is fixed and $k$ is any letter $\ge 4$ are equivalent under the $\{ 123, 132\} \{ 231, 312 \}$-equivalence.
\end{lem}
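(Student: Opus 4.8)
The plan is to fix $j$ and show that \emph{every} V-permutation of the described form is equivalent to the single canonical permutation $j\,1\,2\,3\cdots$ (that is, $j$ followed by $1$ and then all remaining letters in increasing order); since equivalence is transitive, this proves the lemma. A V-permutation of this type is determined by its \emph{left arm}, the decreasing run of letters before $1$, which is an arbitrary subset of $\{4,5,\ldots,j\}$ containing $j$ — the hypotheses ``begins with $j$'' and ``the letter before $1$ is $\ge 4$'' say exactly that the left arm has maximum $j$ and minimum $\ge 4$. I would induct on $n$, taking $n\le 5$ as computational base cases.

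The key structural fact is that a sequence of transformations confined to a block of consecutive positions is legal regardless of the surrounding letters, so a letter at one end may be ignored. This disposes of the case $j<n$ at once: here $n$ occupies the last position, the first $n-1$ letters form a V-permutation with the same left arm (hence the same $j$ and minimum $\ge 4$), and by the inductive hypothesis they can be brought to canonical form using only transformations among positions $1,\ldots,n-1$. Re-attaching the untouched $n$ gives the canonical permutation of $S_n$.

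The case $j=n$ is the heart of the argument. First I would ignore the leading $n$ and apply the inductive hypothesis to positions $2,\ldots,n$, a V-permutation on $\{1,\ldots,n-1\}$ whose left arm is the original one with $n$ removed; this canonicalizes everything after the first letter and reduces us to a left arm $\{n,j'\}$ with $4\le j'\le n-1$, i.e.\ the permutation $n\,j'\,1\,2\,3\cdots$. To remove $j'$ I would then run a second induction on $n$: the final letter is now inert, and restandardizing the first $n-1$ letters produces exactly a two-element-left-arm instance of size $n-1$ whose minimum is still $\ge 4$, so the inductive hypothesis applies; unstandardizing and restoring the last letter yields $n\,1\,2\cdots$. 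This descent bottoms out at the instance $54123\equiv 51234$ in $S_5$.

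I expect the genuine obstacle to be precisely this last reduction — establishing $n\,j'\,1\cdots\equiv n\,1\cdots$, and in particular its $S_5$ base case. The explicit chain realizing it must pass through many non-V-permutations and uses the letters $2$ and $3$ as ``helpers'' to shuttle $j'$ across the valley; this is exactly where the hypothesis $k\ge 4$ (equivalently, that \emph{both} $2$ and $3$ lie in the right arm) is indispensable, and it explains why the analogous statements for left arms with minimum $2$ or $3$ fail. Some care will also be needed to verify that each restandardization preserves the condition ``minimum $\ge 4$'', so that the inductive hypothesis remains applicable at every step of the descent.
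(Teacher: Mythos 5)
Your proposal is correct in outline and rests on exactly the same crux as the paper's proof: the single $S_5$ identity $54123\equiv 51234$, together with the locality fact that a chain of transformations valid on a factor remains valid inside any ambient permutation, and Lemma \ref{lem1tail} for re-sorting the letters after $1$. The organization, however, differs. The paper uses no induction on $n$ at all: it writes out an explicit ten-step chain realizing $51234\to 54123$ and then applies it, reversed, to the five-letter factor $m\,k\,1\,a\,b$ sitting at the junction of the two arms (where $m$ is the left-arm letter preceding $k$, and $a,b$ are the two smallest right-arm letters, necessarily $2$ and $3$ because every left-arm letter is $\ge 4$), thereby ejecting $k$ into the right arm; it then sorts the tail by Lemma \ref{lem1tail} and iterates until the left arm is reduced to $\{j\}$. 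You instead bury the same $S_5$ fact among your ``$n\le 5$ computational base cases'' and reach it through a double induction; this is legitimate, but a complete write-up would still have to exhibit that chain (or an equivalent finite verification), since it is the entire content of the lemma. One small patch your descent needs: in the second induction, when $j'=n-1$ the inert final letter is $n-2$, so unstandardizing yields $n\,1\,2\cdots(n-3)(n-1)(n-2)$ rather than the canonical form; one further application of Lemma \ref{lem1tail} (or the single transformation $132\to 123$ on the last three letters) finishes it. Your diagnosis that $k\ge 4$ is precisely what forces both $2$ and $3$ into the right arm, and that this is where the hypothesis is genuinely used, matches the paper's mechanism exactly.
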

\begin{proof}
 By computation, one can show that $51\ldots$ is reachable from $541\ldots$ (we will refer to this as the hypothesis): $ 5 1 2 3 4 \rightarrow 5 1 3 2 4 \rightarrow 3 5 1 2 4  \rightarrow 3 5 1 4 2  \rightarrow 3 4 5 1 2  \rightarrow 3 5 4 1 2  \rightarrow 3 5 2 4 1  \rightarrow 5 2 3 4 1  \rightarrow 5 2 4 1 3  \rightarrow 5 4 1 2 3$. Consider an arbitrary V-permutation $w$ of the form $j\ldots k1\ldots$ for some $j$ and for some $k \ge 4$. By the hypothesis, this is reachable from a permutation starting with the same decreasing factor which ends with $1$ except with $k$ excluded. By Lemma \ref{lem1tail}, this is equivalent to the V-permutation starting with the same factor which terminates with $1$ as $w$ does except with $k$ excluded. Thus all such V-permutations are equivalent to the V-permutation $j1\ldots$ and we are done.
\end{proof}

\begin{prop} In $S_n$, there are $3 \cdot 2^{n-3} + n - 2$ classes under the $\{ 123, 132\} \{ 231, 312 \}$-equivalence.
\end{prop}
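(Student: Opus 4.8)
The plan is to count the equivalence classes by partitioning them according to the structure established in Lemmas~\ref{lem1tail}--\ref{lemattachedVperms}. By Lemma~\ref{lemVperm}, every permutation is equivalent to a V-permutation, so it suffices to count how the $2^{n-1}$ V-permutations group into classes. By Lemma~\ref{lemstaticstartVperms}, two V-permutations with different first letters can never be equivalent, so I would organize the count according to the first letter $j$ of the V-permutation and tally the classes contributed within each value of $j$.

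First I would dispose of the extreme cases. By Lemma~\ref{lem1tail}, all V-permutations beginning with $1$ (there is exactly one, the identity) lie in the single class of permutations beginning with $1$, contributing one class. At the opposite end, the descending permutation begins with $n$; together with Lemma~\ref{lemEclass} the V-permutations near ``all decreasing'' organize into the $E_n$ class. I would next separate the remaining V-permutations of the form $j\ldots$ (with $2\le j\le n$) according to the two structurally distinct behaviors identified above: those of the form $\ldots 21\ldots$ or $\ldots 31\ldots$, which by the two preceding lemmas are each isolated (equivalent to no other V-permutation), versus those of the form $j\ldots k1\ldots$ with $k\ge 4$, which by Lemma~\ref{lemattachedVperms} collapse onto the single V-permutation $j1\ldots$ for each fixed $j$. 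The arithmetic then reduces to counting, for each first letter $j$, how many V-permutations are ``isolated'' (a factor of $2^{\text{something}}$ coming from the free choices of which letters sit on the descending versus ascending arm, subject to the letter immediately left of $1$ being $2$ or $3$) and adding one class per value of $j$ for the collapsed family.

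The main obstacle will be the careful bookkeeping that converts these three lemma-driven regimes into the closed form $3\cdot 2^{n-3}+n-2$. Concretely, I expect the $3\cdot 2^{n-3}$ term to arise from the isolated V-permutations: a V-permutation is isolated precisely when the letter directly to the left of $1$ is $2$ or $3$, and I would count these by fixing that neighbor and distributing the remaining letters freely between the two arms, which produces a sum of binomial-type contributions summing to a constant times $2^{n-3}$. The linear $n-2$ term should collect the ``one class per first letter $j$'' contributions from the collapsed families of Lemma~\ref{lemattachedVperms} together with the identity class and the $E_n$ class. The delicate point is ensuring no class is double-counted across these regimes and that the boundary values of $j$ (small $j$, and $j=n$) are handled consistently with the stated restriction $n>5$; I would verify the small cases $n\le 5$ computationally, as the earlier lemmas do, so that the induction and the closed form agree at the base.
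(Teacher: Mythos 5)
Your decomposition is essentially the paper's: reduce to V-permutations via Lemma~\ref{lemVperm}, observe that a V-permutation is isolated exactly when the letter directly left of $1$ is $2$ or $3$ (giving $2^{n-2}+2^{n-3}=3\cdot 2^{n-3}$ classes), collapse the V-permutations with a letter $k\ge 4$ left of $1$ into one class per first letter $j\in\{4,\ldots,n\}$ via Lemma~\ref{lemattachedVperms} (giving $n-3$ classes), and add the single class of permutations beginning with $1$. One concrete correction to your bookkeeping: the $E_n$ class must \emph{not} be added as a separate term. It contains exactly one V-permutation, namely $n(n-1)\cdots 4312$, which is of the form $\ldots 31\ldots$ and is therefore already among the $2^{n-3}$ isolated classes of that type; $E_n$ only enters as a tool in the proof of Lemma~\ref{lemunattachedVperms}, not as a new class. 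As you have written it, the linear term would tally $(n-3)+1+1=n-1$ rather than the required $n-2$; dropping the spurious $E_n$ contribution makes the count $2^{n-2}+2^{n-3}+(n-3)+1=3\cdot 2^{n-3}+n-2$, matching the paper.
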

\begin{proof}
For $n=3$ this is trivial. Let $n>3$. By Lemma \ref{lemVperm}, we simply have to count the number of classes in $S_n$ that the V-permutations break into. By Lemma \ref{lemunattachedVperms}, there are $2^{n-2}$ classes with only V-permutations of the form $ \ldots 21\ldots$ and $2^{n-3}$ classes with only V-permutations of the form $ \ldots 31\ldots$. By Lemma \ref{lemattachedVperms}, there are $n-3$ classes with V-permutations of the form $ \ldots k1 \ldots$ where $k>3$. Finally, by Lemma \ref{lem1tail}, there is $1$ class containing only the remaining permutations, those of the form $1 \ldots$. So, there are $2^{n-2}+2^{n-3}+n-3+1= 3 \cdot 2^{n-3} + n - 2$ classes in $S_n$.
\end{proof}

\subsection{\mathheader{\{ 123, 132\} \{ 213, 321 \}}-Equivalence}

\begin{defn}
We call a permutation $w$ \emph{bushy-tailed} if the following is true. Let $x$ be the factor of $w$ containing the letters in $w$ to the left of $1$ as well as $1$. In $x$, the letters in odd positions are in decreasing order, the letters in even positions are in decreasing order, and each of the letters of the same position parity as $1$ is less than the adjacent letter to its right and the one to its left (if there is such a letter). In addition, we ask that the letters to the right of $1$ in $w$ be in increasing order.
\end{defn}

 Note that for a permutation ending with $1$ to form a bushy-tailed permutation is equivalent to it avoiding $123$, $132$, $321$, and $213$. (This is not actually very useful, but is interesting to note.)

\begin{defn}
Let $w$ be a bushy-tailed permutation. A letter in $w$ which is to the left of $1$ or is $1$ is a \emph{$w$-starter}.
\end{defn}

\begin{lem}\label{lemdistinctbushytail}
Let $y$ be a bushy-tailed permutation. Let $x$ be a permutation equivalent to $y$ under the $\{ 123, 132\} \{ 213, 321 \}$-equivalence. In $x$, the $y$-starters are in the same relative order as they are in $y$ (property 1). Furthermore, ignoring each other, they are all left to right minima (property 2). Finally, in $x$, each $y$-starter other than $1$ (the final $y$-starter) is less than each letter between it and the next-in-position $y$-starter (property 3). As a consequence, no two distinct bushy-tailed permutations are equivalent.
\end{lem}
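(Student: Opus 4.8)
The plan is to prove the three properties simultaneously by induction on the number of transformations separating $x$ from $y$, and then to deduce the final uniqueness statement as a corollary. In the base case $x=y$ the three properties hold essentially by definition: property~1 is trivial, and properties~2 and~3 are vacuous, since in a bushy-tailed permutation every $y$-starter lies in the initial block up to and including $1$, so no non-starter lies to the left of any starter and no letter lies strictly between two consecutive starters. For the inductive step it suffices to show that a single transformation carries a permutation satisfying 1--3 to another such permutation, so from now on $x$ is assumed to satisfy 1--3 and $x'$ is obtained from $x$ by one transformation using a hit $h$.

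The engine of the argument is the structural claim that \emph{no $y$-starter is ever the largest letter of a hit}. I first record that any three consecutive $y$-starters form the order pattern $231$ or $312$, hence never a hit: in $y$ this is immediate from the bushy-tailed zigzag (the starters of the parity of $1$ are the valleys, and the remaining starters decrease within each parity class, so a middle valley gives $312$ and a middle peak gives $231$), and property~1 transports it to $x$ because the subsequence of $y$-starters keeps its order. Now suppose a starter $s=v_i$ were the maximum of $h$. Examining the four hit types $123,132,213,321$: any hit-letter to the left of $s$ is smaller than $s$, hence by property~2 is itself a starter and so is the immediately preceding starter; any hit-letter to the right of $s$ is smaller than $s$ and so, by property~3 (applied twice in the $321$ case), cannot be strictly between consecutive starters and must be the next starter. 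Thus all three hit-letters are consecutive starters, contradicting that a starter-triple is never a hit.

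With the claim established, properties~1 and~2 follow quickly. Since $123\leftrightarrow132$ only interchanges the two largest letters of $h$ and $213\leftrightarrow321$ only moves the largest letter past the other two, and the largest letter is always a non-starter, no transformation reverses the relative order of two starters; this is property~1. For property~2, only the other hit-letters can change side relative to a starter $s$ in $h$: the hit-maximum is a non-starter exceeding $s$, so its arriving to the left of $s$ is harmless, and any hit-letter \emph{smaller} than $s$ is already forced to the right of $s$ in $x$ by property~2 (in fact, combined with property~3 this shows a one-starter hit forces $s$ to be the hit-minimum, and a two-starter hit has no letter below a starter except the shared maximum). A short check of the four moves then shows no smaller non-starter crosses to the left of $s$.

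I expect property~3 to be the main obstacle, because a transformation can drag a non-starter out of the gap on one side of a starter into the gap on the other side, and one must verify the transplanted letter still exceeds the starter bounding its new gap on the left. The sharp case is a non-starter $\gamma$ moving from the gap $(v_i,v_{i+1})$ into $(v_{i-1},v_i)$: property~3 in $x$ gives only $\gamma>v_i$, whereas one now needs $\gamma>v_{i-1}$. This is resolved by combining the companion hit-letter already lying in $(v_{i-1},v_i)$, which exceeds $v_{i-1}$ by property~3 of $x$ and is below $\gamma$ (being under the hit-maximum), with the constraint that the consecutive triple $(v_{i-1},v_i,v_{i+1})$ is patterned $231$ or $312$; together with the intra-hit ordering this forces $v_{i-1}<v_i$ in exactly the configurations where $\gamma$ crosses over, so $\gamma>v_{i-1}$. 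A finite check over the four patterns and the two possible starter-counts in a hit confirms each gap's letters continue to exceed its left-hand starter, completing the induction. Finally, for the consequence, let $y,y'$ be bushy-tailed and equivalent. Applying properties 1--3 with $x=y'$, property~1 shows every $y$-starter other than $1$ still precedes $1$ in $y'$, and since the letters preceding $1$ in the bushy-tailed $y'$ are exactly its starters, the $y$-starters minus $1$ sit inside the $y'$-starters minus $1$; by the symmetry of the hypothesis in $y$ and $y'$ the two starter sets coincide. But a bushy-tailed permutation is determined by its starter set---the position of $1$ equals the size of the set, the zigzag arrangement is then forced by the values and that position's parity, and the remaining letters increase---so $y=y'$.
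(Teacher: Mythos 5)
Your induction on the number of transformations, organized around the claim that a $y$-starter can never be the largest letter of a hit (so that every one of the four rearrangements only moves a non-starter past the other two letters), is sound and is essentially the same argument as the paper's, just packaged more systematically: the paper reaches the same contradictions case by case by asking which role a starter could play in each rearrangement and deriving a failure of property 2 or 3 in $x$. The case analysis for property 3 that you defer to ``a finite check'' does close in every configuration (either the companion hit-letter already sitting in the gap $(v_{i-1},v_i)$, or the $231$/$312$ pattern of three consecutive starters, supplies the missing inequality $\gamma>v_{i-1}$), so the main content of the lemma is established.

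The one genuine error is in your deduction of the final consequence: a bushy-tailed permutation is \emph{not} determined by its starter set. For example, $35241$ and $45231$ are both bushy-tailed in $S_5$ with starter set $\{1,2,3,4,5\}$; indeed the paper's own enumeration counts $C_{\lfloor k/2\rfloor}$ distinct zigzag arrangements of any fixed set of $k$ letters placed to the left of $1$, which exceeds $1$ as soon as $k\geq 4$. Fortunately you do not need that claim: once you know the starter sets of $y$ and $y'$ coincide and occupy the initial segment of both permutations, property 1 already tells you that those letters appear in the same left-to-right order in $y'$ as in $y$, and the letters to the right of $1$ are increasing in both, so $y=y'$ follows directly. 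Replace the ``the zigzag arrangement is then forced by the values'' sentence with a second appeal to property 1 and the argument is complete.
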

\begin{proof}
Let $x$ be a permutation equivalent to a bushy-tailed permutation $y$ which satisfies properties 1, 2, and 3. We will show that any single rearrangement using a hit in $x$ will result in a permutation for which the properties still are satisfied. Assume a rearrangement exists which swaps two $y$-starter letters $j$ and $k$ (where $k$ is without loss of generality, to the right of $j$ in $x$). There are three cases for such a rearrangement.
\begin{itemize}
\item The rearrangement is $213 \rightarrow 321$. Due to property 1, not all three letters can be $y$-starters. So, in order for $j$ and $k$ to be swapped, $j$ must play the role of $3$ in the hit, and $k$ must play the role of $2$ or $1$. Because the remaining letter is not a $y$-starter, property 2 is not satisfied in $x$, a contradiction.
\item The rearrangement is $321 \rightarrow 213$.  Clearly, not all three letters can be $y$-starters. So, $j$ must play the role of $3$ and $k$ must play the role of either $2$ or $1$. If $k$ plays the role of $1$, then property 3 is not satisfied in $x$, a contradiction. If $k$ plays the role of $2$, then property 3 is again not satisfied in $x$, a contradiction.
\item The rearrangement is either $123 \rightarrow 132$ or $132 \rightarrow 123$. Since not all three letters can be $y$-starters (property 1), in both cases, property 2 is not satisfied in $x$, a contradiction.
\end{itemize}
Assume a rearrangement exists which results in a permutation not satisfying property 2. There are two possible cases for the rearrangement.
\begin{itemize}
\item The rearrangement is $132 \rightarrow 123$ where the letters playing the roles of $1$ and $3$ are $y$-starters and the letter playing the role of $2$ is not (the letter playing the role of $1$ is a $y$-starter by property 2; by property 1, not all three letters can by $y$-starters, and for property 2 to be broken by the transformation, $3$ must be a $y$-starter). However, because $2$ is not a $y$-starter, property 3 is not satisfied in $x$, a contradiction.
\item The rearrangement is $321 \rightarrow 213$ where the letter playing the $3$ is a $y$-starter and one of the remaining letters is not. However, then property 3 is not satisfied in $x$, a contradiction.
\end{itemize}
Assume a rearrangement exists which results in a permutation not satisfying property 3. Let $j$ and $k$ be the first and second $y$-starter respectively which have a letter less than $j$ being brought between them by the rearrangement. Such a letter can not come from the left of $j$ by property 2. Such a letter can not come from the right of $k$ because then $k$ would be playing the role of $1$ in $213 \rightarrow 321$ and property 3 could not be satisfied in $x$, a contradiction. Thus such a rearrangement cannot exist.
\end{proof}

\begin{lem}\label{lembushytail} Every permutation is equivalent to a bushy-tailed permutation under the $\{ 123, 132\} \{ 213, 321 \}$-equivalence.
\end{lem}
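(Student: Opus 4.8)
The plan is to induct on $n$, checking the small cases $n\le 4$ by direct computation and tracking the largest letter $n$ in the inductive step. The only local moves available are $123\leftrightarrow 132$ and $213\leftrightarrow 321$, and two of their consequences are the workhorses. The first is the \emph{anchor move} $1\,a\,b\leftrightarrow 1\,b\,a$, an instance of $123\leftrightarrow 132$ (or $132\leftrightarrow 123$) with the actual letter $1$ in front, which transposes the two letters immediately following $1$. The second is $n\,a\,b\to a\,b\,n$ whenever $a>b$, an instance of $321\to 213$, which slides $n$ two places to the right across a descent. Note that $n$ is blocked from moving right exactly when it is immediately followed by an ascent, since then $n\,a\,b$ has order permutation $312$, which is not a hit and cannot be rearranged.

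First I would try to route $n$ all the way to the final position. If this succeeds, I write the resulting permutation as $v\,n$ with $v\in S_{n-1}$; the transformations furnished by the induction hypothesis to carry $v$ to its bushy-tailed form never touch a trailing $n$, so $w$ is equivalent to $B\,n$ for some bushy-tailed $B\in S_{n-1}$. Appending the new maximum to the increasing tail of a bushy-tailed permutation yields another bushy-tailed permutation, so $B\,n$ is bushy-tailed and we are done. When $n$ cannot be pushed to the end, the obstruction is an ascent lodged just to the right of $n$ that resists conversion to a descent; I would argue that in this situation $n$ instead belongs at the front, bring it to the first position (or, when the letter $1$ is forced to an odd position, the second position) as the leading peak, delete it, and recurse on the remaining $n-1$ letters. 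Lemma~\ref{lemdistinctbushytail} guarantees that the endpoint is unique, so producing one bushy-tailed permutation per class suffices.

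The main obstacle is the bookkeeping in this second case. Deleting $n$ from the front shifts every position by one and hence flips the position parity of the letter $1$; for the recursively obtained bushy-tailed $(n-1)$-permutation to accept $n$ back at the front while keeping the odd-indexed and even-indexed subsequences decreasing and the $1$-parity letters valleys, that parity must come out right, and I expect the crux of the proof to be showing that the routing of $n$ can always be arranged so the parity matches. The subcase in which $n$ lands in the second position (forced when $1$ sits at an odd position $\ge 3$, so that $n$ heads the even-indexed peak chain) is not a plain prepend, and I would treat it by instead peeling the \emph{first} letter of the target form—the largest valley—whose deletion does return a bushy-tailed $(n-1)$-permutation. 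A cleaner alternative worth attempting is a two-phase argument: first show that every permutation is equivalent to one whose letters to the right of $1$ are increasing (small instances such as $31542\equiv 31245$ suggest this always holds), using the anchor move and the $\{213,321\}$ moves to bubble the tail into order, and then show that, holding the increasing tail fixed, the block of letters weakly to the left of $1$ can be reshaped into the required zigzag.
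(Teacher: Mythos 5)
Your sketch leaves the hardest step unproven, and it is not a step that can be waved through. The case division itself is shaky: "first try to route $n$ to the end; if that fails, $n$ belongs at the front" is not a well-defined dichotomy, because whether the greedy slide $n\,a\,b\to a\,b\,n$ (for $a>b$) reaches the last position depends on the order in which you interleave it with other moves, and getting stuck at one moment does not mean the class's bushy-tailed representative has $n$ left of $1$. Worse, you give no mechanism at all for transporting $n$ to the front: the only leftward moves available to $n$ are $a\,b\,n\to n\,a\,b$ (requiring $a>b$) and $a\,b\,n\to a\,n\,b$ (requiring $a<b$), and showing that some combination always parks $n$ at position $1$ or $2$ in the stuck case is a substantive claim you merely assert. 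Finally, you correctly identify that deleting a leading $n$ flips the position parity of $1$, so that prepending $n$ to the bushy-tailed form of the remaining $n-1$ letters is only legal when $1$ ends up at an odd position of that form --- and you explicitly defer this ("I expect the crux of the proof to be showing that the routing of $n$ can always be arranged so the parity matches"). That deferred parity claim, together with the unproven leftward routing, is essentially the whole content of the lemma in your setup, so the proposal as written is a plan rather than a proof. (The "cleaner alternative" two-phase argument at the end is likewise only stated, not carried out.)

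For contrast, the paper sidesteps all tracking of $n$: by Lemma~\ref{lemdistinctbushytail}, a bushy-tailed permutation is the lexicographically smallest element of its class, so one can alternately rearrange the first $n-1$ letters into their bushy-tailed form and then the last $n-1$ letters into theirs; each nontrivial pass strictly decreases the permutation lexicographically, so the process halts at a permutation whose first and last $n-1$ letters are both bushy-tailed, and for $n>4$ such a permutation is itself bushy-tailed. If you want to salvage your approach, you would need to prove the leftward-routing and parity claims outright; otherwise the stooge-sort argument is both shorter and avoids the case analysis entirely.
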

\begin{proof}
The lemma is easy to show for $S_3$ and $S_4$. Assume inductively that the result holds in $S_{n-1}$ for $n>4$. By Lemma \ref{lemdistinctbushytail}, every bushy-tailed permutation in $S_{n-1}$ is the lexicographically smallest permutation in its class. It follows, that from a permutation $w \in S_n$, repeatedly rearranging the first $n-1$ letters to form a bushy-tailed permutation, then the final $n-1$ letters, then the first $n-1$ letters, and so on, is a process that must halt (because we cannot keep reaching smaller and smaller permutations in $S_n$ forever). Thus, $w\in S_n$ is equivalent to some permutation whose first and final $n-1$ letters both form bushy-tailed permutations. Conveniently, since $n>4$, such a permutation must be bushy-tailed.
\end{proof}

\begin{cor}\label{cor1tail2} Permutations of the form $1\ldots$ are equivalent. Thus there are $(n-1)!$ elements in the class containing the identity under the $\{ 123, 132\} \{ 213, 321 \}$-equivalence.
\end{cor}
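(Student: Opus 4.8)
The plan is to combine the bushy-tailed machinery just developed with a single easy invariant: the letter $1$, once it sits in the first position, can never leave it. First I would verify this invariant. If $1$ is the first letter of a permutation, then any hit that involves $1$ must occupy positions $1$, $2$, $3$, since a hit is a factor of three consecutive letters. In such a hit the letter $1$ is the smallest entry, so it plays the role of the ``$1$'' of the pattern; hence the pattern is $123$ or $132$, both of which lie in the part $\{123,132\}$. The only applicable transformation is therefore $123\leftrightarrow 132$, which fixes the first entry of the hit. Consequently no transformation moves $1$ out of the first position, and dually none can bring another letter ahead of a leading $1$. Thus \emph{beginning with $1$} is invariant under the equivalence.

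Next I would identify the bushy-tailed representative of any permutation that begins with $1$. For such a permutation the factor consisting of the letters weakly to the left of $1$ is just the single letter $1$, so the bushy-tailed conditions on that factor are vacuous, and the only remaining requirement is that the letters to the right of $1$ be increasing. Hence the unique bushy-tailed permutation beginning with $1$ is the identity.

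With these two observations the corollary follows quickly. Given any $w$ of the form $1\ldots$, Lemma \ref{lembushytail} supplies a bushy-tailed permutation $y$ with $w\equiv y$; by the invariance $y$ also begins with $1$, so $y$ is the identity. Therefore every permutation beginning with $1$ is equivalent to the identity, and in particular all such permutations are mutually equivalent. Conversely, the invariance shows that any permutation equivalent to the identity must itself begin with $1$. Hence the class of the identity is exactly the set of permutations of the form $1\ldots$, of which there are $(n-1)!$.

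The only real content is the position-invariance of the leading $1$, together with the recognition (using Lemma \ref{lemdistinctbushytail}, which guarantees uniqueness of bushy-tailed representatives) that the bushy-tailed representative of a $1$-leading permutation is forced to be the identity; everything else is bookkeeping. The step I expect to require the most care is making the invariance argument airtight, namely ruling out that $1$ could participate in a $\{213,321\}$-hit at the front — which is immediate once one notes that $1$ must be the minimal element of any hit in which it lies.
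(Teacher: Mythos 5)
Your proof is correct and follows essentially the same route as the paper: reduce to a bushy-tailed representative via Lemma \ref{lembushytail} and then argue that this representative must be the identity. The paper gets the latter step by citing Lemma \ref{lemdistinctbushytail}, whereas you substitute a direct (and equally valid) observation that a leading $1$ can only sit in a $\{123,132\}$-hit and hence never leaves the first position; this is a minor variation, not a different approach.
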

\begin{proof}
It follows from Lemmas \ref{lemdistinctbushytail} and \ref{lembushytail} that every permutation beginning with $1$ is equivalent to the identity in $S_n$.
\end{proof}

\begin{prop}
The number of classes in $S_n$ is the sum of the first $n-1$ Motzkin numbers under the $\{ 123, 132\} \{ 213, 321 \}$-equivalence.
\end{prop}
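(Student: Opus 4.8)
The plan is to reduce the enumeration of classes to a purely combinatorial count of bushy-tailed permutations, and then to identify that count with a partial sum of Motzkin numbers via a Catalan-number intermediate. By Lemmas \ref{lemdistinctbushytail} and \ref{lembushytail}, every class in $S_n$ contains exactly one bushy-tailed permutation, so it suffices to count bushy-tailed permutations in $S_n$. Such a permutation is completely determined by two independent pieces of data: the set $V$ of values occupying the prefix $x$ (the factor up to and including $1$, which necessarily contains $1=\min V$), together with the arrangement of $x$; the letters to the right of $1$ are then forced to appear in increasing order. Since the prefix conditions depend only on the relative order of the values of $V$, the number of admissible arrangements of $x$ depends only on $m=|V|$; call it $a_m$. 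Grouping bushy-tailed permutations by $m$ gives
\[
f(n)=\sum_{m=1}^{n}\binom{n-1}{m-1}\,a_m ,
\]
where $\binom{n-1}{m-1}$ counts the ways to choose the $m-1$ non-$1$ letters of $V$ from $\{2,\dots,n\}$.

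The combinatorial heart of the argument is the evaluation $a_m=C_{\lfloor (m-1)/2\rfloor}$, where $C_k$ denotes the $k$-th Catalan number. Here I would first unwind the definition: the prefix is an alternating (zigzag) sequence ending in the global minimum $1$, the letters of the parity of $1$ (the ``valleys'') form a decreasing subsequence of local minima, and the remaining letters (the ``peaks'') form a decreasing subsequence. I would show that all the local-minimum inequalities collapse to the single family of conditions $\widehat v_j<\widehat p_j$ for each $j$, where $\widehat v_1<\widehat v_2<\cdots$ and $\widehat p_1<\widehat p_2<\cdots$ are the valley and peak values listed in increasing order. By the standard correspondence between such interleavings and Dyck paths (the ballot condition), the number of partitions of an equal-sized peak/valley pool satisfying these inequalities is a Catalan number. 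The one subtlety is parity: for odd $m=2t+1$ there are $t$ peaks and $t$ free valleys, giving $C_t$ directly; for even $m=2t$ there is one extra peak, but the largest prefix value is always a peak and is unconstrained, so peeling it off reduces to the size-$(t-1)$ count $C_{t-1}$. Both cases are summarized by $a_m=C_{\lfloor (m-1)/2\rfloor}$.

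With $a_m$ in hand, I would finish by a short binomial manipulation. Splitting the sum according to the parity of $r=m-1$ and applying Pascal's rule gives
\[
f(n)=\sum_{r=0}^{n-1}\binom{n-1}{r}C_{\lfloor r/2\rfloor}
     =\sum_{k\ge 0}\Bigl(\binom{n-1}{2k}+\binom{n-1}{2k+1}\Bigr)C_k
     =\sum_{k\ge 0}\binom{n}{2k+1}C_k .
\]
Finally, I would invoke the classical identity $M_j=\sum_{k\ge 0}\binom{j}{2k}C_k$ expressing the $j$-th Motzkin number (Motzkin paths, with $2k$ up/down steps arranged as a Dyck structure and the rest level). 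Summing over $j$ and using the hockey-stick identity $\sum_{j=0}^{n-1}\binom{j}{2k}=\binom{n}{2k+1}$ yields $\sum_{j=0}^{n-1}M_j=\sum_{k\ge 0}\binom{n}{2k+1}C_k=f(n)$, exhibiting the number of classes as the claimed partial sum of Motzkin numbers.

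The step I expect to be the main obstacle is the evaluation $a_m=C_{\lfloor(m-1)/2\rfloor}$: one must verify carefully that the definitional conditions on the prefix are equivalent to the clean ballot inequalities $\widehat v_j<\widehat p_j$ (in particular that the apparently redundant inequalities really are implied by the monotonicity of the peak and valley subsequences), and one must treat the even and odd lengths separately, using the fact that the maximal prefix letter is always a peak to align the even case with the odd case. The remaining steps are routine binomial identities.
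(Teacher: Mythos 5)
Your proof is correct, and its combinatorial core coincides with the paper's: both reduce the problem, via Lemmas \ref{lemdistinctbushytail} and \ref{lembushytail}, to counting bushy-tailed permutations, and both evaluate the number of admissible prefix arrangements of length $m$ as $C_{\lfloor (m-1)/2\rfloor}$ (the paper phrases this count via $2\times \lfloor m/2\rfloor$ standard Young tableaux rather than your ballot inequalities $\widehat v_j<\widehat p_j$, but it is the same Catalan count, and your peeling of the maximal letter in the even case matches the paper's observation that for an even-length prefix the largest letter is forced into the first position). Where you genuinely diverge is in the bookkeeping. The paper splits bushy-tailed permutations according to whether $n$ lies to the left of $1$, obtaining the recursion $f(n)=f(n-1)+\sum_{k}\binom{n-2}{k-1}C_{\lfloor k/2\rfloor}$, and then identifies the sum with $M_{n-1}$ by writing it as $M_{n-2}$ plus a separately constructed lattice-path count of the Motzkin paths not passing through a prescribed point; this yields $f(n)=f(n-1)+M_{n-1}$ and the conclusion follows by induction. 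You instead sum over the prefix size directly, with no recursion, getting $f(n)=\sum_{r}\binom{n-1}{r}C_{\lfloor r/2\rfloor}=\sum_{k}\binom{n}{2k+1}C_k$, and convert this to $\sum_{j=0}^{n-1}M_j$ using only the standard identity $M_j=\sum_k\binom{j}{2k}C_k$ (which the paper also proves and uses) together with Pascal's rule and the hockey-stick identity. Your route buys a cleaner, induction-free derivation and the explicit intermediate closed form $\sum_k\binom{n}{2k+1}C_k$, at the cost of not exhibiting the recursion $f(n)=f(n-1)+M_{n-1}$, which is what makes the appearance of \emph{consecutive} Motzkin numbers immediately visible in the paper's version. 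Both arguments are complete and arrive at $\sum_{j=0}^{n-1}M_j$, matching the paper's conclusion.
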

\begin{proof}
By Lemmas \ref{lemdistinctbushytail} and \ref{lembushytail}, the
number of classes in $S_n$ is exactly the number of bushy-tailed
permutations. Let $f(n)$ be the number of classes in $S_n$ (and thus
the number of bushy-tailed permutations). Then, the number of
bushy-tailed permutations in $S_n$ with $n$ not to the left of $1$ is
clearly $f(n-1)$ because we can just append $n$ to the end of each
bushy-tailed permutation from $S_{n-1}$. Consider a bushy-tailed
permutation $w$ with $n$ to the left of $1$. Let $k$ be the number of
letters to the left of $1$ in $w$. Note that $k$ can be any value
greater than $0$ and less than $n$. There are $\binom{n-2}{k-1}$ ways
to choose these letters without yet fixing their order.

Let $g(k)$ be the number of possible arrangements for a given set of
letters to the left of $1$ in a bushy-tailed permutation with exactly
$k$ letters to the left of $1$. Assume that $k$ is even. Then, each
arrangement of the letters corresponds with a $2 \times k/2$ standard
Young tableau (in English notation); reading from \textbf{right to
  left} in the permutation (starting with the letter preceding $1$),
we fill the first column of the tableau, then the second, and so
on\footnote{i.e., if the letter $1$ is the $g$-th letter, then the
  $i$-th column contains the $g-(2i-1)$-th and $g-2i$-th letter.}. The
number of $2 \times k/2$ standard Young tableaux is $C_{k/2}$ where
$C_{n}$ denotes the $n$-th Catalan number\footnote{This is well
  known. It is also easy to prove by bijecting such standard Young
  tableaux with Catalan paths by means of traversing the cells of the
  Young tableau by increasing letter and using each letter in the
  bottom row to represent a side step and each letter in the top row
  to represent a down step.}. Hence, $g(k)=C_{k/2}$ for even $k$. If
$k$ is odd, then $n$ is forced in the first position, and ignoring it,
we see that $g(k)=g(k-1)=C_{(k-1)/2}$. So,
$$f(n)=f(n-1)+\sum\limits_{k=1}^{n-1}{\binom{n-2}{k-1} \cdot C_{\lfloor k/2 \rfloor}}$$
$$=f(n-1)+\sum\limits_{k=0}^{n-2}{\binom{n-2}{k} \cdot C_{\lceil k/2 \rceil}}$$
$$=f(n-1)+\sum\limits_{k=0}^{\lfloor (n-2)/2 \rfloor}{ \binom{n-2}{2k} \cdot C_{k}} + \sum\limits_{k=1}^{\lfloor (n-1)/2 \rfloor}{ \binom{n-2}{2k-1} \cdot C_{k}}\text{.}$$

Recall that $M_n$ is the number of Motzkin $n$-paths, paths from
$(0,0)$ to $(n,0)$ in the grid $\mathbb{N}\times\mathbb{N}$ using only
steps $U = (1,1)$, $F = (1,0)$ and $D = (1,-1)$. We will enumerate
$M_{n}$. Consider the paths where $F$ is used exactly $n-j$ times for
a given $j$. The uses can be distributed in any of
$\binom{n}{n-j}=\binom{n}{j}$ ways. The remaining $j$ steps must
consist of uses of $U$ and $D$. If $k$ is even, then there are clearly
$C_{k/2}$ arrangements for these steps. Note that $k$ can not be
odd. So, $$M_n=\sum\limits_{k=0}^{\lfloor n/2
  \rfloor}{\binom{n}{2k}C_{k}}\text{.}$$
Hence, $$M_{n-2}=\sum\limits_{k=0}^{\lfloor (n-2)/2
  \rfloor}{\binom{n-2}{2k}C_{k}}\text{.}$$
So, $$f(n)=f(n-1)+M_{n-2}+\sum\limits_{k=1}^{\lfloor (n-1)/2 \rfloor}{
  \binom{n-2}{2k-1} \cdot C_k}\text{.}$$

Consider $\sum\limits_{k=1}^{\lfloor (n-1)/2 \rfloor}{
  \binom{n-2}{2k-1} \cdot C_k}$. We will show that this is the
difference between $M_{n-2}$ and $M_{n-1}$. Note that $M_{n-2}$ is the
number of Motzkin $n-1$-paths that go through point $(n-2, 0)$. We
will now enumerate those Motzkin $n-1$-paths which do \textbf{not} go
through $\left( n-1,0\right)$. (There are $M_{n-1}-M_{n-2}$ such
paths.) In such a path, excluding the final step, there must be one
more use of $U$ than of $D$. So, the number of uses of both combined
must be odd. Let that value be $2h-1$. We pick $2h-1$ of the first
$n-2$ steps to be the steps which are not $F$. These $2h-1$ steps,
with a $D$ appended to the end must form a Catalan path of length
$2h$. Thus there are $C_h$ possibilities for the arrangements of these
steps. Hence, $$M_{n-1}-M_{n-2}=\sum\limits_{k=1}^{\lfloor (n-1)/2
  \rfloor}{ \binom{n-2}{2k-1} \cdot C_k}\text{.}$$ So,
$f(n)=f(n-1)+M_{n-1}$. Inductively, $f(n)$ is the sum of the first
$n-1$ Motzkin numbers.
\end{proof}

\subsection{\mathheader{\{123, 132\} \{213, 312\}}-Equivalence}

In this subsection, $f(n)$ be the number of classes that $S_n$ breaks into under the \\$\{123, 132\} \{213, 312\}$-equivalence.

\begin{defn} We call the \emph{root-permutation} (or \emph{root}) of a permutation $w$ the permutation obtained from $w$ by applying $123 \rightarrow 132$ and $213 \rightarrow 312$ repeatedly to the hit ending with $n$ in order to bring $n$ to the first or second position (We will call this \emph{sliding} $n$).
\end{defn}

We first make two observations. When sliding $n$ in $x$, each transformation only uses letters that were to the left of $n$ before the transformation  (observation 1). Additionally, the letters to the left of $n$ after the transformation were static in the transformation (observation 2).

\begin{lem}\label{lemsolidnslide}
Let $x$ and $y$ be two permutations reachable from each other through a single transformation under the $\{123, 132\} \{213, 312\}$-equivalence. Let $x'$ and $y'$ be the root-permutations of $x$ and $y$ respectively. Then, either $x'$ and $y'$ are each of the form $n\ldots$, or they are each of the form $jn\ldots$ for the same $j$.
\end{lem}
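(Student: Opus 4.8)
The plan is to reduce the claim to a statement about the descent pattern of the block of letters to the left of $n$, and then to split on how the single transformation connecting $x$ and $y$ sits relative to $n$. First I would record a structural fact about sliding. By observations 1 and 2, the letters that remain to the left of $n$ during a slide are always static, so at every stage the portion to the left of $n$ is a prefix of the original left block $A = w_1 w_2 \cdots w_{p-1}$ (where $p$ is the position of $n$), with its letters in their original order. Hence every comparison the procedure ever consults is between two originally adjacent letters of $A$: when the left block is the prefix $w_1 \cdots w_\ell$, an ascent $w_{\ell-1} < w_\ell$ moves $n$ one place left and a descent $w_{\ell-1} > w_\ell$ moves it two places left. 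Writing the descent word $c_1 \cdots c_{p-2}$ of $A$ (with $c_k = A$ if $w_k < w_{k+1}$ and $c_k = D$ otherwise), the final position of $n$ is computed by the walk that starts at index $k = p-2$ and repeatedly sets $k \mapsto k-1$ if $c_k = A$ and $k \mapsto k-2$ if $c_k = D$, stopping once $k \le 0$: it lands on $0$ exactly when $n$ reaches the second position and on $-1$ exactly when $n$ reaches the first. Since $w_1$ is never moved, in the former case the leading letter $j$ of the root equals $w_1$. So the leading structure of the root is determined by the landing point of this walk together with $w_1$, and it suffices to show a single transformation preserves both.

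Next I would dispose of two easy cases. If the transformation involves $n$, then (as $n$ is largest, it plays the role of the $3$) the member of $\{x,y\}$ in which $n$ is rightmost in the active factor has $n$ at the end of the hit ending with $n$, and applying $123 \to 132$ or $213 \to 312$ to it is exactly one sliding step producing the other member; thus $x$ and $y$ have literally the same root. If the transformation lies entirely to the right of $n$, then $x$ and $y$ share the same left block $A$, so the walks and the letters $w_1$ coincide and the leading structures agree. The remaining case, a transformation entirely to the left of $n$, is the heart of the matter.

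For that case, a single $123 \leftrightarrow 132$ or $213 \leftrightarrow 312$ at positions $i, i+1, i+2$ of $A$ can alter only $c_{i-1}, c_i, c_{i+1}, c_{i+2}$, and the interior is rigid: $123 \leftrightarrow 132$ keeps $c_i = A$ and merely flips $c_{i+1}$, while $213 \leftrightarrow 312$ keeps both $c_i = D$ and $c_{i+1} = A$, leaving only the boundary comparisons $c_{i-1}, c_{i+2}$ in doubt. Because the walk agrees on both words until it first meets an index $\le i+2$, that first index is forced into $\{i+1, i+2\}$. I would then verify, for each type and each possible value of $c_{i+2}$, that the two walks reconverge to a common index before any comparison that can differ is consulted: for $123 \leftrightarrow 132$ both walks meet again at index $i-1$, and for $213 \leftrightarrow 312$ the forced $c_i = D$ makes both walks leap from $i$ straight to $i-2$, skipping the unreliable $c_{i-1}$ altogether, so they meet at $i-2$. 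From the common meeting index onward only unaltered comparisons are read, so the two walks land identically. Finally, the only way the transformation can change $w_1$ is a $213 \leftrightarrow 312$ at the very front ($i=1$), but there the walk reaches $i-2 = -1$, i.e.\ $n$ lands first for both words, so $w_1$ is irrelevant.

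The main obstacle is exactly this last case: one must control the boundary comparisons $c_{i-1}$ and $c_{i+2}$, which genuinely can flip, and show the walk reconverges before any discrepancy propagates left into the unaltered tail. What makes this work is the rigidity of the interior comparisons for each pattern type, especially the forced descent $c_i = D$ for $213 \leftrightarrow 312$ that lets the walk vault over the unreliable $c_{i-1}$; confirming it is a short but case-laden check.
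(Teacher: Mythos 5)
Your proposal is correct and follows essentially the same route as the paper: the same three-way split on whether the transformation's hit lies to the right of $n$, contains $n$, or lies to its left, with the first two cases dispatched identically and the third resolved by showing the two slides reconverge at a common configuration just left of the hit (at index $i-1$ for $123\leftrightarrow 132$, at $i-2$ for $213\leftrightarrow 312$, matching the paper's convergence to $wjn\ldots$ and $wn\ldots$ respectively). Your walk on the descent word is a cleaner formalization of the paper's explicit configuration-chasing, but it is the same underlying argument.
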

\begin{proof}
Let $x,y,x',y'$ be such permutations. Consider the transformation connecting $x$ to $y$. There are the following cases.
\begin{itemize}
\item The transformation involves a hit using letters only to the right of $n$. In this case, the series of transformations from $x$ to $x'$ is the same as the series from $y$ to $y'$.
\item The transformation uses a hit containing $n$. In this case, $x'$ and $y'$ are clearly the same; the series of transformations used to obtain from $x'$ from $x$ is the same as the one used to obtain $y'$ from $y'$ except plus or minus an extra transformation.
\item The transformation uses a hit involving only letters to the left of $n$. There are two subcases.
\begin{enumerate}
\item The rearrangement connecting $y$ to $x$ is $123 \rightarrow 132$ (without loss of generality, in that order). Let $j$, $k$, and $r$ be the letters playing the roles of $1$, $2$, and $3$ respectively, and $w$ be the factor containing the letters to the left of the hit. In $x$, we may slide $n$ within the factor going from $k$ to the end of $x$ to reach a permutation of the form either $wjkn \ldots$ or $wjn \ldots$ (observation $2$). In the first case, if we continue sliding $n$, we reach a permutation of the form $wjn \ldots$. In $y$, we may slide $n$ within the factor going from $r$ to the end of $y$ to reach a permutation of the form either $wjrn \ldots$ or $wjn \ldots$ (observation 2). In the first of the two cases, if we continue sliding $n$, we reach a permutation of the form $wjn \ldots$.  Hence, noting observation 1, completing the sliding of $n$ in $x$ and $y$ will result in either a permutation beginning with $n$ in both cases or a permutation of the form $jn \ldots$ for the same $j$ in both cases.
\item The rearrangement connecting $y$ to $x$ is $213 \rightarrow 312$ (without loss of generality, in that order). Let $j$, $k$, and $r$ be the letters playing the roles of $1$, $2$, and $3$ respectively, and $w$ be the factor containing the letters to the left of the hit. In $x$, we may slide $n$ within the factor going from $r$ to the end of $x$ to reach a permutation of the form either $wkjrn \ldots$ or $wkjn \ldots$ (observation $2$). In each case, we may continue sliding $n$ to reach a permutation of the form $wn\ldots$ (we go $wkjrn\ldots \rightarrow wkjn \ldots \rightarrow wn \ldots$ and $wkjn \ldots \rightarrow wn\ldots$ respectively). In $y$, we may slide $n$ within the factor going from $k$ to the end of $y$ to reach a permutation of the form either $wrjkn \ldots$ or $wrjn \ldots$ (observation 2). In each case, we may continue sliding $n$ to reach a permutation of the form $wn\ldots$ (we go $wrjkn\ldots \rightarrow wrjn \ldots \rightarrow wn \ldots$ and $wrjn \ldots \rightarrow wn\ldots$ respectively). Hence, noting observation 1, completing the sliding of $n$ in $x$ and $y$ will result in either a permutation beginning with $n$ in both cases or a permutation of the form $jn \ldots$ for the same $j$ in both cases.
\end{enumerate}
\end{itemize}
\end{proof}

\begin{lem}\label{keepstartlem}
Let $x, y \in S_n $ be permutations separated by a single transformation under the $\{123, 132\} \{213, 312\}$-equivalence. Let $x', y'$ be the root-permutations of $x$ and $y$. Then, $x'$ is reachable from $y'$ using hits only to the right of $n$ under the $\{123, 132\} \{213, 312\}$-equivalence.
\end{lem}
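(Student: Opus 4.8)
The plan is to prove the lemma by the same three-way case analysis on the location of the hit connecting $x$ and $y$ that drives the proof of Lemma~\ref{lemsolidnslide}, but to track \emph{all} the letters lying to the right of $n$ rather than only the leading one or two letters of the roots. The guiding observation is that sliding $n$ only ever rearranges $n$ together with letters currently to its left: a hit lying entirely to the right of $n$'s current position is never touched by any subsequent slide, so the block of letters strictly to the right of $n$ is only ever \emph{appended to} (by letters that $n$ hops over), never internally disturbed. Consequently, if two permutations agree on $n$ and on everything to the left of $n$, their sliding processes are step-for-step identical and carry their respective right-of-$n$ blocks along unchanged.

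Two of the three cases are then immediate. If the connecting hit lies entirely to the right of $n$, then $x$ and $y$ agree on $n$ and on everything to its left, so by the observation their slides are identical; the one differing hit is carried untouched into the tails of $x'$ and $y'$, where it still lies to the right of $n$. Hence $x'$ is reached from $y'$ by that single right-of-$n$ transformation. If the connecting hit contains $n$, note that, since $n$ is the global maximum, each of the four admissible directed transformations involving $n$ (namely $123\to132$, $132\to123$, $213\to312$, and $312\to213$, with $n$ always playing the largest role) is exactly one forward or backward sliding move; thus $x$ and $y$ are consecutive along a common sliding sequence and have the same root, $x'=y'$, which is trivially reachable.

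The remaining case, where the connecting hit lies entirely to the left of $n$, is the crux and follows the two subcases of the proof of Lemma~\ref{lemsolidnslide}. Here I would continue the slide computations carried out there past the point at which the heads are seen to agree, and check what survives in the tails. In the $\{123,132\}$ subcase, sliding $n$ leftward past the disturbed triple reconciles the two permutations completely, so that $x$ and $y$ reach a single common permutation before $n$ enters the untouched prefix $w$; hence $x'=y'$. In the $\{213,312\}$ subcase, the same computation brings $x$ and $y$ to permutations that share an identical prefix up to and past $n$ and differ only in a single adjacent pair to the right of $n$, which is precisely a $123\leftrightarrow132$ pattern; since the remaining slides (through $w$) are identical and never revisit that pair, $x'$ and $y'$ differ by exactly one right-of-$n$ transformation.

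The main obstacle is this final case: one must verify that sliding $n$ all the way past the three disturbed letters collapses the discrepancy between $x$ and $y$ down to at most a single $123\leftrightarrow132$ pattern located to the right of $n$, and in particular that the discrepancy neither spreads across several positions nor gets stranded to the left of $n$. This requires following the slide sequence through every way in which $n$ can approach and hop over the disturbed triple (the ``$wjkn$ versus $wjn$'' branching recorded in the proof of Lemma~\ref{lemsolidnslide}), and confirming that each branch terminates with the claimed localized, right-of-$n$ discrepancy. Once that bookkeeping is complete, the three cases together show that a single transformation between $x$ and $y$ induces at most a single right-of-$n$ transformation between $x'$ and $y'$, which (since reachability is symmetric) proves the lemma.
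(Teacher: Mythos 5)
Your overall structure matches the paper's: the same three-way split on the location of the connecting hit, and your treatments of the first two cases (hit to the right of $n$; hit containing $n$) are correct and essentially identical to the paper's ``observation 1'' argument and the ``slide step is undone'' argument. The gap is in the third case, which you rightly call the crux but do not actually carry out --- and the specific outcomes you predict for it are wrong, so the remaining ``bookkeeping'' would not confirm your sketch. In the $123\to132$ subcase the two permutations do \emph{not} always reconcile completely before $n$ enters the prefix $w$: writing the hit as $jkr$ versus $jrk$ with $j<k<r$ and letting $c$ be the letter separating the hit from $n$ after the partial slide, the case $k<c<r$ leads to roots of the form $wjnkcru$ versus $wjnkrcu$, which differ by a $123\leftrightarrow132$ to the right of $n$, and the cases $c<k$ lead to $wjnkcru$ versus $wjnrcku$, a $213\leftrightarrow312$ discrepancy. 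Symmetrically, in the $213\to312$ subcase the residual discrepancy is not always ``a single adjacent pair forming a $123\leftrightarrow132$'': for $k<c<r$ one obtains tails $jkcr$ versus $jrkc$, whose order permutations $123$ and $312$ lie in \emph{different} parts, so no single transformation connects them (two right-of-$n$ transformations are needed). The lemma's conclusion still holds in every branch, but your proposal neither performs the case check nor correctly anticipates its outcome, so as written it does not establish the result.

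It is worth noting how the paper avoids this branch-by-branch computation entirely: after sliding $n$ as far as possible without touching the hit, it observes that $n$ lands within two positions of the hit, takes the five-letter factor consisting of the hit and the two letters to its right in each of the two permutations, and applies the lemma's own (computationally verified) base case in $S_5$ to those factors. That localization replaces all of the bookkeeping you defer with a single finite check, after which ``observation 1'' propagates the right-of-$n$ equivalence through the identical remaining slides. If you prefer your explicit route, you must enumerate all the branches above (including the sub-branch where $n$ becomes immediately adjacent to the hit, where the roots do coincide) and verify in each that the discrepancy is resolvable by transformations lying wholly to the right of $n$.
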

\begin{proof}
We use as a base case (the ``hypothesis'') that this holds in $S_5$. One can also easily show that the result holds in $S_{\le 5}$. Assume that $n>5$. Let $x, y, x', y'$ be as described in $S_n$. There are three cases for the hit used to reach $x$ from $y$:
\begin{enumerate}
\item The hit uses only letters to the right of $n$. Then $x'$ and $y'$ are reachable from each other with the same transformation (observation 1).
\item The hit contains $n$. Then $x'=y'$ because the transformation is undone in sliding $n$ to the left in one of $x$ or $y$.

\item The hit uses only letters to the left of $n$. Then, $n$ can be slid within the letters to the right of the hit to reach $a$ and $b$ from $x$ and $y$ respectively. In both $a$ and $b$, $n$ is either immediately to the right of the hit, or to the right of the hit and separated from it by one letter. Consider the factors of each $a$ and $b$ containing the hit and the two letters to its right, $w$ and $w'$ respectively. By the hypothesis, the root-permutations of $w$ and $w'$ are reachable from each other using only hits containing letters to the right of $n$. Hence, sliding $n$ within $w$ and $w'$ of $a$ and $b$ respectively, we reach $a'$ and $b'$ such that the letters to the left of $n$ form exactly the same factor, and the letters to the right of $n$ form permutations that are equivalent in $a'$ and $b'$. So, the root-permutations of $a'$ and $b'$ are reachable from each other by using hits only containing letters to the right of $n$ (observation 1). Noting that the root-permutations of $a$ and $b$ are $x'$ and $y'$, $x'$ is reachable from $y'$ using hits only containing letters to the right of $n$.
\end{enumerate}
\end{proof}

\begin{prop}
$f(n \ge 3)=f(n-1)+(n-1) \cdot f(n-2)$.
\end{prop}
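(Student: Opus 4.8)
The plan is to use the root-permutation as an almost-canonical representative and reduce the count of classes in $S_n$ to a count of classes of roots, using the bookkeeping supplied by Lemmas \ref{lemsolidnslide} and \ref{keepstartlem}. First I would observe that a permutation equals its own root exactly when $n$ sits in its first or second position: if $n$ is first, sliding does nothing, and if $n$ is second there is no three-letter factor ending in $n$, so again no slide is possible. Thus the roots are precisely the permutations of the form $nu$ (with $u$ a permutation of $\{1,\dots,n-1\}$) or $jnv$ (with $j\le n-1$ and $v$ a word on the remaining $n-2$ letters), and every permutation $w$ satisfies $w\equiv r(w)$, where $r(w)$ denotes its root, since sliding $n$ is carried out entirely by transformations.

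Next I would introduce the relation $\sim$ on roots: $r_1\sim r_2$ when $r_1$ is reachable from $r_2$ by a sequence of transformations each using only letters to the right of $n$. The central claim is that two roots are equivalent in $S_n$ if and only if they are $\sim$-equivalent, and hence that the classes of $S_n$ are in bijection with the $\sim$-classes of roots: every class contains a root, and the roots inside a single class form exactly one $\sim$-class. One direction is immediate, since $\sim$-moves are transformations. For the converse, given a chain $r_1=w_0,w_1,\dots,w_m=r_2$ of single transformations, I would take roots throughout: $r(w_0)=r_1$ and $r(w_m)=r_2$ because roots are their own roots, and Lemma \ref{keepstartlem} shows that consecutive roots $r(w_i)$ and $r(w_{i+1})$ are joined by transformations only to the right of $n$, so $r_1\sim r_2$ by transitivity. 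Lemma \ref{lemsolidnslide} supplies the complementary fact that $\sim$ never moves $n$ off its position and never changes the letter $j$ when $n$ is second, which is what keeps the two families of roots and the different values of $j$ from merging.

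Then the count is just a split according to the position of $n$, which $\sim$ leaves fixed. For roots of the form $nu$ the letter $n$ in position $1$ is never touched, so $nu_1\sim nu_2$ iff $u_1\equiv u_2$ in $S_{n-1}$; these contribute $f(n-1)$ classes. For roots of the form $jnv$ with a fixed first letter $j$, the prefix $jn$ is untouched and $\sim$ acts on the length-$(n-2)$ suffix $v$ exactly as the $\{123,132\}\{213,312\}$-equivalence on $S_{n-2}$, yielding $f(n-2)$ classes for each of the $n-1$ choices of $j$, hence $(n-1)f(n-2)$ in total. Summing the two contributions gives $f(n)=f(n-1)+(n-1)f(n-2)$.

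The main obstacle is the bijection claim, and specifically its ``only if'' direction: one must be certain that applying the root operation to each permutation along an \emph{arbitrary} equivalence chain produces a chain of roots connected by moves strictly to the right of $n$. This is exactly the content of Lemma \ref{keepstartlem}, so once that lemma is in hand the rest is bookkeeping; the only remaining care is in checking that every class really contains a root (clear from $w\equiv r(w)$) and that distinct families of roots ($n$ first versus $jn$) and distinct values of $j$ cannot be identified, which holds because $\sim$ fixes the first two letters.
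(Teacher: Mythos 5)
Your proposal is correct and follows essentially the same route as the paper: reduce to root-permutations, use Lemma \ref{lemsolidnslide} to see that the position of $n$ and the letter $j$ are class invariants, and use Lemma \ref{keepstartlem} to show that roots in the same class are connected by transformations strictly to the right of $n$, giving the counts $f(n-1)$ and $(n-1)f(n-2)$. Your write-up is simply a more detailed and careful version of the paper's (very terse) argument.
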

\begin{proof}
Sliding $n$ to the left in a given permutation, we can reach a permutation either of the form $jn\ldots$ or $n\ldots$ for any permutation. By Lemma \ref{lemsolidnslide}, permutations of those forms are not equivalent for distinct $j$. By Lemma \ref{keepstartlem}, permutations of the form $jn \ldots$ fall into $f(n-2)$ classes for a given $j$ and permutations of the form $n \ldots$ fall into $f(n-1)$ classes. So, $f(n \ge 3)=f(n-1)+(n-1) \cdot f(n-2)$.
\end{proof}

\subsection{\mathheader{\{123, 321\} \{ 132, 231\} }-Equivalence}

\begin{defn}
The \emph{fall} of a permutation $x \in S_n$ is the set \newline $\left\{k \in \left\{1,2,\ldots ,n\right\} \mid \text{each letter greater than }k\text{ in }x\text{ has the same position parity as }k\right\}$.
\end{defn}

\begin{prop}\label{withtopprop}
Let $x$ be a permutation containing letters $a$ and $b$ separated by one letter. Let $k$ be the greatest letter of different position parity than $a$ and $b$. Assume that $a$ and $b$ are not both in the fall of $x$. Then, $x$ is equivalent to the permutation which is identical to $x$ except with $a$ and $b$ swapped under the $\{123, 321\} \{ 132, 231\} $-equivalence.
\end{prop}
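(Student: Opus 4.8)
The plan is to reduce everything to one hard configuration and then dispatch it by moving a large letter of the opposite parity into the gap between $a$ and $b$.

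First I would record the mechanics of this equivalence. Each of the four transformations $123\leftrightarrow 321$ and $132\leftrightarrow 231$ fixes the middle letter of its three-letter factor and exchanges the two outer letters, and a factor is a hit precisely when its middle letter is not the smallest of the three. Thus every letter stays forever among positions of a single parity, and two letters at positions $j$ and $j+2$ can be exchanged by a single transformation exactly when the letter between them is larger than the smaller of the two. I would then reformulate the hypothesis: if $k$ is the largest letter whose position parity is opposite to that of $a$ and $b$, then a letter $c$ of $a,b$'s parity lies in the fall iff every larger letter shares its parity, i.e.\ iff $c>k$; hence ``$a$ and $b$ are not both in the fall'' is exactly $k>\min(a,b)$. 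In particular there is at least one opposite-parity letter exceeding $\min(a,b)$.

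Let $m$ be the letter between $a$ and $b$ (at the middle position $i+1$, with $a,b$ at positions $i$ and $i+2$). If $m>\min(a,b)$ then $(a,m,b)$ is a hit and a single transformation swaps $a$ and $b$. The remaining case is $m<\min(a,b)$, so $m$ is the minimum of the triple and no direct swap is available. Here the strategy is to route an opposite-parity letter $v$ with $v>\min(a,b)$ into position $i+1$ using only moves that leave $a$ and $b$ fixed; once such a $v$ occupies position $i+1$ the triple $(a,v,b)$ is a hit, so $a$ and $b$ may be swapped, and I then reverse the routing to restore every other letter. Because every gate used in the first and last insertion steps is $a$ or $b$, both of which exceed $m$, the reversal remains valid even after $a$ and $b$ have changed places.

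The main obstacle is the routing step. I would take $v$ to be the opposite-parity letter exceeding $\min(a,b)$ whose position is closest to $i+1$; then every opposite-parity letter strictly between $v$ and $i+1$ is smaller than $\min(a,b)$. Bubbling $v$ toward $i+1$, each elementary step moves $v$ past one such small letter, which is legal exactly when the intervening same-parity gate exceeds that small letter. When a gate is too small, I would first fetch a larger same-parity letter into the gate position, using $v$ itself as the gate for that auxiliary swap; this is always legal since $v$ dominates the small letters it is passing, and it only repositions same-parity letters lying on the far side of $v$, never $a$ or $b$. Iterating brings $v$ to position $i+1$ without disturbing $a$ or $b$, and the whole transport is reversible. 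This gate-management argument, which I expect to formalize by induction, is where essentially all the work lies; once it is in place the proposition follows immediately, the inequality $k>\min(a,b)$ being exactly what guarantees that a usable $v$ exists.
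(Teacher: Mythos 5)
Your setup is right: the transformations fix the middle letter and swap the outer two, a length-three factor is a hit exactly when its middle letter is not the smallest, position parities are invariant, and the hypothesis ``$a$ and $b$ are not both in the fall'' is equivalent to $k>\min(a,b)$. The easy case $m>\min(a,b)$ and the observation that the final insertion/extraction steps are gated by $a$ or $b$ (both exceeding $m$) are also fine. The problem is the routing step, which you correctly identify as ``where essentially all the work lies'': the gate-management mechanism you describe does not work. Take $n=9$ and $x=5\,4\,9\,3\,1\,6\,2\,7\,8$ with $a=5$, $b=9$ at positions $1$ and $3$, so $m=4<5=\min(a,b)$ and $k=7>5$. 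Your prescription picks $v=6$ at position $6$ (the closest even-position letter exceeding $5$). Moving $v$ to position $4$ uses the factor $3\,1\,6$, a $213$, not a hit, so the gate $1$ fails against the small letter $3$. Your auxiliary swap turns $1\,6\,2$ into $2\,6\,1$, but the fetched far-side neighbor is $2$, and $3\,2\,6$ is again a $213$; a second auxiliary swap just toggles back. The only same-parity letters reachable by using $v$ as a gate are its two immediate neighbors, here $1$ and $2$, both smaller than $3$, so the iteration is genuinely stuck. (The permutation \emph{is} equivalent to $9\,4\,5\,3\,1\,6\,2\,7\,8$, but every route requires first transporting the letter $8$ from position $9$ down to position $5$ to serve as a gate --- a multi-step transport of a \emph{same}-parity letter that itself needs gates, i.e., a problem of the same type as the one being solved. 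Your sketch gives no inductive framework for this mutual recursion, and the claim ``this is always legal \dots{} iterating brings $v$ to position $i+1$'' is false as stated.)

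For comparison, the paper avoids routing altogether by inducting on $n$: after normalizing by reversal so that $k$ and $a$ lie weakly to the left of $b$, either $k$ is the middle letter (direct swap), or the factor from $k$ to $b$ is proper and the inductive hypothesis applies to it (the hypothesis survives because $k$ itself witnesses $k>\min(a,b)$ inside the factor), or that factor is all of $x$, forcing $n$ even, in which case the letter $1$ can always be slid to an end (any factor with $1$ as an outer letter is a hit), the inductive hypothesis is applied to the remaining $n-1$ letters, and $1$ is slid back. If you want to salvage your direct approach, you would need a substantially stronger transport lemma proved by its own induction; as written, the argument has a concrete, fatal gap at its central step.
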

\begin{proof}
Let $x$ be such a permutation with letters $a$, $b$, and $k$ as described. Note that $k>a$ or $k>b$ because $a$ and $b$ are not both in the fall of $x$. We will prove the proposition by inducting on $n$. Assume, inductively, that the result holds for lesser $n$ with a trivial base case of $n=3$. Due to symmetries of the relation,  we can assume without loss of generality that $a$ and $k$ are to the left of $b$. If $k$ is between $a$ and $b$, then we simply swap them. Otherwise, consider the factor of $x$ beginning with $k$ and ending with $b$. If it is of length less than $n$, then applying the inductive hypothesis to it, we are done. If it is of length $n$, then $n$ must be even. Thus we can slide the letter $1$ to the first or final position by repeatedly swapping it with the letter two positions to its left or right respectively. Applying the inductive hypothesis to the remaining $n-1$ letters, we can swap $a$ and $b$. Then, sliding $1$ back to its previous position by repeatedly swapping it with the letter two positions to its right or left respectively, we reach the desired permutation.
\end{proof}

\begin{prop}
The set of permutations equivalent to a permutation $x$ under the $\{123, 321\} \{ 132, 231\} $-equivalence is exactly the set of all permutations with the letters in their fall being in the same relative order as in $x$ and with all letters having the same position parity as in $x$.
\end{prop}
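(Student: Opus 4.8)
The plan is to prove that the pair of data — the position parity of every letter, together with the relative order of the letters in the fall — forms a complete invariant, handling necessity and sufficiency separately. For necessity I would first observe that every one of the transformations $123\leftrightarrow 321$ and $132\leftrightarrow 231$ simply reverses a consecutive triple, fixing the middle position and exchanging the two outer positions $i$ and $i+2$. Since $i$ and $i+2$ have equal parity, each letter retains its position parity, so the assignment of a position parity to each letter is invariant. Because the fall is defined entirely in terms of which letters share a common parity with all larger letters, it follows that the fall — necessarily an ``upper interval'' $\{m,m+1,\ldots,n\}$ all of whose letters lie in a single parity class $p$ — is itself invariant as a set.

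To finish necessity I would show that the relative order of the fall letters cannot change. A triple-reversal alters the fall-order only if the reversed triple contains two fall letters. But the two outer positions $i,i+2$ carry parity $p$ while the middle position $i+1$ carries the opposite parity, so the middle letter is not a fall letter; hence a triple can contain two fall letters only when both outer entries are fall letters. In that situation the middle letter, lying outside $\{m,\ldots,n\}$, is smaller than both outer letters, so the triple forms a $213$ or a $312$ and is not a hit. Thus no legal transformation exchanges two fall letters, and the fall-order is preserved.

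For sufficiency, suppose $x$ and $y$ share both invariants. Because the fall equals $\{m,\ldots,n\}$, two equal-parity letters separated by one position fail to be ``both in the fall'' precisely when the smaller of them is less than $m$, so Proposition \ref{withtopprop} lets me transpose the contents of any two positions $i,i+2$ whose letters are not both fall letters, changing nothing else. I would regard the equivalence as generated by these clean transpositions and sort $x$ into $y$ within each of the two parity tracks, which the transformations preserve and which (since the parities agree) carry identical sets of letters in $x$ and in $y$. On a track I run a selection sort from the left: to install at the current track-position the letter that $y$ has there, I slide that letter leftward by successive distance-two transpositions.

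The step I expect to require the most care is showing that this sort never calls for an illegal swap of two fall letters. The governing invariant is that the current fall-order always matches that of $y$: it does so at the start, and every transposition I perform involves a non-fall letter, so it can never reorder two fall letters. Granting this, when the letter being slid is a fall letter $f$, every fall letter preceding $f$ in the common fall-order has already been fixed at an earlier, now-correct track-position and so no longer lies among the unsorted entries to $f$'s left, while a fall letter succeeding $f$ cannot presently sit to $f$'s left without contradicting the matched fall-order. Hence $f$ meets only non-fall letters en route, every transposition is legal, and the invariant is maintained. Carrying this out on both tracks — which do not interfere, as each clean transposition fixes the other track's letters — converts $x$ into $y$, establishing sufficiency and completing the proof.
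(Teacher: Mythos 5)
Your proof is correct and takes essentially the same approach as the paper: necessity via the invariance of each letter's position parity (hence of the fall) and of the relative order of the fall letters, and sufficiency via Proposition \ref{withtopprop}. The paper dispatches sufficiency in a single sentence by citing that proposition, whereas you spell out the connectivity argument (the two-track selection sort with the matched fall-order invariant) explicitly, which is a detail the paper leaves implicit.
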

\begin{proof}
By Proposition \ref{withtopprop}, every permutation in the latter set is equivalent to $x$. Observe that under the relation, any permutation equivalent to $x$ must have each letter be of the same position parity as in $x$. Therefore, such a permutation must also have the same fall as $x$ has. The relation never allows two letters in the fall of a permutation to swap relative positions. Hence, any permutation equivalent to $x$ is in the set.
\end{proof}

\begin{cor}
Let $x$ be a permutation with a fall of size $j$. Then, the size of the class containing $x$ is $\frac{ \lfloor n/2 \rfloor ! \cdot \lceil n/2 \rceil ! }{j!}$ under the $\{123, 321\} \{ 132, 231\} $-equivalence. 
\end{cor}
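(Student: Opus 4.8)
The plan is to read the answer off directly from the preceding proposition, which identifies the class of $x$ as the set of all permutations $y$ in which every letter keeps the same position parity as in $x$ and in which the letters of the fall appear in the same relative order as in $x$. So the whole problem reduces to counting such $y$, and the work is entirely combinatorial.

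First I would record the structure of the fall. From its definition, if $k$ lies in the fall of $x$ then every letter exceeding $k$ shares $k$'s position parity; in particular all of $k+1, k+2, \ldots, n$ carry one common parity, equal to that of $k$. Applying this to the smallest element of the fall shows that the fall is exactly the top block $\{n-j+1, n-j+2, \ldots, n\}$ and that all $j$ of these letters occupy positions of a single common parity $p$ in $x$ (and hence, by the proposition, in every $y$ in the class).

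Next I would set up the count. There are $\lceil n/2 \rceil$ odd positions and $\lfloor n/2 \rfloor$ even positions. Preserving position parity forces the set of letters sitting in the odd positions of $x$ to fill the odd positions of $y$, and likewise for the even positions; within each of these two groups the letters may otherwise be permuted arbitrarily, giving $\lceil n/2 \rceil! \cdot \lfloor n/2 \rfloor!$ choices in the absence of the relative-order constraint.

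Finally I would impose the relative-order constraint on the fall. Since the $j$ fall letters all lie in the single group of positions of parity $p$, and since among all arrangements of that group each of the $j!$ possible relative orders of those letters occurs equally often, exactly a $1/j!$ fraction of the arrangements of that group matches the order prescribed by $x$, while the other group stays unconstrained. Multiplying, the class has size
\[
\frac{\lfloor n/2 \rfloor! \cdot \lceil n/2 \rceil!}{j!},
\]
which is the claim, independently of whether $p$ is odd or even. I expect no genuine obstacle; the only point demanding care is the first step, namely checking from the definition that the fall is an upward-closed top block whose letters share a single position parity, since this is precisely what guarantees that the $j!$ overcounting factor lands entirely within one of the two position groups and does not interact with the free arrangements of the other.
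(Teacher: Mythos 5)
Your proposal is correct and follows essentially the same route as the paper: invoke the preceding proposition to identify the class as the parity-preserving, fall-order-preserving permutations, count $\lfloor n/2\rfloor!\cdot\lceil n/2\rceil!$ parity-preserving arrangements, and divide by $j!$. The only difference is that you explicitly verify the fall is an upward-closed top block lying in a single parity group, a detail the paper leaves implicit but which is indeed what makes the division by $j!$ clean.
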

\begin{proof}
Any two letters of the same position parity not both in the fall of $x$ can be swapped. There would be $ \lfloor n/2 \rfloor ! \cdot \lceil n/2 \rceil ! $ elements in the class if letters both in the fall could be swapped as well. However, keeping in mind that they can not, we must divide by the number of ways they can be sorted, $j!$.
\end{proof}

\begin{cor}
Let $l$ be $\lfloor n/2 \rfloor$ and $h$ be $\lceil n/2 \rceil$. There
are $\sum\limits_{j=1}^{l}{ j! \cdot \binom{n-j-1}{h-1}} +
\sum\limits_{j=1}^{h}{ j! \cdot \binom{n-j-1}{l-1}}$ classes in $S_n$
under the $\{123, 321\} \{ 132, 231\} $-equivalence.
\end{cor}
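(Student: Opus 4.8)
The plan is to count classes by grouping them according to their underlying \emph{position parity pattern}, i.e., the assignment of a position parity to each of the letters $1,\ldots,n$. By the preceding proposition, a class is completely determined by two pieces of data: the position parity of each letter, together with the relative order of the letters in the fall. Moreover, the fall itself (as a set) is determined by the parity pattern alone, since $k$ lies in the fall exactly when every larger letter shares $k$'s parity; in particular the fall is always an up-set in value $\{n-j+1,\ldots,n\}$ of some size $j$, and it always contains $n$. First I would record the combinatorial shape of a parity pattern: there are $h=\lceil n/2\rceil$ odd positions and $l=\lfloor n/2\rfloor$ even positions, so a parity pattern is precisely the choice of which $h$-element subset $O\subseteq\{1,\ldots,n\}$ of letters occupies the odd positions, the complementary $l$-element set $E$ occupying the even positions.

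Next I would determine, for a fixed parity pattern, how many classes share it. Every permutation with a given pattern has the same fall, of some size $j$, so by the preceding corollary every such class has size $\tfrac{l!\,h!}{j!}$. Since there are exactly $l!\,h!$ permutations with a given pattern (freely permuting the $h$ odd-position letters among the odd slots and the $l$ even-position letters among the even slots), the number of classes sharing that pattern is $\tfrac{l!\,h!}{l!\,h!/j!}=j!$. Thus the total number of classes equals $\sum_{P} j(P)!$, the sum over parity patterns $P$ of the factorial of the fall size $j(P)$ of $P$.

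The bulk of the work is then to count parity patterns by fall size. Because the fall always contains $n$, its common parity is that of $n$, which splits the count into two disjoint exhaustive cases according to whether $n\in E$ (fall in even positions) or $n\in O$ (fall in odd positions). In the case $n\in E$ with fall size $j$, the top letters $n-j+1,\ldots,n$ all lie in $E$, the threshold letter $n-j$ must lie in $O$ (else the fall would be larger), and the remaining $n-j-1$ letters $1,\ldots,n-j-1$ are distributed freely subject to filling out $E$; as $j$ of the $l$ even slots are already occupied, exactly $l-j$ of the free letters go to $E$, giving $\binom{n-j-1}{l-j}=\binom{n-j-1}{h-1}$ patterns (using $n-1-l=h-1$), with $1\le j\le l$. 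The symmetric case $n\in O$ yields $\binom{n-j-1}{h-j}=\binom{n-j-1}{l-1}$ patterns with $1\le j\le h$. Multiplying each count by $j!$ and summing over the two cases produces exactly $\sum_{j=1}^{l} j!\binom{n-j-1}{h-1}+\sum_{j=1}^{h} j!\binom{n-j-1}{l-1}$.

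The step I expect to require the most care is precisely this bookkeeping: checking that the threshold letter $n-j$ always exists (which holds since $j\le h<n$) and is forced to carry the opposite parity, that no pattern is double-counted (guaranteed because the parity of the fall, hence the case, is pinned down by the position of $n$), and that the binomial identities $\binom{n-j-1}{l-j}=\binom{n-j-1}{h-1}$ and $\binom{n-j-1}{h-j}=\binom{n-j-1}{l-1}$ convert the natural ``number of free letters'' count into the claimed form. Assembling these pieces gives the stated formula.
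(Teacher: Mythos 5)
Your proposal is correct and follows essentially the same route as the paper: both count classes by summing over the fall's position parity and size $j$, multiplying the $j!$ orderings of the fall letters by the $\binom{n-j-1}{h-1}$ (resp.\ $\binom{n-j-1}{l-1}$) choices of parities for the unconstrained letters. The only cosmetic difference is that you derive the factor $j!$ by dividing the $l!\,h!$ permutations sharing a parity pattern by the class size $l!\,h!/j!$, whereas the paper reads it off directly from the characterization of classes.
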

\begin{proof}
Each class is determined by its fall, the order of the letters in its fall in each permutation in the class, the position parity of the letters in its fall, and the position parity of each of the remaining letters. So, if the letters in its fall are of a given position parity and the fall is of size $j$ (each possibility is summed over in the final equation), then there are $j!$ possible orderings for the letters in the fall. Then, out of the $n-j$ remaining letters, the position parity of the letter $n-j$ is already determined as the opposite of that of $n$, but the other $n-j-1$ letters can be of either position parity, thus yielding the binomial coefficient portion of the equation. 
\end{proof}

\subsection{\mathheader{\{123, 231\} \{ 321, 213 \}}-Equivalence}

Let $i_k$ be the identity in $S_k$ and $u_k$ be the identity in $S_k$ except with $1$ and $2$ swapped.

\begin{defn}
A permutation is \emph{layered} if each letter of one position parity is less than each letter of the other.
\end{defn}

\begin{rem}
Note that when studying the $\{123, 231\} \{ 321, 213 \} $-equivalence,  we will often use the symmetry (specific to the relation) between a permutation and its complement. (The \emph{complement} of a permutation $a_1 a_2 \ldots a_n$ is defined as the permutation $(n + 1 - a_1 )(n + 1 - a_2 ) \ldots (n + 1 - a_n )$.)
\end{rem}

\begin{lem}\label{lemidreach}
Let $x$ be a permutation in $S_{n\ge5}$ containing a factor which forms either $i_5$ or $u_5$. Then, $x$ is equivalent to either $i_n$ or $u_n$ under the $\{123, 231\} \{ 321, 213 \}$-equivalence.
\end{lem}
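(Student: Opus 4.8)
\section*{Proof proposal for Lemma~\ref{lemidreach}}

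The plan is to induct on $n$, with base case $n=5$: there the only length-$5$ factor is the whole permutation, so $x$ is literally $i_5$ or $u_5$ and there is nothing to prove. Before the induction I would record one clean invariant: every allowed transformation rearranges three consecutive letters by a $3$-cycle of their positions (for instance $123\to231$ sends the contents of positions $1,2,3$ by the cycle $(1\,3\,2)$, and likewise $321\to213$), hence is an even permutation, so $\operatorname{sgn}(x)$ is constant on each class. Since $\operatorname{sgn}(i_n)=+1$ and $\operatorname{sgn}(u_n)=-1$, this is not logically needed for the lemma, but it pins down the dichotomy: if $x$ is equivalent to one of $i_n,u_n$ at all, it is equivalent to $i_n$ exactly when it is even and to $u_n$ exactly when it is odd.

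For the inductive step ($n\ge 6$), the strategy is to strip off the letter $n$. Concretely, I would show that $x$ is equivalent to a permutation $x^{\ast}$ in which $n$ sits in the final position and whose first $n-1$ letters still contain a factor forming $i_5$ or $u_5$. Granting this, the first $n-1$ letters of $x^{\ast}$ form a permutation of $\{1,\dots,n-1\}$ satisfying the hypothesis, so by the inductive hypothesis they can be rearranged to $i_{n-1}$ or $u_{n-1}$ using only hits among those positions; such hits never involve the last position, so $n$ stays fixed throughout. Hence $x^{\ast}\equiv i_{n-1}\,n=i_n$ or $x^{\ast}\equiv u_{n-1}\,n=u_n$, finishing the step.

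The work is therefore in relocating $n$. Here the two useful slides are $nyz\to yzn$ (an instance of $321\to213$, legal exactly when $y>z$), which pushes $n$ two places to the right, and $pqn\to npq$ (an instance of $213\to321$, legal exactly when $p>q$), which pulls it two places to the left; note that an ascent immediately next to $n$ produces the blocked patterns $312$ or $132$ and stalls the slide. The role of the length-$5$ increasing block is to serve as a workspace: using the moves that shuttle the smallest letter of an increasing triple ($123\leftrightarrow231$) I can manufacture a descent just to the side of $n$ on demand, slide $n$ one double-step, and repeat, marching $n$ to the end. When $n$ itself lies inside the given factor I would instead slide $n$ out of it first and then re-grow a clean increasing run of length $5$ from the remaining sorted letters. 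The complement symmetry noted just above the lemma lets me treat the symmetric situations (e.g.\ pulling $n$ left, or the $u$-type case) without repeating the argument.

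The main obstacle is exactly the bookkeeping in that last paragraph: guaranteeing that, after $n$ has been walked to the final position, a length-$5$ factor of the correct shape survives among the first $n-1$ letters, and that the parity constraints do not obstruct the descents one needs. Because the slides move letters in steps of two, they preserve the position parity of the shuttled letter, so it is essential that the workspace have \emph{odd} length and be long enough to absorb the bounded disturbance caused by passing $n$ through it; this is precisely what I expect to force the factor length to be $5$ rather than $3$ or $4$, and it is where the detailed casework will live.
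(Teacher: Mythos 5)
There is a genuine gap: the entire inductive step rests on producing a permutation $x^{\ast}$ with $n$ in the last position whose first $n-1$ letters still contain a factor forming $i_5$ or $u_5$, and you explicitly leave that step as ``bookkeeping'' and ``detailed casework.'' It is not routine. The slide $nyz\to yzn$ is blocked whenever the two letters to the right of $n$ ascend, and your proposed remedy --- using the increasing block as a workspace to manufacture descents --- only helps if that block is adjacent to $n$; in general it is not, and nothing in the proposal explains how to transport the workspace to $n$ (or $n$ to the workspace) without the same obstruction, nor why a factor forming $i_5$ or $u_5$ survives after $n$ has been dragged through it. Until that is done, no part of the inductive step is established. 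A smaller issue: with only $n=5$ as a base case you would also need to verify that your shuttling argument already works at $n=6$, where there is very little room to spare.

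The paper avoids all of this with a three-pass ``stooge sort'' that you may want to adopt. With base cases $S_5$ and $S_6$ checked computationally, note that a length-$5$ factor of a permutation of length $n\ge 7$ lies entirely within the first $n-1$ positions or entirely within the last $n-1$; say the first. Apply the inductive hypothesis to the first $n-1$ letters, rearranging them to form $i_{n-1}$ or $u_{n-1}$: this creates a long increasing run, so the last $n-1$ letters now contain a factor forming $i_5$ (here $n\ge 7$ is needed, which is why $S_6$ must be a base case). Apply the inductive hypothesis to the last $n-1$ letters; sorting them sends $n$ to the final position with no hand-crafted slides at all. One more application to the first $n-1$ letters yields $i_n$ or $u_n$. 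In short, the missing idea is that the inductive hypothesis itself is the tool for relocating $n$ and for regenerating the $i_5$/$u_5$ factor, so the shuttling argument you defer never has to be carried out. Your parity observation (every transformation is a $3$-cycle of positions, hence even) is correct and does pin down which of $i_n$, $u_n$ one lands on, but, as you say, it is not needed for the statement.
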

\begin{proof}
We will prove this by inducting on $n$, using $S_5$ and $S_6$ as our base
cases. (These can easily be checked computationally.) Assume the lemma holds
in $S_{n-1}$. Let $x$ be a permutation as described. Without loss of generality, $x$ contains a factor forming $i_5$ or $u_5$ in its first $n-1$ letters. Applying the inductive hypothesis to the first $n-1$ letters, and then the final $n-1$ letters (reaching a permutation ending with $n$), and then the first $n-1$ letters again, we reach either $i_n$ or $u_n$.
\end{proof}

\begin{lem}\label{lemsilly} Let $n>5$. Let $w$ be a permutation in $S_n$ not equivalent to $i_n$ or $u_n$ under the $\{123, 231\} \{ 321, 213 \}$-equivalence. For symmetry reasons, we may assume without loss of generality that $x$ begins with a descent. Under the transformations considered, all permutations equivalent to $x$ begin with a descent as well under the $\{123, 231\} \{ 321, 213 \}$-equivalence.
\end{lem}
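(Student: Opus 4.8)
The plan is to prove the contrapositive: if $x$ begins with a descent and some permutation equivalent to $x$ begins with an ascent, then $x\equiv i_n$ or $x\equiv u_n$. Given such an $x$, I fix a chain of single transformations from $x$ to an ascent-starting permutation and let $z\to z'$ be the first step along which $z_1-z_2$ passes from positive (a descent) to negative (an ascent); thus $z\equiv x$ and $z$ begins with a descent. The first observation is that the transforming factor of $z\to z'$ must occupy positions $1,2,3$ or $2,3,4$, since a factor lying entirely in positions $\ge 3$ leaves $z_1,z_2$ untouched. A factor at positions $1,2,3$ cannot cause the flip: the two patterns in each part of $K$ share the same relation between their first two letters ($123$ and $231$ both ascend, while $321$ and $213$ both descend), so the sign of $z_1-z_2$ is preserved. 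Hence the flipping factor sits at positions $2,3,4$.

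Next I would pin down the prefix of $z$. A transformation at positions $2,3,4$ replaces $z_2$ by one of $z_2,z_3,z_4$, and for the start to flip the new second letter must exceed $z_1>z_2$. Checking the four admissible transformations, only $123\to 231$ (which installs $z_3$, forcing $z_2<z_1<z_3<z_4$) and $213\to 321$ (which installs $z_4$, forcing $z_3<z_2<z_1<z_4$) raise the second letter above $z_1$; the remaining two, $231\to 123$ and $321\to 213$, each install a letter smaller than $z_2$ and so keep the descent. Therefore the order permutation of $z_1z_2z_3z_4$ is either $2134$ or $3214$. In the $3214$ case, applying $321\to 213$ at positions $1,2,3$ (a move which, as just noted, keeps the descent) turns the prefix into order $2134$. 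So, after possibly one harmless move, I may assume $z\equiv x$ has a prefix of order $2134$.

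The remaining and main task is the Key Claim: \emph{every permutation whose first four letters have order $2134$ is equivalent to $i_n$ or $u_n$}. Granting it, $x\equiv z\equiv i_n$ or $u_n$, which is the desired contrapositive. To prove the Key Claim I would aim to produce, by transformations, a permutation containing a factor of order $i_5$ or $u_5$, after which Lemma~\ref{lemidreach} finishes at once. The natural handle is the increasing run $z_2<z_3<z_4$ supplied by the $2134$ prefix together with $z_2<z_1<z_3$: if this run can be pushed to length four, so that $z_2<z_3<z_4<z_5$, then $z_1z_2z_3z_4z_5$ already has order $u_5$. The hard part, and the step I expect to be the main obstacle, is that lengthening the run is not monotone: when the letter following the run lies strictly between two of the run's values the governing factor has the inert order $132$, and one must first reposition larger letters by means of $213\leftrightarrow 321$ before a rotation $231\to 123$ can extend the run. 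I expect the clean way to control this is an induction on $n$ (with the small cases checked directly): slide the largest letter to the end, apply the inductive hypothesis to the first $n-1$ letters to create a long increasing prefix, and thereby expose an $i_5$ or $u_5$ factor in $S_n$. Verifying that this slide can always be arranged without destroying the $2134$ prefix is the delicate point that must be nailed down.
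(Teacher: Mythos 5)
Your reduction is correct as far as it goes, and in fact is more explicit than anything in the paper: you correctly observe that the flipping hit cannot sit at positions $1$--$3$ (both patterns in each part of $K$ agree on whether their first two letters ascend or descend), and your case analysis of the four transformations at positions $2$--$4$ correctly forces the prefix order $2134$ or $3214$, with the latter reducible to the former. But the argument then stops at the Key Claim, which carries essentially all of the content of the lemma (a prefix of order $2134$ can always be flipped to an ascent by $123\to 231$ at positions $2$--$4$, so the Key Claim is logically equivalent to the contrapositive you set out to prove). Your proposed route to it --- lengthening the increasing run, sliding the largest letter, inducting on $n$ --- is a sketch whose crucial step you yourself flag as ``the delicate point that must be nailed down.'' That is a genuine gap: nothing in the proposal establishes that a permutation with prefix order $2134$ is equivalent to $i_n$ or $u_n$.

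The paper closes exactly this gap differently and much more cheaply. It inducts with a computationally verified base case in $S_6$: since the flipping hit lies entirely within the first four positions, the first six letters of $z$ form a permutation of $S_6$ that begins with a descent yet is equivalent (within those six letters) to one beginning with an ascent, so by the $S_6$ case those six letters are equivalent to $i_6$ or $u_6$; rearranging them to form $i_6$ or $u_6$ produces a factor forming $i_5$ or $u_5$, and Lemma~\ref{lemidreach} then yields $w\equiv i_n$ or $u_n$. If you want to salvage your finer localization, the same device works: verify computationally that every permutation of $S_6$ whose first four letters have order $2134$ is equivalent to $i_6$ or $u_6$, then invoke Lemma~\ref{lemidreach}. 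As written, though, the proposal does not constitute a proof.
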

\begin{proof}
One can check computationally that the lemma holds in $S_6$. Let $w \in S_{n>6}$ not be equivalent to $i_n$ or $u_n$, begin with a descent, and be equivalent to a permutation not beginning with a descent. Then, there is some transformation which connects two permutations in $w$'s equivalence class, one of which begins with a descent and one of which begins with an ascent. Applying the inductive hypothesis to the first six letters of such a permutation, we rearrange them to form $i_6$ or $u_6$, and by Lemma \ref{lemidreach}, we can reach either $i_n$ or $u_n$.
\end{proof}

\begin{lem}\label{lemsym}
Let $a$ be the decreasing permutation and $b$ be the decreasing permutation with its first two letters swapped, both in $S_{n>5}$. Then, $a$ and $b$ are each equivalent to a different one of $i_n$ and $u_n$ under the $\{123, 231\} \{ 321, 213 \}$-equivalence.
\end{lem}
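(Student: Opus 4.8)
The plan is to combine two ingredients: a \emph{sign invariant} that forces $a$ and $b$ into \emph{different} classes among $\{i_n,u_n\}$, and an inductive argument feeding into Lemma~\ref{lemidreach} to show that each of $a$ and $b$ is equivalent to \emph{some} member of $\{i_n,u_n\}$ in the first place. The distinctness is the easy half; the existence is where the real work lies.

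First I would record that the parity of the number of inversions is invariant under the $\{123,231\}\{321,213\}$-equivalence. Each of the four rearrangements $123\leftrightarrow 231$ and $321\leftrightarrow 213$ alters the number of inversions among its three involved letters by exactly $\pm 2$ (the patterns $123$ and $231$ differ by two inversions, as do $321$ and $213$); and since the three letters of a hit occupy consecutive positions, any other letter lies entirely to the left or entirely to the right of the hit, so its inversions with the hit depend only on the (unchanged) set of values in the hit. Hence the global inversion parity, i.e.\ the sign, is preserved. Now $a$ has $\binom{n}{2}$ inversions while $b$ has $\binom{n}{2}-1$, so $a$ and $b$ have opposite signs; likewise $i_n$ is even and $u_n$ is odd. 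Consequently $a$ can be equivalent only to the one of $i_n,u_n$ of matching sign and $b$ only to the other, so once each is shown equivalent to one of them, ``a different one'' is automatic.

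For the existence half I would induct on $n$, with base case $n=6$ (checked computationally, as for Lemma~\ref{lemidreach}). The key structural observation is that the last $n-1$ letters of \emph{both} $a$ and $b$ form the decreasing pattern: for $a=n(n-1)\cdots 1$ this is immediate, and for $b=(n-1)\,n\,(n-2)\cdots 1$ the tail $n(n-2)(n-3)\cdots 1$ has order permutation $(n-1)(n-2)\cdots 1$. Treating this tail as a factor of size $n-1$, the inductive hypothesis says its order permutation is equivalent to $i_{n-1}$ or $u_{n-1}$, and I may realize that equivalence by transformations confined to the factor. In either outcome the tail becomes increasing except for at most a single leading transposition, so the whole permutation acquires an increasing factor of length at least $5$, i.e.\ a copy of $i_5$, as soon as $n\ge 7$; for instance, when the tail of $a$ becomes $u_{n-1}$ the permutation reads $n\,2\,1\,3\,4\,5\cdots$, whose factor $1\,3\,4\,5\,6$ forms $i_5$, and the case of $b$ is identical. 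Lemma~\ref{lemidreach} then yields that $a$ (and, by the same argument, $b$) is equivalent to $i_n$ or $u_n$, closing the induction. Combined with the sign invariant, this gives that $a$ and $b$ are equivalent to different ones of $i_n,u_n$.

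The main obstacle is precisely this existence step: a fully decreasing $a$ presents no increasing factor for Lemma~\ref{lemidreach} to act on, and trying to manufacture one by direct brute-force rearrangement at the front of $a$ is awkward, since the leading (maximal) letter resists being lowered. The clean resolution is the observation that the tails of both $a$ and $b$ reduce to the same decreasing pattern, so the inductive hypothesis instantly produces the needed $i_5$ factor; the sign computation then completes the proof with essentially no further computation.
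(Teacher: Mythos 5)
Your proof is correct and follows essentially the same route as the paper: reduce a decreasing factor to $i_k$ or $u_k$ to manufacture an $i_5$ factor, invoke Lemma~\ref{lemidreach}, and use invariance of the sign (inversion parity) to force $a$ and $b$ into different classes. The only cosmetic difference is that you run a genuine induction on $n$ via the length-$(n-1)$ tail, whereas the paper skips the induction by applying the computationally checked $n=6$ case directly to the first six letters of $a$ and $b$.
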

\begin{proof}
Computationally, one can easily check that this holds for $n=6$. For $n>6$, we may rearrange the first $6$ letters of $a$ and $b$ to form either $i_6$ or $u_6$. Then, applying Lemma \ref{lemidreach}, we reach either $i_n$ or $u_n$ from each of $a$ and $b$. Noting that that parity is invariant under the relation, we conclude that $a$ and $b$ are each equivalent to a different one of $i_n$ and $u_n$.
\end{proof}

For the rest of this subsection, we ask for the reader to remember that Lemma \ref{lemsym} makes a permutation being equivalent to $i_k$ or $u_k$ the same as a permutation being equivalent to $a$ or $b$ (as defined in the proof of the lemma). This means that when a permutation is equivalent to one of $u_k$ or $i_k$, the permutation which is the same but with each letter $j$ mapped to $k-j+1$ is as well; this argument of symmetry, although nontrivial, will be assumed to work for the rest of the subsection.

\begin{lem} \label{lemhighlayer}
Let $x \in S_{n>5}$ be a layered permutation not beginning with $n$ or $n-1$, but starting with a decrease. Then, $x$ is equivalent to $i_n$ or $u_n$ under the $\{123, 231\} \{ 321, 213 \}$-equivalence.
\end{lem}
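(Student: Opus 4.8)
The plan is to induct on $n$, reducing every such $x$ to a permutation that contains a length-five increasing factor (that is, a factor forming $i_5$) and then invoking Lemma~\ref{lemidreach}. Throughout I use complement symmetry freely, as licensed by the remark following Lemma~\ref{lemsym}: since $x$ is layered and starts with a decrease, its odd-position letters must be exactly the $\lceil n/2\rceil$ largest values and its even-position letters the $\lfloor n/2\rfloor$ smallest values (the opposite assignment would force an initial ascent), and the hypothesis ``does not begin with $n$ or $n-1$'' says precisely $x_1\le n-2$.

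The key structural observation is that every contiguous factor of a layered permutation is again layered: within a factor the global position parities are inherited, and the high values occurring in the factor are all larger than the low values occurring in it. Hence I intend to apply the inductive hypothesis to the length-$(n-1)$ prefix $p=x_1x_2\cdots x_{n-1}$ (or, symmetrically, to the suffix $x_2\cdots x_n$ through its complement), after small base cases $n=6,7$ checked computationally. For the inductive step, suppose first that $n$ is even. Then $x_n$ occupies an even position, so it is a low value and both $n$ and $n-1$ lie in the prefix $p$; thus $p$ is layered, starts with the decrease $x_1>x_2$, has size $n-1>5$, and has two largest values $n,n-1$, neither equal to $x_1\le n-2$. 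So $p$ meets the hypotheses of the lemma, and the inductive hypothesis makes $p$ equivalent to $i_{n-1}$ or $u_{n-1}$. Read inside $S_n$, each of these contains a factor forming $i_5$ (for $u_{n-1}$ take positions $2$ through $6$), and Lemma~\ref{lemidreach} then finishes the even case.

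For odd $n$ the same reduction applies verbatim whenever $p$ still contains both $n$ and $n-1$ and does not begin with one of its two largest values; concretely it works unless $x_n\in\{n,n-1\}$ \emph{and} $x_1=n-2$ (the only way $x_1$ can be the second-largest value of $p$). In that residual situation I would instead try the suffix $x_2\cdots x_n$, whose complement is layered and starts with a decrease at the value $n+1-x_2$; this reduction succeeds unless additionally $x_2\in\{1,2\}$. The genuinely hard part, and the only place requiring real work, is this doubly-exceptional family ($n$ odd, $x_1=n-2$, $x_n\in\{n,n-1\}$, $x_2\in\{1,2\}$), where both the prefix and the suffix begin with an extremal value of their own, so neither one-letter reduction is legal. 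For these I would argue directly: a short explicit sequence of transformations repositions the offending extremal letter away from the boundary (exploiting that the triple formed by the two leading large values and the intervening small value is a $213$, hence a hit) so as to re-enter one of the good cases, or else I would apply the inductive hypothesis to the middle block $x_2\cdots x_{n-1}$ of length $n-2$ and slot the two end letters in afterward. Checking that this boundary bookkeeping always terminates in an $i_5$ factor (equivalently $u_5$, or by complement symmetry one forming $54321$ or $45321$) without disturbing the parity invariant that distinguishes $i_n$ from $u_n$ is the main obstacle; everything outside this handful of configurations is the clean prefix/suffix induction above.
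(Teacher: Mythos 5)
Your even-$n$ case is complete and correct, but the odd-$n$ case has a genuine gap: the doubly-exceptional family ($n$ odd, $x_1=n-2$, $x_n\in\{n,n-1\}$, $x_2\in\{1,2\}$) is identified but never actually resolved. You offer two escape routes and prove neither. The second one --- applying the inductive hypothesis to the middle block $x_2\cdots x_{n-1}$ --- demonstrably fails for the same reason the prefix and suffix reductions fail: that block begins with $x_2\in\{1,2\}$, and since $1$ and $2$ both survive into the block (neither equals $x_1=n-2$ nor $x_n$), its complement begins with its largest or second-largest value, so the hypotheses of the lemma are again violated. The first route (``a short explicit sequence of transformations repositions the offending extremal letter'') is exactly the content that needs to be supplied, and you explicitly flag it as ``the main obstacle'' rather than supplying it. As written, the argument does not close.

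For comparison, the paper sidesteps the complement bookkeeping entirely by inducting in steps of two (base cases $S_6$ and $S_7$) and using the length-$(n-2)$ prefix $x_1\cdots x_{n-2}$ and suffix $x_3\cdots x_n$: both start at odd positions, hence with a decrease, so no complement is needed, and the residual case pins down $x$ very tightly ($x_1=n-2$, the rightmost of $n,n-1$ in the last two positions, and $x_3\in\{n-1,n\}$). In that configuration there is an explicit hit ($123\to 231$ or $213\to 321$) that slides the rightmost of $n$ and $n-1$ leftward without disturbing $n-2$ or the other large letter, after which the prefix reduction succeeds and Lemma~\ref{lemidreach} finishes. If you want to salvage your step-one induction, you would need to carry out an analogous explicit escape move for your exceptional family; until then the proof is incomplete.
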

\begin{proof} Inductively assume the lemma holds in $S_{n-2}$, using $S_6$ and $S_7$ as base cases (these can be checked computationally). Let $x$ be a permutation as described. Assume $x$ is not equivalent to $i_n$ or $u_n$. We must not be able to apply the inductive hypothesis to the first $n-2$ letters, or else $x$ would be equivalent to $i_n$ or $u_n$ by Lemma \ref{lemidreach}. Hence, $x$ must have $n$ or $n-1$ in the final two positions and start with $n-2$. Since we must also not be able to apply the inductive hypothesis to the final $n-2$ letters, the third letter in $x$ must be either $n-1$ or $n$. So, we know that we can slide the right-most of $n-1$ and $n$ two positions to the left through a $123 \rightarrow 231$ or $213 \rightarrow 321$ rearrangement without moving any other of $n$, $n-1$, or $n-2$. Then, applying the inductive hypothesis to the first $n-2$ letters, and noting Lemma \ref{lemidreach}, $x$ is equivalent to $i_n$ or $u_n$, a contradiction.
\end{proof}

\begin{lem} \label{lemthreeops}
Let $x \in S_{n>5}$. For symmetry reasons, we assume without loss of generality that $x$ begins with a decrease. Then, $x$ is equivalent to a layered permutation starting with $n$ or $n-1$ which starts with a decrease or $x$ is equivalent to either $i_n$ or $u_n$ under the $\{123, 231\} \{ 321, 213 \}$-equivalence.
\end{lem}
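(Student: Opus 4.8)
The plan is to reduce the lemma to the single statement that every $x\in S_n$ beginning with a decrease is equivalent to \emph{some} layered permutation (or else to $i_n$ or $u_n$), and then to prove that statement by induction, pushing the largest letters into the leading odd positions. The reduction is short. First I would note that a layered permutation which begins with a decrease must carry its larger half, the values $\lfloor n/2\rfloor+1,\ldots,n$, in its odd positions; otherwise its first (odd) letter would be smaller than its second (even) letter. Now suppose $x$ is equivalent to a layered permutation $\ell$. If $x\equiv i_n$ or $u_n$ we are in the second alternative and done; otherwise Lemma \ref{lemsilly} guarantees that every permutation equivalent to $x$, and in particular $\ell$, begins with a decrease, so $\ell$ has its larger values in odd positions. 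If the first letter of $\ell$ is $n$ or $n-1$ we are in the first alternative; if it is neither, then Lemma \ref{lemhighlayer} applies to $\ell$ and gives $x\equiv i_n$ or $u_n$, the second alternative. Hence the lemma follows once we know $x$ is equivalent to a layered permutation or to $i_n$ or $u_n$.

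To prove this I would induct on $n$, with $n\in\{6,7\}$ treated computationally, and assume throughout that $x\not\equiv i_n,u_n$. The first operation is to slide $n$ toward the front. Whenever $n$ occupies a position $\ge 3$, the factor consisting of $n$ and the two letters immediately to its left has order permutation $123$ (if those two letters increase) or $213$ (if they decrease); both patterns lie in nontrivial parts, so $123\to 231$ or $213\to 321$ moves $n$ strictly leftward. Iterating brings $n$ to position $1$ or position $2$. If $n$ halts in position $2$ the permutation begins with an increase, so by Lemma \ref{lemsilly} we must have $x\equiv i_n$ or $u_n$, contrary to assumption; hence $n$ reaches position $1$. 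Crucially every such move uses only letters originally to the left of $n$, so it does not disturb the letters to the right and leaves the tail available for recursion.

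The second and third operations normalize the front and then recurse. Keeping $n$ fixed in position $1$, I would use transformations confined to positions $\ge 2$ to move a small letter (one of $1,\ldots,\lfloor n/2\rfloor$) into position $2$ and to arrange that the factor beginning at position $3$ starts with a decrease, so that positions $3,\ldots,n$ form a permutation to which the inductive hypothesis applies. Since positions $3,5,7,\ldots$ of $S_n$ are exactly the odd positions of that tail, a layered tail (larger values in its odd positions) together with $n$ in position $1$ and a small letter in position $2$ reconstitutes a layered permutation of $S_n$ with its larger half in the odd positions; this is the desired layered permutation, and it begins with a decrease since $n>x_2$. If instead the inductive hypothesis returns that the tail is equivalent to $i_{n-2}$ or $u_{n-2}$, then the tail can be brought to increasing (respectively $u$-type) order, exposing a factor forming $i_5$ or $u_5$, and Lemma \ref{lemidreach} yields $x\equiv i_n$ or $u_n$. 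Either way the induction closes.

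The main obstacle is precisely the normalization in the second step: one must show by a finite case analysis that, starting from $n$ in position $1$, either a small letter can be slid into position $2$ with a descent installed at position $3$ (so the recursion on the tail has the correct parity and the reassembled permutation is genuinely layered), or every obstruction to doing so manifests as an increase at the front or as a monotone factor of length five. In the first case Lemma \ref{lemsilly} and in the second Lemma \ref{lemidreach} collapse the situation to $i_n$ or $u_n$, so no obstruction is fatal. The accompanying bookkeeping, checking that the parity of positions and the partition of values into larger and smaller halves line up exactly when $n$ and the chosen second letter are prepended to the layered tail, is routine but is where the argument must be carried out with care.
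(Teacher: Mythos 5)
Your opening reduction is fine: given Lemmas \ref{lemsilly} and \ref{lemhighlayer}, it does suffice to show that $x$ is equivalent to \emph{some} layered permutation or to $i_n$/$u_n$. The fatal problem is in the induction itself, where you commit to always sliding $n$ into position $1$ and then building a layered permutation with $n$ first. That goal is strictly stronger than the lemma and is actually false for the exceptional classes of Lemma \ref{lemextras}. Concretely, take $n=7$ and $x=s'=6152437$, which begins with a decrease, is layered, starts with $n-1=6$, and by Lemma \ref{lemextras} lies in a class of size $(n+1)/2=4$, namely $\{6152437,\,6152743,\,6175243,\,7615243\}$; in particular $x\not\equiv i_7,u_7$. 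The only member of this class beginning with $7$ is $7615243$, and it is not layered (its odd positions hold $7,1,2,3$ and its even positions hold $6,5,4$). So $x$ is simply not equivalent to any layered permutation starting with $n$, and your induction cannot close on it. Worse, the step you flag as the ``main obstacle'' is not merely unfinished but impossible there: in $7615243$ the factors in positions $2$ through $7$ contain no hit at all (they form only $312$ and $132$ patterns, which lie in trivial parts), so no transformation confined to positions $\ge 2$ exists, no small letter can be brought to position $2$, and neither of your two escape hatches applies --- the permutation never shows an increase at the front (consistent with Lemma \ref{lemsilly}) and contains no factor forming $i_5$ or $u_5$ (it cannot, by Lemma \ref{lemidreach}).

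This is why the paper's proof is structured differently: it applies the inductive hypothesis to the \emph{first $n-1$ letters} of $x$, which by Lemma \ref{lemhighlayer} forces the first letter to be $n$, $n-1$, or $n-2$, and then treats those three cases, keeping the ``layered starting with $n-1$'' branch alive throughout (the $n-2$ branch is killed by Lemma \ref{lemhighlayer}). If you want to salvage your tail-recursion idea, you would have to allow the recursion to return a layered permutation headed by either of the top two available letters and track which one you received; as written, the argument proves a statement that the $s'$-type classes refute.
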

\begin{proof} Let $x\in S_n$ be not equivalent to the $i_n$ or $u_n$ and begin with a decrease. We assume inductively that the result holds in $S_{n-1}$ with an inductive base case of $S_6$. Applying the inductive hypothesis to the first $n-1$ letters of $x$, we reach a permutation which is layered in all of its letters but possibly its final letter (otherwise, they could be rearranged as $i_{n-1}$ or $u_{n-1}$, we would be able to apply Lemma \ref{lemidreach}, and $x$ would be equivalent to the identity). Without loss of generality, this permutation begins with $n$, $n-1$, or $n-2$ because its first $n-1$ letters do not form a permutation equivalent to $i_{n-1}$ or $u_{n-1}$ (Lemma \ref{lemhighlayer}). This creates the following three cases.
\begin{enumerate}
\item The first letter is $n$. Applying the inductive hypothesis to
the final $n-1$ letters (Using symmetry, they may be rearranged to be either layered or to form $i_{n-1}$ or $u_{n-1}$; by Lemma \ref{lemidreach}, they must form a layered permutation; by Lemma \ref{lemsilly}, this layered permutation begins with an increase.), we reach a layered permutation starting with $n$.
\item The first letter is $n-1$. Applying the inductive hypothesis to
the final $n-1$ letters (Using symmetry, they may be rearranged to be either layered or to form $i_{n-1}$ or $u_{n-1}$; by Lemma \ref{lemidreach}, they must form a layered permutation; by Lemma \ref{lemsilly}, this layered permutation begins with an increase.), we reach a layered permutation starting with
a $n-1$.
\item The first letter is $n-2$. Applying the inductive hypothesis to
the final $n-1$ letters (Using symmetry, they may be rearranged to be either layered or to form $i_{n-1}$ or $u_{n-1}$; by Lemma \ref{lemidreach}, they must form a layered permutation; by Lemma \ref{lemsilly}, this layered permutation begins with an increase.), we reach a layered permutation starting with a descent and starting with $n-2$. By a Lemma \ref{lemhighlayer}, we reach a contradiction, as such  permutation is equivalent to $i_n$ or $u_n$.
\end{enumerate}
\end{proof}

\begin{lem}\label{Lemmafinalpos}
Let $x \in S_{n>5}$ be a layered permutation beginning with $(n-1)$ and not equivalent to $i_n$ or $u_n$ under the $\{123, 231\} \{ 321, 213 \}$-equivalence. Then, $n$ is in the final position of odd parity.
\end{lem}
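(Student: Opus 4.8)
We are given a layered permutation $x \in S_{n>5}$ beginning with $n-1$ that is not equivalent to $i_n$ or $u_n$, and we must show that $n$ lies in the final position of \emph{odd} parity. The plan is to argue by contradiction: suppose $n$ is \emph{not} in the last odd position. Since $x$ is layered, the letters of one position parity are exactly the large letters and those of the other parity are exactly the small letters; in particular $n$ and $n-1$ have the same position parity, so $n-1$ being the first letter forces $n$ to sit in an odd position. The claim is really that among the odd positions, $n$ must be the \emph{rightmost} occupied one, i.e.\ the last odd slot.

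**The main line of attack.** First I would record that in a layered permutation beginning with $n-1$, the large letters occupy precisely the odd positions and the small letters the even positions (or whichever parity $n-1$ has). I then assume for contradiction that $n$ is in some odd position $p$ with a further large letter to its right in a later odd position $q > p$. The goal is to exhibit a factor equal to $i_5$ or $u_5$, or more directly to apply the inductive/structural machinery already built—namely Lemma \ref{lemidreach}, Lemma \ref{lemhighlayer}, and Lemma \ref{lemthreeops}—to force $x \equiv i_n$ or $x \equiv u_n$, contradicting the hypothesis. Concretely, if $n$ is interior (not in the last odd slot), then I would try to use the transformations $123 \leftrightarrow 231$ and $213 \leftrightarrow 321$ to slide $n$ two positions to the right, staying within the large-letter/small-letter interleaving that layeredness guarantees; each such slide keeps the permutation layered and does not disturb the relative order needed to eventually expose an $i_5$ or $u_5$ factor.

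**The key reduction.** The heart of the argument is that once $n$ has a large letter strictly to its right (in a later odd position), we can rearrange a short window around $n$ to produce, within five consecutive letters, a copy of $i_5$ or $u_5$. For instance, a large letter $n$ followed (two positions later, past an intervening small letter) by another large letter, together with the surrounding small letters, forms a factor whose order permutation is one we can manipulate into $i_5$ or $u_5$ via the allowed replacements. Invoking Lemma \ref{lemidreach} then collapses $x$ to $i_n$ or $u_n$, which is the desired contradiction. So the skeleton is: layeredness fixes the parities; $n-1$ in front pins $n$ to an odd position; if that odd position is not the last one, a large letter lies to its right; that configuration yields an $i_5$ or $u_5$ factor after a bounded rearrangement; Lemma \ref{lemidreach} finishes.

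**Where the difficulty lies.** I expect the main obstacle to be the boundary bookkeeping: verifying that ``$n$ not in the last odd position'' genuinely guarantees another large letter to its right, and that the intervening small letters leave enough room to perform the slide legally (each transformation needs three consecutive letters forming one of the allowed patterns, and layeredness must be preserved at each step so that Lemma \ref{lemhighlayer} and Lemma \ref{lemidreach} remain applicable). The edge cases—when $n$ sits in the second-to-last odd slot, or when the window around $n$ runs into the end of the permutation—will need to be checked separately, presumably by the same small-case computational verification used as the base cases in the surrounding lemmas. The symmetry between $i_k$/$u_k$ and their complements, flagged in the remark following Lemma \ref{lemsym}, should let me assume without loss of generality that $x$ begins with a decrease, trimming the case analysis roughly in half.
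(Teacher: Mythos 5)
There is a genuine gap, and it sits exactly at what you call the ``key reduction.'' In a layered permutation beginning with $n-1$, the large letters occupy the odd positions, so if $n$ sits at an odd position $p$ that is not the last odd position, the five-letter window at positions $p-2,\ldots,p+2$ reads (large, small, $n$, small, large) and its order permutation is one of $31524$, $32514$, $41523$, $42513$. Your claim is that such a factor can be manipulated into $i_5$ or $u_5$ by the allowed replacements, but this is false: a direct computation shows that the class of $31524$ in $S_5$ under the $\{123,231\}\{321,213\}$-equivalence has $21$ elements and contains neither $12345$ nor $21345$ (the only hit in $31524$ is the $213$ in positions $1$--$3$, and the breadth-first search closes up without ever reaching $i_5$ or $u_5$). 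So no local rearrangement of the window can expose an $i_5$ or $u_5$ factor, and the permutation $71528364\in S_8$, say, realizes exactly this window. A smaller mechanical problem: you cannot ``slide $n$ two positions to the right'' inside a layered permutation, because the factor $(n,\, \text{small},\, \text{large})$ has order permutation $312$, which lies in a trivial part of the partition and is not a hit; the only legal move involving $n$ and the two letters after it would require those two letters to descend. So both the sliding step and the window step of your plan fail, and the lemma cannot be proved by a bounded local argument.

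The paper's proof is instead a strong induction on $n$ in steps of two (base cases $S_6$, $S_7$ checked computationally), run as a contradiction argument that repeatedly plays the first $n-2$ letters off against the last $n-2$. Assuming $n$ is not in the last odd position, the contrapositive of the inductive hypothesis applied to the first $n-2$ letters (followed by Lemma \ref{lemidreach}) forces $n$ into the second-to-last odd position; then Lemma \ref{lemhighlayer} applied to the last $n-2$ letters forces the third letter to be $n-2$; and finally the inductive hypothesis applied to the last $n-2$ letters (which now begin with their second-largest letter $n-2$ and do not have their largest letter $n$ in their last odd position) shows via Lemma \ref{lemidreach} that $x\equiv i_n$ or $u_n$, contradicting the hypothesis. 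If you want to salvage your outline, you would need to replace the five-letter window by this kind of global, inductive use of the sub-permutations of length $n-2$.
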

\begin{proof}
As a base case, one can show computationally that this holds in $S_6$ and $S_7$. Inductively, it holds in $S_{n-2}$. Let $w$ be a layered permutation in $S_{n>6}$ beginning with $(n-1)$ and not equivalent to $i_n$ or $u_n$. Assume that $n$ is not in the final position of odd parity in $w$. Then, so that we can not apply the inductive hypothesis to the first $n-2$ letters and then apply Lemma \ref{lemidreach} to reach $i_n$ or $u_n$, $n$ must be in the second to final position of odd parity. Furthermore, so that we can not apply Lemma \ref{lemhighlayer} to the final $n-2$ letters and then Lemma \ref{lemidreach} to reach $i_n$ or $u_n$, the third letter of $x$ must be $n-2$. However, applying the inductive hypothesis to the final $n-2$ letters and then using Lemma \ref{lemidreach}, this means we can reach $i_n$ or $u_n$ from $x$, a contradiction. 
\end{proof}

\begin{lem}\label{lemtreeing}
Excluding the class containing $i_n$ and the class containing $u_n$, the classes containing some permutation starting with $n$ in $S_{n>6}$ are exactly the classes with permutations not equivalent to $i_{n-1}$ or $u_{n-1}$ and beginning with an ascent in $S_{n-1}$, except with $n$ appended to the beginning of each permutation in each class, under the $\{123, 231\} \{ 321, 213 \}$-equivalence.
\end{lem}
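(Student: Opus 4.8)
The plan is to show that prepending the letter $n$ furnishes the claimed bijection, the heart of which is a rigidity statement: once $n$ sits in front of a permutation whose $S_{n-1}$-class begins with an ascent, $n$ can never leave the front.

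\emph{The engine.} For a class $\mathcal{C}$ in $S_{n-1}$ write $n\mathcal{C}=\{nv : v\in\mathcal{C}\}$. First I would record the easy ``lifting'' direction: if $v\equiv w$ in $S_{n-1}$ then $nv\equiv nw$ in $S_n$, since any transformation in a chain from $v$ to $w$ uses a hit in positions $\ge 1$, which becomes a hit in positions $\ge 2$ of the prepended word and hence never touches the leading $n$; thus $n\mathcal{C}$ lies in a single class. For the reverse inclusion I would prove, by induction on the length of a transformation chain out of $nw$, that every permutation reachable from $nw$ has the form $nz'$ with $z'\equiv w$. A transformation using a hit in positions $\ge 2$ descends to a transformation on the tail and preserves this form; a transformation using position $1$ would require $(n,z'_1,z'_2)$ to be a hit, hence the pattern $321$ (the only hit in $\{123,231,321,213\}$ whose largest letter comes first), i.e.\ $z'_1>z'_2$. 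But when $\mathcal{C}\neq [i_{n-1}],[u_{n-1}]$, Lemma~\ref{lemsilly} forces every element of $\mathcal{C}=[w]$ to begin with an ascent, so $z'_1<z'_2$ and no such transformation can occur. Consequently, for $\mathcal{C}\neq [i_{n-1}],[u_{n-1}]$ beginning with an ascent, the class of $nw$ equals exactly $n\mathcal{C}$, and every one of its members starts with $n$.

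\emph{Well-definedness and injectivity.} The engine shows $\mathcal{C}\mapsto n\mathcal{C}$ sends such a $\mathcal{C}$ to a class containing permutations starting with $n$, and it is injective because stripping the leading $n$ from $n\mathcal{C}$ recovers $\mathcal{C}$. To check $n\mathcal{C}\neq [i_n],[u_n]$ I would use the representatives $a$ (the decreasing permutation) and $b$ (the decreasing permutation with its first two letters swapped) of $[i_n]$ and $[u_n]$ supplied by Lemma~\ref{lemsym}: the permutation $b$ begins with $n-1$, so $b\notin n\mathcal{C}$, while if $a\in n\mathcal{C}$ then stripping its leading $n$ would place the decreasing permutation of $S_{n-1}$ into $\mathcal{C}$, contradicting that $\mathcal{C}$ begins with an ascent.

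\emph{Surjectivity, and the main obstacle.} Let $\mathcal{D}\neq [i_n],[u_n]$ contain a permutation starting with $n$; by the complement symmetry I may assume $\mathcal{D}$ begins with a descent. By Lemma~\ref{lemthreeops}, $\mathcal{D}$ is equivalent to a layered permutation $\ell$ that starts, with a descent, with either $n$ or $n-1$. If $\ell$ starts with $n$ then, $\ell$ being layered, its odd positions carry the larger values, so $\ell_2<\ell_3$ and the tail $\ell'$ obtained by deleting the leading $n$ begins with an ascent; moreover $[\ell']\neq [i_{n-1}],[u_{n-1}]$, for otherwise $n\ell'$ would be equivalent to $n12\cdots(n-1)$ or to $n21345\cdots$, each containing a factor forming $i_5$ or $u_5$, whence $\mathcal{D}=[n\ell']\in\{[i_n],[u_n]\}$ by Lemma~\ref{lemidreach}, a contradiction. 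Thus $\mathcal{D}=n[\ell']$ lies in the image. The hard part is to exclude the remaining case, i.e.\ to show that a class whose layered representative starts with $n-1$ contains no permutation beginning with $n$. Here I would combine two ingredients: by the engine every class of the form $n\mathcal{C}$ consists entirely of permutations starting with $n$, so it is disjoint from any class equivalent to a layered permutation starting with $n-1$; and by Lemma~\ref{Lemmafinalpos} such a representative has $n$ pinned in its last odd position, far from the front. The genuine obstacle is to upgrade this positional information into an invariant guaranteeing that $n$ can never be slid to the front from a layered start-$(n-1)$ permutation --- the relation preserves neither the relative order of $n$ and $n-1$ nor the position parity of $n$, so this requires a careful induction tracking $n$ through transformations in the spirit of Lemma~\ref{Lemmafinalpos}, and it is the crux of the whole statement.
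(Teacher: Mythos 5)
Your ``engine'' is exactly the paper's argument: the only hit that could involve a leading $n$ occupies positions $1$--$3$ and would have to form a $321$, which Lemma~\ref{lemsilly} (applied to the tail's class in $S_{n-1}$, which begins with an ascent) rules out, so $n$ never leaves the front and the class of $nw$ is $n$ prepended to the class of $w$. Your well-definedness and injectivity checks are fine, and your surjectivity argument in the case where the Lemma~\ref{lemthreeops} representative starts with $n$ is the same as the paper's. The paper's proof stops there: it only ever considers layered permutations starting with $n$, and never addresses the case you isolate as the crux.

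That remaining case is a genuine gap, and in fact it cannot be closed, because the lemma as literally stated is false. Take $n=7$ and the layered permutation $s'=6152437$ from Lemma~\ref{lemextras}; it starts with $n-1=6$ and is not equivalent to $i_7$ or $u_7$. Its class is $\{6152437,\,6152743,\,6175243,\,7615243\}$ (the complement of the explicitly described class of $s=2736451$), and the last of these begins with $7=n$: in $7615243$ the factor $761$ forms a $321$ and rearranges to the $213$-pattern $617$, giving $6175243$. So a class whose layered representative starts with $n-1$ \emph{can} contain a permutation starting with $n$, yet it is not of the form $n\mathcal{C}$, since only one of its four members starts with $n$. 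The invariant you propose to hunt for does not exist. What the paper's proof actually establishes --- and what the subsequent counting in Lemma~\ref{lemextras} and the final proposition uses --- is the corrected statement that the classes \emph{containing a layered permutation starting with $n$} (equivalently, the classes all of whose members start with $n$) are exactly the sets $n\mathcal{C}$. The right move is therefore not to push harder on your last step but to weaken the statement to that layered version; with that reformulation, the portion of your argument you have already written is complete and coincides with the paper's.
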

\begin{proof}
Let $n>6$. Consider a layered permutation in $S_{n-1}$ which is not equivalent to $i_n$ or $u_n$ and which (without loss of generality) starts with an ascent. Appending $n$ to the beginning, we reach a layered permutation $x \in S_n$. In permutations equivalent to $x$, $n$ can never be involved in a hit because by Lemma \ref{lemsilly}, the two letters to the right of it will always be in increasing order under the relation considered. Thus $x$ is a layered permutation which is not equivalent to $i_n$ or $u_n$. Now, consider $y$, an arbitrary layered permutation not equivalent to $i_n$ or $u_n$. By Lemma \ref{lemidreach} and symmetry, the final $n-1$ letters of $y$ must form a layered permutation not equivalent to $i_{n-1}$ or $u_{n-1}$. Hence, if $y$ starts with $n$, then $y$ is one of the permutations found in the manner that $x$ is found, completing the proof.
\end{proof}

Let $s \in S_n$ for odd $n$ be the permutation beginning with $2$ and then followed by consecutive values every two positions until the third to final position, ending with $1$, and then going from right to left, increasing every two positions from the second to final letter. Let $s'$ be $s$ except with each letter $j$ mapped to $n-j+1$.

Let $d \in S_n$ for even $n$ be the permutation starting with $2$ and then followed by consecutively increasing letters every two positions until the second to final position, ending with $1$, and increasing from right to left in the remaining positions. Let $f$ be $d$ except with the final two letters swapped. Let $d'$ and $f'$ be $f$ and $d$ respectively except with each letter $j$ mapped to $n-j+1$.

Under the transformations considered, parity is maintained. Hence, it is worth noting that if $n \equiv 1 \pmod 4$, then $s$ and $s'$ are equivalent only to even permutations; if $n \equiv 1 \pmod 4$ then $s$ and $s'$ are only equivalent to even and odd permutations respectively; if $n \equiv 0 \pmod 4$, then $d$ and $d'$ are only equivalent to even permutations while $f$ and $f'$ are only equivalent to odd ones; if $n\equiv 2 \pmod 4$, then $d$ and $f'$ are only equivalent to odd permutations while $d'$ and $f$ are only equivalent to even ones.

\begin{lem}\label{lemextras}
Consider the layered permutations not equivalent to $i_n$ or $u_n$ under the $\{123, 231\} \{ 321, 213 \}$-equivalence which do not begin with $1$ or $n$ for $n \ge 6$. For odd $n$, there are two of them, $s$ and $s'$, each in a class of size $(n+1)/2$. For even $n$, there are four of them, $d$, $f$, $d'$ and $f'$, each in classes of size $n+1$, $n/2$, $n+1$, and $n/2$ respectively.
\end{lem}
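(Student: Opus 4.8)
The plan is to reduce to a single orientation by complement symmetry, pin down the few surviving permutations with the positional lemmas already in hand, and then read off the class sizes from an explicit orbit analysis.

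First I would use the complement symmetry of this relation: complementation preserves layeredness, sends a start-with-a-decrease permutation to a start-with-an-increase one, and (via Lemma~\ref{lemsym}) preserves the property of not being equivalent to $i_n$ or $u_n$. Hence it suffices to treat the permutations starting with a decrease; the ones starting with an increase are their complements, producing $s$ from $s'$ for odd $n$ and $d,f$ from $f',d'$ for even $n$. By Lemma~\ref{lemhighlayer}, a layered permutation not equivalent to $i_n$ or $u_n$ that starts with a decrease and does not begin with $n$ must begin with $n-1$, so I restrict to layered $w$ with $w_1=n-1$.

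Next I would locate the letter $n$ using Lemma~\ref{Lemmafinalpos}, which forces $n$ into the last position of odd parity. This is the source of the parity dichotomy: for odd $n$ that position is the very end, leaving $n$ with nothing to its right, whereas for even $n$ it is position $n-1$, so $n$ keeps exactly one trailing neighbour, and this extra mobility is what produces the larger classes and the two sub-families $d'$ and $f'$. Since $w$ is layered with both $n-1$ and $n$ in odd positions, the $\lceil n/2\rceil$ largest letters fill the odd positions and the $\lfloor n/2\rfloor$ smallest fill the even ones. I would then prove by induction on $n$ (base cases $n=6,7$ checked computationally, in the style of the surrounding lemmas) that the interior letters are completely forced, so that the only such permutations are $s'$ (odd) and $d',f'$ (even). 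The inductive step peels off the pinned letters $n-1$ and $n$ to reduce to $S_{n-2}$; any interior arrangement other than the prescribed one either exhibits a factor forming $i_5$ or $u_5$ --- whence $w\equiv i_n$ or $u_n$ by Lemma~\ref{lemidreach}, a contradiction --- or permits a single admissible slide after which the inductive hypothesis or Lemma~\ref{lemhighlayer} again yields $i_n$ or $u_n$. Throughout, Lemma~\ref{lemsilly} keeps $w$ beginning with a descent, which simultaneously certifies that $s'$, $d'$, and $f'$ are genuinely not equivalent to $i_n$ or $u_n$.

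Finally I would compute the class sizes by enumerating each orbit directly. With the large letters confined to odd positions and $n$ pinned (at the end for odd $n$, with a single trailing letter for even $n$), the only available transformations slide the small letters --- principally the letter $1$ --- two positions at a time, and for even $n$ additionally swap $n$ with its lone neighbour; counting the reachable permutations gives $(n+1)/2$ in the odd case and $n+1$ or $n/2$ in the two even sub-cases, with complement symmetry transferring each value to the corresponding start-with-$2$ permutation. I expect the main obstacle to be the uniqueness half of the induction: showing that \emph{every} layered permutation beginning with $n-1$ and not equivalent to $i_n$ or $u_n$ collapses onto this very short list requires systematically ruling out all alternative placements of the interior letters, and the even-$n$ orbit count is delicate because the mobility of $n$ must be tracked without accidentally merging the classes of $d'$ and $f'$.
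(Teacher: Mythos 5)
Your proposal is correct and follows essentially the same route as the paper's proof: reduce to the start-with-a-decrease case by complement symmetry, use Lemma~\ref{lemhighlayer} and Lemma~\ref{Lemmafinalpos} to pin the first letter to $n-1$ and the letter $n$ to the last odd position, argue inductively that only $s'$ (resp.\ $d'$, $f'$) survive, and then enumerate each orbit explicitly to get the sizes $(n+1)/2$, $n+1$, and $n/2$. The only substantive differences are organizational --- the paper's uniqueness step inducts on the length-$(n-1)$ suffix together with Lemma~\ref{lemtreeing} (applied to appending $1$) rather than peeling off $n-1$ and $n$ to reach $S_{n-2}$ --- and one small slip: Lemma~\ref{lemsilly} cannot \emph{certify} that $s'$, $d'$, $f'$ are inequivalent to $i_n$ and $u_n$ (its hypothesis already assumes that), but your explicit orbit enumeration establishes this anyway, exactly as in the paper.
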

\begin{proof}
Assume the lemma holds in $S_{n-1}$ and $S_{n-2}$ (with inductive base cases of $S_6$ and $S_7$). For simplification, we will consider \textbf{only} the permutations beginning with a decrease and note that this describes the others as well by symmetry (and are separate by Lemma \ref{lemsilly}). By the inductive hypothesis for $S_{n-1}$, and the inductive hypothesis for $S_{n-2}$ working along with Lemma \ref{lemtreeing} (applied using symmetry so that we append $1$ rather than $n-1$), in $S_{n-1}$ there is exactly one layered permutation ending with $(n-1)$ and not equivalent to $i_{n-1}$ or $u_{n-1}$, and two layered permutations with $n-1$ in the second to final position which are not equivalent to $i_{n-1}$ or $u_{n-1}$. Recall that any layered permutation not equivalent to $i_n$ or $u_n$ and not beginning with $n$ must begin with $n-1$ by Lemma \ref{lemhighlayer}. Further noting that the final $n-1$ letters of such a permutation must form a layered permutation\footnote{Every factor of a layered permutation forms a layered permutation.} with $n-1$ in the final position of odd parity (Lemma \ref{Lemmafinalpos}), we may conclude that there are at most $2$ layered permutations with $n$ in the second to final position and $1$ layered permutation with $n$ in the final position in $S_n$ which are not equivalent to $i_n$ or $u_n$. By explicitly describing the classes they determine, we will complete the proof.

Keeping symmetry in mind, it is sufficient to show that $s$ is in a class of size $(n+1)/2$, $d$ is in a class of size $n+1$, $f$ is in a class of size $n/2$, and none of these classes overlap. We will do this by characterizing each class.

The class containing $s$ clearly just contains copies of $s$ except with $1$ slid an even number of positions to its left. So, the class containing $s$ has $(n+1)/2$ elements. For example, in $S_5$ the class contains $25341, 25134$, and $12534$.

From $d$, the only transformation is to slide the third to final letter to the right-most position ($321 \rightarrow 213$), bringing us to $e$. From $e$, $1$ can then be slid two positions to the left ($231 \rightarrow 123$), resulting in $e'$ (without going back to $d$, this is the only transformation possible). From $e'$, one can rearrange the first three letters as a $231$ and then slide $1$ two positions to the right. Other than that, without returning to $e$, from $e'$, one can only slide the $1$ an even number of positions to the left (repeated $231 \rightarrow 123$ rearrangements), and choose if the last hit will form a $123$ or $231$ pattern for each of slide of $1$. This results in $5+2(n-4)/2=n+1$ permutations being in the class. As an example, in $S_8$, the class containing $d$ contains $28374651$, $28374516$, $28371456$, $28371564$, $28475614$, $28137456$, $28137564$, $12837456$, and $1283756$. 

From $f$, one can only slide $1$ to any position of the same position parity, resulting in $n/2$ permutations being in its class. For example, when $n=6$, the class contains $263541, 263154$, and $216354$.
\end{proof}

\begin{prop}
For $n>5$ odd, there are $3n-1$ classes in $S_n$ under the \\$\{123, 231\} \{ 321, 213 \}$-equivalence. For $n>5$ even, there are $3n$ classes. 
\end{prop}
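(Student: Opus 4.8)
The plan is to set up a recursion for $f(n)$ by sorting the classes into a small number of groups, evaluating each group with the structural lemmas proved above, and then solving the recursion from a computed base value.

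First I would record the coarse decomposition of the classes. The two classes of $i_n$ and $u_n$ are set aside, and by Lemma~\ref{lemthreeops} (together with its complement-symmetric form) every other class contains a layered permutation; by Lemma~\ref{lemsilly} each such class begins uniformly either with an ascent or with a descent, so these classes split into an ``ascent'' group and a ``descent'' group. The complement map $j\mapsto n-j+1$ preserves the equivalence, interchanges ascent-starting and descent-starting permutations, and by Lemma~\ref{lemsym} merely permutes the pair consisting of the class of $i_n$ and the class of $u_n$ (note $i_n$ starts with an ascent and $u_n$ with a descent). Hence it is a fixed-point-free involution on the classes outside that pair, giving a bijection between the ascent group and the descent group. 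Consequently $f(n)=2+2D(n)$, where $D(n)$ is the number of descent-starting classes other than the two distinguished ones.

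Next I would compute $D(n)$. By Lemma~\ref{lemthreeops} every descent-starting class other than the two distinguished classes has a layered representative beginning with $n$ or with $n-1$, and by Lemma~\ref{lemhighlayer} a descent-starting layered permutation beginning with neither is already equivalent to $i_n$ or $u_n$. I would then argue the two sub-cases are disjoint: in a class with a representative beginning with $n$, the letter $n$ can never enter a hit (as in the proof of Lemma~\ref{lemtreeing}, the two letters to its right stay increasing by Lemma~\ref{lemsilly}), so $n$ is frozen in the first position and no representative of that class begins with $n-1$. The classes beginning with $n$ are counted by Lemma~\ref{lemtreeing}: they biject with the non-distinguished ascent-starting classes of $S_{n-1}$, of which there are $(f(n-1)-2)/2$ by the symmetry just established. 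The remaining descent-starting classes have all their layered representatives beginning with $n-1$, so by Lemma~\ref{Lemmafinalpos} they do not begin with $n$; these are exactly the descent-starting ``extras'' of Lemma~\ref{lemextras} (for $n$ odd, the single class of $s'$; for $n$ even, the two classes of $d'$ and $f'$, which are distinct since their sizes differ). Thus $D(n)=(f(n-1)-2)/2+\epsilon_n$ with $\epsilon_n=1$ for odd $n$ and $\epsilon_n=2$ for even $n$.

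Combining, $f(n)=2+2D(n)=f(n-1)+2\epsilon_n$, that is,
\[
f(n)=\begin{cases} f(n-1)+2, & n\text{ odd},\\ f(n-1)+4, & n\text{ even}.\end{cases}
\]
Starting from the base value $f(6)=18$ (verified by hand or computer) and applying this recursion for $n\ge 7$, where Lemmas~\ref{lemtreeing} and~\ref{lemextras} are in force, an immediate induction yields $f(n)=3n$ for even $n$ and $f(n)=3n-1$ for odd $n$, as claimed. I expect the main obstacle to be the bookkeeping of the third paragraph: confirming that the ``begins with $n$'' classes and the ``begins with $n-1$'' extras together exhaust the descent-starting classes with no overlap, and that the complement symmetry halves $f(n-1)-2$ cleanly. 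Disjointness rests on the freezing of $n$ at the front, exhaustiveness on pairing Lemma~\ref{lemthreeops} with Lemma~\ref{lemhighlayer}, and both require $n$ large enough for the cited lemmas to apply, which is why the induction is anchored at $n=6$.
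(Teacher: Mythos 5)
Your proposal is correct and follows essentially the same route as the paper: both reduce to the recursion $f(n)=f(n-1)+2$ for odd $n$ and $f(n)=f(n-1)+4$ for even $n$ via Lemmas~\ref{lemsym}, \ref{lemtreeing}, and \ref{lemextras}, anchored at $f(6)=18$. Your extra step of halving by the complement symmetry and counting only descent-starting classes is a harmless repackaging of the paper's direct count of the non-distinguished classes.
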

\begin{proof}
Let $n>6$. Permutations equivalent to $u_n$ or $i_n$ (or to $u_n$ or $i_n$ except with each letter $j$ mapped to $n-j+1$ by Lemma \ref{lemsym}) fall into two classes, separated by parity. The remaining permutations fall into the same number of classes as they fall into in $S_{n-1}$ (Lemma \ref{lemtreeing}), with the exception of, when $n$ is even, four additional classes, and when $n$ is odd, two additional classes (Lemma \ref{lemextras}). So, noting the number of classes in $S_6$, for $n>5$ odd, there are $3n-1$ classes, and for $n>5$ even, there are $3n$ classes. 
\end{proof}

 \begin{prop}
For $n > 5$, one can easily count the number of permutations in the class containing $i_n$ or in the class containing $u_n$ under the $\{123, 231\} \{ 321, 213 \}$-equivalence. For the sake of brevity, enumerations are provided inside of the proof.
 \end{prop}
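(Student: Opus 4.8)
The plan is to count the two giant classes by complementary counting, exploiting that parity is preserved by every transformation. Since $i_n$ is even and $u_n$ differs from it by a single transposition and so is odd, the class $[i_n]$ consists only of even permutations and $[u_n]$ only of odd ones. Writing $T(n)$ for the number of permutations of $S_n$ lying in some class other than $[i_n]$ and $[u_n]$, and $T_{\mathrm{ev}}(n),T_{\mathrm{od}}(n)$ for its even and odd parts, I would record that $|[i_n]| = n!/2 - T_{\mathrm{ev}}(n)$ and $|[u_n]| = n!/2 - T_{\mathrm{od}}(n)$. These hold because, by Lemma \ref{lemthreeops} together with Lemmas \ref{lemhighlayer} and \ref{lemidreach}, every permutation not equivalent to $i_n$ or $u_n$ lies in a class possessing a layered representative whose first letter is one of $n$, $n-1$, $1$, $2$; hence every even permutation is either equivalent to $i_n$ or is counted by $T_{\mathrm{ev}}(n)$, and dually for odd permutations.

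First I would evaluate $T(n)$ by a recurrence. Lemma \ref{lemtreeing} identifies the classes whose layered representative begins with $n$ with the exceptional classes of $S_{n-1}$ that begin with an ascent, the identification being simply the prepending of the letter $n$; as this operation prepends $n$ to each member of such a class, it preserves the class size. Applying the complement symmetry (Lemma \ref{lemsym} and the surrounding remark) yields the mirror statement for classes whose layered representative begins with $1$, matched size-for-size with the descent exceptional classes of $S_{n-1}$. Together these two families exhaust the recursively built classes and carry total mass $T(n-1)$, while the only remaining exceptional classes are the base classes of Lemma \ref{lemextras}, namely those with layered representative starting with $n-1$ or $2$. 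Reading off their sizes from Lemma \ref{lemextras} gives a total contribution of $n+1$ for odd $n$ and $2(n+1)+2\cdot(n/2)=3n+2$ for even $n$, so that $T(n)=T(n-1)+(n+1)$ when $n$ is odd and $T(n)=T(n-1)+(3n+2)$ when $n$ is even. With $T(6)$ found by direct inspection of the exceptional classes of $S_6$, this telescopes to a closed (quadratic) form for $T(n)$, and hence for $|[i_n]|+|[u_n]| = n!-T(n)$.

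To separate the two classes I would track the signed quantity $D(n)=T_{\mathrm{od}}(n)-T_{\mathrm{ev}}(n)=|[i_n]|-|[u_n]|$. The same decomposition applies, but now the prepending of $n$ in family A shifts the number of inversions by $n-1$, so it preserves parity when $n$ is odd and reverses it when $n$ is even, whereas a short computation shows the complemented family B preserves parity at every step. Thus $D(n)=D(n-1)+(\mathrm{base}_{\mathrm{od}}-\mathrm{base}_{\mathrm{ev}})$ at odd steps, while at even steps the even/odd totals of the ascent and descent families intertwine and must be disentangled using the complement relation together with the parity of $\binom{n}{2}$. The base-class contributions are supplied by the residue-mod-$4$ data recorded immediately before Lemma \ref{lemextras}, which assigns each of $s,s',d,f,d',f'$ a definite parity. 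Combining the two recurrences gives $|[i_n]| = n!/2 - (T(n)-D(n))/2$ and $|[u_n]| = n!/2 - (T(n)+D(n))/2$.

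The routine part is the size recurrence for $T(n)$, which is immediate once Lemma \ref{lemtreeing} is read as a size-preserving, complement-symmetric bijection on classes. The main obstacle is the parity bookkeeping for $D(n)$: because prepending $n$ swaps the even and odd exceptional totals exactly when $n$ is even, the even/odd split does not satisfy a single clean first-order recurrence, and one must carry the computation through all four residues of $n$ modulo $4$, pairing the base-class parities of Lemma \ref{lemextras} against the flips induced by the recursion, to reach the final closed forms.
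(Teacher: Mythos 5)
Your proposal is correct and follows essentially the same route as the paper: complementary counting of the even and odd permutations lying in the exceptional classes of Lemmas \ref{lemtreeing} and \ref{lemextras}, with the parity shifts from prepending $n$ (respectively $1$) forcing casework over $n \bmod 4$. The only difference is bookkeeping — you carry the total $T(n)$ and the signed difference $D(n)$ while the paper tracks the four quantities $oi_n, od_n, ei_n, ed_n$ directly in a matrix recurrence — which amounts to the same computation in a different basis.
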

 \begin{proof}
Recall that there are two large classes containing permutations equivalent to $i_n$ or $u_n$ in $S_n$. For $n>6$, these classes each consist of the permutations of a given parity not in the smaller classes which were constructed in the proofs of Lemma \ref{lemextras} and Lemma \ref{lemtreeing}. To find how many permutations are in each of the two, it is sufficient to count how many odd and even permutations are in the smaller classes. Let $oi_n, od_n, ei_n, ed_n$ be the number of permutations in these smaller classes that are odd/even and beginning with an increase/decrease as indicated by the variables. We will keep track of these in a matrix (as shown below). Bearing in mind the construction of the classes provided in Lemmas \ref{lemextras} and \ref{lemtreeing}, these variables can be kept track of in the following manner. Let
\[
H_n=  \left( \begin{array}{cc}
oi_n & od_n \\
ei_n & ed_n   \end{array} \right)\text{.}\] 
If $n\equiv 0 \pmod 4$, then 
 \[
H_{n}=  \left( \begin{array}{cc}
od_{n-1}+n/2 & ei_{n-1}+n/2 \\
ed_{n-1}+n+1 & oi_{n-1}+n+1   \end{array} \right)\text{.}\] 
If $n\equiv 1 \pmod 4$, then 
 \[
H_{n}=  \left( \begin{array}{cc}
od_{n-1} & oi_{n-1} \\
ed_{n-1}+(n+1)/2 & ei_{n-1}+(n+1)/2   \end{array} \right)\text{.}\] 
If $n\equiv 2 \pmod 4$, then 
 \[
H_{n}=  \left( \begin{array}{cc}
od_{n-1}+n+1 & ei_{n-1}+n/2 \\
ed_{n-1}+n/2 & oi_{n-1}+n+1   \end{array} \right)\text{.}\] 
If $n \equiv 3 \pmod 4$, then
 \[
H_{n}=  \left( \begin{array}{cc}
od_{n-1} & oi_{n-1}+(n+1)/2 \\
ed_{n-1}+(n+1)/2 & ei_{n-1}   \end{array} \right)\text{.}\]
 
Keeping these identities in mind, enumerations for the variables fall inductively. Assume as the inductive hypothesis that
 \[
H_{4k+0}=  \left( \begin{array}{cc}
4k^2+3k+4 & 4k^2+3k+4 \\
4k^2+3k-1 & 4k^2+3k-1   \end{array} \right)\text{.}\] 

Note that the hypothesis holds for a base case of $k=2$. Using the identities, we get that 
 \[
H_{4k+1}=  \left( \begin{array}{cc}
4k^2+3k+4 & 4k^2+2k+4 \\
4k^2+5k & 4k^2+5k   \end{array} \right)\text{,}\] 
 and that
 \[
H_{4k+2}=  \left( \begin{array}{cc}
4k^2+7k+7 & 4k^2+7k+1 \\
4k^2+7k+1 & 4k^2+7k+7   \end{array} \right)\text{,}\] 
 and that
 \[
H_{4k+3}=  \left( \begin{array}{cc}
4k^2+7k+1 & 4k^2+9k+9 \\
4k^2+9k+9 & 4k^2+7k+1   \end{array} \right)\text{,}\] 
and that
 \[
H_{4k+4}=  \left( \begin{array}{cc}
4k^2+11k+11 & 4k^2+11k+11 \\
4k^2+11k+6 & 4k^2+3k+6   \end{array} \right)
\] 
\[ 
=  \left( \begin{array}{cc}
4(k+1)^2+3(k+1)+4 & 4(k+1)^2+3(k+1)+4 \\
4(k+1)^2+3(k+1)-1 &4(k+1)^2+3(k+1)-1   \end{array} \right) \text{.}
\] 

Hence, the inductive hypothesis holds for $k+1$. Although we use $S_8$ as our base case, we only do this for aesthetic reasons. We could instead use $S_6$ (for which the proposition holds) as a base case, and it is easy to see that the inductive step would still work.
\end{proof}

\subsection{\mathheader{\{123, 321\} \{ 132, 213\}}-Equivalence}

\begin{defn}
Let $j$ and $k$ be two letters in a permutation $w$. A letter $s$ is a \emph{$j,k$-extreme} if $s$ is positioned between $j$ and $k$ and $s$ is either less than $j$ or greater than $k$.
\end{defn}

\begin{defn}
Let $j$ and $k$ be two letters in a permutation. We say that $j$ and $k$ are a \emph{dangerous} pair of letters when the following requirements are satisfied.
\begin{itemize}
\item We require that $j$ is to the left of $k$, $j<k$, and both $j$ and $k$ are of the same position parity.
\item We require that there are more $j,k$-extremes which are of different position parity than $j$ and $k$ than there are of the same.
\end{itemize}
\end{defn}

\begin{lem}\label{lemdanger}
Let $j$ and $k$ be two letters in a permutation $w$. Let $w'$ be a permutation reached from $w$ through a single $321 \rightarrow 123$ transformation using the hit $h$ in $w$. If $j$ and $k$ do not form a dangerous pair of letters in $w$, then they are not a dangerous pair of letters in $w'$.
\end{lem}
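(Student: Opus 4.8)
The plan is to use that a single $321\to 123$ transformation acts on positions in a very controlled way. Suppose the hit $h$ sits in positions $p,p+1,p+2$ and holds the values $a>b>c$ there, so that in $w$ these slots read $a,b,c$ and in $w'$ they read $c,b,a$. Thus the transformation fixes the middle letter $b$ and simply transposes $a$ (at position $p$) with $c$ (at position $p+2$), while leaving every other letter where it was. Since $p$ and $p+2$ have equal parity, this transposition preserves the position parity of every letter, and it changes no values. Hence the requirements ``$j<k$'' and ``$j$ and $k$ have the same position parity'' hold in $w$ exactly when they hold in $w'$. The requirement that $j$ lie to the left of $k$ is also preserved: the only letters that move are $a$ and $c$, and a moving hit letter cannot cross a fixed letter outside positions $p,\dots,p+2$ (such a letter lies either to the left of, or to the right of, all three). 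The single exception is the pair $\{j,k\}=\{a,c\}$, whose two letters trade places; I treat it directly. Here $j=c$, $k=a$, and in $w$ the letter $a$ precedes $c$, so $j$ is to the right of $k$ and the pair is not dangerous in $w$. In $w'$ the order reverses, but the only letter between them is $b$, and $c<b<a$ shows $b$ is not a $c,a$-extreme; with no extremes present the counting requirement fails, so $\{a,c\}$ is not dangerous in $w'$ either.

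For every remaining pair $(j,k)$ all three structural requirements are preserved, so it suffices to control the counting requirement. Write $N_{\mathrm{diff}}$ and $N_{\mathrm{same}}$ for the numbers of $j,k$-extremes of different, respectively equal, position parity to $j$ and $k$, and set $\Delta=N_{\mathrm{diff}}-N_{\mathrm{same}}$; the pair is dangerous precisely when the structural requirements hold and $\Delta>0$. I will show $\Delta$ never increases from $w$ to $w'$, which together with the preservation above shows a non-dangerous pair stays non-dangerous. I organize the argument by how many of $j,k$ lie in $\{a,b,c\}$. If neither does, then the set of positions strictly between $j$ and $k$ either misses the hit entirely or contains all of $p,p+1,p+2$; in the first situation nothing changes, and in the second $a$ and $c$ merely exchange their equal-parity positions, so the set of extremes and all of their parities are unchanged. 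In both situations $\Delta$ is unchanged.

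The main work, and the expected obstacle, is the case where exactly one of $j,k$ is a hit letter. Moving that endpoint between positions $p$ and $p+2$ causes exactly the other two hit letters to enter or leave the between-region (or, when the endpoint is the fixed letter $b$, causes the single occupant of position $p$ or $p+2$ to change from one of $a,c$ to the other). In each of the six choices of hit letter and each sub-case for the value of the non-hit letter relative to $a,b,c$, I must decide which crossing letters are extremes and of which parity. The uniform reason every sub-case succeeds is the interplay of two facts: $b$ occupies position $p+1$, whose parity is opposite to the parities of $a$ and $c$ at $p$ and $p+2$; and $c<b<a$ together with $j<k$ constrains extremeness. Consequently, whenever $b$ and its partner among $a,c$ both cross the boundary as extremes, they contribute one unit to $N_{\mathrm{diff}}$ and one unit to $N_{\mathrm{same}}$ and cancel in $\Delta$; and whenever only a single letter crosses (or replaces) as an extreme, the value inequalities force it to be either a deletion from $N_{\mathrm{diff}}$ (a different-parity extreme) or an insertion into $N_{\mathrm{same}}$ (an equal-parity extreme), each of which strictly decreases $\Delta$. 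The point to verify is that in no sub-case does a lone different-parity extreme get inserted or a lone equal-parity extreme get deleted, so that $\Delta$ can only stay fixed or drop.

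Collecting the three cases shows $\Delta$ is non-increasing under the transformation while the structural requirements are preserved (with the pair $\{a,c\}$ handled separately above), which is exactly the statement that a pair failing to be dangerous in $w$ still fails to be dangerous in $w'$.
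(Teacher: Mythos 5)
Your setup is the right one and mirrors the paper's: the transformation transposes the values $a$ and $c$ between the equal-parity positions $p$ and $p+2$ while fixing $b$ at $p+1$, so position parities and values are preserved, the structural conditions for dangerousness can only change for the pair $\{a,c\}$ itself, and everything reduces to tracking $\Delta=N_{\mathrm{diff}}-N_{\mathrm{same}}$ when exactly one of $j,k$ is a hit letter. Your explicit treatment of the pair $\{a,c\}$ is in fact a small improvement on the paper, which dismisses that case only by noting a decreasing hit cannot contain a pair that is already in increasing order and position.

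However, the heart of the lemma is the case where exactly one of $j,k$ lies in the hit, and there your proposal announces the verification instead of performing it: you write that you ``must decide which crossing letters are extremes and of which parity'' and that ``the point to verify is that in no sub-case does a lone different-parity extreme get inserted,'' and then assert the conclusion. That verification is the entire content of the lemma, and the ``uniform reason'' you offer is not yet a proof. Taken literally it is even inaccurate in the sub-cases where the endpoint is $b$: there the occupant of position $p$ or $p+2$ is \emph{replaced} ($c$ by $a$, or $a$ by $c$), so a different-parity extreme may be deleted and another different-parity extreme inserted in its place, which leaves $\Delta$ unchanged rather than ``strictly decreasing'' it; your parenthetical ``(or replaces)'' gestures at this but the accounting is never done. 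The checks do all succeed --- e.g.\ if $j=a$ both $b$ and $c$ leave the between-region, one of each parity; if $j=c$ or $k=a$ the chain $c<b<a$ forces that whenever the different-parity letter $b$ enters as an extreme so does its same-parity partner; if $k=c$ both leave --- and this is exactly the three-case analysis the paper writes out. As it stands, though, your argument reduces the lemma to an unproved claim and stops, so you need to carry out the six sub-cases (or the three up to the left--right/complement symmetry the paper invokes) explicitly.
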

\begin{proof}
Let $j,k,w,w',$ and $h$ be as stated. Because of the definition of dangerous, we only need to consider the case where $j$ is to the left of $k$, $j<k$, and $j$ and $k$ are of the same position parity in $w$. If the hit $h$ does not contain $j$ or $k$, then the lemma is trivial. The hit cannot contain both $j$ and $k$ because $j<k$ and $j$ is to the left of $k$. If the hit contains only one of $j$ and $k$, then by symmetry, we may assume without loss of generality that it contains $j$. Hence, there are three cases.
\begin{enumerate}
\item In this case, $j$ is the first letter of $h$. Then, in $w'$, there are two less $j,k$-extremes (acting as $2$ and $1$ in $h$) than in $w$, one of each position parity. Hence, $j$ and $k$ are not a dangerous pair of letters in $w'$.
\item In this case, $j$ is the second letter of $h$. Then, in $w'$, a $j,k$-extreme of different position parity than $j$ and $k$ (acting as $1$ in $h$) is replaced with a letter which may or may not be a $j,k$-extreme (acting as $3$ in $h$). Hence, there still are still as least as many $j,k$-extremes of the same position parity as $j$ and $k$ as there are of different position parity in $w'$, and $j$ and $k$ are not a dangerous pair of letters in $w'$.
\item In this case, $j$ is the final letter of $h$. Since the transformation using $h$ only adds letters to the set of letters which are between $j$ and $k$, the number of $j,k$-extremes of each position parity can only increase. If a $j,k$-extreme of different position parity than $j$ and $k$ is added (acting as the $2$ in $h$), then so is one of the same position parity (acting as the $3$ in $h$). Hence, $j$ and $k$ are not a dangerous pair of letters in $w'$.
\end{enumerate}
\end{proof}

\begin{defn}
A permutation is \emph{zipped} if for each letter except for the final two letters, the letter two positions to its right is smaller.
\end{defn}

\begin{defn}
A permutation is $k$\emph{-downed} for $k>0$ if every permutation which can be reached through a single $123 \rightarrow 321$ rearrangement is $k-1$-downed. A permutation is $0$-downed if it is zipped.
\end{defn}

\begin{prop}\label{propzippedisolation}
Let $d$ be a downed permutation and $w$ be a permutation equivalent to $d$ under the $\{123, 321\} \{ 132, 213\}$-equivalence. Let $k$ be the number of transformations needed for $d$ to be reached from $w$. Then, $w$ is $k$-downed and both $132$ and $213$ avoiding.
\end{prop}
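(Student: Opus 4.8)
The plan is to induct on $k$. The base case is $k=0$, where $w=d$; the inductive step reverses the length-$k$ reduction of $w$ to $d$ into a chain of $321\to 123$ transformations and feeds it into Lemma~\ref{lemdanger}, while the $k$-downed assertion is reduced to a confluence statement about the downing rewriting.

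First I would record two observations. (i) A downed permutation $d$ is zipped, so the letters in its odd positions are strictly decreasing and the letters in its even positions are strictly decreasing; hence for any two letters of the same position parity the left one exceeds the right one. Since a dangerous pair $j,k$ requires $j<k$ with $j$ to the left of $k$ and of the same position parity, $d$ has no dangerous pair. (ii) If a permutation contains a factor forming $132$ or $213$, then its first and third letters lie two positions apart (same parity) and increase left to right, while the middle letter is a $j,k$-extreme of the opposite parity; thus those two letters form a dangerous pair. Consequently any permutation with no dangerous pair avoids $132$ and $213$. Combining (i) and (ii), the base case holds: $d$ has no dangerous pair, hence avoids $132$ and $213$, and $d$ is $0$-downed by definition.

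For the inductive step, fix the reduction of $w$ to $d$ of length $k$. Because $d$ is the downed form, this reduction consists of $123\to 321$ moves, so reading it backwards exhibits $w$ as obtainable from $d$ by a sequence of $321\to 123$ transformations. Starting from $d$, which has no dangerous pair, and applying Lemma~\ref{lemdanger} once per step, no step can ever create a dangerous pair; therefore $w$ has no dangerous pair and, by observation (ii), avoids $132$ and $213$. It remains to prove that $w$ is $k$-downed, i.e.\ that every single $123\to 321$ rearrangement of $w$ yields a $(k-1)$-downed permutation. By the inductive hypothesis this follows once we know that every such rearrangement decreases the reduction length to $d$ by exactly one, i.e.\ that all maximal chains of $123\to 321$ moves from $w$ have the same length and the same endpoint $d$.

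The main obstacle is precisely this confluence, which I would establish from a monovariant and an invariant. Each $123\to 321$ move raises the number of inversions by exactly $3$, so every chain of downing moves terminates; and within the class of $d$ a permutation admits no downing move if and only if it is zipped, since avoiding $123$, $132$, and $213$ as factors forces $w_i>w_{i+2}$ throughout. For uniqueness of the endpoint, note that the set of letters occupying odd positions is unchanged by any $123\leftrightarrow 321$ move (such a move merely exchanges the letters in the two same-parity slots of the hit), and a zipped permutation is determined by this set; hence every zipped permutation reached from $w$ by downing coincides with $d$. Together these give a unique normal form and equal chain lengths, forcing the longest downing chain from $w$ to have length exactly $k$ and completing the induction. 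The delicate point within this argument is local confluence (the diamond property) for two overlapping $123\to 321$ hits, which I would verify by the routine case analysis on the relative positions of the two hits, after which Newman's lemma promotes local confluence to the global confluence used above.
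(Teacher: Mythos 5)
There is a genuine gap at the pivot of your inductive step: the sentence ``Because $d$ is the downed form, this reduction consists of $123\to 321$ moves.'' The hypothesis of the proposition is only that $w$ and $d$ are equivalent under the full $\{123,321\}\{132,213\}$-equivalence, and $k$ is the length of a shortest chain of transformations of \emph{any} of the four types ($123\to 321$, $321\to 123$, $132\to 213$, $213\to 132$). That this chain can be taken to consist solely of downing moves is precisely the nontrivial content of the proposition (it is what forces the class of $d$ to avoid $132$ and $213$ and hence to be a singleton among zipped permutations), and it cannot be read off from the hypotheses. Everything downstream depends on it: Lemma~\ref{lemdanger} only controls dangerous pairs under $321\to 123$ moves, so without knowing the chain has that form you cannot conclude that $w$ has no dangerous pair, hence cannot conclude $132$/$213$-avoidance, and your confluence machinery (which presupposes that the class admits no $132\leftrightarrow 213$ moves and that $d$ is the downing normal form of $w$) never gets off the ground. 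The paper's proof spends its first half establishing exactly this claim: letting $r$ be the permutation one step along the minimal chain from $w$, the inductive hypothesis makes $r$ a $(k-1)$-downed, $132$- and $213$-avoiding permutation, which rules out the step $r\to w$ being a $132\leftrightarrow 213$ move (no such hit exists in $r$) or a $123\to 321$ move out of $r$ (that would make $w$ $(k-2)$-downed, contradicting the minimality of $k$); only then does it conclude the step is $321\to 123$ and invoke Lemma~\ref{lemdanger}. You need to supply this case analysis, or an equivalent argument, before anything else in your inductive step is available.

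On the positive side, once that step is in place, your treatment of the $k$-downed claim is an attractive alternative to the paper's: the paper handles confluence by a case analysis on overlapping $123$ hits (using $132$/$213$-avoidance of the intermediate permutation to kill the overlapping case), whereas you observe that each $123\to 321$ move adds exactly three inversions and preserves the set of letters in odd positions, and that a zipped permutation is determined by that set --- so all maximal downing chains from $w$ terminate at the same zipped permutation in the same number of steps, with no local-confluence analysis needed at all. (Your final paragraph about verifying the diamond property and invoking Newman's lemma is then actually superfluous.) But this refinement does not compensate for the missing justification above.
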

\begin{proof}
We will prove this by inducting on $k$, with a trivial base case of $k=0$. Assume as the inductive hypothesis, that the proposition holds for all smaller $k$. Let $d$ and $w$ be as described. Since $w$ is reachable from $d$ through some $k$ transformations, let us pick such a sequence of transformations and let $r$ be $w$ after the first transformation which uses the hit $h$. By the inductive hypothesis, $r$ is $k-1$-downed. If $w$ is reached from $r$ through a $123 \rightarrow 321$ transformation, then $w$ is $k-2$-downed, which cannot be. If $w$ is reached from $r$ through a $132 \leftrightarrow 213$ transformation, then $r$ is a $k-1$-downed permutation which is not $132$ and $213$ avoiding, a contradiction. Hence, $w$ is reached from $r$ through a $321 \rightarrow 123$ rearrangement. 

Since $w$ can be reached from $d$ through as sequence of $321 \rightarrow 123$ transformations, by Lemma \ref{lemdanger}, $w$ contains no pair of dangerous letters. Hence, $w$ is $231$ and $213$ avoiding since both patterns contain a pair of dangerous letters.

It remains to show that $w$ is $k$-downed. Because of the inductive hypothesis, it is sufficient to show that any $123 \rightarrow 321$ transformation applied to $w$ brings us to a permutation $r'$ which can be reached from $d$ through $k-1$ transformations\footnote{Note that $r'$ cannot be $i$-downed for $i<k-1$ because then $w$ would not require $k$ transformations to be reached from $d$.}. Let $r'$ be a permutation reached from $w$ through a single $123 \rightarrow 321$ rearrangement using the hit $h'$. If $h'=h$ then $r'=r$ which clearly can be reached from $d$ in $k$ transformations. If $h$ includes just some of the letters of $h'$, then the letters around and including $h$ must form a $1234$ pattern; in $r$, this becomes a pattern which contains a $132$ or $213$ pattern, which cannot be. In the final case, $h'$ and $h$ do not use any of the same letters. Let $x$ be the permutation reached from $r$ by rearranging $h$ and from $r'$ by rearranging $h'$. Since $x$ can be reached from $r$ through a $123 \rightarrow 321$ rearrangement, it is $k-2$-downed, and hence reachable from $d$ in $k-2$ transformations. Thus $r'$ is reachable from $d$ in $k-1$ transformations, and we are done.
\end{proof}

\begin{defn}
A permutation is \emph{partially zipped} if for each letter but the final three letters, the letter two positions to its right is smaller, and the final two letters are $1n$.
\end{defn}

\begin{defn}
A permutation is $k$\emph{-pdowned} for $k>0$ if every permutation which can be reached through a single $123 \rightarrow 321$ rearrangement is $k-1$-pdowned and each permutation reached through a single $1nl \rightarrow l1n$ rearrangement is $k$-pdowned. A permutation is \emph{$0$-pdowned} if it is zipped.
\end{defn}

Note that the preceding definition is both explicitly and implicitly recursive, with two actual recursion parameters. Given the shortest path of transformations from a $k$-downed permutation and a $0$-downed permutation, we use both $k$, the parameter for the number of $123 \rightarrow 321$ rearrangements in this path, and the number of $1nl \rightarrow l1n$ rearrangements in the path as parameters.

\begin{defn}
 A pair of letters in a permutation is \emph{pdangerous} if the pair is dangerous and one of the following is true.
\begin{itemize}
\item Neither letter is $1$ or $n$.
\item Exactly one of the two letters is $1$ or $n$ and the other of $1$ or $n$ is not between the two letters. 
\end{itemize}
\end{defn}

\begin{lem}\label{lempdanger}
Let $w$ be a permutation which contains no pair of pdangerous letters, which has $1$ and $n$ with different position parities and which has $1$ to the left of $n$, and for which the letters positioned between $1$ and $n$ can be paired up so that each letter with the same position parity as $1$ is paired with a smaller letter of the same position parity as $n$. Let $w'$ be a permutation reached from $w$ with a single $321 \rightarrow 123$ rearrangement or a $l1n \rightarrow 1nl$ rearrangement. Then, $w'$ satisfies the properties noted for $w$. 
\end{lem}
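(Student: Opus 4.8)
The plan is to verify that each of the three asserted properties survives, treating the two kinds of rearrangement ($321 \to 123$ and $l1n \to 1nl$) in turn; two of the three are cheap. For the parity/order property (property 2): a $321 \to 123$ rearrangement merely swaps the outer two letters of its hit, so it preserves every letter's position parity, and because $1$ is to the left of $n$ it can never put $1$ and $n$ in a common hit (a $321$ hit containing both would read $n\ldots 1$); hence the relative order and the parities of $1$ and $n$ are untouched. An $l1n \to 1nl$ rearrangement keeps $1$ immediately before $n$ and flips the parity of each of $1$ and $n$, so they remain oppositely placed and correctly ordered. For the pairing property (property 3): an $l1n \to 1nl$ rearrangement is vacuous, since $1$ and $n$ are adjacent and no letter lies between them before or after. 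A $321 \to 123$ rearrangement that does not touch $1$ or $n$ leaves the multiset (with parities) of letters between $1$ and $n$ unchanged, so the old pairing persists; and one that slides $1$ left (or $n$ right) enlarges that set by exactly two letters, which a short check shows form a legal pair — a letter of $1$'s parity together with a strictly smaller letter of $n$'s parity — so the pairing extends.

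The real work is property 1, that no \emph{pdangerous} pair is created. For the $l1n \to 1nl$ case the key point is that in $w'$ the letter $1$ sits immediately to the left of $n$. Consequently any validly ordered pair of $w'$ whose left endpoint is $1$ has $n$ strictly between its endpoints, and any pair whose right endpoint is $n$ has $1$ strictly between its endpoints; in either situation the pair is \emph{excluded} from pdangerousness by definition, while the pair $\{1,n\}$ itself is not even dangerous because $1$ and $n$ differ in position parity (property 2). Thus a pdangerous pair of $w'$ can have neither $1$ nor $n$ as an endpoint. For such a pair $(j,k)$ the only moving letters $1,n,l$ affect the extremes in a controlled way: $1$ and $n$ always enter or leave the region between $j$ and $k$ together and carry opposite parities, while $l$ keeps both its value and its parity, so the counts of same- and different-parity extremes change by equal amounts and the danger status is unchanged. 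Since for a pair avoiding $1$ and $n$ being pdangerous is the same as being dangerous, such a pair is pdangerous in $w'$ only if it already was in $w$.

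The main obstacle is property 1 under a $321 \to 123$ rearrangement, and I would argue by contradiction. Suppose $(j,k)$ is pdangerous in $w'$ but not in $w$. By Lemma \ref{lemdanger} it was not nondangerous in $w$, so it was dangerous but \emph{excluded} there; using property 2 to rule out $\{1,n\}$, this means one endpoint is $1$ or $n$ while the other of $1,n$ lies between the endpoints, and in passing to $w'$ that separating letter was moved out from between them. A positional analysis shows the separating letter can escape only in one configuration: when, say, $k=n$, the decreasing hit $a>b>c$ has $c=1$ and $b=j$ (the parity constraint forbids $j=a$, since $a$ and $1$ would then share a position parity), so rearranging to $1,b,a$ slides $1$ to the left of $j$.

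The decisive step is then to compare, among the letters that lie between $1$ and $n$ and are smaller than $j$, the number having $j$'s (equivalently $n$'s) parity with the number having the opposite parity. The perfect pairing furnished by property 3 — each letter of $1$'s parity matched to a strictly smaller letter of $n$'s parity — shows the opposite-parity count is at most the $n$-parity count, while dangerousness of $(j,k)$ in $w$ shows it is at least as large, so the two counts are equal. Since the escaping rearrangement removes exactly one extreme, namely the letter $1$ (which has parity opposite $j$), this equality downgrades the pair from dangerous in $w$ to nondangerous in $w'$, contradicting pdangerousness. The symmetric situation with $j=1$ and $n$ separating is handled identically, now using property 3 to bound the number of $n$-parity letters exceeding $k$ by the number of $1$-parity letters exceeding $k$. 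Hence no pdangerous pair can be created, and, together with the preservation of properties 2 and 3, this establishes the lemma.
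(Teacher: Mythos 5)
Your proof is correct and follows essentially the same route as the paper's: split on whether the rearrangement involves neither, one, or both of $1$ and $n$; dispose of pairs avoiding $1$ and $n$ via Lemma \ref{lemdanger}; use the parity constraints to pin down the only configuration in which $1$ (resp.\ $n$) can escape from between the endpoints of a pair; and invoke the pairing of property 3 to bound the count of opposite-parity extremes by the count of same-parity extremes for the one critical pair. The only difference is organizational — you argue by contradiction that a newly pdangerous pair must come from a lifted exclusion, whereas the paper directly verifies that the candidate pairs $(1,a)$ and $(j,n)$ remain non-pdangerous — but the underlying computations coincide.
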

\begin{proof}
If the rearrangement going from $w$ to $w'$ does not involve $n$ or $1$, then this falls from Lemma  \ref{lemdanger}. If the rearrangement involves both $n$ and $1$, then it must be $ln1 \rightarrow 1nl$. In this case, the set of letters between $1$ and $n$ does not change; no pair of letters with either $1$ or $n$ can be pdangerous because $1$ and $n$ are adjacent; and any pair of pdangerous letters, $j$ and $k$, remains pdangerous after the transformation because if either $1$ or $n$ changes its state of being a $j,k$-extreme, so does the other which is of different position parity.

In the remaining case, the transformation from $w$ to $w'$ uses exactly one of $n$ or $1$. Let $h$ be the hit in $w$ involved in the transformation. Because of symmetry, we can assume without loss of generality that $h$ uses $n$ as its first letter and is of the form $nab$ in $w$ and $ban$ in $w'$. By Lemma \ref{lemdanger}, no pair of letters in $w'$ which does not include one of $n$ or $1$ can be pdangerous in $w'$ which was not pdangerous in $w$. Pairing up $a$ with $b$, we see that the claim of  ``the letters positioned between $1$ and $n$ can be paired up so that each letter with the same position parity as $1$ is paired with a smaller letter of the same position parity as $n$'' still holds after the transformation. As a consequence, $a$ and $1$ cannot form a pdangerous pair in $w'$ because because there are at least as many $1,a$-extremes with the same position parity as $1$ and $a$ as there are with different position parity; this means there are no pdangerous pairs of letters including $1$ in $w'$.  As an additional consequence, $n$ cannot be in a pdangerous pair in $w'$ because the pair of letters would have the same letters between them as in $w$ except with the addition of $a$ and $b$; since $a$ being a $j,n$-extreme implies $b$ is as well, there are still at least as many $j,n$-extremes with the same position parity as $n$ and $j$ as there are with different position parity. So, $w'$ has no pdangerous pairs of letters.
\end{proof}

\begin{lem}\label{lem1nslide}
Let $w$ be a $k$-pdowned permutation with $1$ and $n$ adjacent. Let $w'$ be $w$ after a $j1n \rightarrow 1nj$ rearrangement. Then, $w'$ is $k$-pdowned.
\end{lem}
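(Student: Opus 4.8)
The plan is to induct on $k$, verifying directly the two clauses in the definition of ``$k$-pdowned'' for $w'$. For the base case $k=0$ I would note that the statement is vacuous: a $0$-pdowned permutation is by definition zipped, but a zipped permutation cannot contain the factor $j1n$, since the letter two positions to the right of $j$ would be $n>j$, violating the zipped condition (and $j$ is never among the final two letters, as it is followed by $1$ and $n$). Hence no $0$-pdowned $w$ satisfies the hypothesis. For the inductive step I assume the lemma at level $k-1$ and write $w=\ldots j1n\ldots$ and $w'=\ldots 1nj\ldots$, where $1<j<n$ since $j\neq 1,n$.

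The first clause, concerning $1nl\to l1n$ moves, is immediate. Because $1$ and $n$ are unique letters, the factor $1n$ occurs exactly once in $w'$, inside the block $1nj$; so the only $1nl\to l1n$ rearrangement available in $w'$ is $1nj\to j1n$, which returns $w$. As $w$ is $k$-pdowned by hypothesis, this clause holds.

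The second clause, concerning $123\to 321$ moves, is the heart of the argument. First I would observe that no $123$ hit of $w'$ can use $1$ or $n$: since $1$ is minimal, $n$ is maximal, and they sit adjacently as $1n$, no contiguous increasing triple can involve either. Therefore every $123\to 321$ hit $h'$ of $w'$ either (a) is disjoint from the block $\{1,n,j\}$, or (b) begins at $j$, forcing $h'=(j,d,e)$ with $j<d<e$, where $d,e$ are the two letters immediately following $j$. In case (a) the identical hit is available in $w$ (the two permutations agree away from the block); it produces a $(k-1)$-pdowned permutation $r$, and the resulting $r'$ is exactly $r$ with $j1n\to 1nj$ applied, so the inductive hypothesis gives that $r'$ is $(k-1)$-pdowned.

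Case (b) is the main obstacle and requires a reroute. Starting from $w=\ldots j1nde\ldots$ I would apply two $1nl\to l1n$ moves, namely $1nd\to d1n$ then $1ne\to e1n$; these preserve $k$-pdownedness and reach $\ldots jde1n\ldots$. Now $jde$ is a $123$, and the single move $jde\to edj$ yields a $(k-1)$-pdowned permutation $w_3=\ldots edj1n\ldots$. A short positional computation then shows the target $r'=\ldots 1nedj\ldots$ is obtained from $w_3$ by three successive applications of the lemma's own move, sliding the adjacent pair $1n$ leftward past $j$, then $d$, then $e$; applying the inductive hypothesis (the lemma at level $k-1$) three times certifies that $r'$ is $(k-1)$-pdowned. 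Having verified both clauses, I conclude $w'$ is $k$-pdowned. The delicate points are the bookkeeping of the five-letter window in case (b) and checking that the detour never disturbs the letters outside it, so that the three concluding moves genuinely recover $r'$.
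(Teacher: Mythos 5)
Your proof is correct and follows essentially the same strategy as the paper's: induct on $k$, dispose of the $1nl \to l1n$ clause trivially (the only such move in $w'$ returns to $w$), and for a $123 \to 321$ hit, slide the pair $1n$ out of the way using the definition's $k$-preserving moves, perform the hit there, and slide $1n$ back using the inductive hypothesis at level $k-1$. The only cosmetic difference is that the paper slides $1n$ all the way to the rightmost position of the permutation, which handles every hit $g$ uniformly with no case split, whereas you slide it only locally past the two letters $d,e$ in the overlapping case (b).
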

\begin{proof}
Let $w$ and $w'$ be as specified. Let $h$ and $h'$ be the hit $w$ and $w'$ respectively which can be rearranged to obtain $w'$ and $w$ respectively. It is trivial to note that rearranging $h'$ in $w'$ yields a $k$-downed permutation. We need to show that any $123 \rightarrow 321$ rearrangement in $w'$ using the hit $g$ results in a $k-1$-pdowned permutation. Assume inductively that the lemma holds for lower $k$, with a trivial inductive base case of $k=0$. Since $w$ is $k$-pdowned, $w$ with $1n$ slid to the right-most position, yielding $x$, is $k$-pdowned as well. Note that $x$ is also equal to $w'$ with $1n$ slid to the right-most position. Since $g$ cannot involve $1$ or $n$, $g$ is still a hit in $x$ using the same letters as in $w'$. Let $x'$ be $x$ but with that hit rearranged. Since $x$ is $k$-pdowned, $x'$ is $k-1$-pdowned. Note that $x'$ can be reached from $w'$ with $g$ rearranged be sliding $1n$ to the end of $w'$ with $g$ rearranged. Therefore, by repeated applications of the inductive hypothesis, we see that $w'$ with $g$ rearranged is $k-1$-pdowned as well, and we are done.
\end{proof}

\begin{prop}\label{proppzippedisolation}
Let $d$ be a partially zipped permutation and $w$ be a permutation equivalent to $d$ under the $\{123, 321\} \{ 132, 213\}$-equivalence. Of the transformations needed for $d$ to be reached from $w$ in the fewest number of transformations possible, let $k$ be the least number of necessary transformations which are not $1nj \rightarrow j1n$. Then, $w$ is $k$-pdowned. Furthermore, $w$ is $132$ and $213$ avoiding except for when $1$ and $n$ act as $1$ and $3$ respectively.
\end{prop}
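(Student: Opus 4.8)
The plan is to mirror the proof of Proposition~\ref{propzippedisolation}, replacing Lemma~\ref{lemdanger} by its refinement Lemma~\ref{lempdanger} and running a double induction matching the two recursion parameters flagged after the definition of $k$-pdowned. The outer induction is on $k$, the number of essential (non-slide) transformations, and for a fixed $k$ the inner induction is on the number of $1nj\leftrightarrow j1n$ slides in the chosen path (equivalently, on the total path length). Fix a path from $w$ to $d$ using the fewest transformations and, among those, the fewest non-slides, so that its non-slide count is exactly $k$; write $r$ for the permutation obtained from $w$ by the first transformation of this path. In the base case $k=0$, the permutation $w$ is reached from the partially zipped $d$ by slides alone, so repeated use of Lemma~\ref{lem1nslide} shows that $w$ has the same ($0$) pdowned level as $d$; moreover sliding the adjacent block $1n$ leaves the order of the remaining letters untouched and can only create $132$ or $213$ factors of the form $1nl$ or $l1n$, i.e. factors in which $1$ and $n$ play the roles of the pattern's $1$ and $3$. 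Since $d$, being partially zipped, has no $132$ or $213$ factor other than its terminal block, this gives exactly the claimed avoidance with the permitted exception.

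For the inductive step I would split on the type of the first transformation $w\to r$. If it is a slide, then by Lemma~\ref{lem1nslide} the permutation $w$ is $k$-pdowned if and only if $r$ is (with the same $k$); since $r$ reaches $d$ with the same essential non-slide count $k$ but one fewer slide, the inner induction applies to $r$, and the exceptional-pattern statement transfers because a slide only ever disturbs the $1$,$n$ factors. If instead $w\to r$ is a non-slide, I would apply the outer induction to $r$, which reaches $d$ with $k-1$ essential non-slides, to conclude that $r$ is $(k-1)$-pdowned and avoids $132$ and $213$ apart from $1nl$ and $l1n$ factors. Exactly as in Proposition~\ref{propzippedisolation}, this excludes two possibilities for $w\to r$: it cannot be a $321\to 123$ rearrangement, for then $w$ would be a $123\to321$-neighbour of the $(k-1)$-pdowned permutation $r$ and hence $(k-2)$-pdowned, contradicting the minimality of $k$; and it cannot be a genuine (non-slide) $132\leftrightarrow213$ move, for that would require $r$ to contain a $132$ or $213$ factor not of the permitted $1$,$n$ type, contradicting the inductive hypothesis for $r$. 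Hence $w\to r$ is a $123\to321$ rearrangement, so that the last step $r\to w$ on the path from $d$ to $w$ is a $321\to123$ rearrangement.

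Having thus shown that $w$ is reachable from $d$ using only $321\to123$ rearrangements and $l1n\to1nl$ slides, I would conclude as follows. The partially zipped permutation $d$ has no pdangerous pair and satisfies the pairing hypothesis on the letters between $1$ and $n$ required by Lemma~\ref{lempdanger} (vacuously, since $1$ and $n$ are adjacent in $d$); by that lemma both properties are preserved under $321\to123$ and $l1n\to1nl$, so $w$ likewise has no pdangerous pair. Because any $132$ or $213$ factor in which $1$ and $n$ are not the $1$ and $3$ would exhibit a pdangerous pair, this is precisely the desired avoidance statement. The pdowned conclusion then follows by feeding the step classification back into the definition: the final $321\to123$ step together with the $(k-1)$-pdowned-ness of $r$ gives $w$ the correct non-slide level $k$, while Lemma~\ref{lem1nslide} absorbs every slide into the same level.

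The step I expect to be the main obstacle is keeping the two interacting induction parameters honest while pinning down exactly which $132$ and $213$ hits can occur along the path. One must argue that every usable $132$ or $213$ hit is in fact a slideable $1nl$ or $l1n$ block, and this is exactly where Lemma~\ref{lempdanger} (forbidding pdangerous pairs, and so all but the $1$,$n$ patterns) and Lemma~\ref{lem1nslide} (collapsing slides into one pdowned level) carry the argument; verifying that a genuine $132\leftrightarrow213$ transformation would force a forbidden pattern in the preceding permutation, and that this never happens, is the crux.
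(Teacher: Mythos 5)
Your outline follows the paper's own proof quite closely through the analysis of the first transformation: the same induction on $k$, the same elimination of the cases where the first step out of $w$ is a genuine $132\leftrightarrow 213$ move or a $321\to 123$ move, and the same appeal to Lemma~\ref{lempdanger} to get the restricted $132$/$213$ avoidance once $w$ is known to be reachable from $d$ by $321\to123$ rearrangements and slides alone. Your reorganization of the slide case into an inner induction, in place of the paper's up-front normalization (using Lemma~\ref{lem1nslide} to assume $1n$ sits at the end of $w$ whenever $1$ and $n$ are adjacent), is a legitimate variant and changes nothing essential.

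The genuine gap is in your last step, where you conclude that $w$ is $k$-pdowned ``by feeding the step classification back into the definition.'' Being $k$-pdowned is a universally quantified condition: \emph{every} permutation reachable from $w$ by a single $123\to321$ rearrangement must be $(k-1)$-pdowned, and every slide-neighbour must be $k$-pdowned. Your argument certifies only the single neighbour $r$ lying on the chosen minimal path. The paper closes this with a local-confluence argument over an arbitrary competing hit $h'$ in $w$: the case $h'=h$, the case $h'$ disjoint from $h$ (commute the two rearrangements through a common permutation that is $(k-2)$-pdowned by the inductive hypothesis), and the case $h'$ overlapping $h$. The overlap case is the delicate one and is genuinely different from its analogue in Proposition~\ref{propzippedisolation}: the union of the two hits forms a $1234$ factor, the already-established avoidance property forces $1$ and $n$ to occupy the roles of $1$ and $4$ in it, and one then shows that $r$ and $r'$ each reach the permutation obtained by rearranging this $1234$ as $3214$ after at most one extra $1nj\to j1n$ slide (which does not count against $k$). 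Without this analysis the claim that $w$ is $k$-pdowned is unsupported; the step you flag as the ``main obstacle'' (classifying the $132$/$213$ hits) is handled by Lemma~\ref{lempdanger} exactly as you say, but the confluence step is the one your proposal actually omits. (A smaller quibble: in your base case you assert that $w$ has ``the same ($0$) pdowned level as $d$,'' but a partially zipped permutation is not zipped, so the $k=0$ case needs the same care, or the same implicit convention, that the paper glosses over.)
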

\begin{proof}
Let $d$ and $w$ be as stated. If $1$ and $n$ are adjacent in $w$, then by Lemma \ref{lem1nslide}, we may assume that $w$ ends with $1n$. We will assume inductively that the lemma holds for smaller $k$, with a trivial base case of $k=0$. Since $w$ is reachable from $d$ through a series of transformations, $k$ of which are not $1nj \rightarrow j1n$, let us pick such a sequence of transformations and let $r$ be $w$ after the first transformation which uses the hit $h$. If $h$ is a $132$ or $213$ pattern that does not use $1$ and $n$ as $1$ and $3$, then $r$ is a $k-1$-pdowned permutation containing a $132$ or $213$ pattern of that form, a contradiction. Note that $h$ cannot be $1nj$ for any $j$ since we already assumed that if $1$ and $n$ are adjacent, they are at the end of $w$.  If $w$ is reached from $r$ through a $j1n \rightarrow 1nj$ transformation, then by the inductive hypothesis, $w$ is $k-1$-pdowned, a contradiction. If $w$ is reached from $r$ through a $123 \rightarrow 321$ transformation, then by the inductive hypothesis, $w$ is $k-2$-pdowned, a contradiction. Hence, $w$ is reached from $r$ through a $321 \rightarrow 123$ rearrangement and by the inductive hypothesis, $r$ is $k-1$-pdowned. 

Since $r$ is $k-1$-pdowned and $r$ is reached from $w$ by a $123 \rightarrow 321$ rearrangement, by repeated applications of Lemma \ref{lempdanger} (noting that the lemma can be applied to any partially zipped permutation), $w$ must meet the requirements set by the lemma. As a consequence, $w$ is $132$ and $213$ avoiding except for when $1$ and $n$ act as $1$ and $3$ respectively.

It remains to show that $w$ is $k$-pdowned. Because of the inductive hypothesis, it is also sufficient to show that any $123 \rightarrow 321$ or $1nj \rightarrow j1n$ transformation applied to $w$ brings us to a permutation $r'$ from which $d$ can be reached through $1nj \rightarrow j1n$ and $123 \rightarrow 321$ transformations, $k-1$ of which are $123 \rightarrow 321$. Recall that we already assumed no $1nj$ hit exists, so we do not need to consider that case. Let $r'$ be a permutation reached from $w$ through a single $123 \rightarrow 321$ rearrangement using the hit $h'$. If $h'=h$ then $r'=r$ which is $k-1$-pdowned. If $h$ and $h'$ do not use any of the same letters, then let $x$ be the permutation reached from $r$ by transforming $h'$ and from $r'$ by transforming $h$; noting the inductive hypothesis, $x$ is $k-2$-pdowned. Hence, $d$ is reachable from $r'$ through $1nj \rightarrow j1n$ and $123 \rightarrow 321$  transformations, $k-1$ of which are $123 \rightarrow 321$. In the final case, $h$ and $h'$ share some but not all of their letters. Since, $h$ and $h'$ share some letters, the letters surrounding $h$ must form the permutation $1234$ where $h$ is the final or first three letters of the permutation. In order for $r$ to be $132$ and $213$ avoiding except for cases where $1$ and $n$ play the role of $1$ and $3$, $1$ and $n$ must play the role of $1$ and $4$ in the $1234$. Hence, in order for $h'$ to exist in $w$, it must contain the other of either the first three or final three letters in the $1234$. Let $x$ be $w$ except with the $1234$ rearranged as $3214$. Either $r$ is $x$ or $r$ with a $1nj \rightarrow j1n$ rearrangement is $x$. Hence, $x$ is $k-1$-pdowned.  Either $r'$ is $x$ or $r'$ with a $1nj \rightarrow j1n$ rearrangement is $x$. Hence, $d$ is reachable from $r'$ through $1nj \rightarrow j1n$ and $123 \rightarrow 321$  transformations, $k-1$ of which are $123 \rightarrow 321$ and we are done.
\end{proof}

\begin{prop}
Let $f(n)$ be the number of classes created in $S_n$ under the \\$\{ 123, 321 \} \{ 132, 213 \}$-equivalence for $n>4$. Then,
$$f(n)=\binom{n}{\lfloor n/2 \rfloor}+\binom{n-2}{\lfloor (n-2)/2 \rfloor}+3\text{.}$$ 
\end{prop}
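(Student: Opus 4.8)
The plan is to sort the classes into three types and count each separately: the classes consisting entirely of $\{132,213\}$-avoiders, the classes whose only $132$/$213$ occurrences use $1$ and $n$ in the outer (role-$1$ and role-$3$) positions, and everything else. I would show the first type is in bijection with the zipped permutations, the second type with the partially zipped permutations, and that the third type contains exactly three classes; summing the three counts then gives the formula.

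For the first count, a zipped permutation is precisely one whose odd-indexed and even-indexed subsequences are each strictly decreasing, so it is determined by the choice of which $\lceil n/2\rceil$ values occupy the odd positions, giving $\binom{n}{\lceil n/2\rceil}=\binom{n}{\lfloor n/2\rfloor}$ of them. A partially zipped permutation ends in $1n$ and has its first $n-2$ letters forming (the standardization of) a zipped permutation on $\{2,\dots,n-1\}$, since the defining condition $w_i>w_{i+2}$ for $i\le n-3$ restricts to exactly the zipped condition on those $n-2$ positions; hence these are counted by the zipped permutations of $S_{n-2}$, namely $\binom{n-2}{\lfloor (n-2)/2\rfloor}$. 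The two families determine disjoint classes: a class with a zipped representative consists entirely of $\{132,213\}$-avoiders by Proposition \ref{propzippedisolation}, whereas a partially zipped permutation itself contains the $213$ pattern $w_{n-2},1,n$.

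Existence of the canonical representative in these first two types is a downing argument. Applying $123\to321$ strictly increases the number of inversions, so repeated application must halt; inside an all-avoider class it never leaves the class, so it halts at a permutation with no $123$ factor, and such an avoider is forced to be zipped (if $w_i<w_{i+2}$ then $w_iw_{i+1}w_{i+2}$ forms one of $123,132,213$). For the second type one first brings $1$ and $n$ adjacent and slides the block $1n$ to the end through the $132\leftrightarrow213$ moves $1nj\leftrightarrow j1n$, then downs the remaining prefix. Uniqueness is exactly what Propositions \ref{propzippedisolation} and \ref{proppzippedisolation} supply: any zipped (resp.\ partially zipped) permutation equivalent to a fixed one is $k$-downed (resp.\ $k$-pdowned) for $k$ its transformation distance, forcing $k=0$ and hence equality; combined with the converse direction of each proposition this makes ``all-avoider class'' equivalent to ``contains a unique zipped permutation'' (and similarly for the protected type).

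The main obstacle is the third type: the classes that contain a $132$ or $213$ occurrence whose outer letters are not $\{1,n\}$. Here the isolation propositions no longer apply and there is a great deal of freedom to scramble, so the work is to control that freedom rather than to exploit rigidity. I would argue by induction on $n$, with the small cases $n\le 6$ verified directly, that every such permutation is equivalent to one of three explicit representatives, and then separate those three by a tailored invariant that persists across all the available transformations. Showing that exactly three classes survive—ruling out both their collapse to fewer and the appearance of extra ones—together with the base cases is the technical heart of the proof; the stated count $\binom{n}{\lfloor n/2\rfloor}+\binom{n-2}{\lfloor (n-2)/2\rfloor}+3$ then follows immediately by addition.
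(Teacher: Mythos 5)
Your decomposition is exactly the paper's: the two isolation propositions give one class per zipped and per partially zipped permutation, and your counts $\binom{n}{\lfloor n/2\rfloor}$ and $\binom{n-2}{\lfloor (n-2)/2\rfloor}$ for those families are correct (indeed, you make explicit a counting step the paper leaves implicit), as is your downing argument that every all-avoider class reaches a zipped representative. The disjointness observation via the $213$ occurrence $w_{n-2},1,n$ in a partially zipped permutation is also fine.

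The gap is the $+3$. You correctly identify it as the technical heart, but you then only announce a plan (``induction on $n$ with three explicit representatives, separated by a tailored invariant'') without supplying either ingredient, and both require a real idea. For the lower bound, the paper's invariant is the number of inversions modulo $3$: a $123\leftrightarrow 321$ replacement changes the inversion count by $\pm 3$ and a $132\leftrightarrow 213$ replacement leaves it unchanged, so the residue is preserved and all three residues occur among the remaining permutations. For the upper bound, the paper shows every permutation not in the first two families is equivalent to one of $12543678\ldots$, $21435678\ldots$, $12435678\ldots$; this is a delicate induction that first uses $123\to 321$ moves to reach a $123$-avoiding permutation, extracts a $132$ or $213$ occurrence whose outer letters are not $\{1,n\}$, massages it into one of the first or last $n-1$ positions, and then alternates the inductive hypothesis between the two overlapping windows. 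Neither step is routine, and as written your proposal proves the first two terms of the formula but not the third.
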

\begin{proof}
Let $n$ be $\geq 5$. By Proposition \ref{propzippedisolation} and Proposition \ref{proppzippedisolation}, zipped and partially zipped permutations are each in their own classes. This accounts for the first two terms of the formula. We will show that the remaining permutations fall into $3$ classes.

There must be at least three remaining classes because the number of inversions in a permutation modulo $3$ is an invariant in the relation considered. We will now show that all permutations in the remaining classes (for $n \ge 5$) are equivalent to one of $12543678 \ldots$, $21435678 \ldots$, and $12435678 \ldots$. It is not hard to see that these permutations have $3$, $2$, and $1$ inversions respectively, putting them into each of the three prospective classes. Our reason for choosing them is because each has a $132$ hit starting in the second position. This will be useful to us shortly.

Let $w$ be a permutation not equivalent to a zipped or partially zipped permutation. Through repeated applications of $123 \rightarrow 321$, one can reach a $123$ avoiding permutation. Because the permutation is not zipped or partially zipped, it must contain a $132$ or $213$ pattern. If $1$ and $n$ are in the said pattern, we can slide them to the right-most positions and repeat the process on the remaining letters, bringing us to a permutation which contains a $132$ or $213$ pattern which does not use both $1$ and $n$. Hence $w$ is equivalent to some $w'$ which contains a $132$ or $213$ pattern not using $1$ and $n$ as the letters $1$ and $3$. 

If $w'$ has a $132$ or $213$ hit in the first $n-1$ letters such that it does not use both the smallest and largest of the $n-1$ letters or in the final $n-1$ letters such that it does not use both the smallest and largest of the $n-1$ letters, we define $w''$ as $w$. Otherwise, if $w'$ has only one $132$ or $213$ hit which does not use both $n$ and $1$, the said hit is in the right-most position and contains both $1$ and $n$, $n$ is in the first position of $w'$, then arranging the hit as $132$, we reach a $w'$ such that its first $n-1$ letters form a permutation containing a $213$ hit that does not use both the highest and lowest letter. Similarly, if $1$ is in the final position of $w'$ and the only $132$ or $213$ hit not using $1$ and $n$ in $w'$ instead uses $n$ and $2$ and is in the final three positions, then arranging the hit as $213$, we reach a $w''$ such that the final $n-1$ letters contain a $132$ pattern does not use both the highest and smallest letter of those $n-1$ letters.

If $w'$ has a $132$ or $213$ hit in the first $n-1$ letters such that it does not use both the smallest and largest of the $n-1$ letters or in the final $n-1$ letters such that it does not use both the smallest and largest of the $n-1$ letters, we define $w''$ as $w'$. Otherwise, (noting symmetry) $w'$ either begins with $1$ and ends with $2jn$ or $j2n$ or ends with $1$ and begins with $2jn$ or $j2n$ for $2<j<n$. In each of these cases, if we were not able to already define $w''$ simply as $w$, it is not hard to see that a rearrangement of the hit creates a new $132$ or $213$ hit either starting with the second letter or ending with the second to final letter of the new permutation. This yields a $w''$ which has a $132$ or $213$ hit in the first $n-1$ letters such that it does not use both the smallest and largest of the $n-1$ letters or in the final $n-1$ letters such that it does not use both the smallest and largest of the $n-1$ letters.

Assume as an inductive hypothesis for the rest of the proof, that any permutation containing a $132$ or $213$ pattern not using both $1$ and $n$ in $S_k$ for $k<n$ is equivalent to one of $12543678 \ldots$, $21435678 \ldots$, and $12435678 \ldots$. The base cases of $k=5$ and $k=6$ for this induction are easy to check computationally. We will now show that $w''$ is equivalent to one of $12543678 \ldots$, $21435678 \ldots $, and $12435678 \ldots$, completing the proof. If $w''$ has a hit in the first $n-1$ letters not using the highest and smallest letter of those first $n-1$ letters, we apply the inductive hypothesis to them, reaching $x$. Because of the hit that $x$ has in the final $n-1$ letters (starting with the second letter), we can apply the inductive hypothesis to the final $n-1$ letters to reach $x'$ which has $n$ in the final position. Finally, applying the inductive hypothesis to the first $n-1$ letters (using the hit starting in the third position), we are done. If instead, $w''$ initially has a $132$ or $213$ pattern in the final $n-1$ letters not using both of the highest and lowest letters in the final $n-1$ letters, we can apply the inductive hypothesis to the final $n-1$ letters to reach an instance of the case we have already covered.
\end{proof}

\subsection{\mathheader{\{123, 231\} \{213, 312\}}-Equivalence}

We will first count the number of classes which contain only $123$ and $231$ avoiding permutations.

\begin{defn}
A \emph{peak} in a word is a letter that is greater than each of its adjacent letters in the word. A \emph{dip} in a word is a letter that is less than each of its adjacent letters in the word.
\end{defn}

\begin{defn}
The $321$\emph{-leading factor} of a permutation $w$ is the factor containing the letters before the first occurrence of $321$ in $w$ as well as the first letter of that occurrence. If there is no occurrence, then it is simply $w$. 
\end{defn}

\begin{defn}
We consider the $k$\emph{-length} of a permutation to be the number of peaks in its $321$-leading factor.
\end{defn}

\begin{defn}
We define the \emph{$321$-leading segments} of a permutation $w$ to be the factors beginning with the final two letters of a $321$ hit and going to the first letter of the next $321$ hit. The $321$-leading factor as well as the factor going from the second letter of the final $321$ hit to the end of a permutation are considered $321$-leading segments as well.
\end{defn}

\begin{defn}
The $k$\emph{-length} of a $321$-leading segment is the number of peaks in the $321$-leading segment.
\end{defn}

\begin{defn}
We say that a permutation $w$ is \emph{compact} if it satisfies one of the following two conditions.
\begin{itemize}
\item  (condition 1) $w$ begins with a decrease. The $321$-leading factor of $w$ is alternating, with the letters in the odd positions of the factor being the largest letters in $w$. Also, The portion of $w$ not included in the $321$-leading factor, $w'$ either satisfies this condition (recursively) or is of length $0$. Finally, if it is not of length $0$, then where $k$ is the $k$-length of $w$, $k'$ is the $k$-length of $w'$, and $j$ is the value of the letter in the final dip of $321$-leading factor of $w$, we have $k+k' \le n-j$.
\item  (condition 2) $w$ begins with an increase. The final $n-1$ letters of $w$ form a permutation satisfying condition $1$.
\end{itemize}
\end{defn}

\begin{prop}
Each compact permutation is only equivalent to permutations avoiding both $123$ and $231$ under the $\{123, 231\} \{213, 312\}$-equivalence. Each class containing only permutations avoiding both $123$ and $231$ contains exactly one compact permutation. 
\end{prop}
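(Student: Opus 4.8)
The plan is to induct on $n$, after first reducing the transformations that can occur inside a pure class to a single elementary move. Observe that if a class contains only permutations avoiding $123$ and $231$, then no element of that class contains a $123$ or a $231$ hit, so the only transformations connecting its members are $213 \leftrightarrow 312$. Since both $213$ and $312$ place their smallest letter in the middle, such a transformation acts on three consecutive letters whose middle entry is the smallest of the three by swapping the two outer letters; I will call this a \emph{valley swap}. Two consequences will be used throughout: a valley swap moves each of the two swapped letters by exactly two positions, so every letter keeps its position parity, and the letter playing the role of the minimum is never disturbed.

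First I would prove the first assertion, that a compact permutation is equivalent only to avoiders. From the definition, the $321$-leading factor of a compact permutation is an alternating word whose peaks carry the largest available values and whose non-leading remainder is again compact (or empty); I would first check by a short structural argument that such a permutation contains no $123$ and no $231$. The substance is then to show that no sequence of valley swaps starting from a compact permutation ever produces a $123$ or $231$. I would argue by induction on $n$, tracking the largest letter $n$ (which can move only by valley swaps across strictly smaller letters) and the first $321$-leading factor; the recursive clause of compactness lets me peel off the leading factor and apply the inductive hypothesis to the remainder, while the inequality $k+k' \le n-j$ is exactly the bookkeeping that guarantees there are never enough large letters available on the two sides of a valley to complete a forbidden increasing triple or a $231$. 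I expect this to be the main obstacle: coupling the recursion on the remainder with the counting encoded by $k+k' \le n-j$ is where the precise form of the definition is essential, and the case analysis on where a valley swap can occur relative to the leading factor is delicate.

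Next I would establish existence in the second assertion: every pure class contains a compact permutation. Given a class all of whose members avoid $123$ and $231$, I would run a normalization procedure that, using only valley swaps, slides $n$ and the other large letters into the alternating high--low arrangement demanded by condition~1, working from the $321$-leading factor inward and recursing on the shorter remainder. Because the class is assumed pure, every intermediate permutation automatically avoids $123$ and $231$, so the procedure never leaves the class; its output satisfies the alternating condition, and the inequality $k+k' \le n-j$ must hold for it since otherwise purity would be violated, hence the output is compact.

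Finally I would prove uniqueness: distinct compact permutations are inequivalent, so each pure class contains exactly one. Position parity of every letter is already a valley-swap invariant by the reduction above; I would augment this with the finer data preserved along a pure class, namely the sequence of $k$-lengths of the successive $321$-leading segments together with the relative order of the peak letters, and show that this data is both invariant under valley swaps and completely determined by, and determining, the compact normal form. Equivalently, one may show that the compact permutation is the unique lexicographically extremal member of its class, which makes uniqueness immediate. Combining the three steps, each compact permutation lies in a pure class, each pure class contains a compact permutation, and no two compact permutations share a class, giving exactly one compact permutation per pure class.
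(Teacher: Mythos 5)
Your opening reduction is sound: in a class all of whose members avoid $123$ and $231$, the only available transformations are $213\leftrightarrow312$ (``valley swaps''), and these preserve every letter's position parity. Your existence step (normalize by repeated $213\to312$ and deduce the inequality $k+k'\le n-j$ from purity, since its failure would let a small peak be swapped next to $j$ to create a $231$) also matches the paper's argument in spirit. But the core of the proposition --- that a compact permutation never reaches a $123$ or $231$ hit, and that it is the \emph{only} compact permutation in its class --- is left as a plan rather than a proof. The missing idea is an explicit description of the reachable set: the class of a compact $w$ consists exactly of the permutations obtained from $w$ by arbitrarily permuting the peaks \emph{within} each $321$-leading segment. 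The paper proves this by induction on the number of segments, and the crux is a boundary argument your sketch never supplies: writing $l$ for the leading factor (ending in its last dip $j$ followed by its last peak $P$) and $q,r$ for the first two letters of the remainder, the compactness hypotheses (the peaks of $l$ are the $k$ largest letters of $w$, every peak of the remainder's leading factor exceeds $j$ because $k+k'\le n-j$) force the factor $jPq$ to form a $132$ and the factor $Pqr$ to form a $321$; neither is a hit, so no transformation can ever involve letters on both sides of the segment boundary, and the induction closes. Saying the inequality ``is exactly the bookkeeping that guarantees there are never enough large letters'' gestures at this but does not establish it; you yourself flag it as the main unresolved obstacle.

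The uniqueness step contains a concrete error: you propose ``the relative order of the peak letters'' as a valley-swap invariant, but a valley swap is precisely a transposition of two adjacent peaks across a dip, so that order is \emph{not} preserved along a class. Once the peak-scramble characterization above is in hand, uniqueness follows instead by noting that two compact permutations in the same class must be peak-scrambles of one another within each segment, and then checking that the normal-form conditions pin down a single such scramble; this is also the only way to make your lexicographic-extremality alternative verifiable. As written, neither the first assertion nor the ``exactly one'' count is established.
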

\begin{proof}
We will first show that each compact permutation determines a unique class containing only $123$ and $231$ avoiding permutations. We claim that the class containing a compact permutation $w$ contains exactly the permutations which are $w$ except with the peaks in each $321$-leading segment rearranged arbitrarily. Assume as the inductive hypothesis that the claim holds for permutations with $r$ or less $321$-leading segments (with the base cases of $r=1$ and $r=0$ being fairly obvious). 

Let $w$ be a compact permutation with $r+1$ $321$-leading segments. Let $w'$ be $w$ except excluding the $321$-leading factor. Let $l$ be the $321$-leading factor of $w$ and $l'$ be the $321$-leading factor of $w'$. Let $k$ be the $k$-length of $w$, $k'$ be the $k$-length of $w'$, and $j$ be the value of final dip $l$ (if it does not exist, then the $321$-leading factor of $w$ is of size $1$ and the claim easily falls from the inductive hypothesis). Note that $l$ ends with a peak since otherwise, there would be a $321$-leading factor prior to the first one in $w$. Recall that $w$ is compact. Since all of the dips in $l$ are less than or equal to $j$, all of the peaks in $l$ are among the greatest $k$ letters of $w$, all of the peaks of $l'$ are among the greatest $k'$ letters of $w'$, and $k+k' \le n-j$, each peak in $l'$ is greater than $j$ (observation 1). By the inductive hypothesis, the permutations contained in the equivalence class of $w'$ are simply copies of $w'$ where the peaks in each $321$-leading segment are possibly scrambled (observation 2). Also, by the inductive hypothesis, the permutations contained in the equivalence class of $l$ are simply copies of $l$ except with its peaks arbitrarily scrambled (observation 3). By observations 1, 2, and 3, for any permutation $x$ reachable from $w$ by rearrangements of hits in $l$ and rearrangements of hits in $w'$, the letter immediately following $l$ is a peak in $w'$ which is greater than $j$. We also know the said letter is less than the peak immediately following $j$. Hence, $j$ and the two letters following it form a $132$ pattern in $x$. Since the three letters following $j$ form a $321$ pattern in $x$, there is no hit in $x$ containing both letters from $l$ and $w'$. Therefore, the class containing $w$ contains only versions of $w$ where the peaks within each $321$-leading segment have been scrambled arbitrarily and only permutations avoiding $123$ and $231$. Thus $w$ uniquely determines such a class. 

We will now show that every class $C$ containing only $123$ and $231$ avoiding permutations contains a compact permutation $w$, thus completing the proof. Let $x$ be a permutation in such a class $C$. Applying repeated $213 \rightarrow 312$ rearrangements to $x$, we reach a permutation $w$ consisting only of $321, 132$, and $312$ patterns. We will prove that this permutation either meets condition 1 or condition 2. Assume as the inductive hypothesis that this claim is true if $w$ were to have $m$ or less $321$-leading segments (with a base case of $m=0$). Assume $w$ has $m+1$ $321$-leading segments. The $321$-leading factor of $w$ is alternating, has peaks decreasing from left to right, and dips increasing from left to right. The peaks of the $321$-leading factor of $w$ must be the largest letters in the $w$ because otherwise a $123$ or $213$ would occur somewhere. By the inductive hypothesis, the portion of $w$ not included in its $321$-leading factor satisfies condition 1. Finally, if there are more than one $321$-leading segments in $w$, then where $k$ is the $k$-length of $w$, $k'$ is the number of peaks in the second to left-most $321$-leading segment in $w$, and $j$ is the value of the final dip in the $321$-leading factor of $x$, $k+k' \le n-j$. This inequality must hold because otherwise a peak in the second $321$-leading segment in $w$ which has value less than $j$ could be moved to the position two to the right of $j$, forming a $231$ pattern, a contradiction. Thus $w$ is compact.
\end{proof}

\begin{prop}
Let $g(n, k)$ count the number of permutations meeting condition $1$ for compactness and with a $321$-leading factor containing $k-1$ dips and $k$ peaks. We assume that $k$ is such that at least one such permutation exists. Then,
$$
 g(n, k) =
\begin{cases}
 1 , & \text{if } n=1 \text{ or } n-2k+1=0  \\
 \sum\limits_{j=1}^{\lfloor (n-1)/2 \rfloor}{g(n-1, j)} , & \text{if } k=1  \\
 \sum\limits_{x=k-1}^{n-k}{\sum\limits_{j=1}^{n-k-x}{\binom{x-1}{k-2} \cdot g(n-2k+1, j)}},  & \text{ otherwise.}
\end{cases}
$$
\end{prop}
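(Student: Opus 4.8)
The plan is to prove the recursion by peeling off the $321$-leading factor and exhibiting an explicit bijective decomposition of the permutations counted by $g(n,k)$. Recall from the definition of condition $1$ and from the structural analysis in the preceding proposition that a compact permutation $w\in S_n$ of $k$-length $k$ splits into two pieces: its $321$-leading factor $l=P_1D_1P_2D_2\cdots D_{k-1}P_k$, an alternating word of length $2k-1$ whose peaks are exactly the $k$ largest values and hence satisfy $P_1>P_2>\cdots>P_k$ while its dips satisfy $D_1<D_2<\cdots<D_{k-1}$; and the remaining factor $w'$ of length $n-2k+1$, which is itself a condition-$1$ permutation (or empty) on the leftover values and is subject to the compactness inequality $k+k'\le n-D_{k-1}$, where $k'$ is the $k$-length of $w'$. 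I would first dispose of the two degenerate configurations as base cases: $n=1$ forces $w$ to be a single letter, and $n-2k+1=0$ forces $w'$ to be empty so that $l$ exhausts $w$ and is pinned down uniquely by the interleaving rule; both give $g=1$.

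For $k=1$ the leading factor is the single letter $n$, and $w$ is recovered by prepending $n$ to a condition-$1$ permutation $w'$ of size $n-1$. Because $w'$ begins with a descent, $n$ together with the first two letters of $w'$ is the first $321$ of $w$, so the leading factor really is just $n$ and nothing is lost; conversely every condition-$1$ permutation of $k$-length $1$ arises this way. Summing over the $k$-length $j$ of $w'$, over its admissible range, gives $g(n,1)=\sum_j g(n-1,j)$.

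For $k\ge 2$ I would parametrize by the value $x=D_{k-1}$ of the final dip. The first $k-2$ dips then form an arbitrary $(k-2)$-subset of $\{1,\dots,x-1\}$, contributing the factor $\binom{x-1}{k-2}$; the peaks are forced to be $n,n-1,\dots,n-k+1$; and the tail $w'$ may be any condition-$1$ permutation on the remaining values. Since $g$ counts permutations only up to standardization, the number of admissible tails of $k$-length $j$ is $g(n-2k+1,j)$, independently of which small values were used for the dips. The key is to convert the compactness inequality into a bound on $j$: as every dip is at most $x$, the tail values exceeding $x$ are exactly $\{x+1,\dots,n-k\}$, a set of size $n-k-x$, so the requirement that the $j$ peaks of $w'$ all exceed $x$ (i.e.\ $k+j\le n-x$) is precisely $j\le n-k-x$. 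Assembling the choices yields $g(n,k)=\sum_{x=k-1}^{n-k}\sum_{j=1}^{n-k-x}\binom{x-1}{k-2}\,g(n-2k+1,j)$, the lower limit $x=k-1$ being forced by the need for $k-1$ distinct positive dips and the upper limit $x=n-k$ by the dips lying below the smallest peak $n-k+1$.

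The step I expect to demand the most care is verifying that this is a genuine bijection onto the counted set, not merely a surjection or an overcount. Concretely, at the junction of $l$ and $w'$ I must confirm three things: that the triple $P_k,w'_1,w'_2$ is indeed the first $321$, so that the $321$-leading factor is exactly $l$ and $w$ has $k$-length $k$; that no forbidden pattern $123$, $213$, or $231$ straddles the boundary; and that the bound $j\le n-k-x$ is equivalent to compactness while also excluding a boundary $231$ (which would arise precisely when the final dip exceeds the first tail letter). All three collapse to the single arithmetic fact that $j\le n-k-x$ forces the largest tail value to exceed $x$; granting this, distinctness and exhaustiveness of the parametrization are routine. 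Finally I would reconcile the boundary conventions in the degenerate cases, checking in particular that the value $g(1,1)=1$ correctly supplies the innermost tail that terminates the $k=1$ recursion.
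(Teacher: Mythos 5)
Your proposal is correct and follows essentially the same route as the paper's proof: the same two base cases, the same treatment of $k=1$ by stripping off the single leading peak $n$, and for $k\ge 2$ the same parametrization by the value $x$ of the final dip and the $k$-length $j$ of the tail, yielding the factor $\binom{x-1}{k-2}$ for the remaining dips and the bound $j\le n-k-x$ from the compactness inequality. The extra care you give to checking that the first $321$ occurs exactly at the junction of the leading factor and the tail is a welcome refinement but does not change the argument.
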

\begin{proof}
If $n=1$, then it is trivial that $k=1$ and $g(n,k)=1$. If $n-2k+1=0$, then there is only one permutation which $g(n,k)$ counts, having peaks in decreasing order and containing the larger half of the letters, and the dips increasing from left to right and containing the remaining letters.

Otherwise, if $k=1$, then we need the number of possible permutations of size $n-1$ satisfying condition 1. (We know $n-1>0$; we can simply append $n$ to each of these permutations.) We define $j$ as the number of peaks in the $321$-leading factor of a permutation in $S_{n-1}$. Then, $j$ can be anywhere from $1$ to $\lfloor (n-1)/2 \rfloor$. So, $$g(n,k)=\sum\limits_{j=1}^{\lfloor (n-1)/2 \rfloor}{g(n-1, j)}\text{.}$$

Otherwise, we know that $n>1$, $k>1$, and $n-2k-1 \neq 0$. Let $w$ be a permutation counted by $g(n,k)$ in this case. We define $x$ to be the value of the final dip in the $321$-leading factor of $w$. Because the dips in the $321$-leading factor increase from left to right, $x$ is at least $k-1$. Because the largest $k$ letters in $w$ are used in peaks of the $321$-leading factor, $x$ is at most $n-k$. However, $x$ can be any value in-between inclusive. We define $j$ to be the number of peaks in the second $321$-leading segment of $w$. Note that $j$ can be anywhere between $1$ and $n-k-x$ inclusive (by the inequality in condition 1). Now, given $x$ and $j$, we choose $k-2$ dips in the $321$-leading segment of $w$ ($x$ is already chosen). They can have any values less than $x$. So, there are $\binom{x-1}{k-2}$ choices. Then, we choose $w'$, the permutation created by the letters to the right of the $321$-leading factor of $w$. There are $g(n-2k+1, j)$ choices for $w'$. So, we get
$$g(n,k)=\sum\limits_{x=k-1}^{n-k}{\sum\limits_{j=1}^{n-k-x}{\binom{x-1}{k-2} \cdot g(n-2k+1, j)}}\text{.}$$ 
\end{proof}

\begin{prop}
Let the function $g$ be defined as in the previous proposition. Let $f(n)$ be the number of classes in $S_n$ under the $\{123, 231\} \{213, 312\}$-equivalence. Then, 
$$f(n)=\sum\limits_{k=1}^{\lfloor n/2+1 \rfloor}{g(n+1, k)}+n-2\text{.}$$
\end{prop}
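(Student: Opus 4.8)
The plan is to split the classes of $S_n$ into two groups: those all of whose members avoid both $123$ and $231$, and those containing at least one permutation with a $123$ or $231$ pattern. The first proposition of this subsection already identifies the first group with the set of compact permutations of $S_n$, so the whole task reduces to (i) counting compact permutations and (ii) counting the remaining classes, after which I would simply add the two counts.

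For (i) I would show that the compact permutations of $S_n$ are in bijection with the condition-$1$ permutations of $S_{n+1}$, which are precisely the permutations counted by $\sum_{k} g(n+1,k)$, the sum running over all admissible peak-counts $1\le k\le\lfloor n/2+1\rfloor$. The bijection I have in mind adjoins or deletes the largest letter at the front: in a condition-$1$ permutation $v$ of $S_{n+1}$ the letter $n+1$ is forced into the first position (being the global maximum it is a peak, and as the largest peak in the decreasing list of peaks it is the leftmost one), and deleting it yields a permutation of $S_n$. If the $321$-leading factor of $v$ is the single letter $n+1$ (the case $k=1$), the deletion begins with a decrease and is condition-$1$; if the leading factor is longer ($k\ge 2$), the deletion begins with an increase and one checks that its last $n-1$ letters are condition-$1$, so it is condition-$2$. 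Prepending $n+1$ to a compact permutation of $S_n$ inverts this. The bookkeeping is the matching of the condition-$1$ and condition-$2$ cases of $S_n$ to the $k=1$ and $k\ge 2$ parts of the leading factor of $S_{n+1}$; the only genuinely fiddly point is verifying that the inequality $k+k'\le n-j$ in condition $1$ survives the adjunction of $n+1$, and I would dispatch the smallest cases by hand.

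For (ii) I would prove that the number of classes meeting a $123$ or $231$ pattern is exactly $n-2$, which I expect to fall out of a one-step recursion $N(n)=N(n-1)+1$ with base value $N(3)=1$. The cleanest route is to argue that inserting the new largest letter $n$ into the non-avoiding classes of $S_{n-1}$ reproduces each of them and creates exactly one genuinely new class; equivalently, one exhibits an invariant of the $\{123,231\}\{213,312\}$-equivalence taking $n-2$ distinct values on permutations that contain a $123$ or $231$, and shows using the four transformations that any two such permutations sharing the invariant are equivalent. I expect this to be the main obstacle: one must show that the flexibility of $123\leftrightarrow 231$ (a cyclic rotation of three consecutive values) together with $213\leftrightarrow 312$ collapses all non-avoiders with a common invariant value into a single class, while never merging permutations with different values, and that the transformations can always move a $123$ or $231$ hit into a controlled position. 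Combining (i) and (ii) yields $f(n)=\sum_{k=1}^{\lfloor n/2+1\rfloor} g(n+1,k)+n-2$.
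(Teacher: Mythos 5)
Your decomposition --- classes all of whose members avoid $123$ and $231$, counted via compact permutations, versus classes containing a $123$ or $231$ pattern --- is exactly the paper's, and your part (i) is essentially the paper's argument: prepending $n+1$ carries decrease-starting compact permutations of $S_n$ to condition-$1$ permutations of $S_{n+1}$ beginning with a $321$, and increase-starting ones to condition-$1$ permutations of $S_{n+1}$ beginning with a decrease but not a $321$, so the avoiding classes total $\sum_{k=1}^{\lfloor n/2+1\rfloor} g(n+1,k)$. That half is fine (and no less rigorous than the paper's own treatment of the ``fiddly'' inequality).

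The gap is in part (ii), where you defer exactly the step that carries the content. The paper does not use your recursion $N(n)=N(n-1)+1$ by ``inserting the new largest letter $n$,'' and that route looks delicate: the controlling invariant is built from the \emph{smallest} letters and from position parities, both of which are disturbed by inserting $n$ anywhere but the final position. What the paper actually exhibits is the invariant $\bigl(k(w),\,p(w)\bigr)$, where $p(w)$ is the position parity of the letter $1$ and $k(w)$ is the largest $k$ such that the letters $1,2,\ldots,k$ occur in $w$ in increasing order of position, all in positions of the same parity. Invariance follows because a letter outside such a chain always plays the role of the smallest letter of any hit containing it, and in each of the four replacements the minimum of the hit moves by $0$ or $2$ positions; this yields $\lceil n/2\rceil+\lfloor n/2\rfloor=n$ possible values, of which exactly two (the fully layered configurations) force avoidance, leaving $n-2$ values for non-avoiders. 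Completeness --- that two non-avoiders sharing the invariant are equivalent --- is then an explicit induction reducing every such permutation to a canonical form ($1$ in the leftmost position of parity $p$, the next $k-1$ smallest letters every two positions to its right, the rest increasing). Without naming this invariant and carrying out that reduction, your $n-2$ term is a statement of intent rather than a proof.
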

\begin{proof}
First, we calculate the number of classes containing only $123$ and $231$ avoiding permutations. The number of these containing permutations starting with a decrease is $$\sum\limits_{k=1}^{\lfloor (n-1)/2 \rfloor}{g(n+1, k)}\text{.}$$ This is also the same as the number of such classes in $S_{n+1}$ containing only permutations starting with a $321$ pattern (we create a bijection by just appending $n+1$ to the left). The number of such classes $S_n$ that contain permutations starting with an increase is the same as the number of such classes in $S_{n+1}$ containing permutations starting with a decrease but not with a $321$. This falls from the definition of condition $2$. Adding these together, we get that the number of such classes in $S_n$ is the number of such classes in $S_{n+1}$ starting with a $321$ added to the number of such classes in $S_{n+1}$ starting with a decrease but not a $321$, which is just the number of classes containing only $123$ and $231$ avoiding permutations starting with a decrease in $S_{n+1}$. We know that this is $\sum\limits_{k=1}^{\lfloor n/2+1 \rfloor}{g(n+1, k)}$.

Now, will calculate the number of classes containing at least one permutation which contains a $123$ or $231$ pattern. First we note an invariant. Let $w$ be a permutation in $S_n$. If $1$ and $2$ are of the same position parity as each other and $1$ is to the left of $2$, then they cannot be involved in a hit with each other until one of them changes position parity under the transformations considered. However, unless they are in the same hit, they always both act as $1$ in any hit, thus maintaining their position parity. So, in this case, $1$ and $2$ can never swap relative order and can never change position parity under the transformations considered. Then, if $3$ is to the right of $2$ and of the same position parity, then $3$ and $2$ can never swap relative order or change position parity. Similarly, if the $k$ smallest letters are ordered increasing from left to right and are of the same position parity, then none of their position parities can ever change and none of them can ever swap relative order. So, the highest $k$ for which this is true and the parity of the lowest $k$ letters for a permutation does not change under the transformations considered. Let $k$ be that highest such $k$ for a given permutation $w$ in $S_n$. If $1$ has an odd position, $k$ can have any of $\lceil n/2 \rceil$ values. If $1$ has an even position, $k$ can have any of $\lfloor n/2 \rfloor$ values. However, in the case where every odd position is less than every even position or vice versa, $w$ is in one of the $123$ and $231$ avoiding classes. Hence, there are at least $n-2$ classes which contain a permutation containing $123$ or $231$ patterns fall into.
 
We will now show that permutations containing a $123$ or $231$ fall into exactly $n-2$ classes. Let $w \in S_n$ contain either a $123$ or $231$ pattern. Let $k$ be the largest $k$ such that the lowest $k$ letters are in increasing order from left to right and each of the same position parity in $w$. Let $p$ be the position parity of $1$ in $w$. We will show that $w$ is equivalent to $w'$, the permutation which increases from left to right except with $1$ inserted in the left-most position with parity $p$ and the next $k-1$ letters in value inserted every two positions to its right. (e.g., if $k=3$, $n=8$, and $1$ is of odd position parity, then $w'=14253678$.) Assume as an inductive hypothesis that this claim holds in $S_{n-1}$ (with the base cases of $n \le 5$ easy to check). Note that all such $w'$ have their final three letters in increasing order because we are not considering the cases where all the letters of one position parity are greater than those of the other.

If a $123$ or $231$ pattern occurs only in the final three letters of $w$, then applying the inductive hypothesis to the final $n-1$ letters, we reach a permutation where that is not the case. Hence, we may assume that $w$ has a $123$ or $231$ pattern in its first $n-1$ letters. We apply the inductive hypothesis to them, reaching a permutation $x$ which has the three letters preceding its final letter in increasing order. If $1$ is in the first position of $x$, then we apply the inductive hypothesis to the right-most $n-1$ letters of $x$ and reach $w'$. If $1$ is in the second position in $x$, then we apply the inductive hypothesis to the right-most $n-1$ letters of $x$ to reach $x'$ which has its final three letters in increasing order. If $x'$ is not $w'$, then through an application of $312 \rightarrow 213$, we place the first letter in $w'$ in the first position in $x'$ (note that this letter was previously in the third position in $x'$). Then, applying the inductive hypothesis to the right-most $n-1$ letters, we reach $w'$. If $1$ is in the final position of $x$, then we apply the inductive hypothesis to the final $n-1$ letters, reaching a permutation which starts with some letter $t$ and increases afterwards, $m$. Applying the inductive hypothesis to the first $n-1$ letters of $m$ and then the to the final $n-1$ letters, we are done (and we reach the identity). Hence, $w$ and $w'$ are always equivalent and the proposition holds.
\end{proof}

\subsection{\mathheader{\{123, 132\} \{213, 231 \}}-Equivalence}

\begin{prop}\label{prop2n} There are $2^{n-1}$ classes in $S_n$ under the \\$\{123, 132\} \{213, 231 \}$-equivalence.
\end{prop}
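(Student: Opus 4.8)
The plan is to show that each equivalence class in $S_n$ contains exactly one V-permutation; since there are $2^{n-1}$ V-permutations in $S_n$, this immediately yields the count. The first thing I would record is the concrete form of the allowed moves: because the nontrivial parts are $\{123,132\}$ and $\{213,231\}$, a single transformation amounts to swapping the letters in positions $i+1$ and $i+2$ precisely when the letter in position $i$ is \emph{not} the largest of the three consecutive letters $w_i,w_{i+1},w_{i+2}$. Indeed, the four patterns above are exactly those three-letter patterns whose leading letter is not the maximum, and swapping the last two letters keeps us inside the same part; the forbidden leading patterns $312$ and $321$ are precisely the ones whose first letter is the maximum.

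For the existence half, namely that every permutation is equivalent to some V-permutation, I would induct on $n$ with trivial base cases. Given $w\in S_n$, if $n$ sits in some position $j\geq 2$, then the letter in position $j-1$ is automatically not the maximum of $\{w_{j-1},n,w_{j+1}\}$, so I may swap positions $j$ and $j+1$ and push $n$ one step rightward; iterating slides $n$ into the final position. Hence I may assume $n$ occupies either the first or the last position. Applying the inductive hypothesis to the remaining $n-1$ letters (the moves available among them are unaffected by an $n$ glued to one end) rearranges them into a V-permutation $v'$, and prepending or appending the maximal letter $n$ to $v'$ preserves the ``decrease then increase'' shape, so the full word is again a V-permutation. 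This shows there are at most $2^{n-1}$ classes.

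For the inequivalence half I would introduce the invariant that does the real work: the \emph{set of values occurring as left-to-right minima}. I claim no single transformation changes this set. Everything outside positions $i+1,i+2$ is clearly unaffected, and the running minimum after position $i+2$ is independent of the order of the two swapped letters, so only the left-to-right-minimum status of the swapped pair is in question. A short case analysis—using that the anchor letter $w_i$ is not the maximum, which forces at least one of the two swapped values to exceed the running minimum up through position $i$—shows that the set of new minima contributed by those two positions is identical before and after the swap. For a V-permutation the left-to-right minima are exactly the letters to the left of $1$ together with $1$ itself, so distinct V-permutations (which are distinguished precisely by their set of letters to the left of $1$) have distinct invariants and hence lie in distinct classes. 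Combining the two halves gives exactly $2^{n-1}$ classes.

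The slide-$n$-to-an-end induction is routine; the main obstacle is the case analysis verifying that the set of left-to-right-minimum values is preserved by each move, where one must carefully track how the running minimum interacts with the two positions being swapped and invoke the ``anchor is not the maximum'' constraint. Once that invariant is established, the enumeration falls out cleanly.
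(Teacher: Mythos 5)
Your proof is correct, but it takes a genuinely different route from the paper. The paper's proof of this proposition is a two-line appeal to its general confluence result (Theorem \ref{thmavoiders}): one checks computationally that for some $k\ge 2c-1=5$ the number of classes in $S_k$ equals the number of permutations avoiding the non-lexicographically-minimal patterns $\{132,231\}$ (these avoiders are precisely the V-permutations, since a consecutive $132$ or $231$ is exactly an interior peak), and the theorem then propagates the equality to all larger $n$. You instead argue directly: the ``anchor is not the maximum'' reformulation of the moves, the slide-$n$-to-an-end induction showing every class contains a V-permutation, and the invariance of the set of left-to-right-minimum \emph{values} showing distinct V-permutations are inequivalent. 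Your case analysis is sound --- the only problematic case (both swapped letters below the running minimum through the anchor's position) is excluded precisely because the anchor would then be the maximum of the hit --- and your invariant separates V-permutations since a V-permutation is determined by its set of left-to-right minima. What your approach buys is an explicit complete invariant and an explicit system of representatives (one V-permutation per class), which the paper itself acknowledges is more illuminating than the confluence argument (cf.\ Remark \ref{remshortproof} on the analogous situation for the $\{123,132,231\}$-equivalence); indeed the paper later quietly relies on the ``exactly one V-permutation per class'' fact when comparing multisets of class sizes, and your argument supplies that fact directly. What the paper's approach buys is brevity and reusability: the same Theorem \ref{thmavoiders} dispatches several other replacement partitions with no case analysis at all.
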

\begin{proof}
For $n\le 5$, this can be shown computationally. For $n>5$, this falls from Theorem \ref{thmavoiders} (proved in Subsection \ref{Secconnect}).
\end{proof}

\begin{cor}\label{corollarysamesizes}
Let $w$ be in $S_{n>1}$ and $w'$ be $w$ with $n$ struck. The size of the equivalence class of $w$ under the  $\{123, 132\} \{213, 231 \}$-equivalence is the size of the class containing $w'$ if $n$ is in the first position in $w$ and is the size times $n-1$ of the class containing $w'$ otherwise.
\end{cor}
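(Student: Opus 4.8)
The plan is to analyze how the largest letter $n$ behaves under the equivalence and to build a size-comparison map that deletes $n$. First I would record the structural fact peculiar to this relation: in each nontrivial part $\{123,132\}$ and $\{213,231\}$, the smallest (respectively the median) letter of the three occupies the first position of \emph{both} patterns. Hence every transformation fixes the first of the three consecutive letters on which it acts and merely transposes the other two; equivalently, one may swap the letters in positions $i+1$ and $i+2$ exactly when the letter in position $i$ is not the largest of the three. Two consequences matter. Since $n$ can never be the anchor of a transformation (it is always the largest of any factor containing it), whenever $n$ is touched it is simply swapped with a neighbor; and the leading letter of a permutation is never moved, so ``$n$ occupies position $1$'' and ``$n$ occupies a position $\ge 2$'' are both class invariants.

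Next I would prove two facts about deletion. Writing $\delta$ for the operation of striking $n$, I claim $\delta$ carries equivalent permutations in $S_n$ to equivalent permutations in $S_{n-1}$: a transformation not involving $n$ descends verbatim, while a transformation involving $n$ merely swaps $n$ with an adjacent letter and so leaves $\delta$ unchanged. Thus $\delta$ maps the class of $w$ into the class of $w'$. I would also record a sliding observation: if $n$ lies in a position $\ge 2$ it can be moved to any other position $\ge 2$ by repeated adjacent swaps (the anchor is always $<n$), and every such slide fixes $\delta$. Consequently, for a fixed $z\in S_{n-1}$, all $n-1$ insertions of $n$ into $z$ at positions $2,\dots,n$ lie in a single class.

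For Case 1 ($n$ first in $w$), every permutation equivalent to $w$ also has $n$ first, so $\delta$ just removes a leading $n$ and is inverted by prepending $n$; hence $\delta$ is injective. Surjectivity onto the class of $w'$ follows by lifting: a path of transformations from $w'$ to any $z$ becomes, after prepending $n$ and shifting every index by one, a valid path from $nw'=w$ to $nz$. So $\delta$ is a bijection and the two classes have equal size. For Case 2 ($n$ not first), I would show the class of $w$ is exactly $\{z^{(p)} : z\in\text{class of }w',\ 2\le p\le n\}$, where $z^{(p)}$ denotes $z$ with $n$ inserted in position $p$. The inclusion $\subseteq$ follows from the invariant that $n$ stays in a position $\ge 2$; the inclusion $\supseteq$ is proved by inducting along a path from $w'$ to $z$, appending $n$ at the very end so that each $S_{n-1}$-transformation lifts to a valid $S_n$-transformation untouched by the trailing $n$, then using the sliding observation to reconnect endpoints to arbitrary insertion slots. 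Since distinct pairs $(z,p)$ give distinct permutations (recover $z$ by $\delta$ and $p$ as the position of $n$), this set has size $(n-1)\cdot|\text{class of }w'|$.

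The main obstacle is the $\supseteq$ direction of Case 2 (equivalently, surjectivity in Case 1): I must show that \emph{every} class of $w'$ is realized, i.e.\ that inserting $n$ anywhere but the front into any representative of the class of $w'$ lands back in the class of $w$. This is precisely where the lifting trick (append $n$ at the end so transformations survive unchanged) and the sliding observation (to relocate $n$ to the desired slot without altering $\delta$) must be combined; once these are in place, the remaining verifications are routine bookkeeping.
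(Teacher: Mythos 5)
Your proof is correct. The paper itself gives no argument here---it states only ``we leave this as a simple exercise for the reader''---so there is nothing to compare against, but your write-up is a complete solution of that exercise and almost certainly the intended one. The key observation that makes everything work is the one you lead with: in both nontrivial parts $\{123,132\}$ and $\{213,231\}$ the first letter of the pattern is fixed (it is the smallest, respectively the median, in both members of the part), so every transformation is an anchored transposition of positions $i+1,i+2$, legal exactly when the letter in position $i$ is not the largest of the three. From this you correctly extract the three facts that carry the whole corollary: the first letter of a permutation is a class invariant, deletion of $n$ descends to an equivalence-preserving map, and $n$ can be slid freely among positions $2,\dots,n$ without changing the deletion. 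The case analysis and the counting of the fibers $\{z^{(p)}\}$ are then routine, as you say.
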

\begin{proof}
For the sake of brevity, we leave this as a simple exercise for the reader.
\end{proof}

\begin{cor}
The number of permutations in $S_n$ in the class containing the identity under the $\{123, 132\} \{213, 231 \}$-equivalence is $(n-1)!$. 
\end{cor}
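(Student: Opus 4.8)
The plan is to prove this by a straightforward induction on $n$, leaning entirely on Corollary~\ref{corollarysamesizes}. The key observation is that the identity permutation $i_n = 12\ldots n$ in $S_n$ has $n$ in its \emph{final} position rather than its first position (for $n>1$), and that deleting the letter $n$ from $i_n$ yields exactly the identity $i_{n-1}$ in $S_{n-1}$. Thus the identity is the cleanest possible input for the recursive size formula.

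First I would set up notation: let $a_n$ denote the number of permutations in the class containing the identity in $S_n$. For $n>1$, since $n$ is not in the first position of $i_n$, Corollary~\ref{corollarysamesizes} gives that the size of the class of $i_n$ is $n-1$ times the size of the class of the permutation obtained by striking $n$ from $i_n$. Because striking $n$ from $i_n$ produces $i_{n-1}$, this reads $a_n = (n-1)\cdot a_{n-1}$ for all $n>1$.

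Next I would install the base case $a_1 = 1$ (the permutation $1$ is alone in its class in $S_1$), which matches $(1-1)! = 0! = 1$. Then I would unwind the recurrence: assuming inductively that $a_{n-1} = (n-2)!$, the relation $a_n = (n-1)\cdot a_{n-1}$ immediately gives $a_n = (n-1)\cdot(n-2)! = (n-1)!$. This completes the induction and establishes that the class containing the identity has $(n-1)!$ elements.

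I do not anticipate a genuine obstacle here, since Corollary~\ref{corollarysamesizes} does all the combinatorial work; the only point requiring care is confirming that both hypotheses of the ``times $n-1$'' branch are met, namely that $n$ is not in the first position of $i_n$ and that the deletion of $n$ leaves precisely the identity of the smaller symmetric group. Both are immediate from the one-line notation of the identity, so the argument reduces to the elementary recurrence above.
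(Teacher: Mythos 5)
Your proposal is correct and matches the paper's argument exactly: the paper's proof simply states that the result ``falls from straightforward computation'' via Corollary~\ref{corollarysamesizes}, and your recurrence $a_n=(n-1)\cdot a_{n-1}$ with base case $a_1=1$ is precisely that computation spelled out. Nothing is missing.
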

\begin{proof}
As a result of Corollary \ref{corollarysamesizes}, this falls from straightforward computation.
\end{proof}

\subsection{\mathheader{\{123, 321\} \{ 213, 231\}}-Equivalence}
In this subsection, we use several results proven in Subsection \ref{secstooge}.

\begin{lem}\label{bigclasslem}
All non-avoiding permutations are equivalent in $S_n$ for $n>5$ under the $\{123, 321\} \{ 213, 231\}$-equivalence.
\end{lem}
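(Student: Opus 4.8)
The plan is to mirror the template already used for the $\{213,132,231,312\}$-equivalence and the $\{123,132,231,321\}$-equivalence: induct on $n$ and push the whole problem through the ``stooge-sort'' machinery of Subsection~\ref{secstooge}. Since the table predicts only three classes in $S_n$, and exactly two permutations avoid $K$ (an avoider can contain neither $1$ nor $n$ after its second position, since those are the extreme values and the only non-hit triples force each new letter strictly between the previous two, so $1$ and $n$ are forced into the first two positions, giving the two completions $1n\ldots$ and $n1\ldots$), the real content of the lemma is precisely that all the non-avoiders collapse into a single class. I would take $n=6$ as the base case, verified by direct computation, and prove the inductive step for $n\ge 7$.

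For the inductive step I would assume the lemma in $S_{n-1}$, i.e.\ that all non-avoiders of $S_{n-1}$ lie in one class. The key consequence is that the representative sets $L_{n-1}$ and $R_{n-1}$ of Subsection~\ref{secstooge} each shrink to a single permutation, namely the unique representative of the one nontrivial class of $S_{n-1}$, so that $L_{n-1}=R_{n-1}$ is a singleton. Then, exactly as in the $\{213,132,231,312\}$ argument, Proposition~\ref{propreachmiddled} shows that every non-avoider of $S_n$ is equivalent to a middled permutation, and feeding the singleton $L_{n-1}=R_{n-1}$ into Theorem~\ref{thmstoogesort} shows that every such middled permutation is in turn equivalent to one fixed canonical non-avoider. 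Hence all non-avoiders of $S_n$ are equivalent, completing the induction.

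The main obstacle is not the combinatorics of the inductive step, which is essentially formal once the machinery applies, but verifying that the hypotheses of that machinery genuinely transfer to this particular replacement partition. Concretely I would need to confirm three things: that the base case $S_6$ has a single non-avoiding class (a finite computation); that $\{123,321\}\{213,231\}$ admits no nontrivial invariant on non-avoiders that would split $L_{n-1}$ or $R_{n-1}$ into more than one element (the transformation $213\leftrightarrow 231$ already moves the minimum and maximum of its hit across positions of different parity, which is the reason to expect that no position-parity invariant survives, unlike in the $\{123,132,321\}$ case); and that middled permutations are themselves non-avoiders, so that Proposition~\ref{propreachmiddled} lands inside the very class we are studying rather than among the two avoiders.

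Should some hypothesis of Theorem~\ref{thmstoogesort} fail to apply directly, the fallback is an explicit version of the same idea: slide $n$ to a middle position, use the inductive hypothesis to rearrange both the left and the right $(n-1)$-letter blocks to the canonical $S_{n-1}$ representative, and then iterate this left-block/right-block ``stooge'' rearrangement. Because $n>5$ the two overlapping blocks share enough letters that the process cannot cycle and must terminate at the fixed target, which would again show that every non-avoider reaches a single canonical permutation.
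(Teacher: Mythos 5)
Your proposal is correct and follows essentially the same route as the paper: induction with base case $n=6$, Proposition~\ref{propreachmiddled} to reach a middled permutation, and Theorem~\ref{thmstoogesort} combined with the fact that the inductive hypothesis collapses $L_{n-1}$ and $R_{n-1}$ to singletons. The additional checks you flag (the base case, the non-avoidance of middled permutations, the absence of a splitting invariant) are exactly the details the paper leaves implicit, so nothing in your plan diverges from the published argument.
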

\begin{proof}
Assume inductively that the result holds for smaller $n$ (with a base case of $n=6$). Let $n>6$. By Proposition \ref{propreachmiddled}, each non-avoiding permutation in $S_n$ is equivalent to some middled permutation. It follows from Theorem \ref{thmstoogesort} that every permutation is equivalent to the identity in $S_n$.
\end{proof}

\begin{prop}
There are $3$ classes in $S_n$ under the $\{123, 321\} \{213, 231\}$-equivalence for $n>5$.
\end{prop}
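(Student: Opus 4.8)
The plan is to combine Lemma \ref{bigclasslem} with a direct count of the trivial classes, i.e.\ of the permutations in $S_n$ that avoid $K=\{123,321\}\{213,231\}$. Since the nontrivial parts together contain the four patterns $123,321,213,231$, a permutation avoids $K$ exactly when no three consecutive letters form any of these; equivalently, every factor of three consecutive letters forms one of the two remaining patterns $132$ or $312$. Both $132$ and $312$ have their median in the last position, so the avoiders are precisely the permutations in which, for every $i$, the letter $w_{i+2}$ lies strictly between $w_i$ and $w_{i+1}$.

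First I would observe that in an avoider neither $1$ nor $n$ can occupy any position $\ge 3$: if $w_{i+2}\in\{1,n\}$ then $w_{i+2}$ would have to be the median of $\{w_i,w_{i+1},w_{i+2}\}$, which is impossible for the global minimum or maximum. Hence both $1$ and $n$ lie among the first two positions, so $\{w_1,w_2\}=\{1,n\}$, leaving exactly two choices for the ordered pair $(w_1,w_2)$.

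Next I would set up an induction on $n$ to show that each choice of $(w_1,w_2)$ determines the whole permutation uniquely. The factor $w_2 w_3\cdots w_n$ is itself an avoider (each of its windows is a window of $w$, and standardization preserves consecutive patterns), so by the induction hypothesis its own minimum and maximum occupy its first two positions, namely positions $2$ and $3$ of $w$. Since position $2$ already holds one extreme of $w$, position $3$ is forced to be the extreme among the remaining values, and so on down the line. This pins down the two avoiders as the zig-zag permutations $1,n,2,n-1,3,n-2,\ldots$ and $n,1,n-1,2,n-2,3,\ldots$; a direct check that every window of each forms a $132$ or a $312$ confirms that both are genuine avoiders. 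Therefore there are exactly two trivial classes.

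Finally, by Lemma \ref{bigclasslem}, for $n>5$ all non-avoiders form a single nontrivial class; adding the two trivial classes gives $3$ classes in total. I expect the main obstacle to be making the forcing step fully rigorous: one must argue carefully that the recursion genuinely leaves no freedom at any stage, and that the two resulting sequences are honest avoiders, rather than merely that the extremes start in front.
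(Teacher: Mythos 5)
Your proposal is correct and follows the same route as the paper: invoke Lemma \ref{bigclasslem} for the single nontrivial class and then count the trivial classes, which the paper dismisses as "easy to check." Your explicit forcing argument showing the only avoiders are the two zig-zag permutations $1,n,2,n-1,\ldots$ and $n,1,n-1,2,\ldots$ is a sound and complete justification of that step.
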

\begin{proof}
By Lemma \ref{bigclasslem}, there is one nontrivial class. It is easy to check that there are always two trivial classes.
\end{proof}

\begin{cor}
The class containing the identity in $S_n$ contains $n!-2$ elements for $n>5$ under the $\{123, 321\} \{ 213, 231\}$-equivalence.
\end{cor}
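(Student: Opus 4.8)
The plan is to read the answer off directly from the class enumeration just established; the only real content is locating the identity within the three classes and then subtracting the two singletons. The key observation is that the identity lies in the single large (nontrivial) class.

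First I would verify that the identity $12\cdots n$ is non-avoiding: its opening factor $123$ is a pattern lying in the nontrivial part $\{123,321\}$ of $K$, so the identity contains a $K$-hit. Consequently, by Lemma~\ref{bigclasslem}, for $n>5$ the identity belongs to the unique class consisting of all non-avoiding permutations.

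Next I would invoke the preceding proposition, which states that $S_n$ splits into exactly three classes for $n>5$: the one nontrivial class of Lemma~\ref{bigclasslem} together with two trivial classes. By definition a trivial class has exactly one element, so the two trivial classes account for precisely two permutations of $S_n$. Since $\left|S_n\right|=n!$ and the identity sits in the nontrivial class, the class containing the identity is exactly the complement of those two singletons, and hence contains $n!-2$ elements.

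The argument has essentially no obstacle; the only point needing a word of justification is that the identity is non-avoiding (and so is not one of the two isolated singletons), which is immediate from the increasing factor $123$. Everything else is a one-line counting deduction from the already-proven enumeration of classes.
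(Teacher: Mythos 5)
Your proposal is correct and follows essentially the same route as the paper, which simply notes that all $n!-2$ non-avoiding permutations are reachable from the identity (via Lemma~\ref{bigclasslem} and the two trivial classes). Your extra remark that the identity is itself non-avoiding, witnessed by its initial $123$ factor, is a small but harmless addition of detail.
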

\begin{proof}
This is because all $n!-2$ of the non-avoiding permutations are reachable from the identity.
\end{proof}

\subsection{\mathheader{\{123, 231\} \{132, 321\}}-Equivalence}

\begin{prop} There are $2^{n-1}$ classes in $S_n$ under the \\$\{123, 231\} \{132, 321\}$-equivalence.
\end{prop}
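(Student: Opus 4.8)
The plan is to prove $f(n)=2^{n-1}$ (where $f(n)$ is the number of classes in $S_n$) by establishing the recursion $f(n)=2f(n-1)$ for $n\ge 4$, with base case $f(3)=4$ checked by hand (the four classes are $\{123,231\}$, $\{132,321\}$, $\{213\}$, $\{312\}$). The engine is the behaviour of the smallest letter $1$. First I would record the key local fact: in each allowed transformation $123\leftrightarrow 231$ and $132\leftrightarrow 321$, the smallest of the three involved letters plays the role of the ``$1$'' of the pattern and moves exactly two positions, while the other two letters keep their relative order. Consequently, whenever $1$ sits at some position, the factor $1ab$ is automatically a hit of type $123$ or $132$, and $ab1$ is automatically a hit of type $231$ or $321$; so $1$ can always be slid by two positions toward either end, and doing so preserves both the \emph{position parity} of $1$ and the relative order of all the other letters. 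Two consequences follow immediately: the position parity of $1$ is an invariant of the equivalence, and by repeated sliding every permutation is equivalent to one with $1$ in position $1$ (if that parity is odd) or in position $2$ (if even).

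Next I would introduce the deletion map. Deleting the letter $1$ and standardizing sends $w\in S_n$ to some $\bar w\in S_{n-1}$. I claim deletion respects the equivalence: a transformation not involving $1$ descends to the identical transformation on $\bar w$, while a transformation involving $1$ has the form $1ab\leftrightarrow ab1$ (with $a,b$ fixed in order), which becomes a no-op once $1$ is removed. Hence deletion induces a well-defined map $\delta$ from classes of $S_n$ to classes of $S_{n-1}$, and it suffices to show each fibre of $\delta$ has exactly two elements. I would exhibit two sections: $\alpha[\bar w]:=[\,1(\bar w+1)\,]$ (prepend $1$) and $\beta[\bar w]:=[\,c_1\,1\,c_2c_3\cdots c_{n-1}\,]$ (insert $1$ into the second slot), where $c_1c_2\cdots c_{n-1}=\bar w+1$ denotes $\bar w$ with each letter increased by one. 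Both satisfy $\delta\circ\alpha=\delta\circ\beta=\mathrm{id}$, so both are injective; their images are disjoint because $\alpha$ leaves $1$ in an odd position and $\beta$ in an even one (and the parity of $1$ is invariant); and the sliding observation above shows every class lies in one image or the other. Therefore the number of classes in $S_n$ equals $|\operatorname{im}\alpha|+|\operatorname{im}\beta|=2f(n-1)$.

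The one genuine obstacle is proving that $\beta$ is well defined, i.e.\ that $\bar w\equiv\bar w'$ in $S_{n-1}$ forces $c_1\,1\,c_2\cdots\equiv c_1'\,1\,c_2'\cdots$ in $S_n$; it suffices to treat a single transformation $\bar w\to\bar w'$. If that transformation acts on a triple at positions $j,j+1,j+2$ with $j\ge 2$, then after inserting $1$ at position $2$ the triple still occupies three consecutive positions not containing $1$, and the same transformation applies verbatim. The delicate case is a transformation on the front triple $c_1c_2c_3$, since inserting $1$ splits these letters across positions $1,3,4$. Here I would use the slide maneuver: from $c_1\,1\,c_2\,c_3\cdots$ slide $1$ two positions right (the factor $1\,c_2\,c_3$ is a hit), reaching $c_1\,c_2\,c_3\,1\cdots$ so that the front triple is again consecutive; apply $c_1c_2c_3\to c_1'c_2'c_3'$ at positions $1,2,3$; then slide $1$ two positions back to the left (the factor $c_2'\,c_3'\,1$ is a hit), reaching $c_1'\,1\,c_2'\,c_3'\cdots$, which is exactly $\beta$ applied to $\bar w'$. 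This reduces the apparent difficulty to the already-established fact that $1$ can always be moved aside by two and back without disturbing the order of the other letters, so no case analysis on the four pattern types is required, and the recursion $f(n)=2f(n-1)$ closes the argument.
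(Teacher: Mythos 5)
Your argument is correct and follows essentially the route the paper itself indicates: the paper's proof is only a two-line sketch (split the permutations by the position parity of the letter $1$ and induct, with the details ``left as an exercise for the reader''), and your deletion map $\delta$ together with the two sections $\alpha,\beta$ is a clean and complete way of carrying out exactly that induction, yielding $f(n)=2f(n-1)$. The key facts you use --- that the letter $1$ always plays the role of the pattern's ``$1$'' in any hit containing it, hence moves by exactly two positions and preserves its position parity, and that it can always be slid aside and back without disturbing the other letters --- are verified correctly, so your writeup in fact supplies the details the paper omits.
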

\begin{proof}
One wants to show inductively that the permutations with the letter $1$ in an odd position (type 1) break into $2^{n-2}$ classes and that those with $1$ in an even position (type 2) break into $2^{n-2}$ classes, thus completing the proof. For the sake of brevity, we leave this as an exercise for the reader. 


\end{proof}

\begin{defn}
A permutation is \emph{trivializable} if is equivalent to the identity.
\end{defn}

\begin{prop} In $S_n$, under the $\{123, 231\} \{132, 321\}$-equivalence, the number of trivializable permutations is $\lceil n/2 \rceil ! ^2$ if $n$ is even and $\lceil n/2 \rceil ! ^2 / \lceil n/2 \rceil$ if $n$ is odd. By symmetry, the same holds for the number of permutations equivalent to the descending permutation.
\end{prop}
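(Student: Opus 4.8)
The plan is to count the trivializable permutations directly and recover the descending case from a symmetry. First I would record that reversing all three‑letter patterns sends $\{123,231\}$ to $\{132,321\}$ and back, so the partition $K=\{123,231\}\{132,321\}$ is fixed by reversal; hence the map $w\mapsto w_n w_{n-1}\cdots w_1$ is an automorphism of the $K$‑equivalence. Since the reverse of the identity is the descending permutation, the two class sizes coincide, so it suffices to enumerate the permutations equivalent to the identity. Writing $t(n)$ for this number, the goal is the recursion $t(n)=\lceil n/2\rceil\, t(n-1)$ for $n\ge 3$, which I would then solve.

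The recursion comes from a bijection built around the letter $1$. The position parity of $1$ is invariant: $1$ is the minimum of any hit it lies in, so it always plays the role of the value $1$, whose cell moves only between positions $i$ and $i+2$ under each allowed move and thus keeps its parity; as $1$ sits in position $1$ of the identity, every trivializable permutation has $1$ in an odd position. Next I would show that deleting $1$ (and relabelling) descends to the equivalence: a hit avoiding $1$ deletes to a genuine hit on the remaining letters, while any hit containing $1$ has the form $1ab\leftrightarrow ab1$ (for $123\leftrightarrow 231$, with $a<b$) or $1zy\leftrightarrow zy1$ (for $132\leftrightarrow 321$, with $z>y$), each of which deletes to the same two‑letter word; hence deletion turns a derivation of $w$ from the identity into one of $\widehat w$ from the identity in $S_{n-1}$. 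Conversely, given a trivializable $v\in S_{n-1}$ and an odd position $p$, I would raise every letter of $v$ by one, insert $1$ at position $p$, and slide it to the front two cells at a time: a triple $ab1$ is a $231$ if $a<b$ and a $321$ if $a>b$, and applying $231\to123$ or $321\to132$ respectively yields $1ab$ in both cases, so $1$ moves left while the relative order of every other letter is preserved. This shows $w\equiv 1v^{+}$, where $v^{+}$ is $v$ with each letter increased by one; since $v$ reaches the identity through moves not involving $1$, those same moves applied below the leading $1$ carry $1v^{+}$ to $12\cdots n$. Thus $w\mapsto(\widehat w,\ \text{position of }1)$ is a bijection between trivializable permutations in $S_n$ and pairs consisting of a trivializable permutation in $S_{n-1}$ together with one of the $\lceil n/2\rceil$ odd positions, giving $t(n)=\lceil n/2\rceil\,t(n-1)$.

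Finally I would solve the recursion from the base case $t(2)=1$ (no length‑three factor exists, so each permutation is alone in its class), obtaining $t(n)=\prod_{m=1}^{n}\lceil m/2\rceil$. Splitting by parity yields $(l!)^2$ when $n=2l$ and $(l!)^2(l+1)$ when $n=2l+1$, which are exactly $\lceil n/2\rceil!^{2}$ and $\lceil n/2\rceil!^{2}/\lceil n/2\rceil$. I expect the main obstacle to be the insertion step: one must check that $1$ can always be slid to the front while keeping the other letters in order, the point being that $231\to123$ and $321\to132$ have the uniform effect $ab1\to1ab$, and dually that deleting $1$ never corrupts a hit. By comparison the parity invariant and the reversal symmetry are routine.
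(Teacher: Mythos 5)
Your proof is correct and follows exactly the route the paper sketches: the paper leaves this proposition as an exercise, noting only that the identity's class in $S_n$ consists of the identity's class in $S_{n-1}$ with a new smallest letter inserted at an odd-numbered position, which is precisely the bijection you construct, together with the recursion $t(n)=\lceil n/2\rceil\,t(n-1)$ it yields. Your verification of the key points --- the letter $1$ always plays the role of the value $1$ and hence never sits in the middle of a hit, transformations involving $1$ delete to the empty move, and the slide $ab1\to 1ab$ works uniformly for both parts of the partition --- correctly supplies the details the paper omits.
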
 
\begin{proof}
We leave this proof as an exercise for the reader. It basically relies on the fact that the permutations in the class containing the identity in $S_n$ are the permutations in the class containing the identity in $S_{n-1}$, except with a new smallest letter inserted in an odd-numbered position.
\end{proof}

\subsection{\mathheader{ \{ 132, 231 \} \{213, 312\}}-Equivalence Experimental Data}

Figure~\ref{openprobtable} provides computational data for the number of classes created in $S_n$ by the $\{132, 231 \} \{213, 312 \}$-equivalence for $n \le 12$. This equivalence is the only replacement partition of $S_3$ with two nontrivial parts, each of size two, which has yet to be enumerated.

\begin{figure}[h] 
\caption{The number of classes created in $S_n$ by the $\left\{132, 231\right\}\left\{ 213, 312\right\}$-equivalence.}
\begin{center}
    \begin{tabular}{ | l | | l  | l  | l  | l  | l | l  | l  | l  | l  | l |}
    \hline
     $n$       & 3 & 4 &  5&  6&   7&   8&    9&    10&    11& 12 \\ \hline
     \# classes & 4 & 10& 26& 76& 234& 782& 2804& 10972& 47246&  224648 \\ \hline 
     \end{tabular}
\end{center}
\label{openprobtable}
\end{figure}

\section{General Results}\label{Secgen}

In this section, we discuss several general trends that arise in the study of pattern-replacement relations, such as when equivalence classes can be counted using pattern-avoidance (Subsection \ref{Secconnect}), when two pattern-replacement relations are the same (Subsection \ref{secsame}), and the role of stooge-sort-like algorithms in counting equivalence classes (Subsection \ref{secstooge}).

\subsection{Connecting pattern-avoidance to the enumeration of equivalence classes}\label{Secconnect}

In this subsection, we study the connection between pattern-avoidance and the enumeration of equivalence classes.

Let $P$ be a partition of $S_c$. Let $D$ be the set of permutations which are lexicographically smaller than every other permutation in their part. Let $U=S_c \setminus D$. 

We define $N_n$ as the number of classes in $S_n$ under the $P$-equivalence. We define $A_n$ as the number of permutations in $S_n$ which avoid each permutation in $U$.

In Theorem \ref{thmavoiders}, we show that for any $k \ge 2c-1$, $N_k = A_k \implies N_n=A_n$ for all $n\ge k$. Although the result is quite simple, its effects are wide-ranging. Among its uses in this paper have been to prove that there are $2^{n-1}$ classes in $S_n$ under the $\{123, 132\}\{213, 231\}$-equivalence (Proposition \ref{prop2n}), and as one proof that there are $2^{n-1}$ classes in $S_n$ under the $\{123, 132, 231\}$-equivalence (Remark \ref{remshortproof}). The fact that we can use it in the case of the $\{123, 132, 231\}$-equivalence in particular says something interesting about the power of Theorem \ref{thmavoiders}. Indeed, the other proof that we provide for the enumeration uses a clever complete invariant; it seems very unintuitive that we can actually prove the enumeration without ever noting the very invariant that seems to causes it. \cite{LPRW10} studied pattern-replacement equivalences where one does not require that letters in patterns be consecutive in a permutation; this modified concept of an equivalence was referred to as $P^{\fourdots}$-equivalence for the replacement partition $P$. It is trivial to modify the proof of Theorem \ref{thmavoiders} so that it holds for these equivalence relations as well\footnote{For the sake of brevity, we do not provide the modified proof here. However, one only needs to redefine $N_n$ as the number of classes in $S_n$ under the $P^{\fourdots}$-equivalence, redefine the notion of avoidance to not require that letters in patterns be adjacent, and to redefine $f$ in the proof of Theorem \ref{thmavoiders} as a subword rather than a factor.}. Thus, the result can also be used to provide very short alternative proofs of the enumeration of the classes for each of the $\{123, 132\}^{\fourdots}$-equivalence and the $\{123, 132, 213\}^{\fourdots}$-equivalence. Although the theorem serves as an alternative proof for each of these results, one should not consider it to substitute their proofs; each of the already known proofs provide insight into the characterization of the equivalence classes in $S_n$, while the theorem does little more than to enumerate them.

\begin{defn}
We say that a rearrangement of a hit from $U$ to be a hit from $D$ is a \emph{down jump.}
\end{defn}

\begin{defn}
We call $w$ an \emph{avoider} if $w$ avoids each permutation in $U$.
\end{defn}

Note that this notion of an avoider is slightly different from the one used in the other sections of the paper.

\begin{defn}
We say that the \emph{height} of a permutation $w$ is the largest $k$ such that through $k$ down jumps, we may go from $w$ to an avoider (Lemma \ref{lemreduction} shows that such a $k$ exists).
\end{defn}

\begin{defn}
We say that a permutation is \emph{secure} if after repeatedly performing down jumps to it, we always reach the same avoider.
\end{defn}
\begin{defn}
A pair of permutations is \emph{matched} if each permutation in the pair is secure, and repeated down jumps applied to one of the permutations in the pair brings us to the same avoider as repeated down jumps applied to the other.
\end{defn}

\begin{lem}\label{lemreduction}
Let $w \in S_n$. If we repeatedly perform down jumps on $w$, we will eventually reach an avoider.
\end{lem}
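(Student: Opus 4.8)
The plan is to produce a quantity that every down jump strictly decreases, and then invoke finiteness of $S_n$. The natural candidate is the lexicographic order on $S_n$ itself: I claim that each down jump sends $w$ to a permutation that is lexicographically strictly smaller. Since $S_n$ is finite and totally ordered, this immediately forbids an infinite sequence of down jumps, so the process must terminate, and a permutation admitting no further down jump is precisely one containing no hit from $U$, i.e. an avoider.

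First I would recall concretely what a down jump does. If $w$ admits a hit $u$ whose order permutation $p$ lies in $U$, so that $w = aub$ for some (possibly empty) words $a$ and $b$, then a down jump replaces $u$ by the word $v$ obtained by rearranging the same $c$ letters so that the order permutation of $v$ is the lexicographically smallest element $q$ of the part of $P$ containing $p$; by the definition of $D$, this $q$ indeed lies in $D$. The crucial point is that $p \in U$ means $p$ is \emph{not} the lexicographically smallest element of its part, so $q < p$ lexicographically, and strictly so.

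Next I would transfer this strict inequality from the level of order permutations to the level of actual words. Writing the common set of letters of $u$ and $v$ as $s_1 < s_2 < \cdots < s_c$, the $i$-th letter of $u$ is $s_{p_i}$ and the $i$-th letter of $v$ is $s_{q_i}$. Letting $j$ be the first index at which $p$ and $q$ differ, the words $u$ and $v$ agree in positions $1, \ldots, j-1$, and since $q_j < p_j$ we get $s_{q_j} < s_{p_j}$ at position $j$; hence $v$ is lexicographically smaller than $u$ as a word. Because $w = aub$ and $w' = avb$ share the prefix $a$ and first differ inside the replaced factor, it follows that $w' < w$ in the lexicographic order on $S_n$.

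The only mild subtlety, and the step I would take most care to spell out, is exactly this passage from the lexicographic inequality of order permutations to that of the underlying words: here one must use that the replaced factor retains the same set of letters in the same positions while only its internal arrangement changes. Everything else is immediate, and the conclusion follows at once: repeated down jumps strictly decrease $w$ in a finite total order, so they halt at a permutation with no hit from $U$, which is an avoider.
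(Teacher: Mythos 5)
Your proof is correct and follows exactly the paper's argument: each down jump strictly decreases the permutation in lexicographic order, and finiteness of $S_n$ forces termination at an avoider. You merely spell out in more detail the step (which the paper leaves implicit) that the lexicographic decrease of order permutations transfers to the underlying words.
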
 
\begin{proof}
Each down jump brings us to a permutation which is lexicographically smaller. Since there are a finite number of permutations in $S_n$, repeated down jumps must always eventually bring us to an avoider.
\end{proof}

\begin{thm}\label{thmavoiders}
Let $k \ge 2c-1$. If $N_k = A_k$, then $N_n=A_n$ for all $n\ge k$.
\end{thm}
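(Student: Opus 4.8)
The plan is to recognize the $P$-equivalence as the symmetric--transitive closure of the down-jump rewriting and to prove that this rewriting is confluent, so that every equivalence class contains exactly one avoider. One direction is automatic: by Lemma~\ref{lemreduction} every permutation reduces to an avoider lying in its own class, so every class contains at least one avoider and hence $N_n\le A_n$; confluence will give the reverse inequality by forcing each class to contain no more than one avoider. First I would record that the $P$-equivalence is generated by down jumps: any transformation $p\to q$ within a part of $P$ can be realized as a down jump from $p$ to the lexicographically least element of the part, followed by the reverse of a down jump into $q$. Thus two permutations are equivalent exactly when they are connected by a zig-zag of down jumps, and---once confluence is known---exactly when they reduce to the same avoider. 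Combined with Lemma~\ref{lemreduction} (termination, since every down jump strictly decreases the permutation lexicographically), it then suffices to prove that the down-jump system on $S_n$ is locally confluent for every $n\ge k$.

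For local confluence I would take a permutation $w\in S_n$ admitting two down jumps, using hits $h_1$ and $h_2$ and producing $w_1$ and $w_2$. If the position sets of $h_1$ and $h_2$ are disjoint, the two down jumps commute---each leaves the other hit's positions and values untouched---so $w_1$ and $w_2$ have the common reduct obtained by performing both. If $h_1$ and $h_2$ overlap, their combined positions form a factor spanning at most $2c-1$ consecutive positions, and this is exactly where the hypotheses $k\ge 2c-1$ and $N_k=A_k$ enter; this is the crux of the argument. Since $2c-1\le k\le n$, I can choose a factor $F$ of $w$ of length exactly $k$ containing both hits. Writing $p\in S_k$ for the order permutation of $F$, the two down jumps act entirely inside $F$ and correspond to down jumps $p\to p_1$ and $p\to p_2$ in $S_k$. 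The hypothesis $N_k=A_k$ says each class in $S_k$ has a unique avoider; as $p$, $p_1$, $p_2$ all lie in one class, repeated down jumps carry each of $p_1$ and $p_2$ to that single avoider. Reinstating the letters of $F$ inside $w$, these reduction sequences are legitimate down jumps of $w$ (they only rearrange length-$c$ sub-factors of $F$) carrying $w_1$ and $w_2$ to a common permutation, which establishes local confluence.

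Finally I would invoke the standard fact (Newman's lemma) that a terminating, locally confluent rewriting system is confluent, so on $S_n$ every permutation has a unique avoider as its down-jump normal form; that is, every permutation is secure. Because the $P$-equivalence is generated by down jumps, two permutations are equivalent iff they are matched (share this normal form); in particular any two equivalent avoiders coincide, so each class contains exactly one avoider and $N_n=A_n$ for every $n\ge k$. The only genuinely delicate point is the overlapping case: one must check that the enlargement of the overlap to a length-$k$ factor is always possible inside $\{1,\dots,n\}$ (guaranteed by $n\ge k\ge 2c-1$), and that confluence inside $S_k$, which concerns only the relative order of the letters of $F$, transfers faithfully back to $w$ because down jumps depend only on relative order and never touch the letters outside $F$.
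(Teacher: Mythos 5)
Your proposal is correct and follows essentially the same route as the paper: the same three-way case analysis on the two hits (identical, disjoint, overlapping), with the overlapping case resolved by embedding both hits in a factor of length $k$ (possible since $k\ge 2c-1$) and invoking $N_k=A_k$ to get a common reduct. The only cosmetic difference is that you package the "terminating plus locally confluent implies confluent" step as Newman's lemma, whereas the paper proves it inline by inducting on the height of a permutation and propagating the property of being secure.
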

\begin{proof}
Since $N_k=A_k$, we may conclude that each class in $S_k$ contains exactly one avoider (by Lemma \ref{lemreduction}). Hence, each permutation in $S_k$ is secure.

Assume that each of the permutations in $S_{n \ge k}$ of height $\leq h-1$ are secure. We will show that so are each of the permutations in $S_n$ of height $h$. The base case for this induction of $h=0$ is trivial.

Let $w$ be a permutation in $S_n$ of height $h$. Let $a$ and $b$ be permutations which can be reached from $w$ through a down jump using hits $x$ and $y$ respectively. By the inductive hypothesis, $a$ and $b$ are secure. By showing that $a$ and $b$ are matched, we will establish that $w$ is secure. 

If $x$ and $y$ are the same, then $a=b$ and $a$ and $b$ are trivially matched.

If $x$ and $y$ are disjoint, then let $w'$ be $w$ except with each hit $x$ and $y$ rearranged in the same manner as in $a$ and $b$ respectively. Then, $w'$ is matched to each of $a$ and $b$ since it can be reached from either by a down jump and because of the inductive hypothesis. Hence, $a$ and $b$ are matched.

If $x$ and $y$ overlap but are not equal, then we consider a factor\footnote{As noted previously, to modify the proof to hold for the $P^{\fourdots}$-equivalence, we simply consider a subword rather than a factor.} $f$ of $w$ containing both $x$ and $y$ which is of size $k$. We define $f'$ as the avoider reached from $f$ through repeated down jumps. ($f$ is secure since it is in $S_k$.) Rearranging $f$ as $f'$ in $w$, we reach a secure permutation $w'$ by the inductive hypothesis. Furthermore, since $f$ is secure, $f'$ and $f$ with either of $x$ or $y$ rearranged in a down jump are matched. This implies that $a$ is matched to $w'$ and $b$ is matched to $w'$. Hence, $a$ and $b$ are matched.

Thus, each permutation in $S_{n>k}$ is secure. Hence, no two avoiders in $S_n$ are equivalent. Since every permutation in $S_n$ is equivalent to an avoider (Lemma \ref{lemreduction}), $N_n=A_n$.
\end{proof}

\subsection{Two Relations Can Be Equivalent}\label{secsame}

It is easy to not notice the importance of a very simple equivalence between pattern-replacement relations. For a partition $P$ of $S_c$, in $S_{n>c}$, the $P$-equivalence is the same as the $P'$-equivalence where $P'$ is the partition of $S_{c+1}$ into which the $P$-equivalence partitions $S_{c+1}$. For the $\{213, 231, 312, 321\}$-equivalence (enumerated by Proposition \ref{propsimplen}), for example, this is fairly useful. It tells us that our results describing the equivalence classes under the relation also describe the classes under the $\{W \subset S_c | W=\{ w\in S_c | w_k=n-k+1 \text{ for } k<i, w_i\neq n-i+1 \text{ if } i \neq n \}, 1 \le i \le n \}$-equivalence in $S_{n>c}$ for any $c$. This equivalence between pattern-replacement relations also plays an interesting role with regard to Theorem \ref{thmavoiders}. In fact, a pattern-replacement relation that is not confluent when expressed with a replacement partition of $S_c$ can be confluent when expressed as a replacement partition of $S_{k>c}$.

We now consider when the $P$-equivalence and $P^{\fourdots}$-equivalence are the same.

\begin{thm}\label{Thmsame2}
Let $P$ be a partition of $S_c$ such that the equivalence classes in $S_k$ are the same under both the $P$-equivalence and the $P^{\fourdots}$-equivalence for some $k>c$. Then, the equivalence classes in $S_n$ are the same under both the $P$-equivalence and the $P^{\fourdots}$-equivalence for all $n\ge k$
\end{thm}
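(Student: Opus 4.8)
The plan is to exploit that the $P$-equivalence is always a \emph{refinement} of the $P^{\fourdots}$-equivalence: every $P$-transformation acts on a factor, which is in particular a subsequence, so it is also a $P^{\fourdots}$-transformation. Hence on each $S_j$ the $P$-classes subdivide the $P^{\fourdots}$-classes, and the two partitions coincide exactly when $P^{\fourdots}$-equivalence implies $P$-equivalence. Since $P^{\fourdots}$-equivalence is generated by single $P^{\fourdots}$-transformations, it suffices to prove, for every $n\ge k$, that whenever $\phi,\psi\in S_n$ differ by a single $P^{\fourdots}$-transformation we have $\phi\equiv\psi$ under the $P$-equivalence. I would prove this by induction on $n$, the base case $n=k$ being exactly the hypothesis.

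For the inductive step ($n>k$), take a single $P^{\fourdots}$-transformation carrying $\phi$ to $\psi$, acting on the $c$ letters at positions $i_1<\cdots<i_c$ and replacing the order permutation $p$ there by $q$ (with $p$ and $q$ in the same part of $P$). Because $c<k<n$, at least one position is \emph{not} used by the transformation; I would fix such an inactive position $s$, and since $\phi_s=\psi_s$, delete this common letter $v$ from both permutations and standardize to obtain $\phi',\psi'\in S_{n-1}$. Standardization preserves relative order, so $\phi'$ and $\psi'$ still differ by the same transformation $p\to q$, and by the inductive hypothesis they are joined by a chain of single $P$-transformations in $S_{n-1}$. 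The crux is then to \emph{lift} this chain back to $S_n$: reinsert $v$ at position $s$ and realize each move of the chain by $P$-transformations while keeping $v$ frozen at $s$ (legitimate because every $P$-transformation merely permutes the values on a \emph{fixed} set of positions).

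Here the two kinds of moves behave differently, and handling the second is the main obstacle. A move whose length-$c$ factor lies entirely on one side of $s$ lifts verbatim: in $S_n$ it is still a contiguous factor avoiding $v$, and a $P$-transformation depends only on the relative order of its factor, which standardization has not disturbed. A move whose factor \emph{straddles} $s$ is the problem, since after reinserting $v$ its $c$ active letters are no longer adjacent in $S_n$ and it is only a $P^{\fourdots}$-transformation. The key observation I would use is that such a straddling move is confined to the block of $c+1$ consecutive positions formed by its $c$ active letters together with $v$. The hypothesis $k>c$ gives $c+1\le k\le n$, so this block sits inside a contiguous window $W$ of length exactly $k$; standardizing $W$, the straddling move becomes a single $P^{\fourdots}$-transformation between two elements of $S_k$, whose endpoints are $P$-equivalent in $S_k$ by the level-$k$ hypothesis. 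Lifting those $S_k$-moves back through the contiguous window $W$ (again using that $P$-transformations see only relative order) realizes the straddling move by genuine $P$-transformations in $S_n$.

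Composing the lifts of all moves of the chain yields $\phi\equiv\psi$ under the $P$-equivalence, completing the induction. The inequality $c<k$ enters in exactly one decisive place — guaranteeing that a straddling move fits inside a length-$k$ window so that the level-$k$ hypothesis can be invoked — which is precisely why the theorem demands $k>c$; everything else is the routine bookkeeping of deletion, standardization, and contiguous-factor lifting. The part I expect to require the most care in a full write-up is verifying that the frozen letter $v$ and the correspondence between positions of $S_{n-1}$ and $S_n$ are maintained step by step along the chain, so that each straddling move really does remain localized to its $(c+1)$-position block.
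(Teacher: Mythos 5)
Your proposal is correct and follows essentially the same route as the paper: induct on $n$ with base case $n=k$, delete a letter outside the subsequence-hit, apply the inductive hypothesis in $S_{n-1}$, and lift the resulting chain of adjacent-factor moves, handling the moves that straddle the reinserted letter by confining them to a contiguous window (the paper uses a factor of size $c+1$ and re-invokes the inductive hypothesis, you use a length-$k$ window and the level-$k$ hypothesis — an immaterial difference). Your write-up of the straddling case is in fact more explicit than the paper's.
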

\begin{proof}
Assume that the theorem holds in $S_{n-1}$ with an inductive base case of $n=k$. Let $w$ and $w'$ in $S_n$ be two permutations equivalent under the $P^{\fourdots}$-equivalence through a single replacement using the hit $h$ in $w$. By showing that $w$ and $w'$ are also equivalent under the $P$-equivalence, we will have completed the proof (since the other direction of implication is trivial).

Let $j$ be the left-most position that does not contain a letter in $h$ in $w$. Let $x$ be $w$ with $w_j$ struck and $x'$ be $w'$ with $w_j$ struck. By the inductive hypothesis, there is a series of replacements (using hits with adjacent letters only) bringing us from $x$ to $x'$. We may construct a series of transformations bringing us from $w$ to $w'$ using exactly the same replacements. In doing this, we use only hits each of which fit into a factor of size $c+1$. By the inductive hypothesis, each such transformation is a valid rearrangement under the $P$-equivalence and hence $w \equiv w'$ under the $P$-equivalence. 
\end{proof}

An example relation to which Theorem \ref{Thmsame2} is applicable is the $\{123, 132, 213, 231\}$-equivalence. (See Subsection \ref{sub4s}.) 

\subsection{Generalizing Stooge Sort}\label{secstooge}

The following results help to encapsulate the idea of a stooge sort\footnote{Stooge sort is a sorting algorithm in which one recursively sorts the first two thirds, than the second two thirds, and then the first two thirds again of a list. It is easy to check that this results in a completely sorted list. } and its role in pattern-replacement relations. Previously in this paper, they have played crucial roles in enumerating classes under the $\{123, 132, 231, 321\}$-equivalence, the $\{213, 132, 231, 312\}$-equivalence, the $\{123, 321\}\{213, 231\}$-equivalence, and most significantly, the $\{123, 132, 321\}$-equiva\-lence (the enumeration of which was a previously open problem). 

Consider the $P$-equivalence where $P$ partitions $S_c$. 

In this subsection, we assume that one has a pre-picked relation between permutations in which they are well-ordered such that rearranging a factor of a permutation to be smaller also brings us to a smaller permutation (like lexicographic comparison). Each time that the results are used in this paper, it is implicitly assumed we are using lexicographic comparison.

\begin{defn}
A permutation is \emph{lefted} if it contains a hit in its final $n-1$ letters. 
\end{defn}
\begin{defn}
A permutation is \emph{righted} if it contains a hit in its first $n-1$ letters.
\end{defn}
\begin{defn}
A permutation is \emph{middled} if it contains a hit which involves neither the final or first letter of the permutation.
\end{defn}
\begin{defn}
Let $n \geq c+1$. We define $L_n$ as the set containing the each lefted permutation which is the smallest lefted permutation in its equivalence class in $S_n$. 
\end{defn}
\begin{defn}
Let $n \geq c+1$. We define $R_n$ as the set containing the each righted permutation which is the smallest righted permutation in its equivalence class in $S_n$.
\end{defn}
\begin{defn}
Let $n \geq c+2$. We define $I_n$ as the set containing each middled permutation which is the smallest middled permutation in its equivalence class in $S_n$.
\end{defn}

\begin{defn}
Let $w \in S_n$ be middled. Then, we define $l(w)$ as $w$ with its left-most $n-1$ letters rearranged to form the element of $L_{n-1}$ that they are equivalent to. (Note that such an element \emph{does} exist.) Similarly, we define $r(w)$ as $w$ with its right-most $n-1$ letters rearranged to form the element of $R_{n-1}$ that they are equivalent to. Both $r(w)$ and $l(w)$ are middled.
\end{defn}

\begin{prop}\label{propreachmiddled}
If each non-avoiding permutation in $S_{n-1}$ is equivalent to some middled permutation, then the same is true in $S_n$.
\end{prop}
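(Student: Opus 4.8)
The plan is to take an arbitrary non-avoiding permutation $w \in S_n$ and show that it is equivalent to a middled permutation, using the assumed statement for $S_{n-1}$ together with a case analysis on the location of a single hit. If $w$ is already middled we are done, so assume it is not. Since $w$ is non-avoiding it contains a hit, which is a factor of exactly $c$ consecutive positions, say positions $i, i+1, \ldots, i+c-1$. Because $n > c$, such a block cannot simultaneously contain position $1$ and position $n$. I would first record the following dichotomy: if the block starts at position $i=1$ then, as $c \le n-1$, it lies entirely within the first $n-1$ letters, so those letters are non-avoiding; and if $i \ge 2$ then the block lies within positions $2, \ldots, n$, so the last $n-1$ letters are non-avoiding. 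Hence at least one of the two $(n-1)$-letter ends of $w$ is non-avoiding.

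Next I would carry out the reduction. Assume without loss of generality (the other case being entirely symmetric) that the first $n-1$ letters of $w$ are non-avoiding. Their order permutation is a non-avoiding element of $S_{n-1}$, so by the hypothesis on $S_{n-1}$ it is $P$-equivalent to some middled permutation. Since $K$-transformations depend only on relative order, I can mirror this sequence of transformations on the actual first $n-1$ letters of $w$, holding the last letter fixed; each such step is a legal transformation of $w$ because the first $n-1$ letters form a factor. This produces a permutation $w''$ with $w \equiv w''$ whose first $n-1$ letters form a middled permutation.

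Finally I would verify that $w''$ itself is middled. A middled permutation occupying the $(n-1)$-letter prefix has a hit avoiding the first and last positions of that prefix, namely positions $1$ and $n-1$ of $w''$; thus that hit occupies positions drawn from $\{2, \ldots, n-2\}$. In particular it involves neither position $1$ nor position $n$ of the full permutation, which is exactly the definition of being middled. The symmetric case, in which the last $n-1$ letters are non-avoiding, is handled the same way: one rearranges the suffix (positions $2, \ldots, n$) to a middled permutation, whose hit then lies in positions $\{3, \ldots, n-1\}$ and again avoids the two global extreme positions. Either way $w \equiv w''$ with $w''$ middled, completing the argument.

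The step I expect to require the most care is the position bookkeeping in the last paragraph: one must confirm that ``middled within the prefix'' (avoiding the local positions $1$ and $n-1$) is already strong enough to force ``middled within all of $S_n$'' (avoiding the global positions $1$ and $n$), and that the range of $n$ is such that a genuinely interior hit of length $c$ can exist — which holds since middled permutations require $n \ge c+2$. Everything else — exhibiting the hit, deducing which end is non-avoiding, and transporting the transformations from the standardized end back to $w$ — is routine once the dichotomy above is established.
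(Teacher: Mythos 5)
Your proposal is correct and follows the same route as the paper's (one-sentence) proof: observe that any hit, being a factor of $c<n$ consecutive letters, lies entirely in the first $n-1$ or the last $n-1$ letters, apply the hypothesis to that end, and check that a hit interior to the $(n-1)$-letter block is interior to the whole permutation. You have simply spelled out the dichotomy and the position bookkeeping that the paper leaves implicit.
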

\begin{proof}
Let $w \in S_n$ be non-avoiding. By the assumption, we can rearrange one of the first $n-1$ letters of the last $n-1$ letters to form a middled permutation in $S_{n-1}$, bringing us to a middled permutation in $S_n$.
\end{proof}

\begin{thm}\label{thmstoogesort}
Let $w \in S_n$ for $n\ge c+2$ be middled. Then, $w$ is equivalent to a middled permutation $w'$ such that the first $n-1$ letters of $w'$ form an element of $L_{n-1}$ and the final $n-1$ letters of $w'$ form an element of $R_{n-1}$.
\end{thm}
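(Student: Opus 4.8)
The plan is to realize $w'$ as the output of a ``stooge-sort'' process that alternately applies the two factor-rearrangement operations $l$ and $r$ to $w$, and to force termination using the assumed well-ordering. First I would record the facts that make the iteration legitimate. Since $w$ is middled, it contains a hit that avoids both its first and last letter, hence a hit confined to positions $2,\ldots,n-1$. This shows at once that the left-most $n-1$ letters of $w$ form a lefted permutation in $S_{n-1}$ (the hit sits in their final $n-2$ letters) and that the right-most $n-1$ letters form a righted permutation, so both $l(w)$ and $r(w)$ are defined. As recorded in the setup, both $l(w)$ and $r(w)$ are again middled, so the two operations may be composed freely. Moreover each of $l$ and $r$ merely rearranges a factor of length $n-1$ into an equivalent one, and the transformations realizing that rearrangement lie inside the factor, hence remain valid $P$-transformations in $S_n$; thus $l(w)\equiv w$ and $r(w)\equiv w$.

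Next I would establish monotonicity. When we form $l(w)$ we replace the left-most $n-1$ letters, whose order permutation $\bar u$ is lefted, by the element $u^\ast\in L_{n-1}$ of its equivalence class. By definition $u^\ast$ is the \emph{smallest} lefted permutation in that class, so $u^\ast\le\bar u$ in the chosen well-ordering; rearranging the actual letters of the factor accordingly makes that factor no larger. By the standing assumption that rearranging a factor to a smaller one yields a smaller permutation, we conclude $l(w)\le w$, and by the symmetric argument $r(w)\le w$.

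Now I would run the iteration $w_0=w$, $w_1=l(w_0)$, $w_2=r(w_1)$, $w_3=l(w_2)$, and so on, alternating $l$ and $r$. By the monotonicity step this is a non-increasing sequence inside the finite well-ordered set $S_n$, so it is eventually constant: there is an $m$ with $w_m=w_{m+1}=w_{m+2}$. Because the operations alternate, one of the two equalities $w_m=w_{m+1}$ and $w_{m+1}=w_{m+2}$ arises from an application of $l$ and the other from $r$, so the permutation $w':=w_m$ is fixed by \emph{both} $l$ and $r$. Finally I would read these two fixed-point identities back into the statement: $l(w')=w'$ means the left-most $n-1$ letters are unchanged when rearranged to their $L_{n-1}$-representative, i.e.\ they already form an element of $L_{n-1}$; likewise $r(w')=w'$ forces the right-most $n-1$ letters into $R_{n-1}$. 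Since every step preserved both the equivalence class and middledness, $w'$ is the required middled permutation equivalent to $w$.

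The hard part is conceptual rather than computational: one must see why the alternating process produces a single permutation meeting both conditions at once, even though an application of $r$ can in principle spoil the $L_{n-1}$-condition just achieved by $l$ (and conversely). This is precisely the overlapping-segment subtlety behind stooge sort, and the resolution is the termination argument above. The well-ordering guarantees the process halts, and at a halting point neither $l$ nor $r$ can make further progress, so both conditions necessarily hold simultaneously.
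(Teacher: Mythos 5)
Your proof is correct and follows essentially the same route as the paper: iterate $l$ and $r$, observe each application is non-increasing in the chosen well-ordering (strictly decreasing unless the relevant factor is already in $L_{n-1}$ or $R_{n-1}$), and conclude by finiteness that the process stabilizes at a permutation fixed by both operations. Your write-up just spells out the termination-at-a-common-fixed-point step more explicitly than the paper does.
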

\begin{proof}
Unless the first $n-1$ letters of $w$ form an element of $L_{n-1}$, then $l(w)<w$. Unless the final $n-1$ letters of $w$ form an element of $R_{n-1}$, then $r(w)<w$. But since there are a finite number of permutations in $S_n$, we can not keep reaching smaller permutations over and over again in an unending process. Instead, through repeated applications of $r$ and $l$ to $w$, we must reach a permutation whose first $n-1$ letters form an element of $L_{n-1}$ and final $n-1$ letters form an element of $R_{n-1}$.
\end{proof}

\begin{defn}
We say a hit is a \emph{global minimum} if it is the smallest hit in some nontrivial part of $P$.
\end{defn}

Now we study the connections between $L_n$, $R_n$, and $I_n$.

\begin{prop}
Assume that we are using lexicographic comparison to compare permutations. Let $m\in I_n$. Then, the element $w$ of $L_n$ such that $w\equiv m$ exists and either is $m$ or is $m$ with its final $c$ letters rearranged to form a global minimum.
\end{prop}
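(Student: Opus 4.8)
The plan is to read off the shape of $w$ from three facts --- that $w$ is the least \emph{lefted} permutation of its class, that $m$ is the least \emph{middled} permutation of its class, and that a hit is a block of $c$ adjacent letters --- and then to confine the difference between $w$ and $m$ to the final $c$ positions. First I would settle existence and the inequality $w\le m$. Every middled permutation is lefted, since a hit avoiding the first position lies among the final $n-1$ letters; hence the class of $m$ contains a lefted permutation, the least such permutation is the desired $w\in L_n$, and $w\equiv m$. As $m$ is itself lefted and $w$ is the least lefted element of the class, $w\le m$. If $w=m$ we are in the first alternative, so assume henceforth $w<m$.

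Next I would determine the structure of $w$. Because $m$ is the least middled permutation of the class and $w<m$ with $w\equiv m$, the permutation $w$ cannot be middled. Being lefted, $w$ has a hit among its last $n-1$ letters; this hit avoids position $1$, and, since $w$ is not middled, it cannot avoid position $n$ as well, so it contains position $n$. A hit is a factor (a block of $c$ adjacent letters), so a hit containing position $n$ occupies exactly positions $n-c+1,\dots,n$; hence the final $c$ letters of $w$ form a hit. I then claim this hit is a global minimum: rearranging those final $c$ letters to the least pattern of their part yields an equivalent permutation that is still lefted (its hit still lies in positions $n-c+1,\dots,n\subseteq\{2,\dots,n\}$, using $n\ge c+2$) and is lexicographically $\le w$, so minimality of $w$ in $L_n$ forces the final $c$ letters of $w$ to already realize the least pattern of their part, i.e.\ a global minimum.

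It remains to show that $w$ and $m$ agree on their first $n-c$ letters; since $w$ and $m$ permute the same values, this is equivalent to the assertion that $w$ is $m$ with its final $c$ letters rearranged. I would reduce this to showing that $m$ lies in the set $F$ of permutations obtained from $w$ by rearranging only its final $c$ letters within their part: every element of $F$ is equivalent to $w$ and shares the first $n-c$ letters of $w$, so $m\in F$ yields exactly the claim. To locate $m$ inside $F$, I would use that $m$ is a fixed point of the stooge-sort operators, $l(m)=m$ and $r(m)=m$ (each operator can only lexicographically decrease a middled permutation while keeping it middled and equivalent, so minimality of $m$ forces equality), together with an induction on $n$ in the spirit of Theorem~\ref{thmstoogesort}: one attempts to rewrite a chain of transformations from $w$ to $m$ so that every transformation acts inside the last $c$ positions, postponing or eliminating any transformation touching a position $\le n-c$.

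The main obstacle is precisely this confinement step. The difficulty is that a transformation producing $m$ may use a hit straddling the boundary at position $n-c$, mixing letters of the fixed prefix with letters of the terminal block, so the prefix and the final $c$ letters cannot be treated independently. Overcoming it requires a careful exchange argument controlling the interaction between the terminal hit (which must touch position $n$) and any interior hit, showing that creating a middling hit while keeping the permutation least forces all activity into the final $c$ letters. The minimality identities $l(m)=m$ and $r(m)=m$, the confluence technique underlying Theorem~\ref{thmstoogesort}, and small base cases of the induction are the tools I expect to make this precise.
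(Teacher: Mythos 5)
Your first half is sound and matches the paper: existence of $w$ and the inequality $w\le m$ are handled the same way, and your deduction that when $w<m$ the permutation $w$ cannot be middled, hence its lefted hit must occupy the final $c$ positions and must realize the global minimum of its part, is correct (indeed slightly more careful than the paper's wording). The problem is the second half. You reduce the remaining claim to showing $m\in F$, where $F$ consists of permutations obtained from $w$ by transformations confined to the last $c$ positions, and you then concede that this confinement step is ``the main obstacle'' without carrying it out; as written this is a genuine gap, not a proof. Moreover, the target you chose is strictly stronger than the proposition and there is no reason it should hold: every permutation reachable from $w$ by moves inside the final $c$ positions has its final $c$ letters forming a pattern in the \emph{same part} of $P$ as $w$'s terminal hit, whereas any middled hit of $m$ must straddle the boundary at position $n-c$ (a hit avoiding position $n$ cannot fit inside the last $c-1$ non-final positions, and a middled hit lying wholly in the shared prefix would make $w$ middled), so the final $c$ letters of $m$ need not form a hit at all, let alone one in that part. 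No exchange argument will place $m$ in $F$.

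The paper sidesteps confinement entirely. Once every hit of $w$ is known to occupy the final $c$ positions, the fact that the class of $w$ contains a middled permutation forces some rearrangement of that terminal hit to create a hit lying in the first $n-1$ letters; this yields a middled permutation $x\equiv m$ whose first $n-c$ letters are exactly those of $w$. Minimality of $m$ in $I_n$ then finishes the argument: if $w$ and $m$ differed somewhere among the first $n-c$ positions, then since $w<m$ the permutation $x$ would be a middled permutation lexicographically smaller than $m$, contradicting $m\in I_n$. Hence $w$ and $m$ agree on their first $n-c$ letters, which combined with your first half gives the statement. The point you are missing is that you only need the \emph{prefixes} of $w$ and $m$ to coincide, and this is forced by exhibiting a single well-chosen middled witness and appealing to the minimality of $m$; no rewriting of transformation chains is required.
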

\begin{proof}
Let $m \in I_n$ and $w \in L_n$ be such that $w \equiv m$. (It is trivial that $w$ exists.) Let $m'$ and $w'$ be the factors of $m$ and $w$ containing the first $n-c$ letters of each. Assume $w \neq m$ and thus $w < m$. If $w$ contains a hit in the first $n-1$ letters, then we have a contradiction since $m \in I_n$, not $w \in I_n$. Since $w \equiv m$, some rearrangement of the hit in $w$ must create a hit which is in the first $n-1$ letters. Hence there exists a middled permutation $x \equiv w$ such that the first $n-c$ letters of $x$ are the same as those of $w$. Hence if $w' \neq m'$ and thus $w' < m'$, then we have a contradiction since this would imply that $x \in I_n$, not $m \in I_n$. Therefore, and $w$ is simply $m$ with its final $c$ letters rearranged to form a global minimum.
\end{proof}

Note that there is not a similar proposition to be said for $L_n$. In fact, the closest we can come is with the following proposition.

\begin{prop}
\textbf{(a)} Assume that no elements of $R_{n-1}$ contain a hit in the first $c$ letters. Then the same is true for $R_n$. \textbf{(b)} Assume that no elements of $L_{n-1}$ contain a hit in the final $c$ letters. Then, the same is true for $L_n$.
\end{prop}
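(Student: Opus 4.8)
The plan is to prove part (a) directly and then obtain part (b) from the identical argument applied to the final $n-1$ letters rather than the first. Throughout I work under the hypothesis $n \ge c+2$, which is forced anyway since $R_{n-1}$ (and $L_{n-1}$) must be defined, and which also guarantees $c \le n-2$. For (a), the strategy is a descent to $S_{n-1}$: I take an arbitrary $w \in R_n$, assume for contradiction that $w$ has a hit in its first $c$ letters, and manufacture from it an element of $R_{n-1}$ having a hit in its first $c$ letters, contradicting the hypothesis of (a).

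The steps for (a) run as follows. First I let $u \in S_{n-1}$ be the order permutation of the first $n-1$ letters of $w$. Since standardization preserves the relative order of any subsequence, the order permutation of the first $c$ letters of $u$ coincides with that of the first $c$ letters of $w$; hence $u$ has a hit in its first $c$ letters, and because $c \le n-2$ this hit lies within the first $n-2$ letters, so $u$ is righted in $S_{n-1}$. Let $u' \in R_{n-1}$ be the (smallest righted) representative of the class of $u$, so that $u' \le u$. I then rearrange the first $n-1$ letters of $w$ — a factor — into order type $u'$, leaving $w_n$ fixed, and call the result $w^*$. Because the $K$-equivalence is determined solely by the order permutations of factors, the sequence of transformations carrying $u$ to $u'$ lifts verbatim to the actual letters $w_1,\dots,w_{n-1}$, so $w^* \equiv w$. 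Moreover $w^*$ is still righted: since $u' \in R_{n-1}$ it has a hit among its first $n-2$ letters, and these occupy positions $1,\dots,n-2$ of $w^*$, so $w^*$ carries a hit within its first $n-1$ letters.

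The contradiction then falls out. By the assumed monotonicity of the ordering under rearranging a factor to be smaller, $u' \le u$ gives $w^* \le w$. But $w^*$ is righted and equivalent to $w$, while $w$ is by definition the smallest righted permutation in its class, so $w \le w^*$; this forces $w^* = w$ and hence $u' = u$. Thus $u \in R_{n-1}$ has a hit in its first $c$ letters, contradicting the hypothesis of (a). Part (b) is the same after replacing ``first $n-1$ letters'' by ``final $n-1$ letters,'' ``righted'' by ``lefted,'' and $R$ by $L$ throughout; the final $n-1$ letters form a factor (positions $2,\dots,n$), so the monotonicity assumption applies equally. I expect the only genuinely delicate points — and the ones I would check most carefully — to be bookkeeping rather than conceptual: confirming that the rearranged permutation $w^*$ stays righted (resp.\ lefted), so that the minimality defining $R_n$ (resp.\ $L_n$) is applicable, and confirming that the $S_{n-1}$-equivalence lifts to a genuine $S_n$-equivalence fixing the untouched letter.
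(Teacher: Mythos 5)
Your proof is correct and takes essentially the same route as the paper's: both descend to $S_{n-1}$ by rearranging the first $n-1$ letters of a hypothetical $w\in R_n$ with a hit in its first $c$ letters into their $R_{n-1}$-representative, and both derive a contradiction from the minimality defining $R_n$. Your version is if anything slightly more carefully bookkept (deducing $w^*=w$ and hence $u\in R_{n-1}$, rather than asserting strict descent directly), but the underlying argument is identical.
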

\begin{proof}
Assume that for our method of comparison of permutations, we have that rearranging a factor of a permutation to be smaller always yields a smaller permutation. Assume that no elements of $R_{n-1}$ contain a hit in the first $c$ letters. Assume that there is an element $w$ of $R_n$ containing a hit in the first $c$ letters. Then, rearranging the first $n-1$ letters of $w$ to form a permutation in $R_{n-1}$ (which we can do since the first $n-1$ letters of $w$ form a righted permutation), we reach a smaller righted permutation in $S_n$ than $w$, a contradiction. This proves \textbf{(a)}; \textbf{(b)} can be proved analogously.
\end{proof}

\section{Conclusion and Future Work}\label{conclusion}
In our study of pattern-replacement relations, several new recurring ideas came to light. First, in proofs by induction, we found that modified versions of stooge sort could often be used. Second, we found that sometimes it is easier to treat nontrivial and trivial classes separately. In addition, several previously known tools played an important role in our research, for example the search for invariants and systems of representatives (sets of permutations such that each class has exactly one element of the set).

While the proofs of our results use a common set of tools, they are creatures of different ilks: they vary in difficulty and structure. This makes it all the more surprising that the results show some unexpected similarities, like the $2^{n-1}$ occurring four times in our enumerations of the classes in $S_n$ (and the multiset of class sizes in $S_n$ being the same in two cases). If our results are combined with those of previous works, \cite{PRW11} and \cite{LPRW10}, even more similarities occur. Analyzing the sources of these similarities, as well as viewing our equivalence relations from the more advanced viewpoint of algebra, is a promising direction for future study.  It would be interesting to find formulas for the number of classes created by several replacement partitions not yet well-understood, specifically $\{132, 231\} \{213, 312\}$. In addition, there are the following directions of future work:
\begin{enumerate}
\item In Section \ref{Secdouble}, we only provide the size of the class containing the identity for relations when the result is convenient. Future authors might further study the sizes of the classes created under those relations.
\item Are there connections between equivalence relations having the same number of classes. Is there a reason why the enumerations for each of the Knuth relation and the forgotten relation show up again in our study of the $\{123, 132\}\{213, 312\}$-equivalence and $\{123, 231\} \{213, 132\}$-equivalence respectively?
\item \cite{LPRW10} deals with relations that allow re-ordering only adjacently valued letters or, alternatively, re-ordering any subword (rather than only a contiguous set of adjacent letters). This is an important direction of research to continue.
\item Some equivalence classes have additional structure. Can one classify permutations in a given equivalence class based on characteristics such as inversions, length (number of inversions), the locations of hits, ascents, Major index, etc.?
\item We can consider the $K$-equivalence not only on permutations, but on arbitrary words (cf.\ the Knuth relations). By linearization, this corresponds to studying binomial ideals in noncommutative polynomial rings, and the quotient rings modulo these ideals. These have revealed interesting properties in the cases of the Knuth and forgotten relations. Do similar properties arise in the relations that we have studied?
\end{enumerate}

\subsection*{Acknowledgements}

I would like to thank Richard Stanley as well as the MIT PRIMES program for providing me with this research project. I would like to thank Sergei Bernstein, Darij Grinberg, and Ziling Zhou for many useful conversations throughout the research, as well as for helping with the editing process of this paper. I would also like to thank Tanya Khovanova for suggesting edits to this paper.


\end{document}